\newtheorem{thm}{Theorem}[chapter]
\newtheorem{prp}[thm]{Proposition}
\newtheorem{lem}[thm]{Lemma}
\newtheorem{cor}[thm]{Corollary}
\theoremstyle{definition}
\newtheorem{df}[thm]{Definition}
\newtheorem{rem}[thm]{Remark}
\def\Cx{\mathbb{C}}
\def\R{\mathbb{R}}
\def\N{\mathbb{N}}
\def\Q{[0,1)}
\def\Z{\mathbb{Z}}
\def\Dd{\mathbb{D}}
\def\B{\mathbb{B}}
\def\Fb{\mathbb{F}}
\def\1{\mathbbm{1}}
\def\Rn{\mathcal{R}_n}
\def\P{\mathcal{P}}
\def\C{\mathcal{C}}
\def\CS{\mathcal{CS}}
\def\S{\mathcal{S}}
\def\F{\mathcal{F}}
\def\A{\mathcal{A}}
\def\D{\mathcal{D}}
\def\V{\mathcal{V}}
\def\Dn{\mathfrak{D}}
\newcommand\dint{{\rm d}}
\newcommand{\im}{{\rm i}}
\DeclareMathOperator{\supp}{supp}
\DeclareMathOperator{\modulo}{mod}
\DeclareMathOperator{\wal}{wal}
\DeclareMathOperator{\dg}{deg}
\DeclareMathOperator{\err}{err}
\DeclareMathOperator*{\esssup}{ess-sup}
\DeclareMathOperator{\e}{e}
\begin{document}



\title{Discrepancy and integration in function spaces with dominating mixed smoothness}

\author{\textbf{Lev Markhasin} \\ Institut f\"ur Stochastik und Anwendungen \\ Fachbereich Mathematik \\ Universit\"at Stuttgart \\ Pfaffenwaldring 57 \\ 70569 Stuttgart \\ Germany \\ Email: lev.markhasin@mathematik.uni-stuttgart.de}

\maketitle

\tableofcontents
\newpage
\thispagestyle{empty}
\section*{Abstract}
Optimal lower bounds for discrepancy in Besov spaces with dominating mixed smoothness are known from the work of Triebel. Hinrichs proved upper bounds in the plane. In this work we systematically analyse the problem, starting with a survey of discrepancy results and the calculation of the best known constant in Roth's Theorem. We give a larger class of point sets satisfying the optimal upper bounds than already known from Hinrichs for the plane and solve the problem in arbitrary dimension for certain parameters considering a celebrated constructions by Chen and Skriganov which are known to achieve optimal $L_2$-norm of the discrepancy function. Since those constructions are $b$-adic, we give $b$-adic characterizations of the spaces. Finally results for Triebel-Lizorkin and Sobolev spaces with dominating mixed smoothness and for the integration error are concluded.\\[5mm]
\noindent{\footnotesize {\it 2010 Mathematics Subject Classification.} Primary 11K06,11K38,42C10,46E35,65C05.\\
{\it Key words and phrases.} discrepancy, numerical integration, quasi-Monte Carlo algorithms, Haar system, Hammersley point set, Chen-Skriganov point set, dominating mixed smoothness.}\\[5mm]
\textit{Acknowledgement:} The author wants to thank Aicke Hinrichs, Hans Triebel, Dmitriy Bilyk and Tino Ullrich. Research of the author was supported by Gerhard C. Starck Stiftung and Ernst Ludwig Ehrlich Studienwerk.

\chapter*{Introduction}
\markboth{Introduction}{Introduction}
\addcontentsline{toc}{chapter}{Introduction}
The analysis of uniformity of point distributions and the search for very well-distributed point sets play an important role in the context of so-called quasi-Monte Carlo methods. In numerical integration, point sets with low discrepancy can sometimes provide a significant improvement over so-called Monte Carlo methods, which generate point sets randomly. The discrepancy function measures deviation of a concrete given point set from a hypothetical perfectly uniform distribution. Low discrepancy guarantees a small integration error, as can be established by Koksma-Hlawka type inequalities.

Since the much celebrated result by Klaus Roth, the discrepancy theory has become a very popular subject of study. While only $L_p$-spaces have been studied initially, results for other function spaces (BMO, weighted $L_p$-spaces etc.) are emerging now. Nevertheless there is still much work to do for the classical spaces, especially on $L_1$-discrepancy and star discrepancy.

In this work we concentrate on spaces with dominating mixed smoothness, namely Besov spaces $S_{pq}^r B([0,1)^d)$, Triebel-Lizorkin spaces $S_{pq}^r F([0,1)^d)$ and Sobolev spaces $S_p^r H([0,1)^d)$. The best possible lower bounds for the spaces $S_{pq}^r B([0,1)^d)$ have been established by Hans Triebel in \cite{T10a} while his upper bounds were not optimal. There were gaps between the exponents of the lower and the upper bounds which have been closed in the plane by Aicke Hinrichs (\cite{Hi10}). We completely solve this problem for a certain interval of the smoothness parameter $r$, closing the gap in the exponents for arbitrary dimension. To do so, we calculate the norm of the discrepancy function for the explicit constructions of Chen and Skriganov, which are known to achieve the best possible asymptotic behavior for the $L_p$-discrepancy (Theorem \ref{thm_main_conclusion}). Additionally, we prove the upper bounds in the plane for a much larger class of point sets than has been given in \cite{Hi10}, thereby generalizing Hinrichs's result (Theorem \ref{thm_hammersley_disc}). Using embeddings of the spaces with dominating mixed smoothness we get results for Triebel-Lizorkin spaces and Sobolev spaces as well (Corollaries \ref{cor_f} and \ref{cor_h}).

From \cite{T10a} we have Koksma-Hlawka type inequalities for Besov spaces with dominating mixed smoothness. Therefore, another important result is Theorem \ref{thm_int_err} on the integration error.

Many prerequisites have to be established and used. Most significant are the characterizations for the Besov spaces with dominating mixed smoothness which generalize Triebel's results for higher dimension and greater bases (Theorem \ref{thm_besov_char}). The proof is equivalent to Triebel's proof, therefore, we just follow the original proofs without giving complete calculations in detail. Elsewise it would go beyond the scope of this work.

The point sets used for our purpose (generalized Hammersley and Chen-Skriganov point sets) are $b$-adic, i.e. in higher base, hence $b$-adic Haar bases must be used for the calculation.

Additionally, we give a slightly modified proof of Roth's theorem, calculating the best constant known so far (Theorem \ref{thm_roth_const}) improving the former one significantly. 

This work is arranged in the following way. The first chapter gives the necessary definitions, explanations (including well known facts, proofs and examples), alternatives and historical remarks. Also, literature recommendations are given. In that chapter we define the discrepancy function, spaces with dominating mixed smoothness, $b$-adic Haar and Walsh bases, digital nets and their dual counterparts.

The second chapter deals with the characterization of the Besov spaces with dominating mixed smoothness using $b$-adic Haar bases. We give a proof for the fact that they are a basis for $L_2$-spaces. Then we find equivalent $b$-adic definitions for the $S_{p q}^r B$-norms and finally prove the characterizations.

The third chapter summarizes the known results on $L_p$-discrepancy, including star discrepancy. Also the calculation of the constant for the lower bound of $L_2$-discrepancy can be found there. Additional historical remarks are given.

The fourth chapter deals with the calculation of upper bounds for the discrepancy in Besov spaces with dominating mixed smoothness of generalized Hammersley point sets and Chen-Skriganov point sets. Results for other spaces with dominating mixed smoothness are derived.

The last chapter concludes the results for the integration errors for spaces with dominating mixed smoothness.

\chapter{Preliminaries}
\section{Basic notation} \label{BasicNotation}
Let $\N$ denote the set of the natural numbers and $\N_0 = \N \cup \{0\}$ and $\N_{-1} = \N \cup \{-1,0\}$. Let $\Z$ denote the set of all integers, $\R$ the set of all real numbers and $\Cx$ the complex plane. For a positive integer $b$ we mean by $\Z_b$ the ring of the residue classes modulo $b$, identified with $\{ 0, 1, \ldots, b - 1 \}$ with addition and multiplication modulo $b$. If $b$ is a prime power, then $\Fb_b$ is the finite field of order $b$. We will only use it for $b$ prime so we can identify it with $\Z_b$. $\Fb_b[x]$ will stand for the set of polynomials over $\Fb_b$.

By $d \in \N$ we will denote the dimension. We will either use the Euclidian space $\R^d$ or the $d$-dimensional unit cube $\Q^d$. The scalar product of $x,y \in \R^d$ is given by $x \, y = x_1 \, y_1 + \ldots + x_d \, y_d$ for $x = (x_1, \ldots, x_d), y = (y_1, \ldots, y_d)$.

Let $a,b \in \R^d$, then by $[a,b)$ we will mean the rectangular box $[a_1,b_1) \times \ldots \times [a_d,b_d)$ whenever $a_1 < b_1, \ldots, a_d < b_d$ where $a = (a_1,\ldots,a_d)$ and $b = (b_1,\ldots,b_d)$ and call it an interval. We will denote $0 = (0, \ldots, 0) \in \R^d$, but it will always be clear from the context if the real number $0$ or the vector $(0, \ldots, 0)$ is meant. For a measurable set $\Omega \subset \R^d$ we denote by $|\Omega|$ the volume of $\Omega$. For instance we have that $|[a,b)| = (b_1 - a_1) \cdot \ldots \cdot (b_d - a_d)$ is the volume of the interval $[a,b)$. Any measurability or integration in this work will be considered with respect to the Lebesgue measure. For any finite set $A$ we denote by $\# A$ the cardinality of $A$.
Let $\Omega \subset \R^d$, then by $\chi_\Omega$ we will denote the characteristic function of the set $\Omega$ defined as
\[ \chi_\Omega(x) = \begin{cases} 1 & \text{ if } x \in \Omega, \\ 0 & \text{ if } x \notin \Omega \end{cases} \]
for $x \in \R^d$. By $\log$ we denote the natural logarithm, by $\log_b$ the logarithm in base $b$.

We will use many constants which we will denote either by $c$ if we need only one or by $c_1$ and $c_2$ or $c$ and $C$ if we need two. If the constant changes in a proof we will use indices as well, increasing the index every time the constant changes. If we want to stress the fact that the constant depends on the dimension $d$, we use the notation $c_d$.

Since we are going to deal with irregularities of point distribution it is clear that we will use point sets in $\Q^d$. By $N \in \N$ we will denote the cardinality of a point set. An arbitrary point set with $N$ points will be denoted by $\P$.

We call a function on $\R^d$ rapidly decreasing if for all multi-indices $\alpha, \beta \in \N^d$ we have
\[ \sup_{x \in \R^d} |x^{\alpha} \, D^{\beta} f(x)| < \infty \]
where $D^{\beta}$ is the derivative of order $\beta$. Let $\S(\R^d)$ denote the Schwartz space of all complex-valued, rapidly decreasing, infinitely differentiable functions on $\R^d$ and $\S'(\R^d)$ its topological dual, the space of all tempered distributions on $\R^d$. Let $\D(\Q^d)$ consist of all complex-valued infinitely differentiable functions on $\R^d$ with compact support in the interior of $\Q^d$ and let $\D'(\Q^d)$ be its dual space of all distributions in $\Q^d$. For $0 < p \leq \infty$ we denote the Lebesgue spaces on $\R^d$ by $L_p(\R^d)$, (quasi-)normed by
\begin{align*}
& \left\| f | L_p(\R^d) \right\| = \left( \int_{\R^d} |f(x)|^p \dint x \right)^{\frac{1}{p}}, \quad 0 < p < \infty, \\
& \left\| f | L_{\infty}(\R^d) \right\| = \esssup_{x \in \R^d} |f(x)| = \inf \left\{ \sup_{x \in \R^d \backslash \Omega} |f(x)| \, : \, \Omega \subset \R^d, \, |\Omega| = 0 \right\}.
\end{align*}
For $1 \leq p \leq \infty$ these spaces are Banach spaces, for $0 < p < 1$ quasi-Banach spaces. Here $\esssup$ stands for essential supremum. Analogously we define $L_p(\Q^d)$. It is well known that for $1 \leq p \leq q \leq \infty$ we have the embedding
\[ L_q(\Q^d) \hookrightarrow L_p(\Q^d). \]
By the symbol "$\hookrightarrow$" we mean the following. Let $M_1$ and $M_2$ be two (quasi-) normed spaces. Then by $M_1 \hookrightarrow M_2$ we mean that $M_1 \subset M_2$ and there exists a constant $c > 0$ such that, for any $x \in M_1$ we have $\left\| x | M_2 \right\| \leq c \, \left\| x | M_1 \right\|$.

For any quasi-Banach space $V$ we denote by $V'$ its dual space, i.e. the set of all linear functionals $V \longrightarrow \Cx$. For any $1 \leq p < \infty$ and
\[ \frac{1}{p} + \frac{1}{p'} = 1 \]
we have that $(L_p(\R^d))' = L_{p'}(\R^d)$ and $(L_p([0,1)^d))' = L_{p'}([0,1)^d)$.

For $1 \leq p \leq \infty$ and $k \in \N_0$ we denote the Sobolev spaces by $W_p^k(\Q^d)$, normed by
\[ \left\| f | W_p^k(\Q^d) \right\| = \left( \sum_{|\alpha| \leq k} \left\| D^{\alpha} f | L_p(\Q^d) \right\|^p \right)^{\frac{1}{p}} \]
for $f \in L_p(\Q^d)$ satisfying $D^{\alpha} f \in L_p(\Q^d)$ for all $\alpha \in \N_0^d$ with $|\alpha| \leq k$. For $\alpha = (\alpha_1, \ldots, \alpha_d) \in \N_0^d$ we put $|\alpha| = \alpha_1 + \ldots + \alpha_d$. By $D^{\alpha}$ we denote the weak derivative of order $\alpha$, which is defined in the following sense. A measurable function $g$ on $\Q^d$ is the weak derivative of order $\alpha$ of $f$ if
\[ \int_{\Q^d} g(x) \varphi(x) \dint x = (-1)^{\alpha} \int_{\Q^d} f(x) D^{\alpha} \varphi(x) \dint x \]
for all infinitely differentiable functions $\varphi$ with compact support in $\Q^d$. The spaces $L_2(\Q^d)$ and $W_2^k(\Q^d)$ are Hilbert spaces. The inner product of $L_2(\Q^d)$ is given by
\[ \left\langle f, g\right\rangle_{L_2} = \int_{[0,1)^d} f(x) \overline{g(x)} \dint x \]
for $f, g \in L_2(\Q^d)$. The inner product of $W_2^k(\Q^d)$ is given by
\[ \left\langle f, g\right\rangle_{W_2^k} = \sum_{|\alpha| \leq k} \int_{[0,1)^d} D^{\alpha} f(x) \overline{D^{\alpha} g(x)} \dint x \]
for $f, g \in W_2^k(\Q^d)$. For any $p$ we have $L_p(\Q^d) = W_p^0(\Q^d)$.

For $\varphi \in \S(\R^d)$ we denote by
\[ \F \varphi(\xi) = (2 \pi)^{-\frac{d}{2}} \int_{\R^d} \varphi(x) \e^{-\im x \xi} \dint x, \quad \xi \in \R^d \]
the Fourier transform of $\varphi$. The inverse Fourier transform is given by
\[ \F^{-1} \varphi(x) = (2 \pi)^{-\frac{d}{2}} \int_{\R^d} \varphi(\xi) \e^{\im x \xi} \dint \xi, \quad x \in \R^d. \]
We extend $\F$ and $\F^{-1}$ in the usual way from $\S$ to $\S'$. For $f \in \S'(\R^d)$,
\[ \F f(\varphi) = f(\F \varphi), \quad \varphi \in \S(\R^d) \]
and
\[ \F^{-1} f(\varphi) = f(\F^{-1} \varphi), \quad \varphi \in \S(\R^d). \]
\section{Irregularities of point distribution}
In different contexts one often asks what is the most uniform way of distributing a finite point set in $[0,1)^d$ and how big is the irregularity of such a distribution. Questions of this kind were motivated by problems in number theory. But to answer such questions one has to clarify the notion of uniformity and irregularity first. In this section we give an introduction into some of the concepts concentrating on the discrepancy of left lower corners since it is the central part of the results given in this work. We advise the interested reader to study such monographs as \cite{DP10}, \cite{M99}, \cite{NW10}, \cite{KN74} and the references given there. 
\subsection{Discrepancy function of left lower corners}
\begin{df} \label{def_discrepancy_function}
Let $N$ be some positive integer and let $\P$ be a point set in the unit cube $[0,1)^d$ with $N$ points. Then the discrepancy function $D_{\P}$ is defined as
\begin{align}
D_{\P}(x) = \frac{1}{N} \sum_{z \in \P} \chi_{[0,x)}(z) - x_1 \cdot \ldots \cdot x_d.
\end{align}
for any $x = (x_1, \ldots, x_d) \in [0,1)^d$.
\end{df}
We also will call it the discrepancy function of left lower corners if we will have to distinguish it from other kinds of discrepancy functions though it will be the one used throughout this work. The term $\sum_z \chi_{[0,x)}(z)$ is equal to $\# (\P \cap [0,x))$. The discrepancy function measures the deviation of the number of points of $\P$ in $[0,x)$ from the fair number of points $N |[0,x)| = N \, x_1 \cdot \ldots \cdot x_d$ which would be achieved by a (practically impossible) perfectly uniform distribution of the points of $\P$, normalized by the total number of points. The following image shows the $2$-dimensional case.

There, we have a point set $\P$ with $21$ points and $5$ points of $\P$ are in the interval $[0,x)$ of volume $0.26$. So we have $D_{\P}(x) = \frac{5}{21} - 0.26 \approx -0.02$.
\begin{center} \includegraphics[height = 8cm]{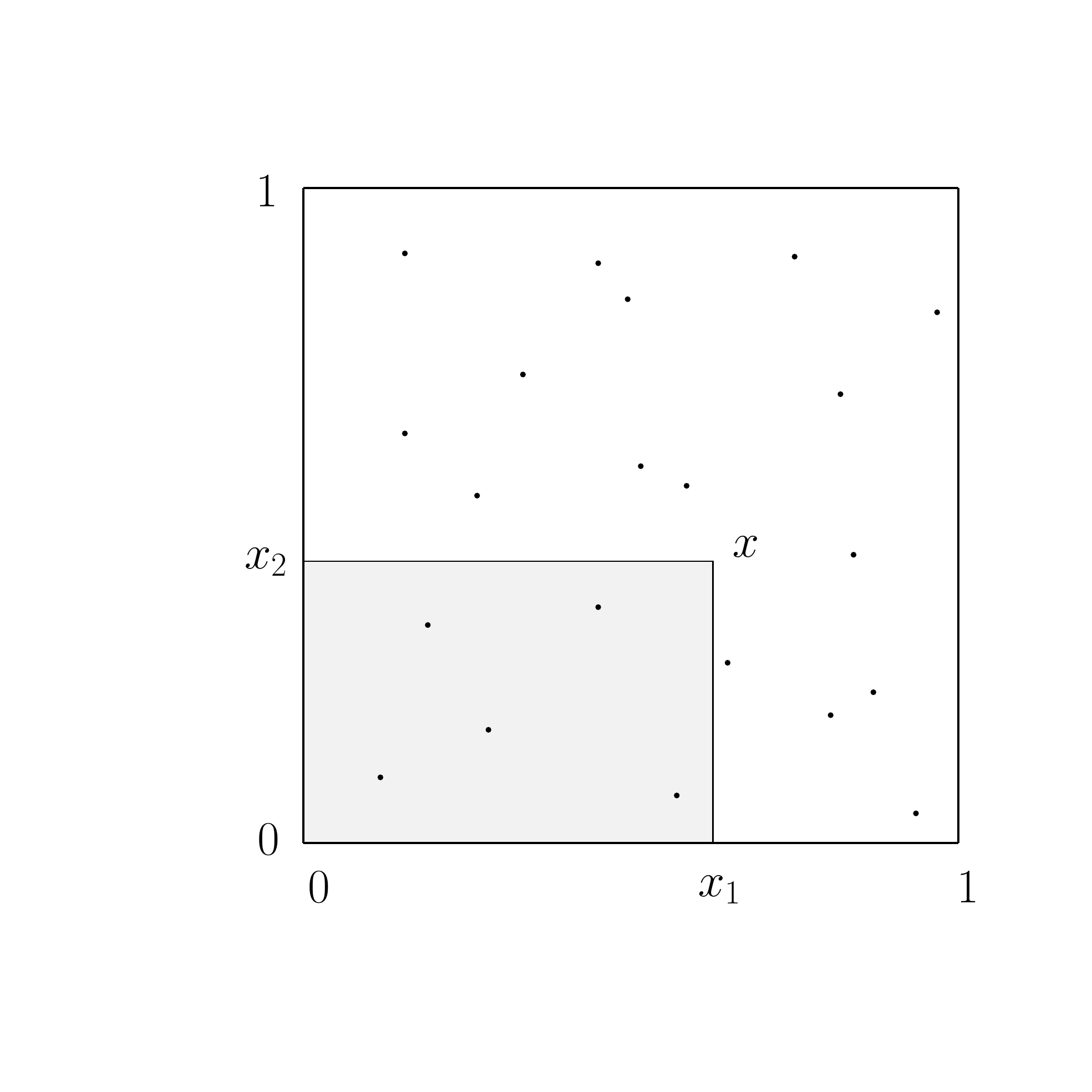} \end{center}

Sometimes instead of $D_{\P}$ the discrepancy function is introduced as $N \cdot D_{\P}$. Usually one is interested in calculating the norm of the discrepancy function in some normed space of functions on $[0,1)^d$ to which the discrepancy function belongs. Then there are two major tasks to work on with the discrepancy function. Before we describe them we need the following notation.
\begin{df} \label{M_discr}
Let $M([0,1)^d)$ be some Banach space of functions on $[0,1)^d$ such that, for every positive integer $N$ and every point set $\P$ in $[0,1)^d$ with $N$ points, the discrepancy function $D_{\P}$ belongs to $M([0,1)^d)$. Then we denote
\begin{align}
D^M(N) = \inf_{\# \P = N} \left\| D_{\P} | M([0,1)^d) \right\|
\end{align}
as $M$-discrepancy.
\end{df}
The aforementioned tasks are to find functions $f_1$ and $f_2$ such that, there exist constants $c_1, c_2 > 0$ and for every positive integer $N$, we have
\[ c_1 \, f_1(N) \leq D^M(N) \leq c_2 \, f_2(N). \]
The first task is to prove that for every $N$ and every point set $\P$ in $[0,1)^d$ with $N$ points we have
\[  \left\| D_{\P} | M([0,1)^d) \right\| \geq c_1 \, f_1(N). \]
The second major task is to find point sets with the best possible discrepancy, i.e. to prove that for every $N$ there exists a point set $\P$ in $[0,1)^d$ with $N$ points such that
\[  \left\| D_{\P} | M([0,1)^d) \right\| \leq c_2 \, f_2(N). \]
Ideally, we want $f_1 = f_2$.

In the case $d = 1$ nothing beats the set of $N$ equidistant points
\[ \left\{ \frac{1}{2N} + \frac{k}{N} \, : \, k = 0, \ldots, N - 1 \right\}. \]
One easily calculates the value of the discrepancy function.

\subsection{Generalized discrepancy functions}
The definition for the discrepancy function given above is not a very general one. In fact, it is very specific. There are several other ways to study irregularity of point distributions. For instance we introduced it only for left lower corners $[0,x)$ while we could have defined it for any other class of geometrical figures. Let $\A$ be such a possible class, e.g. the class of all axis-parallel rectangular boxes, the class of all rectangular boxes, the class of all balls and so on, then the discrepancy function of a point set $\P$ in $[0,1)^d$ with $N$ points could be defined for $A \in \A$ as
\[ D_{\P}^{\A}(A) = \frac{1}{N} \sum_{z \in \P} \chi_{A}(z) - |A|. \]
For more information on other approaches of this kind the reader is referred to \cite{M99} and the references given there. In this context we only consider the discrepancy function of the class of all axis-parallel rectangular boxes of the form $[a,b)$ for $a,b \in [0,1)^d$. The discrepancy function
\[ D_{\P}^{ap}([a,b)) = \frac{1}{N} \sum_{z \in \P} \chi_{[a,b)}(z) - |[a,b)| \]
seems to be more general than the discrepancy function of the left lower corners. But after considering the following well known fact, it becomes clear that the discrepancy function of left lower corners and the discrepancy function of all axis-parallel rectangular boxes are connected and will give us at least in $\sup$-norm the same results if we are not interested in the exact constant of proportionality.

\begin{prp} \label{prp_apboxes_corners}

For any finite set $\P$ in $[0,1)^d$ and any rectangular box $[a,b)$ there exists a point $x \in [0,1)^d$ such that, we have
\[ D_{\P}(x) \leq D_{\P}^{ap}([a,b)) \leq 2^d \, D_{\P}(x). \]

\end{prp}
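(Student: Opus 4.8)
The plan is to reduce the box discrepancy to the corner discrepancy by inclusion--exclusion over the $2^d$ vertices of the box $[a,b)$. For a subset $S\subseteq\{1,\dots,d\}$ let $x^S\in[0,1)^d$ be the vertex with $(x^S)_j=b_j$ for $j\in S$ and $(x^S)_j=a_j$ otherwise. First I would expand the indicator of the box as a telescoping product,
\[ \chi_{[a,b)}(z)=\prod_{j=1}^{d}\bigl(\chi_{[0,b_j)}(z_j)-\chi_{[0,a_j)}(z_j)\bigr)=\sum_{S\subseteq\{1,\dots,d\}}(-1)^{d-|S|}\,\chi_{[0,x^S)}(z), \]
and likewise $|[a,b)|=\prod_{j=1}^{d}(b_j-a_j)=\sum_{S\subseteq\{1,\dots,d\}}(-1)^{d-|S|}\,|[0,x^S)|$. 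Averaging over $z\in\P$ and subtracting gives the identity
\[ D_{\P}^{ap}([a,b))=\sum_{S\subseteq\{1,\dots,d\}}(-1)^{d-|S|}\,D_{\P}(x^S). \]

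From this identity the factor-$2^d$ upper bound is immediate: the right-hand side is a sum of $2^d$ terms, so by the triangle inequality $\bigl|D_{\P}^{ap}([a,b))\bigr|\le 2^d\max_{S}|D_{\P}(x^S)|$, and hence there is a vertex $x=x^{S_0}$ with $\bigl|D_{\P}^{ap}([a,b))\bigr|\le 2^d\,|D_{\P}(x)|$; checking the sign of the dominating summand $(-1)^{d-|S_0|}D_{\P}(x^{S_0})$ lines it up with the sign of $D_{\P}^{ap}([a,b))$. For the reverse inequality I would use that each corner value is itself a box discrepancy, $D_{\P}(x^S)=D_{\P}^{ap}([0,x^S))$, so that one of the vertices automatically lies on the correct side; a more robust route, which also controls the absolute values, is to slide one corner of $[a,b)$ in a single coordinate, obtaining a one-parameter family of sub-boxes whose discrepancies run from $0$ up to $D_{\P}^{ap}([a,b))$, and to apply a discrete intermediate-value argument on this family (iterating over the $d$ coordinates accounts for the loss of the factor $2^d$), so that $D_{\P}$ genuinely attains a value sandwiched between $2^{-d}D_{\P}^{ap}([a,b))$ and $D_{\P}^{ap}([a,b))$.

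The step I expect to be the real obstacle is exactly this last one: the inclusion--exclusion identity hands over a vertex that is good for the upper bound, but a vertex of $[a,b)$ need not satisfy $|D_{\P}(x)|\le|D_{\P}^{ap}([a,b))|$, so producing a \emph{single} point $x$ that meets both inequalities (and with the right signs) requires either a case distinction on the signs of the $2^d$ vertex discrepancies or the mild continuity and jump-size considerations sketched above. I note that for the way the statement is used afterwards -- the equivalence of $\sup_{x}|D_{\P}(x)|$ and $\sup_{[a,b)}|D_{\P}^{ap}([a,b))|$ up to the constant $2^d$ -- only the clean inclusion--exclusion estimate together with the trivial observation that the corner boxes $[0,x)$ are among all boxes is needed.
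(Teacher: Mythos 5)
Your inclusion--exclusion over the $2^d$ vertices is exactly the paper's argument: the paper writes $[a,b)$ as an alternating combination of the corner boxes $[0,x^S)$ (via iterated set differences, spelled out for $d=2$) and invokes the subadditivity lemma, which is precisely your triangle-inequality step, so the two derivations of $\bigl|D_{\P}^{ap}([a,b))\bigr|\leq 2^d\max_S\bigl|D_{\P}(x^S)\bigr|$ coincide. The obstacle you flag for the left inequality is real, and the paper's proof does not overcome it either -- it only delivers the factor-$2^d$ upper estimate; indeed the statement as literally printed (without absolute values) cannot hold whenever $D_{\P}^{ap}([a,b))<0$, since $D_{\P}(x)\leq D_{\P}^{ap}([a,b))\leq 2^d\,D_{\P}(x)$ forces $2^{-d}D_{\P}^{ap}([a,b))\leq D_{\P}(x)\leq D_{\P}^{ap}([a,b))$, an empty condition for a negative value. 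As you note at the end, what is actually used afterwards is only the two-sided comparability of $\sup_x|D_{\P}(x)|$ and $\sup_{[a,b)}|D_{\P}^{ap}([a,b))|$ up to $2^d$, and for that your identity plus the observation that each $[0,x)$ is itself an axis-parallel box suffices; the intermediate-value sliding you sketch is unnecessary for this purpose and would in any case need care because $D_{\P}$ has jumps.
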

Before we prove this fact we consider another well known fact that shows that the discrepancy function can be interpreted as an additive signed measure.

\begin{lem}

Let $\A$ be some class of geometric figures as above and $A,B \in \A$. Then, if $A$ and $B$ are disjoint, then
\[ |D_{\P}^{\A}(A \cup B)| \leq |D_{\P}^{\A}(A)| + |D_{\P}^{\A}(B)| \]
while, if $A \subset B$, then
\[ |D_{\P}^{\A}(A \backslash B)| \leq |D_{\P}^{\A}(A)| + |D_{\P}^{\A}(B)|. \]

\end{lem}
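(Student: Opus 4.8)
The plan is to exploit the fact that $D_{\P}^{\A}$ is a difference of two finitely additive set functions. Writing out the definition, for every $A \in \A$ we have
\[ D_{\P}^{\A}(A) = \frac{1}{N} \# (\P \cap A) - |A|, \]
and both $A \mapsto \# (\P \cap A)$ (the counting measure of the $N$-point set $\P$) and $A \mapsto |A|$ (Lebesgue measure) are additive under disjoint unions. Hence, whenever $A$, $B$ and the set-theoretic combination under consideration all belong to $\A$ (so that the relevant values of $D_{\P}^{\A}$ are defined at all), $D_{\P}^{\A}$ inherits additivity, and the two asserted inequalities will follow from exact identities by the triangle inequality.

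First I would treat the disjoint case. If $A \cap B = \emptyset$ and $A \cup B \in \A$, then $\# (\P \cap (A \cup B)) = \# (\P \cap A) + \# (\P \cap B)$ and $|A \cup B| = |A| + |B|$; subtracting these gives the exact identity $D_{\P}^{\A}(A \cup B) = D_{\P}^{\A}(A) + D_{\P}^{\A}(B)$. Applying $|x + y| \leq |x| + |y|$ yields the first estimate.

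Next I would treat the nested case. Here I read the hypothesis $A \subset B$ together with $A \backslash B$ as a typo for $B \backslash A$, which is the only reading that makes the statement nontrivial. If $A \subset B$ and $B \backslash A \in \A$, then $B$ is the disjoint union of $A$ and $B \backslash A$, so the additivity already established gives $D_{\P}^{\A}(B) = D_{\P}^{\A}(A) + D_{\P}^{\A}(B \backslash A)$, i.e. $D_{\P}^{\A}(B \backslash A) = D_{\P}^{\A}(B) - D_{\P}^{\A}(A)$. Taking absolute values and using the triangle inequality gives $|D_{\P}^{\A}(B \backslash A)| \leq |D_{\P}^{\A}(B)| + |D_{\P}^{\A}(A)|$.

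There is no genuinely hard step: the entire content is the elementary observation that $D_{\P}^{\A}$ is finitely additive on the portion of $\A$ where it is defined, and the inequalities are just the triangle inequality applied to the resulting exact identities. The only point that deserves an explicit line is that all the sets appearing on the left-hand sides (namely $A \cup B$, resp. $B \backslash A$) must themselves lie in $\A$ for those expressions to be meaningful; this is implicit in the statement, since $D_{\P}^{\A}$ is only defined on members of $\A$.
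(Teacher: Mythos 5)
Your proof is correct and follows essentially the same route as the paper's: additivity of both the counting measure and the Lebesgue measure gives an exact identity for $D_{\P}^{\A}$ on disjoint unions, and the two inequalities are the triangle inequality applied to that identity (with the nested case reduced to the disjoint one). Your explicit reading of the second part as concerning $B \backslash A$ rather than $A \backslash B$ is the right correction of the statement's typo, and your remark that the combined sets must themselves lie in $\A$ is a reasonable clarification the paper leaves implicit.
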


\begin{proof}
Clearly, $\# \left( \P \cap (A \cup B) \right) = \# \left( \P \cap A \right) + \# \left( \P \cap B \right)$ and $|A \cup B| = |A| + |B|$ so the first part follows. The second part follows analogously.

\end{proof}

\begin{proof}[Proof of Proposition \ref{prp_apboxes_corners}]
We give the idea for $d = 2$. The general case will follow analogously. We represent the rectangular box as
\begin{multline*}
[a_1,b_1) \times [a_2,b_2) \\ = \left( [0,b_1) \times [0,b_2) \backslash [0,a_1) \times [0,b_2) \right) \backslash \left( [0,b_1) \times [0,a_2) \backslash [0,a_1) \times [0,a_2) \right)
\end{multline*}
and the proposition follows from the previous lemma.

\end{proof}

Another generalization of the discrepancy function is the so called weighted discrepancy function, which can be defined as follows. Let $a=(a_z)_{z\in\cal P}$ be a system of real numbers associating a weight $a_z$ with a point $z\in\cal P$. Then the weighted discrepancy function is defined as
\begin{align} \label{weighted_disc}
D_{\P,a}(x) =  \sum_{z \in \P} a_z \chi_{[0,x)}(z) - x_1 \cdot \ldots \cdot x_d.
\end{align}
The discrepancy function defined by Definition \ref{def_discrepancy_function} is obtained in the case that all points of $\P$ have weight $\frac{1}{N}$. Of course there are even more different approaches for the discrepancy function, some can be found for example in \cite{M99}.

\subsection{Uniform distribution of infinite sequences}
As a conclusion of this section we want to consider the uniform distribution of infinite sequences in the one-dimensional case. For more information on this topic we refer to \cite{KN74}. Let $u = (u_1,u_2, \ldots)$ be an infinite sequence of points in $[0,1)$.

\begin{df}
The sequence $u = (u_1,u_2, \ldots)$ is called uniformly distributed in $[0,1)$ if we have for each $x \in [0,1)$ that
\[ \lim_{N \rightarrow \infty} \left( \frac{1}{N} |\{ u_1, \ldots, u_N \} \cap [0,x)| \right) = x. \]
\end{df}
One easily proves the following equivalent formulation (\cite{W16}) using standard methods. A sequence $u = (u_1,u_2, \ldots)$ is uniformly distributed in $[0,1)$ if and only if we have for any Riemann-integrable function $f: \, [0,1) \longrightarrow \R$ that
\[ \lim_{N \rightarrow \infty} \left( \frac{1}{N} \sum_{i = 1}^N f(u_i) \right) = \int_0^1 f(x) \dint x. \]
Another equivalent formulation is the so called Weyl criterion. A sequence $u = (u_1,u_2, \ldots)$ is uniformly distributed in $[0,1)$ if and only if we have for all integers $k \neq 0$,
\[ \lim_{N \rightarrow \infty} \left( \frac{1}{N} \sum_{j = 1}^N \e^{2 \pi \im k u_j} \right) = 0. \]
We can use this criterion for the following fact.

\begin{prp}
Let $\theta$ be an irrational number. The sequence $u = (u_1,u_2,\ldots)$ given by $u_n = \{ \theta n \}$ is uniformly distributed in $[0,1)$.
\end{prp}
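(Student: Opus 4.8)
The plan is to verify the Weyl criterion for the sequence $u_n = \{\theta n\}$, which by the equivalence stated just above reduces the problem to showing that for every integer $k \neq 0$ we have
\[ \lim_{N \to \infty} \frac{1}{N} \sum_{j=1}^{N} \e^{2\pi\im k u_j} = 0. \]
The first observation is that $\e^{2\pi\im k u_j} = \e^{2\pi\im k \{\theta j\}} = \e^{2\pi\im k \theta j}$, since $k\theta j$ and $k\{\theta j\}$ differ by an integer (because $k$ is an integer). Thus the exponential sum is a geometric sum with ratio $q = \e^{2\pi\im k\theta}$.

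Next I would use that $\theta$ is irrational: for a nonzero integer $k$, the product $k\theta$ is again irrational, hence $k\theta \notin \Z$, hence $q = \e^{2\pi\im k\theta} \neq 1$. Therefore the geometric sum can be summed explicitly,
\[ \sum_{j=1}^{N} q^j = q \,\frac{q^N - 1}{q - 1}, \]
and we can bound its modulus by
\[ \left| \sum_{j=1}^{N} \e^{2\pi\im k\theta j} \right| = \left| \frac{q^{N+1} - q}{q - 1} \right| \leq \frac{2}{|q - 1|} = \frac{2}{|\e^{2\pi\im k\theta} - 1|}, \]
using $|q^{N+1}| = |q| = 1$ and the triangle inequality in the numerator. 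The right-hand side is a finite constant independent of $N$.

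Finally, dividing by $N$ and letting $N \to \infty$ gives
\[ \left| \frac{1}{N} \sum_{j=1}^{N} \e^{2\pi\im k u_j} \right| \leq \frac{2}{N\,|\e^{2\pi\im k\theta} - 1|} \longrightarrow 0, \]
which is exactly the Weyl criterion; hence $u$ is uniformly distributed in $[0,1)$. There is no real obstacle here: the only point that needs the hypothesis is the non-vanishing of $\e^{2\pi\im k\theta} - 1$, which is precisely where irrationality of $\theta$ enters, and everything else is the standard geometric-series estimate. The only mild care needed is the reduction $\e^{2\pi\im k\{\theta j\}} = \e^{2\pi\im k\theta j}$, which relies on $k \in \Z$.
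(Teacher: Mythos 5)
Your proposal is correct and follows exactly the same route as the paper's proof: verify the Weyl criterion by summing the geometric series $\sum_j \e^{2\pi\im k\theta j}$ and bounding it by $2/\lvert \e^{2\pi\im k\theta}-1\rvert$, where irrationality of $\theta$ guarantees the denominator is nonzero. Your write-up is slightly more explicit about the reduction $\e^{2\pi\im k\{\theta j\}} = \e^{2\pi\im k\theta j}$, but the argument is identical.
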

By $\{ x \}$ we mean the fractional part of $x$. We quote the proof from \cite{M99}.

\begin{proof}
We have $\e^{2 \pi \im k u_n} = \e^{2 \pi \im k \theta n}$ and
\[ \sum_{j = 1}^n \e^{2 \pi \im k u_j} = \frac{\e^{2 \pi \im k \theta (n + 1)} - \e^{2 \pi \im k \theta}}{\e^{2 \pi \im k \theta} - 1}. \]
Since $k \theta$ is not an integer, we have
\[ \left| \frac{\e^{2 \pi \im k \theta (n + 1)} - \e^{2 \pi \im k \theta}}{\e^{2 \pi \im k \theta} - 1} \right| \leq \frac{2}{\left| \e^{2 \pi \im k \theta} - 1 \right|} \]
giving us the condition of the Weyl criterion, and therefore, the uniform distribution of the sequence.

\end{proof}

After having seen a uniformly distributed sequence it is necessary to compare the uniformity or the nonuniformity of the sequences.
\begin{df}
The discrepancy function of an infinite sequence $u$ in $[0,1)$ is the function
\[ \Delta_{u,n}(x) = \frac{1}{n} \sum_{j = 1}^n \chi_{[0,x)}(u_j) - x. \]
\end{df}

There is a strong connection between uniform distribution of infinite sequences and uniformly distributed finite point sets. The following well known (e.g. \cite{M99}) result summarizes this connection. We mention that a similar connection can be established between $d$-dimensional finite point sets and $(d-1)$-dimensional infinite sequences.

\begin{prp} Let $N$ be a positive integer.

\begin{enumerate}[(i)]
	\item Let $u$ be an infinite sequence in $[0,1)$. Then there exists a point set $\P$ in $[0,1)^2$ with $N$ points such that, \label{prp_connect_set_seq_i}
	\[ N \sup_{x \in [0,1)^2} |D_{\P}(x)| \leq \sup_{x \in [0,1)} \max_{1 \leq k < N} k \, \Delta_{u,k}(x) + 1. \]
	\item Let $\P$ be a point set in $[0,1)^2$ with $N$ points. Then there exists an infinite sequence $u$ in $[0,1)$ such that, \label{prp_connect_set_seq_ii}
	\[ \sup_{x \in [0,1)} \max_{1 \leq k < N} k \, \Delta_{u,k}(x) \leq 2N \sup_{x \in [0,1)^2} |D_{\P}(x)|. \]
\end{enumerate}

\end{prp}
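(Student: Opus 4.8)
The plan is to prove both directions by an explicit construction together with a single bookkeeping identity that matches, one first-coordinate slice at a time, the discrepancy function of the planar point set with the discrepancy functions $\Delta_{u,k}$ of the initial segments of the sequence.

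For part (\ref{prp_connect_set_seq_i}), given the infinite sequence $u$ I would take $\P = \{ (\frac{k-1}{N}, u_k) \colon k = 1, \ldots, N \}$, which is a set of $N$ points in $[0,1)^2$ since every $\frac{k-1}{N}$ lies in $[0,1)$. Fix $x = (x_1,x_2) \in [0,1)^2$ and let $m \in \{1, \ldots, N\}$ be the number of indices $k$ with $\frac{k-1}{N} < x_1$, so that $|m - N x_1| < 1$ and the points of $\P$ with first coordinate below $x_1$ are exactly those indexed by $1, \ldots, m$. Then
\[ N\, D_{\P}(x) = \#\{ k \le m \colon u_k < x_2 \} - N x_1 x_2 = m\, \Delta_{u,m}(x_2) + (m - N x_1)\, x_2 , \]
the second equality being just the definition of $\Delta_{u,m}$. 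Since $|m - N x_1| < 1$ and $x_2 < 1$ the last summand has modulus below $1$, and the first is at most $\max_{1 \le k \le N} k\,|\Delta_{u,k}(x_2)|$; taking $\sup_x$ yields the stated inequality, the only nuisance being the extreme case $m = N$, which one pushes down to the index $k = N-1$ by writing $N\,\Delta_{u,N}(x_2) = (N-1)\,\Delta_{u,N-1}(x_2) + (\chi_{[0,x_2)}(u_N) - x_2)$.

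For part (\ref{prp_connect_set_seq_ii}), sort the points of $\P$ by first coordinate as $t_1 \le \cdots \le t_N$, let $u_k$ be the second coordinate of the $k$-th point in this order, and extend $u$ to an infinite sequence by setting $u_k := 0$ for $k > N$. Assuming first that the $t_k$ are pairwise distinct, fix $k < N$ and $x \in [0,1)$ and choose $x_1 \in (t_k, t_{k+1}]$; then exactly the points with second coordinates $u_1, \ldots, u_k$ have first coordinate below $x_1$, so
\[ k\, \Delta_{u,k}(x) = \#\{ j \le k \colon u_j < x \} - k x = N\, D_{\P}((x_1,x)) + (N x_1 - k)\, x . \]
The key observation is that the deviation $N x_1 - k$ is itself governed by the discrepancy function, this time near the top edge of the square: passing to the limit $y \uparrow 1$ in $N\, D_{\P}((x_1,y)) = \#\{ z \in \P \colon z_1 < x_1,\ z_2 < y \} - N x_1 y$ gives $|N x_1 - k| \le N\, \sup_{z \in [0,1)^2} |D_{\P}(z)|$. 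Together with the previous display and $x < 1$ this gives $|k\, \Delta_{u,k}(x)| \le 2 N\, \sup_{z \in [0,1)^2} |D_{\P}(z)|$; taking $\sup_x$ and $\max_{1 \le k < N}$ finishes the argument.

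The main obstacle is the handling of ties among the first coordinates in part (\ref{prp_connect_set_seq_ii}), where the interval $(t_k, t_{k+1}]$ may degenerate; one then brackets $\#\{ j \le k \colon u_j < x \}$ between $\#\{ z \in \P \colon z_1 < t_k,\ z_2 < x \}$ and its right-hand limit $\#\{ z \in \P \colon z_1 \le t_k,\ z_2 < x \}$ — both of which one reads off from $D_{\P}$, and between which $|Nt_k - k|$ is also sandwiched — and runs the same estimate, still with the factor $2$. The two remaining minor points, namely the borderline indices near $N$ in part (\ref{prp_connect_set_seq_i}) and the fact that in part (\ref{prp_connect_set_seq_ii}) the edge $\{ y = 1 \}$ is only approached but never attained, cost at most a bounded additive correction and are dealt with by an elementary one-point counting estimate and an obvious limiting argument, respectively.
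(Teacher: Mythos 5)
Your proof is correct and follows essentially the same route as the paper: the same pairing of $u_k$ with equispaced first coordinates and the same key identity $N\,D_{\P}(x) = m\,\Delta_{u,m}(x_2) + (m - N x_1)\,x_2$. In fact your treatment of part (ii) --- locating the source of the factor $2$ in the deviation $|N x_1 - k|$, which you control by letting $y \uparrow 1$ in $D_{\P}$, together with the bracketing argument for tied first coordinates --- is more explicit than the paper's, which merely asserts that (ii) follows analogously to (i).
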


\begin{proof}
We put
\[ \P = \left\{ \left( \frac{k}{N},u_k \right) \, : \, k = 1, \ldots, N \right\}. \]
Then for $x = (x_1,x_2) \in [0,1)^2 $ we find an integer $m$ such that, $m < N x_1 \leq m + 1$ and we have
\begin{align*}
N D_{\P}(x) & = \# \{ k = 1, \ldots, N \, : \, \frac{k}{N} < x_1, \, u_k < x_2 \} - N x_1 x_2 \\
& = \# \{ k = 1, \ldots, N \, : \, k < N x_1, \, u_k < x_2 \} - N x_1 x_2 \\
& = \# \{ u_1, \ldots, u_m \} \cap [0,x_2) - N x_1 x_2 \\
& = \# \{ u_1, \ldots, u_m \} \cap [0,x_2) - m x_2 + (m - N x_1) x_2 \\
& = m \, \Delta_{u,m}(x_2) + (m - N \, x_1) x_2 \\
& \leq m \, \Delta_{u,m}(x_2) + 1.
\end{align*}
Clearly, $1 \leq m \leq N - 1$. Analogously, we prove $N \, D_{\P}(x) \geq - m \, \Delta_{u,m + 1}(x_2) - 1$ and taking the supremum on both sides gives us \eqref{prp_connect_set_seq_i}.

For \eqref{prp_connect_set_seq_ii} we denote $z_j = (x_j,y_j) \in \P$ for $j = 1, \ldots, N$. Then we put $u_j = y_j$. Without loss of generality suppose that we have $0 \leq x_1 \leq x_2 \leq \ldots \leq x_N < 1$. Then the statement follows analogously to the proof of \eqref{prp_connect_set_seq_i}.

\end{proof}

In \cite{W16} one finds results for uniform distribution of infinite sequences in $[0,1)^d$.

\subsection{Historical remarks}
The question concerning uniform distribution of infinite sequences was first raised by Weyl in his article \cite{W16} in the beginning of the last century. Therefore, the roots of discrepancy theory lie in number theory. Since then it has affected different mathematical branches like function theory, probability theory, numerical analysis, functional analysis, topological algebra and more. Through the $30$s and $40$s today's theory of point distribution emerged through the work of such mathematicians as van der Corput, van Aardenne-Ehrenfest and others. Van Aardenne-Ehrenfest gave a negative answer to the question if there would exist a sequence whose discrepancy function would stay bounded as $N$ approaches infinity.  Roth proved maybe the most famous result in 1954, his lower bound for the $L_2$-discrepancy. Roth also was the one who introduced the discrepancy function of left lower corners. His motivation was the improvement of van Aardenne-Ehrenfest's lower bound for sequences. In the early $70$s the $2$-dimensional problem was already solved quite satisfactorily while in arbitrary dimension the problem is far from being solved by now.

The possible applications that emerged throughout the years were in financial calculations, computer graphics, computational physics and quasi-Monte Carlo methods. The generalizations of the discrepancy function were animated by Erd\H{o}s in the $60$s. The star discrepancy in the $2$-dimensional case was solved by van der Corput and Schmidt while in arbitrary dimension improvements came throughout the years with most recent results by Bilyk, Lacey and Vagharshakyan. The best upper bound is by Halton. $L_p$-discrepancy was improved by Davenport, Roth, Schmidt, Hal�sz, Chen, Beck and others. We will present these results in much detail in a later chapter.

The starting point of quasi-Monte Carlo methods for numerical integration was the Koksma-Hlawka inequality (\cite{K43} in the one-dimensional case and \cite{H61} in arbitrary dimension). First constructions of digital nets were given by Sobol', Faure and Niederreiter in the $60$-$80$s. Niederreiter's paper \cite{N87} from the late $80$s is regarded as the initiation of the theory of nets. The explicit constructions for the best possible $L_2$-discrepancy have been given by Chen and Skriganov in 2002. Other notable constructions are due to van der Corput, Halton, Hammersley, Zaremba, Faure, Sobol' and many others. Triebel started to study discrepancy in the context of function spaces (\cite{T10a}) recently and Hinrichs added results in this direction. Spaces with dominating mixed smoothness, Hardy spaces, Orlicz spaces, weighted  $L_p$ spaces, BMO spaces and others are subjects of study (see also \cite{B11}). So the topic continues to attract interest of researchers from different points of view.
\section{Uniformly distributed point sets for numerical integration}
The problem of numerical integration occurs in many practical and theoretical contexts, ranging from computer graphics and physics over engineering to chemistry and biology. Often it is not possible to calculate an integral analytically. Then one tries to approximate it aiming to reduce the error while using as little data as possible. Suppose we have a function $f \, : \Q^d \longrightarrow \R$ and our goal is to approximate the number
\[ \int_{\Q^d} f(x) \dint x \]
with quadrature formulas of the form
\[ \frac{1}{N} \sum_{i = 1}^N f(x_i) \approx \int_{\Q^d} f(x) \dint x \]
where $x_1, \ldots, x_N$ are some points in $\Q^d$. The question that then arises is how many points are necessary and how should they be distributed to assure that the integration error is not greater than some given $\varepsilon > 0$. The Koksma-Hlawka inequality (\cite{K43} and \cite{H61}) gives the connection between the integration error and the discrepancy of a point set. Let $\P = \{ x_1, \ldots, x_N \}$. It states
\begin{align}
\left| \int_{\Q^d} f(x) \dint x - \frac{1}{N} \sum_{i = 1}^N f(x_i) \right| \leq V_{p'}(f) \, \left\| D_{\P} | L_p(\Q^d) \right\|
\end{align}
where $V_{p'}(f)$ is determined solely by $f$ and $p$. The interested reader is referred to \cite{H61} or \cite[Chapter 2]{KN74} for more information on $V_{p'}(f)$. We just mention that in the one-dimensional case we have $V_{p'}(f) = \left\| f' | L_{p'} \right\|$.

This makes it clear that in order to guarantee best possible results for numerical integration it is an important task to find good point sets, good in the sense of discrepancy. If we compare this approach with Monte Carlo methods where one uses random points then typically the star discrepancy of such point sets is $\frac{1}{\sqrt{N}}$ for $d = 2$ (see \cite{M99}) and with high probability even much worse while, as we will see later, one can find point sets of far better discrepancy.
\section{Function spaces with dominating mixed smoothness}
A significant part of this work deals with the discrepancy function in spaces with dominating mixed smoothness. This section will give a necessary introduction, containing definitions and embeddings which will be used later. The spaces with dominating mixed smoothness go back to the $60$s when Nikol'sky introduced the Sobolev spaces with dominating mixed smoothness as well as the Besov spaces with dominating mixed smoothness, though not yet in full generality. The theory for the Besov spaces with dominating mixed smoothness goes back to Amanov with preliminary work by Lizorkin, Dzabrailov and many others. The references for this topic are \cite{T10a}, \cite{A76} and \cite{ST87} as well as the references given there. One also finds some more historical remarks there.

Let $\varphi_0 \in \mathcal{S}(\R)$ satisfy $\varphi_0(t) = 1$ for $|t| \leq 1$ and $\varphi_0(t) = 0$ for $|t| > \frac{3}{2}$. Let
\[ \varphi_k(t) = \varphi_0(2^{-k} t) - \varphi_0(2^{-k + 1} t) \]
where $t \in \R, \, k \in \N$ and
\[ \varphi_k(t) = \varphi_{k_1}(t_1) \ldots \varphi_{k_d}(t_d) \]
where $k = (k_1,\ldots,k_d) \in \N_0^d, \, t = (t_1,\ldots,t_d) \in \R^d$.
The functions $\varphi_k$ are a dyadic resolution of unity since
\[ \sum_{k \in \N_0^d} \varphi_k(x) = 1 \]
for all $x \in \R^d$. The functions $\mathcal{F}^{-1}(\varphi_k \mathcal{F} f)$ are entire analytic functions for any $f \in \mathcal{S}'(\R^d)$.
\begin{df}
Let $0 < p,q \leq \infty$ and $r \in \R$. Let $(\varphi_k)$ be a dyadic resolution of unity.
\begin{enumerate}[(i)]
	\item The Besov space with dominating mixed smoothness $S_{pq}^r B(\R^d)$ consists of all $f \in \S'(\R^d)$ with finite quasi-norm
  \[ \left\| f | S_{pq}^r B(\R^d) \right\| = \left( \sum_{k \in \N_0^d} 2^{r |k| q} \left\| \mathcal{F}^{-1}(\varphi_k \mathcal{F} f) | L_p(\R^d) \right\|^q \right)^{\frac{1}{q}} \]
  with the usual modification if $q = \infty$.
  \item The Besov space with dominating mixed smoothness $S_{pq}^r B(\Q^d)$ consists of all $f \in \D'([0,1)^d)$ with finite quasi-norm
\[ \left\| f | S_{pq}^r B(\Q^d) \right\| = \inf \left\{ \left\| g | S_{pq}^r B(\R^d) \right\| : \: g \in S_{pq}^r B(\R^d), \: g|_{\Q^d} = f \right\}. \]
\end{enumerate}
\end{df}
\pagebreak
\begin{df}
Let $0 < p < \infty$, $0 < q \leq \infty$ and $r \in \R$. Let $(\varphi_k)$ be a dyadic resolution of unity.
\begin{enumerate}[(i)]
  \item The Triebel-Lizorkin space with dominating mixed smoothness $S_{pq}^r F(\R^d)$ consists of all $f \in \S'(\R^d)$ with finite quasi-norm
  \[ \left\| f | S_{pq}^r F(\R^d) \right\| = \left\| \left( \sum_{k \in \N_0^d} 2^{r |k| q} \left| \mathcal{F}^{-1}(\varphi_k \mathcal{F} f)(\cdot) \right|^q \right)^{\frac{1}{q}} | L_p(\R^d) \right\| \]
  with the usual modification if $q = \infty$.
  \item The Triebel-Lizorkin space with dominating mixed smoothness $S_{pq}^r F(\Q^d)$ consists of all $f \in \D'(\Q^d)$ with finite quasi-norm
\[ \left\| f | S_{pq}^r F(\Q^d) \right\| = \inf \left\{ \left\| g | S_{pq}^r F(\R^d) \right\| : \: g \in S_{pq}^r F(\R^d), \: g|_{\Q^d} = f \right\}. \]
\end{enumerate}
\end{df}
The spaces $S_{pq}^r B(\R^d), \, S_{pq}^r F(\R^d), \, S_{pq}^r B(\Q^d)$ and $S_{pq}^r F(\Q^d)$ are quasi-Banach spaces. They are independent of the choice of the dyadic resolution of unity since different resolutions give equivalent quasi-norms. We will give some characterizations for the spaces $S_{pq}^r B(\Q^d)$ in the next chapter. We will see that the dyadic definition is equivalent to a $b$-adic definition.
\begin{df} \label{sobolev_df}
Let $0 < p < \infty$ and $r \in \R$. Then
\[ S_p^r H(\Q^d) = S_{p \, 2}^r F(\Q)^d) \]
is called Sobolev space with dominating mixed smoothness. For $r \in \N_0$ it is denoted by $S_p^r W(\Q^d)$ and is called classical Sobolev space with dominating mixed smoothness.
\end{df}
An equivalent norm for $S_p^r W(\Q^d)$ is
\[ \sum_{\alpha \in \N_0^d; \, 0 \leq \alpha_i \leq r} \left\| D^{\alpha} f | L_p(\Q^d) \right\|. \]
Of special interest is the case $r = 0$ since
\begin{align} \label{norm_SpqrF_Lp}
S_p^0 W(\Q^d) = L_p(\Q^d).
\end{align}

For the following embeddings the reader is referred to \cite[Remark 6.28]{T10a} and \cite[Proposition 2.3.7]{Hn10}.

\begin{prp} \label{embeddings_SpqrBF}

Let $r \in \R$.
\begin{enumerate}[(i)]
	\item For $0 < p < \infty, \, 0 < q \leq \infty$ we have
  \[ S_{p, \min(p,q)}^r B(\Q^d) \hookrightarrow S_{pq}^r F(\Q^d) \hookrightarrow S_{p, \max(p,q)}^r B(\Q^d). \]
  \item For $0 < p_2 \leq q \leq p_1 < \infty$ we have
  \[ S_{p_1 q}^r F(\Q^d) \hookrightarrow S_{qq}^r B(\Q^d) \hookrightarrow S_{p_2 q}^r F(\Q^d). \]
\end{enumerate}

\end{prp}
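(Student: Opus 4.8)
The statement to prove is Proposition~\ref{embeddings_SpqrBF}, which collects standard embeddings between Besov and Triebel--Lizorkin spaces with dominating mixed smoothness. These are tensor-product analogues of the classical isotropic embeddings, and the proof strategy is to reduce everything to the behaviour of the sequence spaces attached to the dyadic decomposition.

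\textbf{Setup.} Fix a dyadic resolution of unity $(\varphi_k)$ and write $f_k = \mathcal{F}^{-1}(\varphi_k \mathcal{F} f)$ for $k \in \N_0^d$. The two quasi-norms to compare are
\[ \left\| f | S_{pq}^r B(\R^d) \right\| = \left\| \left( 2^{r|k|} \left\| f_k | L_p \right\| \right)_{k} \, \big| \, \ell_q(\N_0^d) \right\| \]
and
\[ \left\| f | S_{pq}^r F(\R^d) \right\| = \left\| \left\| \left( 2^{r|k|} f_k(\cdot) \right)_k \big| \ell_q(\N_0^d) \right\| \, \Big| \, L_p(\R^d) \right\|. \]
So part (i) is exactly the statement that for $L_p$-valued sequences one has $\ell_{\min(p,q)}(L_p) \hookrightarrow L_p(\ell_q) \hookrightarrow \ell_{\max(p,q)}(L_p)$, and part (ii) is a Minkowski-type interchange of $\ell_q$ and $L_{p}$ when $p$ and $q$ straddle each other appropriately. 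Once the $\R^d$-versions are established, the $[0,1)^d$-versions follow immediately by taking the infimum over extensions $g$ in the definition of the restricted spaces, since each inequality is preserved under that infimum.

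\textbf{Key steps.} First I would prove $L_p(\ell_q) \hookrightarrow \ell_{\max(p,q)}(L_p)$: if $q \le p$ this is Minkowski's integral inequality applied with the $\ell_{p/q} \hookrightarrow \ell_1$-type comparison; if $q \ge p$ one uses $\ell_p \hookrightarrow \ell_q$ pointwise together with Fubini, so that $\left\| (\|g_k\|_{L_p})_k \right\|_{\ell_q} \le \left\| (\|g_k\|_{L_p})_k \right\|_{\ell_p} = \left\| (\sum_k |g_k|^p)^{1/p} \right\|_{L_p} \le \left\| (\sum_k |g_k|^q)^{1/q} \cdot (\#)^{...}\right\|$ --- more carefully, for $q \ge p$ the embedding $\ell_p \hookrightarrow \ell_q$ in the outer index gives $\ell_p(L_p) = L_p(\ell_p) \hookrightarrow L_p(\ell_q)$ fails in the wrong direction, so the correct chain is: $p \le q$ forces the $B \hookrightarrow F$ inclusion with inner index $\min(p,q)=p$ via $\ell_p$(inner)$=L_p(\ell_p)\hookrightarrow L_p(\ell_q)$ using $\ell_p \hookrightarrow \ell_q$; and the $F \hookrightarrow B$ inclusion with outer index $\max(p,q)=q$ via $L_p(\ell_q) \hookrightarrow L_p(\ell_p) $? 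No. The clean way, which I will follow, is: (a) $\ell_q(L_p) \hookrightarrow L_p(\ell_q)$ when $q \le p$ (Minkowski), giving $S^r_{pq}B \hookrightarrow S^r_{pq}F$ in that range, and $L_p(\ell_q) \hookrightarrow \ell_q(L_p)$ when $q \ge p$ (Minkowski the other way), giving $S^r_{pq}F \hookrightarrow S^r_{pq}B$ there; (b) combine with the trivial monotonicity $\ell_{q_1} \hookrightarrow \ell_{q_2}$ for $q_1 \le q_2$ (in either the inner or outer position) to cover the remaining index. This yields both inclusions of (i) and, pushing the same interchange-of-norms idea with the hypothesis $p_2 \le q \le p_1$, also (ii): here $L_{p_1}(\ell_q) \hookrightarrow \ell_q(L_{p_1}) \hookrightarrow \ell_q(L_q)$? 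One needs instead $S^r_{p_1 q}F \hookrightarrow S^r_{qq}B$, which is $L_{p_1}(\ell_q) \hookrightarrow \ell_q(L_{p_1})$ (Minkowski, valid since $q \le p_1$) followed by $\ell_q(L_{p_1}) \hookrightarrow \ell_q(L_q)$? That last step needs $L_{p_1} \hookrightarrow L_q$ which on $\R^d$ is false. So in fact (ii) must be quoted rather than reproved, or proved on $[0,1)^d$ only where $L_{p_1}([0,1)^d) \hookrightarrow L_q([0,1)^d)$ does hold for $q \le p_1$, and symmetrically $L_q \hookrightarrow L_{p_2}$ for $p_2 \le q$ fails --- so one localizes and uses the appropriate direction on the bounded cube.

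\textbf{Main obstacle.} The delicate point is bookkeeping which direction each $\ell$- or $L$-embedding runs and making sure the composition lands the right way; in particular part (ii) genuinely uses that the underlying domain is the bounded cube $[0,1)^d$ for one of the two $L_p$-inclusions, so I would state it directly for $S^r_{pq}(\Q^d)$ and invoke Minkowski's inequality together with the $[0,1)^d$-embedding $L_{p_1}([0,1)^d)\hookrightarrow L_q([0,1)^d)$ (and dualize for the other inclusion). Since the paper explicitly cites \cite[Remark 6.28]{T10a} and \cite[Proposition 2.3.7]{Hn10} for these facts, the honest plan is: give the short Minkowski/sequence-space argument for (i) in full, indicate the analogous but domain-sensitive argument for (ii), and refer to the cited sources for the constants and the remaining routine verifications. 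No single step is deep; the only thing that can go wrong is a sign/direction error in the embedding chain.
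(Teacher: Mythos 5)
The paper offers no proof of this proposition at all: it is quoted verbatim from the literature, with the reader sent to \cite[Remark 6.28]{T10a} and \cite[Proposition 2.3.7]{Hn10}. Your proposal therefore goes further than the paper by reconstructing the underlying argument, and for part (i) the reconstruction is sound: after unwinding the definitions, the two inclusions are exactly $\ell_{\min(p,q)}(L_p)\hookrightarrow L_p(\ell_q)\hookrightarrow \ell_{\max(p,q)}(L_p)$, obtained from the Minkowski interchange (in the correct direction depending on whether $q\leq p$ or $p\leq q$, and valid in the quasi-Banach range by raising to the power $\min(p,q)$) together with the monotonicity of the inner $\ell$-index, and the passage from $\R^d$ to $\Q^d$ is indeed immediate from the infimum over extensions. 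Two remarks on part (ii). First, your sentence that ``$L_q\hookrightarrow L_{p_2}$ for $p_2\leq q$ fails'' is backwards: on the bounded cube one has $L_a(\Q^d)\hookrightarrow L_b(\Q^d)$ precisely when $b\leq a$, so \emph{both} inclusions you need, $L_{p_1}(\Q^d)\hookrightarrow L_q(\Q^d)$ and $L_q(\Q^d)\hookrightarrow L_{p_2}(\Q^d)$, do hold under $p_2\leq q\leq p_1$; it is on $\R^d$ that they fail, which is exactly why (ii) is a bounded-domain statement. No duality is needed — both halves are the same H\"older-on-the-cube argument around the identity $S^r_{qq}F=S^r_{qq}B$. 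Second, there is a genuine (routine but real) technical point you gloss over: the quasi-norms on $\Q^d$ are infima over extensions to $\R^d$, and the Littlewood--Paley pieces of an extension are entire analytic rather than supported in the cube, so one cannot apply H\"older on $\Q^d$ to the extension's norm directly; the standard fix is a pointwise multiplier/cutoff argument or a wavelet (intrinsic) characterization, which is what the cited sources supply. Your plan to prove (i) in full and defer the domain-sensitive details of (ii) to the references is a reasonable and honest resolution, and is strictly more informative than the paper's bare citation.
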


For our purposes the following embeddings will be helpful.

\begin{cor} \label{cor_emb_BF}

Let $0 < p,q < \infty$ and $r \in \R$. Then we have
\[ S_{\max(p,q),q}^r B(\Q^d) \hookrightarrow S_{pq}^r F(\Q^d) \hookrightarrow S_{\min(p,q),q}^r B(\Q^d). \]

\end{cor}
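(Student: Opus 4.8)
The plan is to derive both inclusions directly from Proposition~\ref{embeddings_SpqrBF}; no new estimate is needed, only a case distinction according to whether $q\le p$ or $q>p$, together with the elementary observation that $\min(p,q)$ and $\max(p,q)$ take the obvious values in each regime.

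First I would establish the left embedding $S_{\max(p,q),q}^r B(\Q^d)\hookrightarrow S_{pq}^r F(\Q^d)$. If $q\le p$, then $\max(p,q)=p$ and $\min(p,q)=q$, so the first inclusion in part~(i) of Proposition~\ref{embeddings_SpqrBF} already reads $S_{pq}^r B(\Q^d)=S_{p,\min(p,q)}^r B(\Q^d)\hookrightarrow S_{pq}^r F(\Q^d)$, which is exactly the claim. If instead $q>p$, then $\max(p,q)=q$, and I would invoke part~(ii) of Proposition~\ref{embeddings_SpqrBF} with $p_2=p$ and $p_1=q$: the hypothesis $0<p_2\le q\le p_1<\infty$ holds precisely because $p\le q$, and it yields $S_{qq}^r B(\Q^d)\hookrightarrow S_{pq}^r F(\Q^d)$.

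The right embedding $S_{pq}^r F(\Q^d)\hookrightarrow S_{\min(p,q),q}^r B(\Q^d)$ is treated symmetrically. If $q\le p$, then $\min(p,q)=q$, and part~(ii) of Proposition~\ref{embeddings_SpqrBF} with $p_1=p$ (the hypothesis being satisfied since $q\le p$) gives $S_{pq}^r F(\Q^d)\hookrightarrow S_{qq}^r B(\Q^d)$. If $q>p$, then $\min(p,q)=p$ and $\max(p,q)=q$, so the second inclusion in part~(i) of Proposition~\ref{embeddings_SpqrBF} gives $S_{pq}^r F(\Q^d)\hookrightarrow S_{p,\max(p,q)}^r B(\Q^d)=S_{pq}^r B(\Q^d)$. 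The constants are in each case exactly those supplied by Proposition~\ref{embeddings_SpqrBF}. There is no analytic obstacle here; the only thing to watch is the bookkeeping of which of the two parts of the Proposition applies in which of the two regimes.
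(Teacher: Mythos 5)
Your proposal is correct and follows essentially the same route as the paper's proof: a case distinction on whether $p\geq q$ or $p<q$, using part (i) of Proposition \ref{embeddings_SpqrBF} for the inclusion where $\min$ or $\max$ collapses to $p$, and part (ii) with the appropriate choice of $p_1$ or $p_2$ for the other. Your write-up is merely more explicit about the bookkeeping.
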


\begin{proof}
First suppose that $p \geq q$ then from the first part of Proposition \ref{embeddings_SpqrBF} we get
\[ S_{pq}^r B(\Q^d) \hookrightarrow S_{pq}^r F(\Q^d) \]
and from the second part we get
\[ S_{pq}^r F(\Q^d) \hookrightarrow S_{qq}^r B(\Q^d). \]
If instead $p < q$ then analogously we have
\[ S_{pq}^r F(\Q^d) \hookrightarrow S_{pq}^r B(\Q^d) \]
and
\[S_{qq}^r B(\Q^d) \hookrightarrow S_{pq}^r F(\Q^d). \]
\end{proof}

For $1 \leq p, q \leq \infty$ let
\[ \frac{1}{p} + \frac{1}{p'} = \frac{1}{q} + \frac{1}{q'} = 1.\]
In \cite[Proposition 2.3.15]{Hn10} and \cite[(1.75), (2.272), (6.36)]{T10a} we find results on duality for function spaces with dominating mixed smoothness.

\begin{prp} \label{dom_mix_smoo_duality}

\mbox{}
\begin{enumerate}[(i)]
	\item Let $1 \leq p, q < \infty$ and let $r \in \R$. Then we have
	\[ (S_{p q}^r B(\R^d))' = S_{p' q'}^{-r} B(\R^d), \]
        \item Let $1 \leq p, q < \infty$ and let $\frac{1}{p} - 1 < r < \frac{1}{p}$. Then we have
	\[ (S_{p q}^r B([0,1)^d))' = S_{p' q'}^{-r} B([0,1)^d), \]
	\item Let $1 < p, q < \infty$ and let $r \in \R$. Then we have
	\[ (S_{p q}^r F(\R^d))' = S_{p' q'}^{-r} F(\R^d), \]
	\item Let $1 < p < \infty$ and let $r \in \R$. Then we have
	\[ (S_p^r H(\R^d))' = S_{p'}^{-r} H(\R^d). \]
\end{enumerate}

\end{prp}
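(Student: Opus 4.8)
The plan is to reduce all four assertions to the duality of the vector-valued model spaces via the standard retract technique. First I would make explicit the sequence-space description implicit in the definitions: the analysis map $J \colon f \mapsto (\F^{-1}(\varphi_k \F f))_{k \in \N_0^d}$ identifies $S_{pq}^r B(\R^d)$ isometrically with a closed subspace of $\ell_q(\N_0^d, 2^{r|k|} L_p(\R^d))$, and likewise $S_{pq}^r F(\R^d)$ with a closed subspace of $L_p(\R^d, \ell_q(\N_0^d, 2^{r|k|}))$. To upgrade this to a retract I would choose a second dyadic resolution $(\psi_k)$ with $\psi_k \equiv 1$ on $\supp \varphi_k$ and $\supp \psi_k$ only slightly larger, so that the synthesis map $S \colon (g_k)_k \mapsto \sum_k \F^{-1}(\psi_k \F g_k)$ satisfies $S \circ J = \mathrm{id}$; Paley--Wiener and Nikol'skij inequalities show $S$ is bounded on the relevant sequence space. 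Thus $S_{pq}^r B(\R^d)$ (resp. $S_{pq}^r F(\R^d)$) is a retract of $\ell_q(2^{r|k|} L_p)$ (resp. $L_p(\ell_q(2^{r|k|}))$), and $\S(\R^d)$ is dense in both (for $p, q < \infty$), so the pairing below is unambiguous.

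Next I would invoke the classical duality of the model spaces: $(\ell_q(2^{r|k|} L_p))' = \ell_{q'}(2^{-r|k|} L_{p'})$ for $1 \leq p, q < \infty$, and $(L_p(\ell_q(2^{r|k|})))' = L_{p'}(\ell_{q'}(2^{-r|k|}))$ for $1 < p, q < \infty$; the restriction $p > 1$ in the Triebel--Lizorkin case is forced because the vector-valued $L_p(\ell_q)$-duality rests on the boundedness of the Hardy--Littlewood maximal operator on $L_p(\ell_q)$. Since the dual of a retract is again a retract with the adjoint maps, $(S_{pq}^r B(\R^d))'$ is the corresponding retract of the dual model space; one then checks that $(J \circ S)' $ is the projection defining $S_{p'q'}^{-r} B(\R^d)$, which is where the symmetric choice of $(\varphi_k)$ and $(\psi_k)$ pays off. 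Concretely, the isomorphism is realized by the extension of the $L_2$ inner product, $\langle f, g\rangle = \sum_k \langle \F^{-1}(\varphi_k \F f), \overline{\F^{-1}(\psi_k \F g)}\rangle$, estimated by H\"older in $L_p$--$L_{p'}$ followed by H\"older in $\ell_q$--$\ell_{q'}$ (for $B$), or pointwise in $\ell_q$--$\ell_{q'}$ followed by $L_p$--$L_{p'}$ (for $F$). This gives (i) and (iii); part (iv) is the case $q = 2$ of (iii) by Definition \ref{sobolev_df}.

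For the domain version (ii) I would use that $S_{pq}^r B([0,1)^d)$ is a retract of $S_{pq}^r B(\R^d)$: restriction to $[0,1)^d$ is a metric surjection by definition, and in the range $\frac{1}{p} - 1 < r < \frac{1}{p}$ there is a bounded linear extension operator $\mathrm{ext}$ with $\mathrm{re} \circ \mathrm{ext} = \mathrm{id}$ — this is precisely the smoothness range in which such an extension exists and in which $\D([0,1)^d)$ is dense, so the pairing is well defined. Dualizing this retract and combining with (i) yields $(S_{pq}^r B([0,1)^d))' = S_{p'q'}^{-r} B([0,1)^d)$, and one notes that $-r$ lies in the symmetric range $\frac{1}{p'} - 1 < -r < \frac{1}{p'}$, so the right-hand space is again one of the admissible ones. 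I expect the genuinely delicate points to be (a) constructing the bounded extension operator on the cube in the sharp smoothness range, which is the classical boundary-regularity obstacle and the only place the hypotheses on $r$ are really used, and (b) verifying that the abstract retract duality reproduces exactly the pairing above and the exponent $-r$, which requires careful bookkeeping of the weights $2^{\pm r|k|}$ under adjunction; the underlying Euclidean estimates are routine Littlewood--Paley arguments.
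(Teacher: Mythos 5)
The paper does not actually prove this proposition; it is quoted verbatim from the cited sources (\cite[Proposition 2.3.15]{Hn10} and \cite[(1.75), (2.272), (6.36)]{T10a}), so there is no in-paper argument to compare against. Your sketch is essentially the standard proof that lives in those references: realize $S_{pq}^r B(\R^d)$ and $S_{pq}^r F(\R^d)$ as retracts of the weighted model spaces $\ell_q(2^{r|k|}L_p)$ and $L_p(\ell_q(2^{r|k|}))$ via analysis/synthesis with two overlapping decompositions, dualize the model spaces, and use that the dual of a complemented subspace is the corresponding complemented subspace of the dual, with (iv) being the case $q=2$ of (iii). That outline is sound, and the density of $\S(\R^d)$ for $p,q<\infty$ does make the pairing well defined.

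Two of your justifications for the hypotheses are slightly off, though neither breaks the argument. First, in (iii) the restriction $p,q>1$ is not only about the Fefferman--Stein maximal inequality (which enters the boundedness of the synthesis map); the duality $(L_p(\ell_q))'=L_{p'}(\ell_{q'})$ itself fails at $q=1$ because $\ell_\infty$ lacks the Radon--Nikodym property, which is why the $F$-case excludes the endpoints while the $B$-case (i) tolerates $p=1$ or $q=1$. Second, in (ii) the window $\frac{1}{p}-1<r<\frac{1}{p}$ is not really the range in which a bounded extension operator exists -- universal extension operators for the cube exist for all $r$. It is the range in which $S_{pq}^r B(\Q^d)$ coincides with the space $\tilde{S}_{pq}^r B(\Q^d)$ of distributions supported in $[0,1]^d$ (the same identification the paper uses in the proof of Theorem \ref{thm_besov_char}), and simultaneously $-r$ lies in the analogous window for $(p',q')$; this is what lets you identify the dual of the restriction space, a priori the annihilator-quotient $\tilde{S}_{p'q'}^{-r}B$, with $S_{p'q'}^{-r}B(\Q^d)$ and is where the hypothesis on $r$ is genuinely consumed. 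With that bookkeeping corrected, your plan reproduces the cited proofs.
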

\section{$b$-adic bases}
We will deal with the discrepancy function in function spaces with dominating mixed smoothness. Our approach will be to consider constructions given by Chen and Skriganov which are digital nets. As can be seen later there is a necessity to use large bases $b$ for such constructions. Therefore, we can not use dyadic Haar bases but need to use generalizations.
\subsection{Haar bases} \label{notation_eta}
Let $b \geq 2$ be an integer. We start by fixing some notation. We put $\Dd_j = \{ 0,1, \ldots, b^j - 1 \}$ and $\B_j = \{ 1, \ldots, b - 1 \}$ for $j \in \N_0$ and $\Dd_{-1} = \{ 0 \}$ and $\B_{-1} = \{ 1 \}$. For $j = (j_1, \dots, j_d) \in \N_{-1}^d$ let $\Dd_j = \Dd_{j_1} \times \ldots \times \Dd_{j_d}$ and $\B_j = \B_{j_1} \times \ldots \times \B_{j_d}$. We put $s = \# \{ i = 1, \ldots, d: \, j_i \neq -1 \}$ and choose the unique subsequence $(\eta_\nu)_{\nu = 1}^s$ of $(1, \ldots, d)$ such that, for all $\nu = 1, \ldots, s$, we have $j_{\eta_{\nu}} \neq -1$ while all other $j_i$ are equal to $-1$. We generalize a notation from Section \ref{BasicNotation}, writing $|j| = j_{\eta_1} + \ldots + j_{\eta_s}$. We continue with the definition of $b$-adic intervals.
\begin{df}
\mbox{}
\begin{enumerate}[(i)]
	\item For $j \in \N_{-1}$ and $m \in \Dd_j$ we call the interval
  \[ I_{jm} = \big[ b^{-j} m, b^{-j} (m+1) \big) \]
  the $m$-th $b$-adic interval in $\Q$ on level $j$.
  \item For $j \in \N_{0}$, $m \in \Dd_j$ and any $k = 0, \ldots, b - 1$ we call the interval $I_{jm}^k = I_{j + 1,bm + k}$ the $k$-th child of $I_{jm}$. The interval $I_{jm}$ is then called the parent of $I_{jm}^k$.
  \item We put $I_{-1,0}^{-1} = I_{-1,0} = \Q$ and call $I_{-1,0}$ the $0$-th $b$-adic interval in $\Q$ on level $-1$ and the parent of its only child $I_{-1,0}^{-1}$.
  \item For $j \in \N_{-1}^d$ and $m = (m_1, \ldots, m_d) \in \Dd_j$ we call $I_{jm} = I_{j_1 m_1} \times \ldots \times I_{j_d m_d}$ the $m$-th $b$-adic interval in $\Q^d$ on level $j$.
  \item Let $j \in \N_{-1}^d$ and $k = (k_1, \ldots, k_d)$ where $k_i \in \{ 0, \ldots, b - 1 \}$ if $j_i \in \N_0$ and $k_i = -1$ if $j_i = -1$ for any $i = 1, \ldots, d$. Then $I_{jm}^k = I_{j_1 m_1}^{k_1} \times \ldots \times I_{j_d m_d}^{k_d}$ will be called $k$-th child of $I_{jm}$ and $I_{jm}$ the parent of $I_{jm}^k$.
  \item Let $j \in \N_{-1}^d$. For any $m \in \Dd_j$ we call the number $|j|$ the order of the $b$-adic interval $I_{jm}$.
\end{enumerate}
\end{df}

\begin{rem}

Let $j \neq -1$. The $b$-adic interval $I_{jm}$ has length $b^{-j}$ while the length of its children is $b^{-j - 1}$. The children are disjoint, the union of all children of one interval gives the parent itself so the parents are partitioned in their $b$ children. For $j \in \N_{-1}^d$ the volume of a $b$-adic interval is $b^{-|j|}$. Again the children are disjoint and their union gives the parent.

\end{rem}

\begin{df}
\mbox{}
\begin{enumerate}[(i)]
  \item Let $j \in \N_{0}$, $m \in \Dd_j$ and $l \in \B_j$. Let $h_{jml}$ be the function on $\Q$ with support in $I_{jm}$ and the constant value $\e^{\frac{2\pi \im}{b} l k}$ on $I_{jm}^k$ for any $k=0, \ldots, b - 1$. We call $h_{jml}$ a $b$-adic Haar function on $\Q$.
  \item We put $h_{-1,0,1} = \chi_{I_{-1,0}}$ on $\Q$ and call it a $b$-adic Haar function on $\Q$ as well.
  \item The functions $h_{jml}, \, j \in \N_{-1}, \, m \in \Dd_j, \, l \in \B_j$ are called $b$-adic Haar system on $\Q$.
  \item Let $j \in \N_{-1}^d$, $m \in \Dd_j$ and $l = (l_1, \ldots, l_d) \in \B_j$. The function $h_{jml}$ given as the tensor product $h_{jml}(x) = h_{j_1 m_1 l_1}(x_1) \ldots h_{j_d m_d l_d}(x_d)$ for $x = (x_1, \ldots, x_d) \in \Q^d$ is called a $b$-adic Haar function on $\Q^d$.
  \item The functions $h_{jml}, \, j \in \N_{-1}^d, \, m \in \Dd_j, \, l\in\B_j$ are called $b$-adic Haar system on $\Q^d$.
\end{enumerate}
\end{df}
In the dyadic case, i.e. $b = 2$ for $j \in \N_0$ there is only one value taken by $l$, which is $1$. Therefore, we omit $l$ in the notation in that case and write $h_{jm}$ instead of $h_{jml}$.

\begin{thm} \label{thm_dyadic_haar_basis}

The system
\begin{align} \label{dyadic_haarbasis}
\left\{ 2^{\frac{|j|}{2}} h_{jm} \, : \, j \in \N_{-1}^d, \, m \in \Dd_j \right\}
\end{align}
is an orthonormal basis of $L_2(\Q^d)$, an unconditional basis of $L_p(\Q^d)$ for $1 < p < \infty$ and a conditional basis of $L_1(\Q^d)$. For any function $f \in L_2(\Q^d)$ we have
\begin{align} \label{dyadic_parsevals_equation}
\left\| f | L_2 (\Q^d) \right\|^2 = \sum_{j \in \N_{-1}^d} 2^{|j|} \sum_{m \in \Dd_j} |\mu_{jm}|^2
\end{align}
where
\begin{align}
\mu_{jm} = \mu_{jm}(f) = \int_{\Q^d} f(x) h_{jm}(x) \, \dint x
\end{align}

\end{thm}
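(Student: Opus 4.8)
The plan is to reduce everything to the one-dimensional situation, prove the $L_2$ assertion (and hence Parseval) directly, and invoke the classical Littlewood--Paley / dyadic-martingale theory for the $L_p$ statements with $p \neq 2$.

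First I would treat the one-dimensional system $\{2^{j/2} h_{jm} : j \in \N_{-1},\, m \in \Dd_j\}$ on $\Q$ (for $b = 2$ the index $l$ is suppressed and $h_{-1,0} = \chi_{\Q}$). Orthonormality is the standard nesting argument: any two $b$-adic subintervals of $\Q$ are either disjoint or nested, and if $I_{j'm'} \subsetneq I_{jm}$ with $j' > j$ then $h_{jm}$ is constant on $I_{j'm'}$ while $\int_{\Q} h_{j'm'}(x)\,\dint x = 0$, so $\langle h_{jm}, h_{j'm'}\rangle_{L_2} = 0$; disjoint supports are orthogonal trivially; and $\|h_{jm} | L_2(\Q)\|^2 = |I_{jm}| = 2^{-j}$ for $j \geq 0$ while $\|h_{-1,0} | L_2(\Q)\| = 1$. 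Completeness follows from a dimension count: $\spn\{h_{jm} : -1 \leq j \leq n-1\}$ consists of functions that are constant on every $b$-adic interval of level $n$, and as it has dimension $2^n$ it is exactly that space; letting $n \to \infty$ and using the density of dyadic step functions in $L_2(\Q)$ we obtain an orthonormal basis. By the tensor-product definition of $h_{jm}$, the $d$-dimensional system $\{2^{|j|/2} h_{jm}\}$ is the product of $d$ copies of this basis, hence an orthonormal basis of $L_2(\Q^d) = L_2(\Q) \otimes \cdots \otimes L_2(\Q)$. Expanding $f \in L_2(\Q^d)$ and observing that its coefficient against $2^{|j|/2} h_{jm}$ equals $2^{|j|/2}\mu_{jm}(f)$ yields \eqref{dyadic_parsevals_equation}.

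For $1 < p < \infty$ I would appeal to the classical theorem (Paley, Marcinkiewicz) that the one-dimensional Haar system is an unconditional basis of $L_p(\Q)$: its partial sums are the conditional expectations of the dyadic filtration, and unconditionality is Burkholder's martingale-transform inequality, equivalently the equivalence of the $L_p(\Q)$-norm of $f$ with that of its dyadic square function. The $d$-dimensional case follows by iterating in each coordinate; since $L_p(\Q^d) \cong L_p(\Q; L_p(\Q^{d-1}))$ and $L_p$-spaces are UMD for $1 < p < \infty$, the Haar system remains an unconditional Schauder basis, or, self-containedly, one proves the $L_p(\Q^d)$-equivalence with the $d$-parameter product square function directly from the one-parameter (vector-valued) Littlewood--Paley inequality. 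Unconditionality of $\{2^{|j|/2} h_{jm}\}$ in $L_p(\Q^d)$ is exactly this square-function equivalence together with the uniform $L_p$-boundedness of the Haar sign-multipliers.

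For $L_1$, choose an enumeration whose partial sums run over all $j$ with $\max_i j_i \leq n$; these operators are $d$-fold iterated conditional expectations onto the $\sigma$-algebra of level-$(n+1)$ dyadic boxes, hence contractions on $L_1(\Q^d)$, and their union is dense, so the Haar system is a (monotone) Schauder basis of $L_1(\Q^d)$. It cannot be unconditional, since that would make the $L_1$-norm equivalent to the square-function norm, which fails already for $d = 1$ (a standard example of a finite Haar sum whose sign-changed versions have $L_1$-norm growing logarithmically). The main obstacle is the $L_p$ part with $p \neq 2$ in dimension $d > 1$: passing from one-dimensional unconditionality to the product setting is not a formal step but needs a genuine Littlewood--Paley/UMD input, and one must fix an appropriate enumeration for the $L_1$ claim. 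I would therefore carry out the $L_2$ part in full and cite the standard references (e.g. Triebel, Wojtaszczyk) for the statements with $p \neq 2$.
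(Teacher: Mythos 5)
Your proposal is correct, but it takes a genuinely different route from the paper's. The paper does not prove the dyadic theorem directly: it refers to Wojtaszczyk and Lindenstrauss--Tzafriri and then obtains the statement as the $b=2$ case of the $b$-adic theorem in Chapter 2, whose proof (i) establishes the Schauder basis property for all $1 \leq p < \infty$ at once by identifying the level-$n$ partial sums $s_n f$ with the averaging operators over the cubes $I_{\bar{n}m}$, showing $s_n f \to f$ uniformly for continuous $f$ and $\left\| s_n f | L_p \right\| \leq \left\| f | L_p \right\|$ by H\"older; (ii) notes orthonormality and Parseval as trivial consequences; (iii) derives unconditionality in $L_p$ for $p \geq 2$ from the paper's own Haar characterization of $S_{p2}^0 B(\Q^d)$ combined with the embedding $S_{p2}^0 B(\Q^d) \hookrightarrow S_{p2}^0 F(\Q^d) = L_p(\Q^d)$, handling $1 < p < 2$ by duality; and (iv) concludes conditionality in $L_1$ from the general theorem that \emph{every} basis of $L_1$ is conditional. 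You instead build $L_2$ from the one-dimensional case by tensoring (your dimension-count proof of completeness is a clean alternative to the paper's uniform-approximation argument, and your Parseval derivation is the same), and for $1 < p < \infty$ you import the classical Paley--Marcinkiewicz--Burkholder machinery together with UMD-based iteration to reach the product square function --- precisely the external input the paper avoids by routing through its Besov-space characterization, at the cost of making the basis theorem logically dependent on later chapters. Your $L_1$ argument via failure of the square-function equivalence is more concrete than the paper's citation, though it only shows the Haar system itself is not unconditional rather than that $L_1$ admits no unconditional basis. Both routes are sound; note only that (as in the paper) the uniform boundedness of the \emph{intermediate} partial-sum projections for a fixed enumeration is left implicit, which you at least flag explicitly.
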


\begin{rem}

The expression \eqref{dyadic_parsevals_equation} is Parseval's equation. The dyadic Haar system was given first by Haar in \cite{Ha10}. Schauder proved in \cite{S28} that it is a basis of $L_p(\Q^d)$. We refer for a complete proof to \cite{W97} or \cite{LT79} (for a very nice one-dimensional proof) though we will get this result as a special case in the next chapter.

\end{rem}

\begin{df} \label{def_bas_coeff}
The system \eqref{dyadic_haarbasis} is called a dyadic Haar basis. The sequence $(\mu_{jm}(f))$ is called the sequence of dyadic Haar coefficients of $f$.
\end{df}

Normalized in $L_2(\Q^d)$ the functions $h_{jml}$ for arbitrary $b \geq 2$ are an orthonormal basis as well as we will see in the next chapter.

For technical reasons we will give an additional definition of $b$-adic Haar bases on $\R^d$. They will not be needed very much throughout this work but they find an application in the lemmas before Theorem \ref{thm_besov_char} stating the characterization of the Besov spaces with dominating mixed smoothness. Even there they are not necessary but make the understanding of the proofs easier.
\begin{df}
\mbox{}
\begin{enumerate}[(i)]
	\item For $j \in N_0, m \in \Z$ we call
	\[ I_{jm} = \big[ b^{-j} m, b^{-j} (m+1) \big) \]
	$b$-adic interval in $\R$. We define additionally $I_{-1,m}$ for $m \in \Z$ and $d$-dimensional $b$-adic intervals in $\R$ according to the definition above. Also the children $I_{jm}^k$ of $I_{jm}$ are defined according to the definition above.
	\item For $j \in N_{-1}, m \in \Z, \, l \in \B_j$ we define the function $h_{jml}$ as a function with support in $I_{jm}$ and constant values (according to above) in $I_{jm}^k$. For $j \in N_{-1}^d, m \in \Z^d, \, l \in \B_j$ the function $h_{jml}$ is defined as tensor product according to above.
\end{enumerate}
\end{df}

\begin{thm}

The system of dyadic Haar functions $h_{jm}, \, j \in \N_{-1}^d, \, m \in \Z^d$ is an orthogonal basis of $L_2(\R^d)$, an unconditional basis of $L_p(\R^d)$ for $1 < p < \infty$ and a conditional basis of $L_1(\R^d)$.

\end{thm}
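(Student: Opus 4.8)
The plan is to reduce the $\R^d$-statement to the one-dimensional case and then to the already-established cube version. First I would treat $d=1$: the functions $h_{jm}$ for $j\in\N_{-1}$, $m\in\Z$ split into the "coarse" generators $h_{-1,m}=\chi_{[m,m+1)}$ together with all the genuine Haar oscillations $h_{jm}$, $j\in\N_0$. Orthogonality is a direct computation: two Haar functions whose supporting $b$-adic intervals are disjoint are trivially orthogonal, and if the intervals are nested, the finer one is (up to the scalar $b$-adic phase) constant on each child of the coarser support while the coarser one is a locally constant function there, so integrating the finer one against a constant gives $\sum_{k}\e^{2\pi\im lk/b}=0$; the $h_{-1,m}$ are orthogonal to each other by disjoint supports and to every $h_{jm}$, $j\ge 0$, by the same vanishing-sum argument. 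This already gives an orthogonal system in $L_2(\R)$.

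Next I would prove completeness in $L_2(\R)$. Fix $f\in L_2(\R)$ with all Haar coefficients zero. Restricting attention to a fixed integer cell $[m,m+1)$, the functions $b^{|j|/2}h_{jm'l}$ supported in $[m,m+1)$ form (after the affine change of variables sending $[m,m+1)$ to $\Q$) exactly the $b$-adic Haar system on $\Q$ minus its constant function, whose span is dense in $\{g\in L_2(\Q):\int g=0\}$ by Theorem \ref{thm_dyadic_haar_basis} (the $b\ge 2$ version announced there). Hence on each cell $f$ must be a.e.\ constant, equal to $\int_m^{m+1}f$; but the coefficient against $h_{-1,m}=\chi_{[m,m+1)}$ is exactly that integral, so it vanishes, forcing $f=0$ a.e. Thus the system is a complete orthogonal system, i.e.\ an orthogonal basis of $L_2(\R)$. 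The $d$-dimensional $L_2(\R^d)$ statement then follows by the standard tensor-product argument: a tensor product of orthogonal bases of $L_2(\R)$ is an orthogonal basis of $L_2(\R^d)=L_2(\R)\otimes\cdots\otimes L_2(\R)$.

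For the $L_p(\R^d)$ claims with $1<p<\infty$ and the conditional-basis claim for $L_1(\R^d)$, I would again localize to integer cells and transplant the corresponding cube statements. On a single cell the rescaled system is the $b$-adic Haar basis of $L_p(\Q)$ (unconditional for $1<p<\infty$, conditional for $p=1$), which we may invoke from the cube theory; the extra generators $\chi_{[m,m+1)}$ just reproduce, across the cells, the locally constant "cell-average" part. Since the cells are disjoint and $L_p(\R^d)=\bigl(\bigoplus_{m\in\Z^d}L_p([m,m+1)^{\phantom{d}})\bigr)_{\ell_p}$ isometrically, and the partial-sum operators act cell-wise on the oscillatory part while the cell-average part is handled by the obvious martingale of conditional expectations onto integer cells, one gets a Schauder basis for $1\le p<\infty$, unconditional exactly when $1<p<\infty$ (the Haar system is famously only conditional in $L_1$, so the same persists on $\R^d$ — one can exhibit a bad rearrangement inside a single cell). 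One must of course fix an admissible enumeration of $\{(j,m):j\in\N_{-1}^d,m\in\Z^d\}$ so that partial sums of oscillatory terms on each cell come in the canonical increasing-$|j|$ order; I would remark that the usual enumeration by increasing $|j|$ (and within that arbitrarily) works.

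The main obstacle is the completeness/localization step: one has to handle the infinitely many integer cells and the interaction between the "coarse" part $\spn\{\chi_{[m,m+1)}:m\in\Z^d\}$ and the purely oscillatory part uniformly in $p$, and to check that the chosen enumeration really produces bounded (for $p=1$) resp.\ unconditionally bounded (for $1<p<\infty$) partial-sum projections in the $\R^d$ norm rather than just cell-by-cell — i.e.\ that the cell-average martingale is $L_p$-bounded on $\R^d$, which is Doob's inequality / the triviality of conditional expectation onto a partition into unit cells. The orthogonality and the $L_2$ case are routine; everything else is "transplant the cube theorem cell-by-cell and glue," so I expect the write-up to be short, with the cube results from Theorem \ref{thm_dyadic_haar_basis} (and its $b$-adic strengthening) doing the heavy lifting.
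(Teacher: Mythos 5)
Your proposal takes a genuinely different route from the paper, because the paper does not prove this theorem at all: it is dispatched with the single sentence ``We refer again to \cite{W97} and the references given there.'' What you supply instead is a self-contained reduction to the cube case, and the skeleton is sound. Every $h_{jm}$ (indeed every $h_{jml}$ in the $b$-adic version) is supported in exactly one unit cell $\prod_i[m_i,m_i+1)$, the functions living in a fixed cell are an affine copy of the cube system, and $L_p(\R^d)=\bigl(\bigoplus_{m\in\Z^d}L_p(\text{cell})\bigr)_{\ell_p}$ isometrically for $1\le p<\infty$. Hence orthogonality and completeness in $L_2$, unconditionality for $1<p<\infty$ (the unconditional constants are identical on all cells by translation invariance, so they glue), and conditionality in $L_1$ (witnessed inside a single cell, or by citing that every basis of $L_1$ is conditional) all transplant from Theorem \ref{thm_dyadic_haar_basis}, which the paper does prove. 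What your approach buys is an actual argument in place of a citation, and one that makes transparent why nothing new happens on $\R^d$; what the paper's citation buys is avoiding exactly the enumeration bookkeeping you flag at the end.

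The one detail I would not let stand is the parenthetical claim that enumerating by increasing $|j|$, ``and within that arbitrarily'', works. For $1<p<\infty$ the order is irrelevant by unconditionality, but for $p=1$ and $d\ge 2$ a partial selection inside a single level $|j|=\lambda$ decomposes into one block for each $j\in\N_{-1}^d$ with $|j|=\lambda$; each block (disjointly supported in $m$) is a uniformly bounded projection, but there are of order $\lambda^{d-1}$ such blocks with overlapping supports, and I see no uniform bound on an arbitrary partial sum of them. The safe choice is the ``square'' enumeration implicit in the paper's cube proof (the partial sums $s_nf$ over $j_1,\ldots,j_d\le n$, refined one function at a time), for which the initial-segment projections factor through conditional expectations onto $b$-adic partitions and are uniformly bounded. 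With that correction, and with the routine dominated-convergence argument you sketch for summing the cell-wise errors over infinitely many cells, the proposal is complete.
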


We refer again to \cite{W97} and the references given there.

\subsection{Walsh bases}
Let $b \geq 2$ be an integer.
\begin{df}
\mbox{}
\begin{enumerate}[(i)]
	\item For $\alpha \in \N$ with $b$-adic expansion $\alpha = \alpha_0 + \alpha_1 b + \ldots + \alpha_{h - 1} b^{h - 1}$ with digits $\alpha_0, \alpha_1, \ldots, \alpha_{h - 1} \in \{ 0, 1, \ldots, b - 1 \}$ such that, $\alpha_{h - 1} \neq 0$, the Niederreiter-Rosen-bloom-Tsfasman (NRT) weight is given by $\varrho(\alpha) = h$. Furthermore, $\varrho(0) = 0$.
	\item The number of non-zero digits $\alpha_{\nu}, \, 0 \leq \nu < \varrho(\alpha)$ is the Hamming weight $\varkappa(\alpha)$.
	\item For $\alpha = (\alpha^1, \ldots, \alpha^d) \in \N_0^d$, the NRT weight is given by
\[ \varrho^d(\alpha) = \sum_{i = 1}^d \varrho(\alpha^i) \]
  and the Hamming weight by
  \[ \varkappa^d(\alpha) = \sum_{i = 1}^d \varkappa(\alpha^i). \]
\end{enumerate}
\end{df}

\begin{rem}

Clearly, $\varrho(\alpha) = 0$ if and only if $\alpha = 0$. Also the triangle inequality is easy to verify. Hence, $\varrho$ defines a norm on $\N_0$.

\end{rem}

\begin{df}
\mbox{}
\begin{enumerate}[(i)]
	\item For $\alpha \in \N_0$ with $b$-adic expansion $\alpha = \alpha_0 + \alpha_1 b + \ldots + \alpha_{\varrho(\alpha) - 1} b^{\varrho(\alpha) - 1}$ the $\alpha$-th $b$-adic Walsh function $\wal_{\alpha}: \, \Q \rightarrow \Cx$ is given by
	\[ \wal_{\alpha}(x) = \e^{\frac{2 \pi \im}{b} \left( \alpha_0 x_1 + \alpha_1 x_2 + \ldots + \alpha_{\varrho(\alpha) - 1} x_{\varrho(\alpha)} \right)}, \]
	for $x \in \Q$ with $b$-adic expansion  $x = x_1 b^{-1} + x_2 b^{-2} + \ldots$.
	\item The functions $\wal_{\alpha}, \, \alpha \in \N_0$ are called $b$-adic Walsh system on $\Q$.
	\item For $\alpha = (\alpha^1, \ldots, \alpha^d) \in \N_0^d$ the $b$-adic Walsh function $\wal_{\alpha}$ on $\Q^d$ is given as the tensor product $\wal_{\alpha}(x) = \wal_{\alpha^1}(x^1) \ldots \wal_{\alpha^d}(x^d)$ for $x = (x^1, \ldots, x^d) \in \Q^d$.
	\item The functions $\wal_{\alpha}, \, \alpha \in \N_0^d$ are called $b$-adic Walsh system on $\Q^d$.
\end{enumerate}
\end{df}

The following well known results can be found for instance in \cite[Appendix A]{DP10}.

\begin{prp} \label{prp_constant_walsh}

Let $\alpha \in \N_0$. Then $\wal_{\alpha}$ is constant on $b$-adic intervals $I_{\varrho(\alpha),m}$ for any $m \in \Dd_{\varrho(\alpha)}$. Further, $\wal_0$ is the characteristic function of $\Q$.

\end{prp}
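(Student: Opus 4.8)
The plan is to read off directly from the definition that $\wal_{\alpha}(x)$ depends only on the first $\varrho(\alpha)$ digits of the $b$-adic expansion of $x$, and then to observe that these digits are exactly what is held fixed on a $b$-adic interval of level $\varrho(\alpha)$. Write $h = \varrho(\alpha)$ and $\alpha = \alpha_0 + \alpha_1 b + \ldots + \alpha_{h-1} b^{h-1}$. For $x \in \Q$ with $b$-adic expansion $x = x_1 b^{-1} + x_2 b^{-2} + \ldots$ the definition gives
\[ \wal_{\alpha}(x) = \e^{\frac{2 \pi \im}{b} \left( \alpha_0 x_1 + \alpha_1 x_2 + \ldots + \alpha_{h-1} x_h \right)}, \]
so $\wal_{\alpha}(x)$ is a function of the digit vector $(x_1, \ldots, x_h)$ alone.

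Next I would show that this digit vector is constant on $I_{hm} = \big[ b^{-h} m, b^{-h}(m+1) \big)$. For $x \in \Q$, the condition $x \in I_{hm}$ is equivalent to $m \le b^h x < m+1$, i.e. to $\lfloor b^h x \rfloor = m$; and since $b^h x = x_1 b^{h-1} + \ldots + x_h + \big(\sum_{i \ge 1} x_{h+i} b^{-i}\big)$ with the parenthesised tail lying in $[0,1)$ (under the standard convention that one never uses the expansion ending in infinitely many digits $b-1$), one has $\lfloor b^h x \rfloor = x_1 b^{h-1} + x_2 b^{h-2} + \ldots + x_h$. Thus for every $x \in I_{hm}$ the digits $x_1, \ldots, x_h$ are precisely the $b$-adic digits of $m$ and do not depend on $x$. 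Combined with the previous display this proves that $\wal_{\alpha}$ is constant on $I_{hm}$ for each $m \in \Dd_h$. (The $d$-dimensional analogue, should it be needed, follows at once by forming tensor products.)

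For the final assertion, $\varrho(0) = 0$ by definition, so the exponent in the definition of $\wal_0$ is the empty sum and $\wal_0(x) = \e^0 = 1$ for every $x \in \Q$; that is, $\wal_0 = \chi_{\Q}$. The only point that needs care in this argument is the non-uniqueness of $b$-adic expansions at $b$-adic rationals, which is exactly what fixing the non-terminating-in-$(b-1)$ convention resolves and what makes the floor description $\lfloor b^h x \rfloor = x_1 b^{h-1} + \ldots + x_h$ exact; beyond this bookkeeping there is no substantial obstacle.
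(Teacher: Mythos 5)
Your proof is correct and follows essentially the same route as the paper: both arguments note that $\wal_{\alpha}(x)$ depends only on the first $\varrho(\alpha)$ digits $x_1,\ldots,x_{\varrho(\alpha)}$ and that these digits are determined (as the digits of $m$) by the condition $x \in I_{\varrho(\alpha),m}$. Your added care about the non-terminating-in-$(b-1)$ convention and the explicit treatment of $\wal_0$ are fine but do not change the substance.
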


\begin{proof}
Let $x \in I_{\varrho(\alpha),m}$. Hence its $b$-adic expansion can be written as
\[ x = m b^{-\varrho(\alpha)} + x_{\varrho(\alpha) + 1} b^{-\varrho(\alpha) - 1} + \ldots \]
where
\[ m = m_1 + m_2 b + \ldots + m_{\varrho(\alpha)} b^{\varrho(\alpha) - 1}. \]
Then
\[ \wal_{\alpha}(x) = \e^{\frac{2 \pi \im}{b} \left( \alpha_0 m_{\varrho(\alpha)} + \ldots + \alpha_{\varrho(\alpha) - 1} m_1 \right)} = \wal_{\alpha}(m b^{-\varrho(\alpha)}). \]
\end{proof}

\begin{prp}

We have for $\alpha \in \N_0$
\[ \int_{\Q} \wal_{\alpha}(x) \dint x = \begin{cases} 1 & \text{ if } \alpha = 0, \\ 0 & \text{ if } \alpha \neq 0. \end{cases} \]

\end{prp}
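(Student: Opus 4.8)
The plan is to split into the two cases of the statement. For $\alpha = 0$ there is nothing to compute: Proposition \ref{prp_constant_walsh} already records that $\wal_0 = \chi_{\Q}$, so $\int_{\Q} \wal_0(x)\,\dint x = |\Q| = 1$.

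For $\alpha \neq 0$ I would set $h = \varrho(\alpha) \geq 1$ and write $\alpha = \alpha_0 + \alpha_1 b + \ldots + \alpha_{h-1} b^{h-1}$ with $\alpha_{h-1} \neq 0$. The first step is to reduce the integral to a finite sum using Proposition \ref{prp_constant_walsh}: since $\wal_\alpha$ is constant on each of the $b^h$ $b$-adic intervals $I_{h,m}$, $m \in \Dd_h$, each of volume $b^{-h}$, we get $\int_{\Q} \wal_\alpha(x)\,\dint x = b^{-h}\sum_{m \in \Dd_h}\wal_\alpha(m b^{-h})$. Next, parametrise $m$ by its digits, $m = m_1 + m_2 b + \ldots + m_h b^{h-1}$, and insert the explicit value $\wal_\alpha(m b^{-h}) = \e^{\frac{2\pi\im}{b}(\alpha_0 m_h + \alpha_1 m_{h-1} + \ldots + \alpha_{h-1} m_1)}$ obtained in the proof of Proposition \ref{prp_constant_walsh}. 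The multiple sum over $m_1,\ldots,m_h \in \{0,\ldots,b-1\}$ then factors into a product of $h$ one-dimensional geometric sums, $\int_{\Q}\wal_\alpha(x)\,\dint x = \prod_{\nu=0}^{h-1}\bigl(\frac{1}{b}\sum_{m=0}^{b-1}\e^{\frac{2\pi\im}{b}\alpha_\nu m}\bigr)$.

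The last step is the observation that the factor belonging to the leading digit vanishes: because $1 \leq \alpha_{h-1} \leq b-1$ we have $\e^{2\pi\im\alpha_{h-1}} = 1$ while $\e^{\frac{2\pi\im}{b}\alpha_{h-1}} \neq 1$, so $\sum_{m=0}^{b-1}\e^{\frac{2\pi\im}{b}\alpha_{h-1}m} = \frac{\e^{2\pi\im\alpha_{h-1}}-1}{\e^{\frac{2\pi\im}{b}\alpha_{h-1}}-1} = 0$, and hence the whole product is $0$. There is no real obstacle here — the argument is entirely elementary; the only point demanding a little care is the bookkeeping in the digit reindexing that produces the factorisation, together with the (harmless) remark that one uses the convention that $b$-adic expansions do not end in an infinite string of digits $b-1$, so that $\wal_\alpha$ is well defined and genuinely constant on the half-open intervals $I_{h,m}$; in any case the ambiguous points form a set of measure zero and do not affect the integral.
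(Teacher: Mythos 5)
Your proof is correct. The paper itself offers no argument for this proposition --- it is listed among the ``well known results'' delegated to \cite[Appendix A]{DP10} --- so there is no internal proof to compare against; your computation (reduce to a sum over the $b^{\varrho(\alpha)}$ intervals of constancy via Proposition \ref{prp_constant_walsh}, factor the sum over digits, and kill the product with the geometric sum attached to the nonzero leading digit $\alpha_{\varrho(\alpha)-1}$) is the standard one and meshes exactly with how the paper proves the constancy statement it does spell out.
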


\begin{prp}

Let $\alpha, \beta \in \N_0^d$. Then we have
\[ \int_{\Q^d} \wal_{\alpha}(x) \overline{\wal_{\beta}(x)} \dint x = \begin{cases} 1 & \text{ if } \alpha = \beta, \\ 0 & \text{ if } \alpha \neq \beta. \end{cases} \]

\end{prp}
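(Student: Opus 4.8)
The plan is to reduce the statement to the one-dimensional case and then exploit the group structure of digitwise addition modulo $b$. First I would observe that, by the tensor product definition of $\wal_{\alpha}$ on $\Q^d$ together with Fubini's theorem,
\[ \int_{\Q^d} \wal_{\alpha}(x) \overline{\wal_{\beta}(x)} \dint x = \prod_{i = 1}^d \int_{\Q} \wal_{\alpha^i}(t) \overline{\wal_{\beta^i}(t)} \dint t, \]
so it suffices to prove the claim for $d = 1$; the product on the right then equals $1$ precisely when $\alpha^i = \beta^i$ for every $i$, i.e. when $\alpha = \beta$, and is $0$ otherwise.

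For the one-dimensional statement the key step is to show that $\wal_{\alpha} \, \overline{\wal_{\beta}}$ is again a Walsh function. Writing $\alpha = \sum_{\nu \geq 0} \alpha_{\nu} b^{\nu}$ and $\beta = \sum_{\nu \geq 0} \beta_{\nu} b^{\nu}$ with digits in $\{ 0, \ldots, b - 1 \}$ (only finitely many nonzero), and $x \in \Q$ with $b$-adic expansion $x = \sum_{\nu \geq 1} x_{\nu} b^{-\nu}$, one has
\[ \wal_{\alpha}(x) \overline{\wal_{\beta}(x)} = \e^{\frac{2 \pi \im}{b} \sum_{\nu \geq 0} (\alpha_{\nu} - \beta_{\nu}) x_{\nu + 1}}. \]
Since each $x_{\nu + 1}$ is an integer, the factor $\e^{\frac{2 \pi \im}{b} (\alpha_{\nu} - \beta_{\nu}) x_{\nu + 1}}$ depends on $\alpha_{\nu} - \beta_{\nu}$ only through its residue $\gamma_{\nu} \in \{ 0, \ldots, b - 1 \}$ modulo $b$. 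Setting $\gamma = \sum_{\nu \geq 0} \gamma_{\nu} b^{\nu} \in \N_0$, I obtain $\wal_{\alpha} \, \overline{\wal_{\beta}} = \wal_{\gamma}$: there is no carrying since the digits are reduced independently, and the possible vanishing of leading digits only lowers $\varrho(\gamma)$, which is harmless because $\wal_{\gamma}$ depends only on the digits below $\varrho(\gamma)$ and the rest are zero anyway.

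Then I would apply the preceding proposition, $\int_{\Q} \wal_{\gamma}(x) \dint x = 1$ if $\gamma = 0$ and $0$ otherwise, to get
\[ \int_{\Q} \wal_{\alpha}(x) \overline{\wal_{\beta}(x)} \dint x = \int_{\Q} \wal_{\gamma}(x) \dint x = \begin{cases} 1 & \text{ if } \gamma = 0, \\ 0 & \text{ if } \gamma \neq 0, \end{cases} \]
and it remains to note that $\gamma = 0$ if and only if every $\gamma_{\nu} = 0$, i.e. $\alpha_{\nu} \equiv \beta_{\nu} \pmod b$ for all $\nu$, i.e. (the digits lying in $\{ 0, \ldots, b - 1 \}$) $\alpha = \beta$. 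The main obstacle — a mild one — is exactly the verification that $\wal_{\alpha} \, \overline{\wal_{\beta}}$ is itself a single Walsh function rather than merely a product of two, which is where the "digitwise subtraction modulo $b$" group law enters; once that is established the remainder is bookkeeping together with Fubini.
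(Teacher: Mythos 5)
Your proof is correct. The paper itself gives no proof of this proposition --- it is stated as a well-known fact with a pointer to \cite[Appendix A]{DP10} --- and the argument you give (reduction to $d=1$ via the tensor-product structure and Fubini, followed by the observation that $\wal_{\alpha}\,\overline{\wal_{\beta}} = \wal_{\gamma}$ with $\gamma$ obtained by digitwise subtraction modulo $b$, so that the previous proposition applies) is precisely the standard character-group argument found in that reference; your handling of the two minor points, the absence of carrying and the possible drop in $\varrho(\gamma)$, is also correct.
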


\begin{thm}

The system
\begin{align} \label{walshbasis}
\left\{ \wal_{\alpha} \, : \, \alpha \in \N_0^d \right\}
\end{align}
is an orthonormal basis of $L_2(\Q^d)$.

\end{thm}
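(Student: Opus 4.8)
The plan is to reduce everything to a density statement, since orthonormality of the system $\{\wal_\alpha : \alpha \in \N_0^d\}$ is exactly the content of the preceding proposition. So it remains to show that the linear span of the Walsh functions is dense in $L_2(\Q^d)$; then, being an orthonormal system whose span is dense, it is automatically an orthonormal basis.

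First I would settle the one-dimensional case. Fix $n \in \N_0$ and let $V_n$ denote the $b^n$-dimensional space of all functions on $\Q$ that are constant on every $b$-adic interval $I_{nm}$, $m \in \Dd_n$. If $\alpha \in \N_0$ with $\alpha < b^n$, write $\alpha = \alpha_0 + \ldots + \alpha_{h-1} b^{h-1}$ with $\alpha_{h-1} \neq 0$; then $b^{h-1} \leq \alpha < b^n$, so $\varrho(\alpha) = h \leq n$, and by Proposition \ref{prp_constant_walsh} the function $\wal_\alpha$ is constant on every $I_{\varrho(\alpha),m}$, hence a fortiori on every $I_{nm}$, so $\wal_\alpha \in V_n$ (the case $\alpha = 0$ being trivial). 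Thus $\{\wal_\alpha : 0 \leq \alpha < b^n\}$ is a family of $b^n$ elements of $V_n$ which, by the previous proposition, is orthonormal; since $\dim V_n = b^n$, it must be an orthonormal basis of $V_n$. In particular, each characteristic function $\chi_{I_{nm}}$ lies in the span of the one-dimensional Walsh system.

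Next I would invoke the standard measure-theoretic fact that the $b$-adic step functions — finite linear combinations of the $\chi_{I_{nm}}$, $n \in \N_0$, $m \in \Dd_n$ — are dense in $L_2(\Q)$: one approximates an arbitrary $L_2$ function by simple functions and then approximates each underlying measurable set in measure by a finite union of small $b$-adic intervals. Combining this with the previous step shows that the span of $\{\wal_\alpha : \alpha \in \N_0\}$ is dense in $L_2(\Q)$, so this system is an orthonormal basis of $L_2(\Q)$.

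Finally, for general $d$ I would pass to the tensor product: $L_2(\Q^d)$ is the $d$-fold Hilbert space tensor product of $L_2(\Q)$, and the tensor product of orthonormal bases is an orthonormal basis of the tensor product; since $\wal_\alpha(x) = \wal_{\alpha^1}(x^1)\cdots\wal_{\alpha^d}(x^d)$ for $\alpha = (\alpha^1,\ldots,\alpha^d)$, the $d$-dimensional Walsh system is precisely this tensor product basis. Alternatively, and without any tensor-product machinery, one repeats the argument above verbatim, replacing $V_n$ by the $b^{nd}$-dimensional space of functions on $\Q^d$ that are constant on all $d$-dimensional $b$-adic intervals of level $(n,\ldots,n)$, observing that it contains the $b^{nd}$ orthonormal Walsh functions $\wal_\alpha$ with $0 \leq \alpha^i < b^n$ for all $i$, and then using density of $d$-dimensional $b$-adic step functions. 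I do not expect any genuine obstacle in this argument; the only point needing (routine) attention is the dimension count that upgrades "orthonormal subset of $V_n$ of the right cardinality" to "orthonormal basis of $V_n$", together with the standard density of step functions.
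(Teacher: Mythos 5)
Your argument is correct. Note, however, that the paper does not actually prove this theorem: it is stated as a known result with a pointer to the literature (Appendix A of the Dick--Pillichshammer monograph), so there is no in-paper proof to compare against. Your proof is essentially the standard one and is the argument one finds in that reference: orthonormality is the preceding proposition, and completeness follows because the $b^n$ Walsh functions $\wal_\alpha$ with $0 \leq \alpha < b^n$ are, by Proposition \ref{prp_constant_walsh} and a dimension count, an orthonormal basis of the $b^n$-dimensional space of functions constant on the level-$n$ $b$-adic intervals; density of $b$-adic step functions and the tensor-product structure of $L_2(\Q^d)$ finish the job. All the individual steps you flag as routine (the inequality $\varrho(\alpha) \leq n$ for $\alpha < b^n$, the upgrade from ``orthonormal family of the right cardinality'' to ``basis of $V_n$'', the density of step functions) are indeed sound. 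It is worth observing that your completeness argument runs parallel to the proof the paper \emph{does} give for the $b$-adic Haar basis in the next chapter, where density of the span is likewise obtained by approximating on the partition into level-$n$ intervals; the only genuinely Walsh-specific ingredient is the linear-algebraic identification of $\spn\{\wal_\alpha : \alpha < b^n\}$ with $\spn\{\chi_{I_{nm}} : m \in \Dd_n\}$, which you supply correctly.
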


\begin{rem}

The system \eqref{walshbasis} will be called a $b$-adic Walsh basis. Without going into details we mention that Walsh functions are characters on the Cantor group. We refer to the monograph \cite{SWS90} for much more information on Walsh functions.

\end{rem}
\section{Digital nets}
The idea of $(v,n,d)$-nets is the central property of uniform distribution that all intervals of the same order have to contain an approximately proportional number of points of a set. To achieve that goal we choose a large class of intervals and make sure that a constructed point set is distributed in a way that all the intervals from the chosen class contain the right number of points. We are eminently interested in so called digital $(v,n,d)$-nets since we are going to work with constructions by Chen and Skriganov.

For a finite point set in $\Q^d$ we can always find subsets of $\Q^d$ that do not contain a proportional number of points. For example we can even always find an interval that contains no points at all.
\begin{df}
For an integer $N$ and a class $J$ of subsets of $\Q^d$ we call a point set $\P$ in $\Q^d$ with $N$ points fair (with respect to $J$) if
\[ \frac{\# (I \cap \P)}{N} = |I| \]
for all $I \in J$.
\end{df}
It is desirable to consider a class as large as possible. We are going to work with the class of $b$-adic intervals. Then we can define the nets.
\begin{df}
For a given dimension $d \geq 1$, an integer $b \geq 2$, a positive integer $n$ and an integer $v$ with $0 \leq v \leq n$, a point set $\P$ in $\Q^d$ with $b^n$ points is called a $(v,n,d)$-net in base $b$ if the point set $\P$ is fair with respect to the class of all $b$-adic intervals in $\Q^d$ of order $n - v$. The number $v$ is called quality parameter of the $(v,n,d)$-net. A $(v,n,d)$-net in base $b$ is called strict for $v = 0$ or for $v \geq 1$ if it is not a $(v - 1,n,d)$-net in base $b$. 
\end{df}

\begin{rem} \label{rem_numberininterval}

The property for a $(v,n,d)$-net $\P$ in base $b$ means that every $b$-adic interval in $\Q^d$ of volume $b^{-n + v}$ contains exactly $b^v$ points of $\P$.

\end{rem}

Every $b$-adic interval of order $k$ for $k \geq 0$ is the union of $b$ disjoint $b$-adic intervals of order $k + 1$. Every $(v,n,d)$-net in base $b$ with $v \leq n - 1$ is also a $(v + 1,n,d)$-net in base $b$. Every point set of $b^n$ points in $\Q^d$ is an $(n,n,d)$-net in base $b$. The condition is then trivial. The following results can be found in \cite[Chapter 4]{DP10}.

\begin{lem}

For $1 \leq i \leq r$ let $\P_i$ be $(v_i,n_i,d)$-nets in base $b$ with $n_1, \ldots, n_r$ such that $b^{n_1} + \ldots + b^{n_r} = b^n$ for some integer $n$. Then the point set $\P = \P_1 \cup \ldots \cup \P_r$ is a $(v,n,d)$-net in base $b$ with
\[ v = n - \min_{1 \leq i \leq r} (n_i - v_i). \]

\end{lem}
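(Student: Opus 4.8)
The plan is to verify the defining property of a $(v,n,d)$-net directly by a counting argument, treating $\P=\P_1\cup\dots\cup\P_r$ as a multiset so that $\#\P=\#\P_1+\dots+\#\P_r=b^{n_1}+\dots+b^{n_r}=b^n$, which is the required cardinality. Set $m_i=n_i-v_i$ for $1\le i\le r$ and $k=\min_{1\le i\le r}m_i$, so that the claimed value is $v=n-k$ and hence $n-v=k$. One first checks $0\le v\le n$: since $\sum_i b^{n_i}=b^n$ forces $n_i\le n$ for all $i$, we get $0\le m_i\le n_i\le n$, so $k\ge 0$ gives $v\le n$ and $k\le n$ gives $v\ge 0$.

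Next I would fix an arbitrary $b$-adic interval $I$ in $\Q^d$ of order $n-v=k$ and count $\#(I\cap\P_i)$ for each $i$. Since $m_i\ge k$, the interval $I$ (of volume $b^{-k}$) is a disjoint union of exactly $b^{m_i-k}$ $b$-adic intervals of order $m_i$, obtained for instance by refining $I$ in a single coordinate. Each such subinterval $J$ has volume $b^{-m_i}=b^{-n_i+v_i}$, so by Remark~\ref{rem_numberininterval} it contains exactly $b^{v_i}$ points of the $(v_i,n_i,d)$-net $\P_i$. Summing over the $b^{m_i-k}$ subintervals gives
\[
\#(I\cap\P_i)=b^{m_i-k}\cdot b^{v_i}=b^{m_i+v_i-k}=b^{n_i-k},
\]
using $m_i+v_i=n_i$.

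Finally I would sum over $i$:
\[
\#(I\cap\P)=\sum_{i=1}^r\#(I\cap\P_i)=\sum_{i=1}^r b^{n_i-k}=b^{-k}\sum_{i=1}^r b^{n_i}=b^{-k}b^n=b^{n-k}=b^v .
\]
Thus every $b$-adic interval of order $n-v$ contains exactly $b^v$ points of $\P$, equivalently $\#(I\cap\P)/\#\P=b^{v-n}=b^{-(n-v)}=|I|$, so $\P$ is fair with respect to the class of all $b$-adic intervals of order $n-v$ and has $b^n$ points; hence it is a $(v,n,d)$-net in base $b$.

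There is no serious obstacle here: the argument is purely combinatorial once the decomposition of a $b$-adic interval of order $k$ into $b^{m_i-k}$ $b$-adic intervals of order $m_i$ is in place, and the only points requiring minor care are the bookkeeping of exponents and the (standard) convention that the union is counted with multiplicity so that $\#\P=b^n$.
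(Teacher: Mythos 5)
Your proof is correct and follows essentially the same route as the paper: the paper's proof simply asserts that a $b$-adic interval of order $n-v$ contains exactly $b^{-n+n_i+v}=b^{n_i-k}$ points of $\P_i$ and sums, while you supply the justification of that count by decomposing the interval into $b^{m_i-k}$ subintervals of order $m_i$ each holding $b^{v_i}$ points. The extra care about the multiset convention and the check that $0\le v\le n$ are sensible additions but do not change the argument.
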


\begin{proof}
Let $I$ be some $b$-adic interval in $\Q^d$ of order $n - v$. For every $1 \leq i \leq r$, $I$ contains exactly $b^{-n + n_i + v}$ points of $\P_i$. Therefore, $I$ contains exactly $b^v$ points of $\P$ and $\P$ is a $(v,n,d)$-net in base $b$.

\end{proof}

\begin{lem}

Let $\P$ be a $(v,n,d)$-net in base $b$. Let $1 \leq \tilde{d} \leq d$. We put
\[ \tilde{\P} = \left\{ (x_1, \ldots x_{\tilde{d}}) \, : \, (x_1, \ldots, x_d) \in \P \right\}. \]
Then $\tilde{\P}$ is a $(v,n,\tilde{d})$-net in base $b$.

\end{lem}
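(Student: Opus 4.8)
The plan is to reduce the net property of $\tilde{\P}$ in dimension $\tilde{d}$ to the net property of $\P$ in dimension $d$ by padding a low-dimensional $b$-adic interval with full unit-interval factors. First I would fix an arbitrary $b$-adic interval $\tilde{I} = I_{\tilde{j}\tilde{m}}$ in $\Q^{\tilde{d}}$ of order $n-v$, i.e. with $|\tilde{j}| = n-v$, and form the box $I = \tilde{I} \times \Q^{d-\tilde{d}}$. Writing $\Q = I_{00}$ as a $b$-adic interval in each of the last $d-\tilde{d}$ coordinates, $I$ is the $b$-adic interval $I_{jm}$ in $\Q^d$ with $j = (\tilde{j}_1,\dots,\tilde{j}_{\tilde{d}},0,\dots,0)$ and $m = (\tilde{m}_1,\dots,\tilde{m}_{\tilde{d}},0,\dots,0)$; since the appended zero components contribute nothing to $|j|$, the order of $I$ is again $n-v$.

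Next I would match the points. A point $(x_1,\dots,x_d) \in \P$ lies in $I$ if and only if $(x_1,\dots,x_{\tilde{d}}) \in \tilde{I}$, because the condition imposed by the last $d-\tilde{d}$ coordinates is vacuous. Hence, counting points of $\tilde{\P}$ with multiplicity, we have $\#(\tilde{\P} \cap \tilde{I}) = \#(\P \cap I)$. By Remark \ref{rem_numberininterval} the right-hand side equals $b^v$, since $\P$ is a $(v,n,d)$-net in base $b$ and $I$ has order $n-v$, hence volume $b^{-n+v}$. As $\tilde{\P}$ consists of $b^n$ points (again counted with multiplicity) and $\tilde{I}$ was an arbitrary $b$-adic interval of order $n-v$ in $\Q^{\tilde{d}}$, this shows $\tilde{\P}$ is fair with respect to all such intervals, which is exactly the definition of a $(v,n,\tilde{d})$-net in base $b$.

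There is essentially no deep step here: the only point that needs care is the multiset convention for $\tilde{\P}$, since a coordinate projection need not be injective on $\P$, so after projecting one must count points with multiplicity (as is standard for nets) for the count $b^v$ to come out right. Once that is agreed, the argument is just the observation that the defining intervals in dimension $\tilde{d}$ are precisely the "slabs" $\tilde{I} \times \Q^{d-\tilde{d}}$ sitting among the defining intervals in dimension $d$. I would also remark that the same padding argument applies verbatim to the projection onto any $\tilde{d}$ of the $d$ coordinates, not only the first $\tilde{d}$.
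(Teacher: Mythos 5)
Your proof is correct and follows essentially the same route as the paper: both pad $\tilde{I}$ to $I=\tilde{I}\times\Q^{d-\tilde{d}}$, note that the order is unchanged, and transfer the count $b^v$ from $\P$ to $\tilde{\P}$. Your explicit remark about counting projected points with multiplicity is a welcome clarification of a point the paper's proof passes over silently.
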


\begin{proof}
Let $\tilde{I}$ be some $b$-adic interval in $\Q^{\tilde{d}}$ of order $n - v$. Then $I = \tilde{I} \times \Q^{d - \tilde{d}}$ is a $b$-adic interval in $\Q^{d}$ of order $n - v$. Therefore, $I$ contains exactly $b^v$ points of $\P$. If we now fix the first $\tilde{d}$ coordinates of the points of $\P$ then exactly $b^v$ points of $\tilde{\P}$ are contained in $\tilde{I}$. Hence, $\P$ is a $(v,n,\tilde{d})$-net in base $b$.

\end{proof}

\cite[Corollary 4.19]{DP10} also gives us an existence rule for nets which will be important for our purposes later.

\begin{lem} \label{non_existence_for_nets}

A $(0,n,d)$-net in base $b$ cannot exist if $n \geq 2$ and $d \geq b + 2$.

\end{lem}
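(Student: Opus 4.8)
The statement is a non-existence result: a $(0,n,d)$-net in base $b$ cannot exist when $n \geq 2$ and $d \geq b+2$. The plan is to reduce to the case $n = 2$ and $d = b+2$ (any $(0,n,d)$-net contains, by the projection lemma above, a $(0,n,b+2)$-net, and a $(0,n,d)$-net with $n \geq 2$ also yields a $(0,2,d)$-net by a suitable argument — actually it is cleaner to argue directly, see below), and then derive a contradiction by a counting/linear-algebra argument on how the $b^2$ points of the net are forced to lie with respect to the $b$-adic intervals of order $1$ in each coordinate.

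First I would set up the combinatorial skeleton. Fix $n = 2$, $d = b+2$, and suppose $\P$ is a $(0,2,d)$-net in base $b$, so $\#\P = b^2$ and every $b$-adic interval in $\Q^d$ of order $2$ contains exactly one point of $\P$. Taking order-$1$ intervals (volume $b^{-1}$) in a single coordinate $i$: the set $I_{1,m}^{(i)} = \Q^{i-1} \times I_{1m} \times \Q^{d-i}$ has volume $b^{-1}$, but it is a union of $b$-adic intervals of order $2$, hence contains exactly $b$ points of $\P$. So for each coordinate $i$, the map sending $z \in \P$ to the digit $\lfloor b z_i \rfloor \in \Z_b$ is a balanced map: each value in $\Z_b$ is hit exactly $b$ times. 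Moreover, looking at order-$2$ intervals that fix the first digit in two coordinates $i \neq j$: the joint map $z \mapsto (\lfloor b z_i\rfloor, \lfloor b z_j\rfloor)$ takes each of the $b^2$ values exactly once. Thus, writing $\pi_i : \P \to \Z_b$ for the first-digit map, the tuple $(\pi_1,\dots,\pi_d) : \P \to \Z_b^d$ is injective (indeed bijective onto a set of size $b^2$) and pairwise-independent in the combinatorial-design sense: any two coordinates together are ``orthogonal.'' In other words, identifying $\P$ with an index set of size $b^2$, the functions $\pi_1,\dots,\pi_d$ form $d$ mutually orthogonal partitions into $b$ classes of size $b$ — equivalently, this is exactly the data of $d-2$ mutually orthogonal Latin squares of order $b$ (use $\pi_1,\pi_2$ as ``row'' and ``column'' coordinates, and each further $\pi_i$ defines a Latin square).

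The key step is then to invoke (or reprove) the classical bound that the number of mutually orthogonal Latin squares of order $b$ is at most $b-1$, so the total number of mutually orthogonal partitions is at most $b+1$; since we have $d = b+2 > b+1$ of them, we get a contradiction. I would give the short self-contained argument: after normalizing $\pi_1,\pi_2$ to be the standard coordinates on $\Z_b \times \Z_b$, each $\pi_i$ for $i \geq 3$ can be normalized (by a permutation of $\Z_b$) so that its restriction to the ``row'' $\pi_1 = 0$ is the identity; then orthogonality of $\pi_i$ and $\pi_j$ for $i,j \geq 3$ forces their values on the row $\pi_1 = 1$ to differ in every cell, and one counts that at most $b-1$ such squares can be pairwise orthogonal — a pigeonhole on the value at a fixed cell across the $b-1$ admissible shifted rows. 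The main obstacle is purely expository: packaging the net condition into the orthogonal-partition / MOLS language cleanly, and deciding whether to cite the MOLS bound (which is standard, e.g. \cite[Chapter 4]{DP10} or \cite{DP10} references) or to inline its two-line proof; once the translation is made, the contradiction is immediate. Handling general $n \geq 2$ rather than $n = 2$ is a minor point: a $(0,n,d)$-net, restricted to the first digit in each coordinate exactly as above, still forces each single-coordinate first-digit map to be balanced with $b^{n-1}$ preimages and each pair of coordinates to be jointly balanced, and the same Latin-square obstruction applies verbatim (the block sizes scale but the orthogonality structure is identical), so no separate argument for larger $n$ is needed.
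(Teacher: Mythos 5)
The paper does not prove this lemma at all: it is quoted verbatim from \cite[Corollary 4.19]{DP10}, so there is no internal proof to compare against. Your route --- translating the net condition on first digits into $d$ mutually orthogonal partitions of a $b^2$-point set, equivalently $d-2$ mutually orthogonal Latin squares of order $b$, and invoking the classical bound of $b-1$ MOLS to force $d \leq b+1$ --- is the standard argument behind the cited corollary, and for $n=2$ your sketch is correct and complete in outline (the normalization and the pigeonhole on the value at the cell in row $1$, column $0$ are exactly the textbook proof of the MOLS bound).

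There is, however, a genuine gap in your treatment of $n \geq 3$. You claim that ``the same Latin-square obstruction applies verbatim'' to the first-digit maps $\pi_1,\dots,\pi_d$, but for $n \geq 3$ the joint map $(\pi_1,\pi_2)$ is $b^{n-2}$-to-one rather than a bijection: each cell of the $b\times b$ grid contains $b^{n-2}$ points of $\P$, and $\pi_i$ for $i\geq 3$ is not constant on a cell (indeed, for $d=3$, $n=3$ the net property forces $\pi_3$ to be a \emph{bijection} on each cell), so no Latin square is defined. What the first digits actually give is an orthogonal array of strength $2$ and index $\lambda = b^{n-2}$, and for such arrays the number of factors is \emph{not} bounded by $b+1$; it can be as large as $(b^n-1)/(b-1)$, which exceeds $b+1$ for every $n \geq 3$. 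So the strength-$2$, first-digit information alone cannot yield the lemma, and one must use the full order-$n$ balance of the net. The standard repair is a restriction-and-rescaling step: intersect the $(0,n,d)$-net with $\left[0,b^{-(n-2)}\right) \times [0,1)^{d-1}$, which contains exactly $b^2$ points, and dilate the first coordinate by $b^{n-2}$. A $b$-adic interval of order $2$ for the rescaled set pulls back to a $b$-adic interval of order $n$ contained in the slab, hence contains exactly one point; so the rescaled set is a $(0,2,d)$-net and your $n=2$ argument then finishes the proof. With that substitution for your last paragraph the proof is correct.
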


We mention just briefly that there is also a concept of so called $(v,d)$-sequences and $(V,d)$-sequences which is closely connected to $(v,n,d)$-nets. A sequence $(x_1, x_2, \ldots)$ in $\Q^d$ is called a $(v,d)$-sequence in base $b$ if for all integers $n \geq v$ and $k \geq 0$, the point set consisting of the points $x_{kb^n}, \ldots, x_{kb^n + b^n - 1}$ is a $(v,n,d)$-net in base $b$. The $(V,d)$-sequences are a more general concept.

Such sequences have a very ordered structure. For an integer $N \geq 1$ with $b$-adic expansion $N = N_0 + N_1 b + \ldots + a_n b^n$ the point set $\{ x_1, \ldots, x_N \}$ consisting of the first $N$ points of a $(v,d)$-sequence in base $b$ is the union of $N_n$ of $(v,n,d)$-nets in base $b$, $N_{n-1}$ of $(v,n-1,d)$-nets in base $b$, $\ldots$, $N_{v+1}$ of $(v,v+1,d)$-nets in base $b$ and $N_0 + N_1 b + \ldots + N_v b^v$ points without a special structure.

Additionally, every $(v,d)$-sequence is uniformly distributed.

Now we come to the subject which is the main goal of this section. Though $(v,n,d)$-nets have nice properties we did not give a method so far to construct them. And here digital nets come into play. For the rest of the section the base $b$ will be a prime. The construction of digital nets is clearer that way because there exists a finite field of order $b$ and it can be identified with $\Z_b$. But there are also digital nets in non-prime bases and for prime power bases the construction works in the same way. We describe the digital method to construct digital nets.

Let $n \in \N_0$. Let $C_1, \ldots, C_d$ be $n \times n$ matrices with entries from $\Fb_b$. We generate the net point $x_r = (x_r^1, \ldots, x_r^d)$ with $0 \leq r < b^n$. We expand $r$ in base $b$ as
\[ r = r_0 + r_1 b + \ldots + r_{n-1}b^{n-1} \]
with digits $r_k \in \{ 0, 1, \ldots, b-1 \}$, $1 \leq k \leq n-1$. We put $\bar{r} = (r_0, \ldots, r_{n-1})^{\top} \in \Fb_b^n$ and $\bar{h}_{r,i} = C_i \, \bar{r} = (h_{r,i,1}, \ldots, h_{r,i,n})^{\top} \in \Fb_b$, $1 \leq i \leq d$. Then we get $x_r^i$ as
\[ x_r^i = \frac{h_{r,i,1}}{b} + \ldots + \frac{h_{r,i,n}}{b^n}. \]
\begin{df}
A point set $\left\{ x_0, \ldots x_{b^n-1} \right\}$ constructed with the digital method is called a digital $(v,n,d)$-net in base $b$ with generating matrices $C_1, \ldots C_d$ if it is a $(v,n,d)$-net in base $b$.
\end{df}
The definition makes sense because, as we found out before, every point set of $b^n$ points is at least an $(n,n,d)$-net in base $b$. So the question is only what is the connection between the quality parameter $v$ of the digital $(v,n,d)$-net and the generating matrices.
\begin{df}
Let $b$ be a prime power and $C_1, \ldots, C_d$ be $n \times n$ matrices with entries from $\Fb_b$. Let $\varrho(C_1, \ldots, C_d)$ be the largest integer such that for any choice of $\gamma_1, \ldots, \gamma_d \in \N_0$ with $\gamma_1 + \ldots + \gamma_d = \varrho(C_1, \ldots, C_d)$, we have that the first $\gamma_1$ row vectors of $C_1$ together with the first $\gamma_2$ row vectors of $C_2$ together with $\ldots$ together with the first $\gamma_d$ row vectors of $C_d$ (i.e. $\varrho(C_1, \ldots, C_d)$ vectors), are linearly independent. We call $\varrho(C_1, \ldots, C_d)$ the linear independence parameter.
\end{df}
Now we can quote the result connecting the quality parameter with the generating matrices from \cite{DP10}.

\begin{prp}

Let $b$ be a prime power and $C_1, \ldots, C_d$ be $n \times n$ matrices with entries from $\Fb_b$. The point set constructed with the digital method using the matrices $C_1, \ldots, C_d$ is a strict $(n - \varrho(C_1, \ldots, C_d),n,d)$-net in base $b$.

\end{prp}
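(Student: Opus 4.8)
The plan is to translate the combinatorial fairness condition defining a $(v,n,d)$-net into a linear-algebraic condition on the generating matrices, and then recognize the latter as exactly the statement that $\varrho(C_1,\ldots,C_d) \geq n-v$. Fix generating matrices $C_1,\ldots,C_d$ and let $\P = \{x_0,\ldots,x_{b^n-1}\}$ be the digital point set they produce. First I would recall that a $b$-adic interval in $\Q^d$ of order $t := n-v$ is a product $\prod_{i=1}^d [a_i b^{-t_i}, (a_i+1) b^{-t_i})$ with $t_1+\cdots+t_d = t$, and that (by construction of $x_r$ from the digits $h_{r,i,\nu}$) the point $x_r$ lies in such an interval precisely when, for each $i$, the first $t_i$ coordinates of $\bar h_{r,i} = C_i\bar r$ are prescribed to fixed values in $\Fb_b$. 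Writing $C_i^{(t_i)}$ for the $t_i\times n$ matrix consisting of the first $t_i$ rows of $C_i$, and stacking these into one $t\times n$ matrix $A = A(t_1,\ldots,t_d)$, the condition "$x_r$ lies in the given interval" becomes the affine system $A\bar r = w$ over $\Fb_b$, where $w\in\Fb_b^t$ encodes the interval.

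The key step is then a counting argument: the number of $r\in\{0,\ldots,b^n-1\}$ with $A\bar r = w$ equals $b^{n-\rank A}$ if $w$ lies in the column space (row space image) of $A$ and $0$ otherwise. Now I would split into the two directions. For one direction, if $\varrho(C_1,\ldots,C_d) \geq t$, then for every admissible choice $t_1+\cdots+t_d = t$ the $t$ stacked rows are linearly independent, so $\rank A = t$, the map $\bar r \mapsto A\bar r$ is surjective onto $\Fb_b^t$, and every target $w$ has exactly $b^{n-t} = b^v$ preimages; hence every $b$-adic interval of order $t$ contains exactly $b^v$ points, i.e. $\P$ is a $(v,n,d)$-net (using Remark \ref{rem_numberininterval}). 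For the converse — which is what makes the claim "strict" — if $\varrho(C_1,\ldots,C_d) < t$, there is an admissible $(t_1,\ldots,t_d)$ for which the stacked rows are linearly dependent, so $\rank A < t$; then some $w$ is not in the image, and the corresponding $b$-adic interval of order $t$ contains $0 \neq b^v$ points, so $\P$ is not a $(v,n,d)$-net for this $v$. Applying the first direction with $v = n - \varrho(C_1,\ldots,C_d)$ and the second with any smaller $v$ gives strictness.

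The main obstacle — really a bookkeeping point rather than a deep one — is setting up the correspondence between "prescribing the first $t_i$ base-$b$ digits of the $i$-th coordinate of $x_r$" and "prescribing the first $t_i$ entries of $C_i\bar r$" cleanly, and checking that as $r$ ranges over $\{0,\ldots,b^n-1\}$ the vectors $\bar r$ range over all of $\Fb_b^n$ (so that the counting of solutions of $A\bar r = w$ is genuinely over the full space). Once that dictionary is in place, everything reduces to the elementary fact about the number of solutions of a linear system over a finite field, together with the definition of $\varrho(C_1,\ldots,C_d)$. I would also remark that this is precisely the classical argument of Niederreiter, and refer to \cite[Chapter 4]{DP10} for the details omitted here.
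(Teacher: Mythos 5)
Your argument is correct, and it is the standard one: the paper itself offers no proof of this proposition but simply quotes it from \cite[Chapter 4]{DP10}, where precisely this reduction to counting solutions of the linear system $A\bar r = w$ over $\Fb_b$ is carried out. Two points you gloss over but which do go through: for the positive direction you need linear independence for all row selections summing to $t \leq \varrho(C_1,\ldots,C_d)$, not just to $\varrho(C_1,\ldots,C_d)$ itself, which follows since any such selection extends to one summing to $\varrho(C_1,\ldots,C_d)$ and subsets of independent sets are independent; and for strictness the definition in the paper only requires failure at $v-1$, i.e. at order $\varrho(C_1,\ldots,C_d)+1$, where the defining property of $\varrho$ hands you a dependent selection directly.
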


Now we quote again from \cite{DP10} - a result establishing a connection between digital nets and Walsh functions. It will have a significant importance later.
\begin{df} \label{df_d_prime}
Let $b$ be a prime. For a digital net with generating matrices $C_1, \ldots, C_d$ over $\Fb_b$, we call the matrix $C = (C_1^{\top} | \ldots | C_d^{\top}) \in \Fb_b^{n \times dn}$ the overall generating matrix of the digital net. The corresponding dual net is
\[ \Dn(C_1, \ldots, C_d) = \left\{ t \in \{ 0, \ldots, b^n - 1 \}^d \, : \, C_1^{\top} \, \bar{t}_1 + \ldots + C_d^{\top} \, \bar{t}_d = 0 \right\} \]
where $t = (t_1, \ldots, t_d)$ and for $1 \leq i \leq d$ we denote by $\bar{t}_i$ the $n$-dimensional column vectors of $b$-adic digits of $t_i$. We also put
\[ \Dn'(C_1, \ldots, C_d) = \Dn(C_1, \ldots, C_d) \backslash \{ 0 \}. \]
\end{df}

\begin{lem} \label{lem_duality_into_disc}

Let $b$ be a prime and let $\{ x_0, \ldots, x_{b^n - 1} \}$ be a digital $(v,n,d)$-net in base $b$ generated by the matrices $C_1, \ldots, C_d$. Then for $t \in \{ 0, \ldots, b^n - 1 \}^d$, we have
\[ \sum_{h = 0}^{b^n - 1} \wal_t(x_h) = \begin{cases} b^n & \text{ if } t \in \Dn(C_1, \ldots, C_d), \\ 0 & \text{ otherwise}. \end{cases} \]

\end{lem}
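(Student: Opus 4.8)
The plan is to compute the Walsh-function sum directly by unwinding the digital construction of the net, reducing the question to a character-sum identity over the additive group $\Fb_b^n$. First I would write each net point $x_h$ in terms of its digit vector: recalling that $x_h^i = \frac{h_{h,i,1}}{b} + \ldots + \frac{h_{h,i,n}}{b^n}$ with $\bar h_{h,i} = C_i\,\bar h$, and that $\wal_{t_i}$ depends on $x_h^i$ only through the first $\varrho(t_i)$ digits (Proposition \ref{prp_constant_walsh}), I can express $\wal_{t_i}(x_h^i) = \e^{\frac{2\pi\im}{b}\, \bar t_i^{\,\top} C_i\,\bar h}$ after correctly accounting for the digit-reversal bookkeeping in the definition of $\wal_{\alpha}$. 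Taking the tensor product over $i = 1, \ldots, d$ gives
\[ \wal_t(x_h) = \e^{\frac{2\pi\im}{b}\left( \bar t_1^{\,\top} C_1 + \ldots + \bar t_d^{\,\top} C_d \right)\bar h} = \e^{\frac{2\pi\im}{b}\, \bar h^{\,\top}\left( C_1^{\top}\bar t_1 + \ldots + C_d^{\top}\bar t_d \right)}. \]

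Next I would sum over $h = 0, \ldots, b^n - 1$. Since $h \mapsto \bar h$ is a bijection from $\{0,\ldots,b^n-1\}$ onto $\Fb_b^n$, this sum equals $\sum_{\bar h \in \Fb_b^n} \e^{\frac{2\pi\im}{b}\,\bar h^{\,\top} w}$ where $w := C_1^{\top}\bar t_1 + \ldots + C_d^{\top}\bar t_d \in \Fb_b^n$. This is a standard orthogonality relation for characters of $(\Z_b)^n$: it splits as a product of $n$ one-dimensional sums $\sum_{c \in \Z_b} \e^{\frac{2\pi\im}{b} c\, w_k}$, each of which equals $b$ if $w_k \equiv 0 \bmod b$ and $0$ otherwise. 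Hence the total sum is $b^n$ if $w = 0$ and $0$ if $w \neq 0$. By Definition \ref{df_d_prime}, $w = 0$ is precisely the condition $t \in \Dn(C_1, \ldots, C_d)$, which gives the claimed dichotomy.

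The only genuinely delicate point is the index bookkeeping in the first step: the Walsh function $\wal_{t_i}$ pairs the digits $t_{i,0}, t_{i,1}, \ldots$ of $t_i$ with the digits $x^i_1, x^i_2, \ldots$ of $x_h^i$ in a fixed order, and the digit vector $\bar t_i$ in Definition \ref{df_d_prime} is the column of $b$-adic digits in some convention, so one must check that these line up so that the exponent really is $\bar t_i^{\,\top} C_i \bar h$ and not some permuted variant. Since the character-sum conclusion depends only on whether $w$ vanishes, and a reversal of digit order is a linear bijection on $\Fb_b^n$, a careful but routine check confirms it does not affect the outcome; I would state this explicitly and then the identity follows. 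Everything after that step is the elementary character orthogonality, so the proof is short. The quality parameter $v$ plays no role here — the statement holds for any digital net regardless of $v$ — so I would not invoke it at all.
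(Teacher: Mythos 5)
Your proposal is correct and follows essentially the same route as the paper: both reduce $\wal_t(x_h)$ to $\e^{\frac{2\pi\im}{b}\,\bar h^{\top}(C_1^{\top}\bar t_1+\ldots+C_d^{\top}\bar t_d)}$ via $\bar x_h^i = C_i\bar h$ and then invoke character orthogonality on $\Fb_b^n$, the paper phrasing this abstractly ("since $\wal_t$ is a character, the sum is $b^n$ or $0$") where you compute the product of one-dimensional geometric sums explicitly. Your worry about digit order is unfounded here since the conventions for $\wal_{t_i}$ and $\bar t_i$ line up directly, and the remaining details match the paper's argument.
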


\begin{proof}
Since $\wal_t$ is a character, we have
\[ \sum_{h = 0}^{b^n - 1} \wal_t(x_h) = \begin{cases} b^n & \text{ if } \wal_t(x_h) = 1 \text{ for all } 0 \leq h < b^n, \\ 0 & \text{ otherwise}. \end{cases}. \]
We have $\wal_t(x_h) = 1$ for all $0 \leq h < b^n$ if and only if
\[ \sum_{i = 1}^d \bar{t}_i \, \bar{x}_h^i = 0 \]
for all $0 \leq h < b^n$. By definition of the digital nets we have $\bar{x}_h^i = C_i \, \bar{h}$. Hence, we have $\wal_t(x_h) = 1$ for all $0 \leq h < b^n$ if and only if
\[ \sum_{i = 1}^d \bar{t}_i \cdot C_i \, \bar{h} = 0 \]
for all $0 \leq h < b^n$ which is equivalent to
\[ C_1^{\top} \, \bar{t}_1 + \ldots + C_d^{\top} \, \bar{t}_d = 0. \]
\end{proof}

We mention the concept of digital $(v,d)$- and $(V,d)$-sequences just briefly again. Instead of $n \times n$ matrices one uses $\N \times \N$ matrices. Instead of $n$-dimensional vectors one uses sequences. Every digital sequence is a $(v,d)$-sequence. More information on this topic can be found in \cite[Chapter 4]{DP10} and the references given there.
\section{Duality theory}
In this section we deal with the simplification of the constructions of digital $(v,n,d)$-nets. Instead of constructing such directly one constructs certain $\Fb_b$-linear subspaces of $\Fb_b^{dn}$. We mainly quote from \cite[Chapter 7]{DP10}. We start with some definitions. Let $b$ be a prime. By the standard inner product in $\Fb_b^{dn}$ we mean
\[ A \cdot B = \sum_{i, j} a_{i j} b_{i j} \]
for $A = (a_{i j})_{i j}, B = (b_{i j})_{i j} \in \Fb_b^{dn}$.
\begin{df}
Let $\C$ be some $\Fb_b$-linear subspace of $\Fb_b^{dn}$. Then the dual space $\C^{\perp}$ relative to the standard inner product in $\Fb_b^{dn}$ is
\[ \C^{\perp} = \left\{ A \in \Fb_b^{dn}: \, B \cdot A = 0 \text{ for all } B \in \C \right\}. \]
\end{df}

\begin{rem}

We have $\dim(\C^{\perp}) = dn - \dim(\C)$ and $(\C^{\perp})^{\perp} = \C$.

\end{rem}

Recall that we have defined NRT and Hamming weights. We now give dual versions.
\begin{df}
\mbox{}
\begin{enumerate}[(i)]
  \item For $a = (a_1, \ldots, a_n) \in \Fb_b^n$ let
\[ v_n(a) = \begin{cases} 0 & \text{ if } a = 0, \\ \max \left\{ \nu: \, a_{\nu} \neq 0 \right\} & \text{ if } a \neq 0. \end{cases} \]
  \item Let $\varkappa(a)$ be the number of indices $1 \leq \nu \leq n$ such that, $a_{\nu} \neq 0$.
  \item For $A = (a_1, \ldots, a_d) \in \Fb_b^{dn}$ with $a_i \in \Fb_b^n$ for $1 \leq i \leq d$ let
\[ v_n^d(A) = \sum_{i = 1}^d v_n(a_i) \text{ and } \varkappa_n^d(A) = \sum_{i = 1}^d \varkappa_n(a_i). \]
\end{enumerate}
We call $v_n$ and $v_n^d$ NRT weight, $\varkappa_n$ and $\varkappa_n^d$ Hamming weight.
\end{df}
\begin{df}
Let $\C \neq \left\{ (0, \ldots, 0) \right\}$ be an $\Fb_b$-linear subspace of $\Fb_b^{dn}$. 
\begin{enumerate}[(i)]
	\item The minimum distance of $\C$ is given by
\[ \delta_n(\C) = \min \left\{ v_n^d(A): \, A \in \C \textbackslash \left\{ (0, \ldots, 0) \right\} \right\}. \]
  Furthermore, let $\delta_n(\left\{ (0, \ldots, 0) \right\}) = dn + 1$.
  \item The Hamming weight of $\C$ is
\[ \varkappa_n(\C) = \min\left\{ \varkappa_n(A): \, A \in \C \textbackslash \left\{ (0, \ldots, 0) \right\} \right\}. \]
\end{enumerate}
\end{df}

\begin{prp}

For any $\Fb_b$-linear subspace $\C$ of $\Fb_b^{dn}$ we have
\[ 1 \leq \delta_n(\C) \leq dn - \dim(\C) + 1. \]

\end{prp}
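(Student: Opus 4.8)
The plan is to establish the two inequalities separately. The lower bound $\delta_n(\C) \geq 1$ is immediate: by definition $v_n^d(A) = \sum_{i=1}^d v_n(a_i)$, and for $A \in \C \setminus \{(0,\ldots,0)\}$ at least one block $a_i$ is nonzero, so $v_n(a_i) \geq 1$ for that $i$, whence $v_n^d(A) \geq 1$; taking the minimum preserves this. The content is therefore entirely in the upper bound $\delta_n(\C) \leq dn - \dim(\C) + 1$.

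For the upper bound I would argue by a counting / dimension argument of Singleton-bound type. Set $k = \dim(\C)$ and let $\delta = \delta_n(\C)$; I want to show $\delta \leq dn - k + 1$, equivalently $k \leq dn - \delta + 1$. The idea is to exhibit a coordinate projection of $\Fb_b^{dn}$ onto a space of dimension $dn - \delta + 1$ that is injective on $\C$. Concretely, think of an element $A = (a_1,\ldots,a_d) \in \Fb_b^{dn}$ with $a_i = (a_{i,1},\ldots,a_{i,n})$, and recall $v_n(a_i)$ is the largest index $\nu$ with $a_{i,\nu} \neq 0$ (and $0$ if $a_i = 0$). The weight $v_n^d$ only "sees" trailing nonzero coordinates. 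I would choose, in each block $i$, to delete the last few coordinate positions so that the total number of deleted positions is $\delta - 1$ — for instance delete positions $n, n-1, \ldots$ block by block, removing $\delta - 1$ positions in all (say $e_i$ positions from block $i$, with $\sum_i e_i = \delta - 1$). If $A \in \C$ is nonzero but lies in the kernel of this projection, then in each block $i$ all nonzero entries of $a_i$ sit among the deleted trailing positions, so $v_n(a_i) \leq e_i$ whenever $a_i \neq 0$ (and $v_n(a_i) = 0 \le e_i$ otherwise), giving $v_n^d(A) = \sum_i v_n(a_i) \leq \sum_i e_i = \delta - 1 < \delta$, contradicting the definition of $\delta$ as the minimum weight. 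Hence the projection is injective on $\C$, so $k = \dim(\C) \leq dn - (\delta - 1) = dn - \delta + 1$, which rearranges to $\delta \leq dn - \dim(\C) + 1$.

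The one point requiring a little care — and the place I expect the only real friction — is the bookkeeping of how to distribute the $\delta - 1$ deleted positions among the $d$ blocks so that the projection genuinely kills every nonzero kernel element: one must make sure that "all nonzero entries of $a_i$ lie in deleted positions of block $i$" forces $v_n(a_i) \le e_i$, which is exactly why deleting a trailing segment $\{n - e_i + 1, \ldots, n\}$ of each block (rather than an arbitrary set of positions) is the right choice. With the trailing-segment choice this is transparent. Finally one should note the statement in the degenerate case $\C = \{(0,\ldots,0)\}$ is consistent with the earlier convention $\delta_n(\{(0,\ldots,0)\}) = dn + 1$, since then $dn - \dim(\C) + 1 = dn + 1$ as well, so the displayed inequality holds with equality; for $\C \neq \{(0,\ldots,0)\}$ the argument above applies verbatim.
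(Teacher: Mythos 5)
Your overall strategy --- the Singleton-bound puncturing argument --- is the right one (the paper itself does not prove this proposition but simply cites \cite[Proposition 7.3]{DP10}), and your treatment of the lower bound and of the degenerate case $\C = \{(0,\ldots,0)\}$ is fine. But the key step of the upper bound is carried out with the wrong set of deleted coordinates, and as written the contradiction you need does not materialize. Recall that $v_n(a_i)$ is the \emph{largest} index $\nu$ with $a_{i,\nu} \neq 0$. If you delete the trailing segment $\{n-e_i+1,\ldots,n\}$ of block $i$ and $A$ is a nonzero kernel element of your projection, then every nonzero entry of $a_i$ sits in a position $\geq n-e_i+1$, so for each $i$ with $a_i \neq 0$ you get $v_n(a_i) \geq n-e_i+1$, not $v_n(a_i) \leq e_i$. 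Such an $A$ has \emph{large} NRT weight, which is perfectly compatible with $v_n^d(A) \geq \delta$, so no contradiction arises and injectivity is not established. (Concretely, with $d=1$ and $e_1 = \delta-1 = 1$: a codeword supported only in position $n$ lies in your kernel and has $v_n = n$, far above $\delta - 1$.)

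The fix is immediate: vectors of small NRT weight are supported on \emph{leading} positions, so you must delete the leading segments $\{1,\ldots,e_i\}$ of the blocks, with $\sum_i e_i = \delta - 1$. A nonzero kernel element then has all nonzero entries of $a_i$ in positions $\leq e_i$, so $v_n(a_i) \leq e_i$ for every $i$ (this also covers $a_i = 0$), hence $v_n^d(A) \leq \sum_i e_i = \delta - 1 < \delta$, contradicting the definition of $\delta_n(\C)$ as a minimum over nonzero elements. The projection onto the remaining $dn - \delta + 1$ coordinates is therefore injective on $\C$, giving $\dim(\C) \leq dn - \delta + 1$ as you intended. Everything else in your write-up survives this correction unchanged.
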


This fact is part of \cite[Proposition 7.3]{DP10}. Our goal is to transfer the subspaces $\C$ into point sets in $\Q^d$. To do so we need the following tool.
\begin{df} \label{df_mapping_Phi}
Let the mapping $\Phi_n^d: \, \Fb_b^{dn} \rightarrow \Q^d$ be given in the following way. For $a = (\alpha_1, \ldots, \alpha_n) \in \Fb_b^n$, let
\[ \Phi_n(a) = \frac{\alpha_1}{b} + \ldots + \frac{\alpha_n}{b^n} \]
and for $A = (a_1, \ldots, a_d) \in F_b^{dn}$, let
\[ \Phi_n^d(A) = \left( \Phi_n(a_1), \ldots, \Phi_n(a_d) \right). \]
\end{df}

The following result is \cite[Theorem 7.14]{DP10} and is our important duality tool.

\begin{prp} \label{prp_dig_net_duality}

Let $n, d \in \N, \, n \geq 2$. Let $\C$ and $\C^{\perp}$ be mutually dual $\Fb_b$-linear subspaces of $\Fb_b^{dn}$ of dimensions $n$ and $nd - n$, respectively. Then $\Phi_n^d(\C)$ is a digital $(v,n,d)$-net in base $b$ if and only if $\delta_n(\C^{\perp}) \geq n - v + 1$.

\end{prp}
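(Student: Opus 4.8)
The plan is to reduce the claim to the familiar ``dual-net'' description of the quality parameter of a digital net and then to translate that description, via the digit map $\Phi_n^d$, into a statement about the minimum distance $\delta_n$. First I would realize $\Phi_n^d(\C)$ as a digital net: choose a matrix $M \in \Fb_b^{n \times dn}$ whose rows form a basis of $\C$, split it into $d$ blocks $M = (M_1 \,|\, \cdots \,|\, M_d)$ of $n$ columns, and put $C_i := M_i^{\top}$. Comparing with the formula of the digital method, the element $\bar r^{\top} M$ of $\C$ (with coefficient vector $\bar r \in \Fb_b^n$) has $i$-th block $\bar r^{\top} M_i$, whose image under $\Phi_n$ is $\sum_{\ell=1}^{n}(\bar r^{\top} M_i)_\ell \, b^{-\ell} = x_r^i$; hence $\Phi_n^d(\C)$ is exactly the digital net with generating matrices $C_1, \ldots, C_d$, and it has $b^n$ distinct points since $\dim \C = n$ and $\Phi_n^d$ is injective. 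Next, for $t = (t_1, \ldots, t_d) \in \{0, \ldots, b^n-1\}^d$ with $b$-adic digit vectors $\bar t_i$, the condition $C_1^{\top} \bar t_1 + \cdots + C_d^{\top} \bar t_d = 0$ is $M \bar t = 0$, i.e. $\bar t$ is orthogonal, with respect to the standard inner product on $\Fb_b^{dn}$, to every row of $M$, hence to all of $\C$. So, under the digit bijection $\Fb_b^{dn} \leftrightarrow \{0, \ldots, b^n-1\}^d$, which carries $v_n^d$ to $\varrho^d$, the dual net $\Dn(C_1, \ldots, C_d)$ corresponds to $\C^{\perp}$ and $\delta_n(\C^{\perp}) = \min\{\varrho^d(t) : t \in \Dn'(C_1, \ldots, C_d)\}$ (with $\delta_n(\{0\}) = dn+1$ taking care of the case $\C^{\perp} = \{0\}$). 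It therefore suffices to show that the digital net is a $(v,n,d)$-net if and only if every nonzero $t \in \Dn(C_1, \ldots, C_d)$ satisfies $\varrho^d(t) \ge n - v + 1$.

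For this I would expand the indicator of a $b$-adic interval $I = I_{d_1 m_1} \times \cdots \times I_{d_d m_d}$ of order $d_1 + \cdots + d_d = n - v$ into $b$-adic Walsh functions; by Proposition \ref{prp_constant_walsh} and orthonormality of the Walsh system, $\chi_{I_{d_i m_i}}(y) = b^{-d_i} \sum_{\varrho(t_i) \le d_i} \overline{\wal_{t_i}(m_i b^{-d_i})} \, \wal_{t_i}(y)$, so that multiplying out over $i$, summing over the net points, and applying Lemma \ref{lem_duality_into_disc} gives
\[ \# (I \cap \P) = b^v + b^v \sum_{t \in \Dn',\, \varrho(t_i) \le d_i \,\forall i} \ \prod_{i=1}^{d} \overline{\wal_{t_i}(m_i b^{-d_i})}. \]
If every nonzero $t \in \Dn$ has $\varrho^d(t) \ge n - v + 1$, then no nonzero $t \in \Dn$ can have $\varrho(t_i) \le d_i$ for all $i$ (which would force $\varrho^d(t) \le n - v$), so the correction sum is empty and every interval of order $n-v$ contains exactly $b^v$ points, i.e. $\P$ is a $(v,n,d)$-net. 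Conversely, if some nonzero $t^* \in \Dn$ has $\varrho^d(t^*) \le n - v$, I would choose $d_i \ge \varrho(t_i^*)$ with $\sum_i d_i = n - v$ and sum the correction term against $\prod_i \wal_{t_i^*}(m_i b^{-d_i})$ over all admissible $(m_1, \ldots, m_d)$; by the finite orthogonality relations $\sum_{m=0}^{b^{d_i}-1} \overline{\wal_{t_i}(m b^{-d_i})} \wal_{t_i^*}(m b^{-d_i})$, which equal $b^{d_i}$ for $t_i = t_i^*$ and $0$ otherwise, this collapses to $b^{n-v} \ne 0$, so the correction term is nonzero for at least one interval $I$ of order $n-v$, whence $\P$ is not fair with respect to $I$ and not a $(v,n,d)$-net.

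Combining, $\Phi_n^d(\C)$ is a digital $(v,n,d)$-net $\Leftrightarrow$ every nonzero $t \in \Dn(C_1, \ldots, C_d)$ has $\varrho^d(t) \ge n - v + 1$ $\Leftrightarrow$ $\delta_n(\C^{\perp}) \ge n - v + 1$, which is the assertion. I expect the main obstacle to be the ``only if'' part of the core equivalence: exhibiting a concrete $b$-adic interval that fails to be fair once $\delta_n(\C^{\perp})$ drops below $n - v + 1$. This is where the Walsh expansion and the finite orthogonality relations carry the argument; the remaining steps are essentially bookkeeping — unwinding the definitions of $\Phi_n^d$, of the dual net, and of the NRT weight, together with the purely dimensional observation that a nonzero dual-net vector of NRT weight at most $n-v$ can be accommodated by an interval of order $n-v$.
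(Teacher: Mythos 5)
The paper does not actually prove this proposition: it is quoted verbatim as Theorem 7.14 of \cite{DP10}, so there is no internal proof to compare against. Your argument is a correct, self-contained derivation, and it takes a different route from the one in \cite{DP10}, where the result is reached through the linear-independence parameter $\varrho(C_1,\ldots,C_d)$ and a purely linear-algebraic duality between initial row segments of the generating matrices and NRT weights of the dual code. You instead argue via harmonic analysis on $\Z_b^{dn}$: after the (correct) observations that $\Phi_n^d(\C)$ is the digital net generated by $C_i=M_i^{\top}$ for a basis matrix $M=(M_1|\cdots|M_d)$ of $\C$, and that the digit bijection carries $\Dn(C_1,\ldots,C_d)$ onto $\C^{\perp}$ and $v_n^d$ onto $\varrho^d$, you expand the indicator of a $b$-adic interval of order $n-v$ in Walsh functions, apply the character-sum identity of Lemma \ref{lem_duality_into_disc} to get $\#(I\cap\P)=b^v+b^v\sum_{t}\cdots$, and use discrete orthogonality to exhibit a non-fair interval whenever some nonzero dual vector has NRT weight at most $n-v$. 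This buys a proof that relies only on tools already set up in this paper (Proposition \ref{prp_constant_walsh} and Lemma \ref{lem_duality_into_disc}) at the cost of a somewhat longer computation; the only delicate step, the converse, is handled correctly by choosing levels $d_i\geq\varrho(t_i^*)$ with $\sum_i d_i=n-v$ and isolating $t^*$ by orthogonality, and the degenerate case $\C^{\perp}=\{0\}$ is covered by the convention $\delta_n(\{0\})=dn+1$.
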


\begin{rem}

The point set $\Phi_n^d(\C)$ is always at least an $(n,n,d)$-net in base $b$. So we can call $\Phi_n^d(\C)$ the corresponding digital $(v,n,d)$-net in base $b$.

\end{rem}

\chapter{Characterization of $S_{pq}^r B([0,1)^d)$-spaces with $b$-adic Haar bases}
\section{The $b$-adic Haar basis}
We give a $b$-adic generalization of Theorem \ref{thm_dyadic_haar_basis}.

\begin{thm}

The system
\begin{align} \label{haarbasis}
\left\{ b^{\frac{|j|}{2}} h_{jml} \, : \, j \in \N_{-1}^d, \, m \in \Dd_j, \, l \in \B_j \right\}
\end{align}
is an orthonormal basis of $L_2(\Q^d)$, an unconditional basis of $L_p(\Q^d)$ for $1 < p < \infty$ and a conditional basis of $L_1(\Q^d)$. For any function $f \in L_2(\Q^d)$ we have
\begin{align} \label{parsevals_equation}
\left\| f | L_2 (\Q^d) \right\|^2 = \sum_{j \in \N_{-1}^d} b^{|j|} \sum_{m \in \Dd_j, \, l \in \B_j} |\mu_{jml}|^2
\end{align}
where
\begin{align}
\mu_{jml} = \mu_{jml}(f) = \int_{\Q^d} f(x) h_{jml}(x) \, \dint x
\end{align}

\end{thm}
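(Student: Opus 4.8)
The plan is to reduce the $d$-dimensional statement to the one-dimensional case by a tensor-product argument, and to establish the one-dimensional case by a direct Gram--Schmidt-free computation. First I would treat $d=1$. Here the claim is that $\{b^{j/2}h_{jml}\colon j\in\N_0,\ m\in\Dd_j,\ l\in\B_j\}\cup\{h_{-1,0,1}\}$ is orthonormal in $L_2(\Q)$. Orthonormality: for fixed $j$, the functions $h_{jml}$ with varying $m$ have disjoint supports $I_{jm}$, so those pairings vanish; for fixed $j,m$ and $l\neq l'$ in $\B_j$, the integral $\int_{I_{jm}} h_{jml}\overline{h_{jml'}}$ is $b^{-j-1}\sum_{k=0}^{b-1}\e^{\frac{2\pi\im}{b}(l-l')k}=0$ since $l-l'\not\equiv0\bmod b$; and $\int_{I_{jm}}|h_{jml}|^2 = b^{-j-1}\sum_{k=0}^{b-1}1 = b^{-j}$, which explains the normalizing factor $b^{j/2}$. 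For $j<j'$, the function $h_{j'm'l'}$ is supported in some child-of-child of $I_{jm}$ on which $h_{jml}$ is the constant $\e^{\frac{2\pi\im}{b}lk}$, and $\int_{I_{j'm'}}h_{j'm'l'}=0$ by the same root-of-unity sum, so cross-level pairings also vanish. Finally $\langle h_{-1,0,1},b^{j/2}h_{jml}\rangle = b^{j/2}\int_\Q h_{jml}=0$ likewise, and $\|h_{-1,0,1}\|_2=1$.

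Next, completeness in $L_2(\Q)$. I would argue that a function $f\in L_2(\Q)$ orthogonal to every $h_{jml}$ and to $h_{-1,0,1}$ must vanish a.e. The span of $\{h_{-1,0,1}\}\cup\{h_{jml}\colon 0\le j<n,\ m,l\}$ coincides with the space of functions constant on every $b$-adic interval of level $n$: indeed both have dimension $b^n$ (one checks $1+\sum_{j=0}^{n-1}b^j(b-1)=b^n$), and the Haar functions of levels $<n$ together with the constant are clearly contained in that space and are linearly independent by the orthogonality just proved. Hence orthogonality of $f$ to all Haar functions forces $\int_{I_{nm}}f = 0$ for every $n$ and every $m\in\Dd_n$, i.e. the conditional expectations of $f$ onto the $\sigma$-algebras generated by level-$n$ intervals all vanish; by the Lebesgue differentiation theorem (martingale convergence) these averages converge to $f$ a.e., so $f=0$. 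This gives the orthonormal basis property; Parseval's equation \eqref{parsevals_equation} is then the abstract Parseval identity for this basis, with the $b^{|j|}$ factor absorbing the normalization.

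Then I would pass to general $d$ by tensorization: the system \eqref{haarbasis} is exactly the set of tensor products $h_{jml}(x)=\prod_{i=1}^d h_{j_im_il_i}(x_i)$ with normalizing factor $b^{|j|/2}=\prod_i b^{j_i/2}$, and it is standard that if $\{e_\alpha\}$ is an orthonormal basis of $L_2(\Q)$ then $\{e_{\alpha_1}\otimes\cdots\otimes e_{\alpha_d}\}$ is an orthonormal basis of $L_2(\Q^d)=L_2(\Q)^{\otimes d}$; Parseval in the product form follows immediately, and $\mu_{jml}(f)$ is the corresponding coefficient. For the $L_p$ statements, $1<p<\infty$, unconditionality: I would invoke the one-dimensional unconditional basis property of the $b$-adic Haar system — which can be proved exactly as in the dyadic case via the Burkholder square-function / martingale-transform inequalities, since the partial sums over levels $<n$ are martingale conditional expectations with respect to the filtration by $b$-adic intervals — and then use that the tensor product of unconditional bases of $L_p$-spaces built on product measure spaces is again an unconditional basis of $L_p$ of the product (iterating the vector-valued Littlewood--Paley/Burkholder inequality in each coordinate). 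Conditionality in $L_1$ is inherited from the dyadic case, already cited from \cite{W97}; one can also note it suffices to produce one coordinate slice where the rearrangement fails.

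The main obstacle is the $L_p$-unconditionality for $1<p<\infty$ in base $b$: unlike orthonormality and $L_2$-completeness, this is not a formal consequence of the dyadic theorem, because the $b$-adic Haar functions are complex-valued and each $b$-adic refinement splits an interval into $b$ pieces rather than $2$. The cleanest route is to phrase the level-$n$ partial sum operator as the conditional expectation $\E[\cdot\mid\mathcal{F}_n]$ for the filtration $\mathcal{F}_n$ generated by $b$-adic intervals of order $n$, observe that a Haar multiplier is a martingale transform, and apply Burkholder's inequality (valid for complex martingales, with constants depending only on $p$) to get the $L_p$-bound uniformly in the choice of signs; this yields unconditionality in one dimension, and iterating in the $d$ coordinates (via the vector-valued version of the same inequality) gives the $d$-dimensional claim. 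I expect the rest to be routine and would present it as such, since the paper explicitly says it will obtain the dyadic Theorem \ref{thm_dyadic_haar_basis} as the special case $b=2$.
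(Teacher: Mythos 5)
Your treatment of the $L_2$ part is correct and close in spirit to the paper's: the orthonormality computations (disjoint supports, root-of-unity sums, the normalization $\|h_{jml}\|_2^2=b^{-j}$) are exactly right, and your completeness argument (orthogonality to all Haar functions forces all level-$n$ conditional expectations to vanish, then martingale convergence) is a legitimate variant of the paper's route, which instead shows directly that the partial sum operators $s_nf$ are the conditional expectations onto level-$n$ $b$-adic cubes, are $L_p$-contractions by H\"older, and converge uniformly on continuous functions. Note that the paper's version buys more for the same price: it yields the Schauder basis property simultaneously for all $1\leq p<\infty$, which is needed for the ``conditional basis of $L_1$'' clause. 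Your plan only establishes completeness in $L_2$; to assert anything about $L_1$ you still have to assemble the contraction property of the conditional expectations with density into a proof that the system is a basis of $L_1$ at all, and then invoke the classical fact that \emph{every} basis of $L_1$ is conditional (this is how the paper argues). ``Inherited from the dyadic case'' is not an argument here, since for $b>2$ it is a different system.

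The genuine gap is in the unconditionality step, and it sits exactly at the obstacle you yourself identified. For $b>2$ the martingale difference $d_jf$ on an atom $I_{jm}$ is $\sum_{l\in\B_j}\mu_{jml}h_{jml}$, a vector in a $(b-1)$-dimensional space; an arbitrary choice of signs $\epsilon_{jml}$ depending on $l$ is therefore \emph{not} a scalar martingale transform (it does not multiply $d_jf$ by a single predictable scalar), so Burkholder's inequality does not apply as stated. For $b=2$ there is only one $l$ and the reduction is literal, which is precisely why the dyadic case is easier. To repair your route one needs either the conditional square-function characterization together with the regularity of the $b$-adic filtration (each atom splits into exactly $b$ children of equal measure, and the $L_2(I_{jm})$-mass of $d_jf$ is preserved under $l$-wise sign changes), or the vector-valued Calder\'on--Zygmund machinery for compactly supported wavelet bases as in the references the paper cites. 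The paper avoids all of this by a completely different mechanism: it derives unconditionality for $p>2$ from the $b$-adic Haar characterization of $S^0_{p2}B([0,1)^d)$ proved in the same chapter, via the embedding $S^0_{p2}B\hookrightarrow S^0_{p2}F=L_p$, and then handles $1<p<2$ by duality. So even with the repair, your approach is genuinely different from the paper's for this part; it is more self-contained and classical, but the martingale-transform step as written does not go through.
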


\begin{proof}
We start by proving that the system \eqref{haarbasis} is a Schauder basis of $L_p(\Q^d)$ for $1 \leq p < \infty$. The orthonormality is trivial, therefore, we will have proved that the system \eqref{haarbasis} is a conditional basis of $L_1(\Q^d)$ (since every basis in $L_1(\Q^d)$ is conditional, see \cite[Theorem II.D.10]{W91}) and an orthonormal basis of $L_2(\Q^d)$. In a second step we prove the unconditionality of the basis for $p > 1$. The formula \eqref{parsevals_equation} is Parseval's equation.

Let $1 \leq p < \infty$ and $f \in L_p(\Q^d)$. We denote by $s_n f$ the partial sum of the Haar series of $f$
\[ s_n f = \sum_{j_1, \ldots, j_d = -1}^n b^{\frac{j_{\eta_1} + \ldots + j_{\eta_s}}{2}} \sum_{m \in \Dd_j, \, l \in \B_j} \mu_{jml} \, h_{jml}. \]
We denote $\bar{n} = (n, \ldots, n)$. The function $s_n f$ is constant on all intervals $I_{\bar{n} m}$ for $m \in \Dd_n$ and one proves inductively that for every $n \in \N_0$ and any $m \in \Dd_{\bar{n}}$ we have
\[ s_n f(x) = b^{dn} \int_{I_{\bar{n} m}} f(y) \dint y \]
for all $x \in I_{\bar{n} m}$. Now we assume that $f$ is a continuous function. For every $\varepsilon > 0$ we can find an $n_0(\varepsilon)$ such that, for all $x, y \in I_{\bar{n} m}$ for any $m \in \Dd_{\bar{n}}$ we have
\[ |f(x) - f(y)| < \varepsilon, \]
and therefore,
\[ \left| f(x) - s_n f(x) \right| \leq b^{dn} \int_{I_{\bar{n} m}} |f(x) - f(y)| \dint y < \varepsilon \]
for all $n > n_0(\varepsilon)$. Hence,
\[ \left\| f - s_n f | L_{\infty}(\Q^d) \right\| < \varepsilon. \]
This means that the linear span of the Haar functions is dense in the space of continuous functions on $\Q^d$ with respect to the $\sup$-norm which in turn is dense in $L_p(\Q^d)$ which gives us $L_p(\Q^d)$-convergence of $s_n f$ to $f$. Therefore, we have completeness. Hölder's inequality gives us additionally
\[ \left\| s_n f | L_p (\Q^d) \right\| \leq \left\| f | L_p (\Q^d) \right\| \]
since
\begin{align*}
\int_0^1 |s_n f(x)|^p \dint x & = \sum_{m \in \Dd_{\bar{n}}} \int_{I_{\bar{n} m}} |s_n f(x)|^p \dint x \\
                              & = \sum_{m \in \Dd_{\bar{n}}} \int_{I_{\bar{n} m}} \dint x \, b^{dnp} \left| \int_{I_{\bar{n} m}} f(y) \dint y \right|^p \\
                              & \leq \sum_{m \in \Dd_{\bar{n}}} b^{dn (p - 1)} b^{-dn (p - 1)} \int_{I_{\bar{n} m}} |f(y)|^p \dint y \\
                              & = \int_0^1 |f(y)|^p \dint y.
\end{align*}
Now, let $p > 1$. The unconditionality of the basis follows for the case $p > 2$ from the results in the next section and Corollary \ref{cor_emb_BF} since in this case we have
\[ S_{p 2}^0 B(\Q^d) \hookrightarrow S_{p 2}^0 F(\Q^d) \]
and by \eqref{norm_SpqrF_Lp} we have
\[ S_{p 2}^0 F(\Q^d) = L_p(\Q^d). \]
Therefore, we get unconditionality from the unconditionality in $S_{p 2}^0 B(\Q^d)$ which we prove in the next section. In the case $1 < p < 2$ the unconditionality follows from duality.

\end{proof}

\begin{df} \label{def_bas_coeff}
The system \eqref{haarbasis} is called a $b$-adic Haar basis. The sequence $(\mu_{jml}(f))$ is called the sequence of $b$-adic Haar coefficients of $f$.
\end{df}
Analogously one proves the following result.

\begin{thm}

The system of $b$-adic Haar functions $h_{jml}, \, j \in \N_{-1}^d, \, m \in \Z^d, \, l \in \B_j$ is an orthogonal basis of $L_2(\R^d)$, an unconditional basis of $L_p(\R^d)$ for $1 < p < \infty$ and a conditional basis of $L_1(\R^d)$.

\end{thm}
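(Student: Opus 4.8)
The plan is to follow the proof of the preceding theorem on $\Q^d$ almost verbatim, inserting the two modifications forced by the passage from the cube to $\R^d$: in each coordinate the location index now runs over $m\in\Z$ rather than $m\in\Dd_j$, so the coarsest level consists of the infinitely many indicators $\chi_{[m,m+1)}$, $m\in\Z^d$; and the density arguments must start from compactly supported functions rather than from continuous functions on $\Q^d$.

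\emph{Orthogonality.} Since $h_{jml}$ is a tensor product, $\int_{\R^d}h_{jml}\overline{h_{j'm'l'}}$ factors into one-dimensional integrals. If in some coordinate the intervals $I_{j_im_i}$ and $I_{j'_im'_i}$ are disjoint, the whole integral vanishes; if one strictly contains the other, the coarser Haar factor is constant on the finer interval while the finer one integrates to zero over its support because $\sum_{k=0}^{b-1}\e^{\frac{2\pi\im}{b}lk}=0$ for $l\in\B_j$. Hence a nonvanishing factor in every coordinate forces $(j_i,m_i)=(j'_i,m'_i)$ for all $i$, and then the product equals $|I_{jm}|$ if in addition $l=l'$ and $0$ otherwise, by orthogonality of the characters of $\Z_b$. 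Thus $\{h_{jml}\}$ is orthogonal with $\|h_{jml}|L_2(\R^d)\|^2=b^{-|j|}$, so $\{b^{|j|/2}h_{jml}\}$ is an orthonormal system.

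\emph{Schauder basis for $1\le p<\infty$.} Fix an enumeration in which the partial sum $s_n$ collects all indices $(j,m,l)$ with $-1\le j_i\le n$ for every $i$ before passing to level $n+1$. As on $\Q^d$ one shows by induction that $s_nf$ is constant on each cube $I_{\bar{n}m}$, $m\in\Z^d$, with value the $b$-adic average $b^{dn}\int_{I_{\bar{n}m}}f(y)\,\dint y$; in particular the coarsest functions $h_{-1,m,1}=\chi_{[m,m+1)}$, forming an infinite family, enter the partial sums first and reproduce the averages of $f$ over the unit cubes. For $f$ continuous with support in a fixed cube $Q$, uniform continuity gives, for each $\varepsilon>0$, an $n_0$ with $\|f-s_nf|L_\infty(\R^d)\|<\varepsilon$ for $n>n_0$; since $f$ and all $s_nf$ then live in a fixed bounded neighbourhood of $Q$, also $\|f-s_nf|L_p(\R^d)\|\to 0$, and such $f$ are dense in $L_p(\R^d)$. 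The same Jensen/H\"older estimate on each cube $I_{\bar{n}m}$ as in the $\Q^d$ computation yields $\|s_nf|L_p(\R^d)\|\le\|f|L_p(\R^d)\|$. Density plus uniform boundedness of the $s_n$ makes the system a Schauder basis of $L_p(\R^d)$ for $1\le p<\infty$; being orthogonal it is an orthogonal basis of $L_2(\R^d)$, and for $p=1$ it is conditional since every basis of $L_1$ is (cf.\ \cite[Theorem II.D.10]{W91}).

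\emph{Unconditionality for $1<p<\infty$, and the main obstacle.} As on $\Q^d$ one argues by cases: for $p>2$ the unconditionality descends from the $\R^d$-analogue of the $b$-adic Haar characterization of $S_{p2}^0B$ of the next chapter together with the embedding $S_{p2}^0B(\R^d)\hookrightarrow S_{p2}^0F(\R^d)=L_p(\R^d)$, and the range $1<p<2$ then follows by duality from $(L_p(\R^d))'=L_{p'}(\R^d)$; equivalently one may invoke the Littlewood--Paley / martingale square-function theory for the $b$-adic filtration on $\R^d$, or simply cite \cite{W97}. The only genuinely new point compared with the cube is the bookkeeping at the coarsest level: one must order the infinite family $\{\chi_{[m,m+1)}:m\in\Z^d\}$ together with all finer Haar functions so that the partial-sum operators remain the $b$-adic conditional expectations and stay uniformly $L_p$-bounded. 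Once this is arranged, every estimate is identical to the $\Q^d$ case, so the real content again sits in the characterization chapter (or in \cite{W97}), not in the present adaptation.
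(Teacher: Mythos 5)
Your proof is correct and follows exactly the route the paper intends: the paper disposes of this theorem with the single remark that it is proved ``analogously'' to the preceding $\Q^d$ theorem (deferring otherwise to \cite{W97} and \cite{RW98}), and your adaptation --- orthogonality via the tensor-product and roots-of-unity argument, completeness via the conditional-expectation partial sums $s_n$ together with density of compactly supported continuous functions and the uniform H\"older bound, unconditionality for $p>2$ from the $\R^d$-version of the $S_{p2}^0B$ Haar characterization plus the embedding into $L_p(\R^d)$ and for $1<p<2$ by duality --- is precisely that analogy carried out. Your explicit flagging of the one genuinely new point, namely the enumeration of the infinite coarsest-level family $\{\chi_{[m,m+1)}:m\in\Z^d\}$ needed to make the partial sums finite and keep them uniformly bounded, addresses a detail the paper glosses over entirely.
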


The interested reader is referred to \cite{RW98} for much more information on $b$-adic wavelets, especially $b$-adic Haar functions.
\section{Equivalent norms for $S_{pq}^r B (\Q^d)$}
The definition of the spaces $S_{pq}^r B (\Q^d)$ and $S_{pq}^r F (\Q^d)$ is not applicable for practical problems. Since we are going to calculate the norms of the discrepancy function, we need some equivalent norms. In \cite[Theorem 2.41]{T10a} Triebel gave such norms for the Besov spaces with dominating mixed smoothness for $d = 2$ using dyadic (i.e. $b = 2$) Haar bases. We generalize this theorem for arbitrary dimension and arbitrary base $b$.  We will get results for the spaces $S_{pq}^r F (\Q^d)$ using the embedding results given by Corollary \ref{cor_emb_BF}.

The definition of the spaces $S_{pq}^r B (\Q^d)$ was dyadic making it difficult to gain any $b$-adic results. Hence, we have to change the base first.

Let $\varphi_0 \in \mathcal{S}(\R)$ satisfy $\varphi_0(t) = 1$ for $|t| \leq 1$ and $\varphi_0(t) = 0$ for $|t| > \frac{b + 1}{b}$. Let
\[ \varphi_k(t) = \varphi_0(b^{-k} t) - \varphi_0(b^{-k + 1} t) \]
where $t \in \R, \, k \in \N$ and
\[ \varphi_k(t) = \varphi_{k_1}(t_1) \ldots \varphi_{k_d}(t_d) \]
where $k = (k_1,\ldots,k_d) \in \N_0^d, \, t = (t_1,\ldots,t_d) \in \R^d$.
The functions $\varphi_k$ are a $b$-adic resolution of unity since
\[ \sum_{k \in \N_0^d} \varphi_k(x) = 1 \]
for all $x \in \R^d$. The functions $\mathcal{F}^{-1}(\varphi_k \mathcal{F} f)$ are entire analytic functions for any $f \in \mathcal{S}'(\R^d)$.
\begin{df} \label{b_adic_SB}
Let $0 < p,q \leq \infty$ and $r \in \R$. Let $(\varphi_k)$ be a $b$-adic resolution of unity. The $b$-adic Besov space with dominating mixed smoothness $S_{pq}^r B^b(\R^d)$ consists of all $f \in \S'(\R^d)$ with finite quasi-norm
\[ \left\| f | S_{pq}^r B^b(\R^d) \right\| = \left( \sum_{k \in \N_0^d} b^{r |k| q} \left\| \mathcal{F}^{-1}(\varphi_k \mathcal{F} f) | L_p(\R^d) \right\|^q \right)^{\frac{1}{q}} \]
with the usual modification if $q = \infty$.
\end{df}

\begin{lem} \label{schm_triebel_1987}

Let $0 < p \leq \infty$ and $l > \frac{1}{\min(1,p)} - \frac{1}{2}$. Then there exists a constant $c > 0$ such that, for every $M \in S_2^l W(\R^d)$, all positive $\beta_1, \ldots, \beta_d$ and every $f \in L_p(\R^d)$ for which $\F f$ has compact support in $[-\beta_1, \beta_1] \times \ldots \times [-\beta_d, \beta_d]$, we have
\[ \left\| \F^{-1} (M \F f) | L_p(\R^d) \right\| \leq c \left\| M(\beta_1 \cdot, \ldots, \beta_d \cdot) | S_2^l W(\R^d) \right\| \left\| f | L_p(\R^d) \right\|. \]

\end{lem}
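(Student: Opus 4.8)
The statement is a Fourier multiplier theorem for entire analytic functions: one reduces to a fixed frequency box by dilation, localizes the multiplier, and then splits the argument into the cases $p\geq 1$ and $0<p<1$.

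First I would dilate. If $\F f$ is supported in $[-\beta_1,\beta_1]\times\dots\times[-\beta_d,\beta_d]$, set $f_\beta(x)=f(\beta_1 x_1,\dots,\beta_d x_d)$; then $\F f_\beta$ is supported in $[-1,1]^d$, and a change of variables in the defining integral shows that $\F^{-1}\bigl(M(\beta_1\cdot,\dots,\beta_d\cdot)\,\F f_\beta\bigr)$ is, up to the factor $\beta_1\cdots\beta_d$, a dilate of $\F^{-1}(M\F f)$. Keeping track of the Jacobians in the $L_p$-norms — and noting that the right-hand side of the asserted inequality is already written in terms of $M(\beta_1\cdot,\dots,\beta_d\cdot)$ — it suffices to prove the lemma for $\beta_1=\dots=\beta_d=1$. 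So from now on $\F f$ is supported in $[-1,1]^d$ and the goal is $\|\F^{-1}(M\F f)|L_p(\R^d)\|\leq c\,\|M|S_2^l W(\R^d)\|\,\|f|L_p(\R^d)\|$. Next I would localize: fix $\psi\in\S(\R^d)$ with compact support and $\psi\equiv 1$ on $[-1,1]^d$; since $\F f$ is supported where $\psi\equiv 1$, we have $M\F f=(M\psi)\F f$, so $M$ may be replaced by $M\psi$, and by the equivalent norm $\sum_{0\leq\alpha_i\leq l}\|D^\alpha\,\cdot\,|L_2\|$ for $S_2^l W$ together with the Leibniz rule one gets $\|M\psi|S_2^l W(\R^d)\|\leq c\,\|M|S_2^l W(\R^d)\|$. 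Hence we may assume $K:=\F^{-1}M\in\S(\R^d)$, in particular band-limited, and $\F^{-1}(M\F f)=c_d\,K*f$.

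For $p\geq 1$ the estimate follows from Young's inequality $\|K*f|L_p(\R^d)\|\leq\|K|L_1(\R^d)\|\,\|f|L_p(\R^d)\|$ once $\|K|L_1(\R^d)\|$ is bounded. Cauchy--Schwarz with the weight $\prod_{i=1}^d(1+x_i^2)^{l/2}$ gives
\[ \|K|L_1(\R^d)\|\leq\Bigl(\int_{\R^d}|K(x)|^2\prod_{i=1}^d(1+x_i^2)^l\,\dint x\Bigr)^{1/2}\Bigl(\int_{\R^d}\prod_{i=1}^d(1+x_i^2)^{-l}\,\dint x\Bigr)^{1/2}. \]
The second factor is finite precisely because $l>\tfrac12=\tfrac1{\min(1,p)}-\tfrac12$; the first factor is comparable to $\|M|S_2^l W(\R^d)\|$ by Plancherel, since multiplication of $K$ by $x_i$ corresponds to differentiation of $M$ in the $i$-th variable and $\prod_{i=1}^d(1+x_i^2)^l$ is a polynomial in the $x_i^2$ with exponents at most $l$ in each variable.

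For $0<p<1$ Young's inequality is unavailable, and one instead uses that both $K$ and $f$ are band-limited. The analogous weighted H\"older inequality, now with exponents $\tfrac2p$ and $\tfrac{2}{2-p}$, yields
\[ \|K|L_p(\R^d)\|^p\leq\Bigl(\int_{\R^d}|K(x)|^2\prod_{i=1}^d(1+x_i^2)^l\,\dint x\Bigr)^{p/2}\Bigl(\int_{\R^d}\prod_{i=1}^d(1+x_i^2)^{-\frac{lp}{2-p}}\,\dint x\Bigr)^{1-\frac p2}, \]
and the last integral converges exactly when $\tfrac{2lp}{2-p}>1$, i.e. $l>\tfrac1p-\tfrac12$, which is the hypothesis; hence $\|K|L_p(\R^d)\|\leq c\,\|M|S_2^l W(\R^d)\|$. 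It then remains to bound $\|K*f|L_p(\R^d)\|\leq c\,\|K|L_p(\R^d)\|\,\|f|L_p(\R^d)\|$, which is a standard fact for functions whose Fourier transforms live in a fixed compact set: decomposing $\R^d$ into unit cubes, using the Nikol'skij inequality for entire analytic functions of fixed exponential type to pass from the $L_\infty$-norm of $K$ on a cube to its $L_p$-norm on a slightly enlarged cube (and likewise for $f$), and applying the $p$-triangle inequality $(\sum a_k)^p\leq\sum a_k^p$ before summing over the cubes, one obtains such a bound with $c$ depending only on $d$, $p$ and the (fixed) bandwidth. Combining the two displays settles this case.

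The dilation reduction, the localization, and the whole case $p\geq 1$ are routine. The real work is the case $0<p<1$: the weighted H\"older inequality is precisely what forces the threshold $l>\tfrac1{\min(1,p)}-\tfrac12$, and the band-limited convolution estimate $\|K*f\|_p\leq c\,\|K\|_p\|f\|_p$ — classical, but delicate — must be invoked carefully, since convolution behaves badly on $L_p$ for $p<1$ and the bound genuinely relies on the analyticity of both $K$ and $f$.
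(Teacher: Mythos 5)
The paper does not actually prove this lemma: its ``proof'' is the single sentence that the statement is \cite[Proposition 2.3.3]{Hn10}. So your proposal is not competing with an argument in the text but reconstructing the one behind the citation, and what you reconstruct is the standard Fourier multiplier theorem for entire analytic functions in its tensorized form (essentially the argument of Schmei{\ss}er and Triebel \cite{ST87}): dilate to a fixed frequency box, localize the multiplier by a cutoff $\psi$ equal to $1$ on the spectrum of $f$, for $p\geq 1$ combine Young's inequality with Cauchy--Schwarz against the product weight $\prod_{i}(1+x_i^2)^{l/2}$ and Plancherel, and for $0<p<1$ replace Young by the Plancherel--Polya--Nikol'skij convolution inequality $\left\| K*f \,|\, L_p \right\| \leq c \left\| K \,|\, L_p \right\| \left\| f \,|\, L_p \right\|$ for band-limited functions together with the H\"older step that produces the threshold $l>\frac1p-\frac12$. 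The choice of the \emph{product} weight rather than $(1+|x|^2)^{l/2}$ is exactly what makes this the $S_2^lW$ statement instead of the isotropic one, and you have correctly located the real difficulty in the case $p<1$. The approach is sound and fills in what the paper outsources.

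Two small repairs are needed. First, your dilation goes the wrong way: if $\F f$ is supported in $\prod_i[-\beta_i,\beta_i]$, then it is $g(x)=f(x_1/\beta_1,\ldots,x_d/\beta_d)$ whose Fourier transform lives in $[-1,1]^d$, whereas $f(\beta_1x_1,\ldots,\beta_dx_d)$ has spectrum in $\prod_i[-\beta_i^2,\beta_i^2]$. With $g$ one checks $\F^{-1}(M\F f)(x)=\F^{-1}\bigl(M(\beta_1\cdot,\ldots,\beta_d\cdot)\,\F g\bigr)(\beta_1x_1,\ldots,\beta_dx_d)$, and the Jacobian factors $(\beta_1\cdots\beta_d)^{\pm 1/p}$ cancel exactly as you intend. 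Second, $\F^{-1}(M\psi)$ need not lie in $\S(\R^d)$: the function $M\psi$ has compact support but only finite Sobolev smoothness, so its inverse Fourier transform decays only polynomially. What you actually use --- that it is band-limited and belongs to $L_1$, respectively $L_p$, by the weighted H\"older estimate --- is true and is all the argument requires. Neither slip affects the validity of the proof.
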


This fact is \cite[Proposition 2.3.3]{Hn10}.

\begin{prp}

Let $0 < p,q \leq \infty$ and $r \in \R$. Then for all $f \in \S'(\R^d)$ we have that $f \in S_{pq}^r B(\R^d)$ if and only if $f \in S_{pq}^r B^b(\R^d)$ and the norms $\left\| \cdot | S_{pq}^r B(\R^d) \right\|$ and $\left\| \cdot | S_{pq}^r B^b(\R^d) \right\|$ are equivalent on $S_{pq}^r B(\R^d)$.

\end{prp}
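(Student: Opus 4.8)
The plan is to show that the two quasi-norms are equivalent by viewing each block $\mathcal{F}^{-1}(\varphi_k \mathcal{F} f)$ of one resolution of unity as a finite linear combination of blocks of the other, with Fourier-multiplier transitions controlled by Lemma~\ref{schm_triebel_1987}. Write $(\varphi_k)$ for the dyadic resolution from the definition of $S_{pq}^r B(\R^d)$ and $(\psi_k)$ for the $b$-adic resolution from Definition~\ref{b_adic_SB}, so that $\sum_{k\in\N_0^d}\varphi_k = \sum_{k\in\N_0^d}\psi_k = 1$. First I would record the one-dimensional overlap structure: $\varphi_k$ is supported in $\{|t|\le \tfrac{3}{2}2^k\}$ and vanishes for $|t|< 2^{k-1}$, while $\psi_m$ is supported in $\{|t|\le \tfrac{b+1}{b}b^m\}$ and vanishes for $|t|< b^{m-1}$; hence for each $k$ there is a bounded set $J(k)\subset\N_0$ — an interval of length $O(1)$ around $\log_b(2^k)=k\log_b 2$, with the bound on its length independent of $k$ — such that $\varphi_k\psi_m\equiv 0$ for $m\notin J(k)$, and symmetrically a bounded set $K(m)$ with $\psi_m\varphi_k\equiv 0$ for $k\notin K(m)$. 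Tensorizing, for $k\in\N_0^d$ the multi-indices $m$ with $\varphi_k\psi_m\not\equiv 0$ form a product $J(k_1)\times\cdots\times J(k_d)$ of cardinality bounded by an absolute constant $\Lambda$.

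Next, for $f\in S_{pq}^r B(\R^d)$ and each $k$, I would write
\[
\mathcal{F}^{-1}(\psi_k\mathcal{F}f) = \sum_{l\in K(k_1)\times\cdots\times K(k_d)} \mathcal{F}^{-1}\big(\psi_k\varphi_l\mathcal{F}(\mathcal{F}^{-1}(\varphi_l\mathcal{F}f))\big),
\]
which is legitimate because $\psi_k = \psi_k\sum_l\varphi_l$ and only finitely many terms survive. Each summand has the form $\mathcal{F}^{-1}(M\,\mathcal{F}g)$ with $g=\mathcal{F}^{-1}(\varphi_l\mathcal{F}f)$, whose Fourier transform is supported in a box $[-\beta_1,\beta_1]\times\cdots\times[-\beta_d,\beta_d]$ with $\beta_i\asymp 2^{l_i}$, and $M=\psi_k\varphi_l$ is a smooth compactly supported multiplier. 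Applying Lemma~\ref{schm_triebel_1987} with a fixed admissible $l>\tfrac{1}{\min(1,p)}-\tfrac12$ gives
\[
\left\| \mathcal{F}^{-1}(\psi_k\varphi_l\mathcal{F}g)\,|\,L_p(\R^d)\right\| \le c\,\left\| (\psi_k\varphi_l)(\beta_1\cdot,\ldots,\beta_d\cdot)\,|\,S_2^l W(\R^d)\right\|\,\left\| g\,|\,L_p(\R^d)\right\|,
\]
and the point is that the rescaled multiplier $(\psi_k\varphi_l)(\beta_1\cdot,\ldots,\beta_d\cdot)$, together with all its derivatives up to order $\lceil l\rceil$ in each variable, is bounded uniformly in $k$ and $l$: this is because $\psi_k$ and $\varphi_l$ are themselves fixed dilates of $\psi_0,\varphi_0$, so after the normalization by $\beta_i\asymp 2^{l_i}\asymp b^{k_i}$ the resulting functions range over a bounded family in $S_2^l W(\R^d)$. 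Hence there is a constant $c$, independent of $k$, with
\[
\left\| \mathcal{F}^{-1}(\psi_k\mathcal{F}f)\,|\,L_p\right\| \le c\sum_{l\in K(k_1)\times\cdots\times K(k_d)} \left\| \mathcal{F}^{-1}(\varphi_l\mathcal{F}f)\,|\,L_p\right\|.
\]
Since $|k|$ and $|l|$ differ by at most an additive constant whenever $\varphi_l\psi_k\not\equiv 0$ (each $k_i$ and $l_i\log_b 2$ differ by $O(1)$), the weights $b^{r|k|}$ and $2^{r|l|}$ are comparable on these index pairs; multiplying the displayed inequality by $b^{r|k|}$, raising to the power $q$, using that the inner sum has at most $\Lambda$ terms (so $\ell^q$-sums over it cost only a constant depending on $\Lambda,q$), and summing over $k$ — re-indexing so that each $l$ appears boundedly often — yields $\left\| f\,|\,S_{pq}^r B^b(\R^d)\right\| \le C\,\left\| f\,|\,S_{pq}^r B(\R^d)\right\|$. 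The reverse inequality follows by the identical argument with the roles of $(\varphi_k)$ and $(\psi_k)$ interchanged, which also shows the two function classes coincide.

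The main obstacle is the uniformity of the multiplier norms in Lemma~\ref{schm_triebel_1987}: one must check carefully that after rescaling by the $\beta_i$ the family $\{(\psi_k\varphi_l)(\beta_1\cdot,\ldots,\beta_d\cdot)\}$ stays in a bounded subset of $S_2^l W(\R^d)$, uniformly over all admissible $k,l$, including the boundary indices where $k_i=0$ or $l_i=0$ (where $\varphi_0,\psi_0$ rather than their dyadic/\,$b$-adic annuli appear, and where one should take $\beta_i$ of order $1$). Once that uniform bound is in hand — it reduces to a finite check since, up to dilation, only $\varphi_0,\varphi_1$ and $\psi_0,\psi_1$ occur — the bookkeeping of exponents and the passage from the $L_p$-estimates to the $\ell^q(b^{r|k|})$-estimates is routine, exactly as in Triebel's change-of-base argument in \cite{T10a}.
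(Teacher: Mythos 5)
Your strategy is the same as the paper's: exploit the bounded overlap of the supports of the two resolutions of unity and push each block of one resolution through the other using the Fourier multiplier estimate of Lemma \ref{schm_triebel_1987}, with the uniformity of the rescaled multiplier norms supplied by the fact that all the $\varphi_k$, $\psi_m$ are dilates of finitely many fixed Schwartz functions. The only organizational difference is that you compare base $2$ with base $b$ directly, whereas the paper compares $S_{pq}^rB^b$ with $S_{pq}^rB^{b+1}$ and chains; both are fine, and your uniformity discussion (including the boundary indices $k_i=0$, $l_i=0$) is the right thing to check.

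There is, however, one concrete error in your displayed decomposition. Since $\mathcal{F}\bigl(\mathcal{F}^{-1}(\varphi_l\mathcal{F}f)\bigr)=\varphi_l\mathcal{F}f$, your summand $\mathcal{F}^{-1}\bigl(\psi_k\varphi_l\,\mathcal{F}(\mathcal{F}^{-1}(\varphi_l\mathcal{F}f))\bigr)$ equals $\mathcal{F}^{-1}(\psi_k\varphi_l^2\mathcal{F}f)$, so the right-hand side sums to $\mathcal{F}^{-1}\bigl(\psi_k(\sum_l\varphi_l^2)\mathcal{F}f\bigr)$, not to $\mathcal{F}^{-1}(\psi_k\mathcal{F}f)$: the identity $\psi_k=\psi_k\sum_l\varphi_l$ only justifies the decomposition with the multiplier $M=\psi_k$ applied to $g_l=\mathcal{F}^{-1}(\varphi_l\mathcal{F}f)$, i.e.
\[
\mathcal{F}^{-1}(\psi_k\mathcal{F}f)=\sum_{l\in K(k_1)\times\cdots\times K(k_d)}\mathcal{F}^{-1}\bigl(\psi_k\,\mathcal{F}(\mathcal{F}^{-1}(\varphi_l\mathcal{F}f))\bigr),
\]
which is exactly what the paper uses. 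The fix costs nothing: take $M=\psi_k$ (its rescaled Sobolev norms are uniformly bounded by the same dilation argument, even more simply than for the product $\psi_k\varphi_l$), and the rest of your estimate, the comparability of the weights $b^{r|k|}$ and $2^{r|l|}$, and the re-indexing all go through unchanged. Alternatively one could keep your product multiplier and divide out $\sum_l\varphi_l^2$, which is a smooth function bounded above and below by positive constants, but that is unnecessary extra work.
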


\begin{proof}
We will prove the following fact from which the proposition can be concluded. The spaces $S_{pq}^r B^b(\R^d)$ and $S_{pq}^r B^{b+1}(\R^d)$ are equal and their norms are equivalent.

Let the functions $\varphi_k$ be a $b$-adic one-dimensional resolution of unity and the functions $\psi_k$ a $(b+1)$-adic one-dimensional resolution of unity. We observe that 
\[ \supp \varphi_k \subset [-b^{k+1}, -b^{k-1}] \cup [b^{k-1}, b^{k+1}] \]
and
\[ \supp \psi_k \subset [-(b+1)^{k+1}, -(b+1)^{k-1}] \cup [(b+1)^{k-1}, (b+1)^{k+1}]. \]
Now we check that for every $j \in \N_0$ there are at most $2$ such $k \in \N_0$ that
\[ [-b^{k+1}, -b^{k-1}] \cup [b^{k-1}, b^{k+1}] \subset [-(b+1)^{k+1}, -(b+1)^{k-1}] \cup [(b+1)^{k-1}, (b+1)^{k+1}]. \]
It is sufficient to check $[b^{k-1}, b^{k+1}] \subset [(b+1)^{k-1}, (b+1)^{k+1}]$ (because of the symmetry). But this is easy since $(b+1)^{j-1} \leq b^{k-1}$ and $b^{k+1} \leq (b+1)^{j+1}$ is equivalent to
\begin{align} \label{k_j_limited}
(j - 1) \frac{\log(b + 1)}{\log(b)} + 1 \leq k \leq (j + 1) \frac{\log(b + 1)}{\log(b)} - 1.
\end{align}
The fact that the cardinality of the set of such $k$ is at most $2$ follows from
\[ 2 \frac{\log(b + 1)}{\log(b)} - 2 < 2 \]
which is equivalent to
\[ \frac{\log(b + 1)}{\log(b)} < 2 \]
which is equivalent to $0 < b^2 - b - 1$ which is clearly satisfied since $b \geq 2$. Therefore, we know that for every $j$ there are no more than two $k$ such that,
\[ \supp \varphi_k \subset \supp \psi_j. \]
For every $j \in \N_0$ we denote by $\Lambda(j)$ the set of such $k$ that
\[ \supp \varphi_k \cap \supp \psi_j \neq \emptyset. \]
Suppose that $\supp \varphi_k \subset \supp \psi_j$ and $\supp \varphi_{k+1} \subset \supp \psi_j$ then (for $k \geq 2$)
\[ \Lambda(j) = \{ k - 2, k - 1, k, k + 1, k + 2, k + 3 \}, \]
and therefore, the cardinality of such sets is at most $6$ but for sure they are not empty.
Conversely, for every $k \in \N_0$, there are at most $3$ such $j \in \N_{-1}$ that
\[ \supp \varphi_k \cap \supp \psi_j \neq \emptyset \]
and they are of the form $j - 1, j, j + 1$ (if $j \geq 1$). The cases $j = 1$ or $k < 2$ are to be modified just slightly. We denote by $\Omega(k)$ the set of such $j$. Additionally, we put for $j \in \N_0^d$
\[ \Lambda(j) = \Lambda(j_1) \times \ldots \times \Lambda(j_d) \]
and for $k \in \N_0^d$
\[ \Omega(k) = \Omega(k_1) \times \ldots \times \Omega(k_d). \]
Hence, for all $x \in \R^d$ we have
\[ \varphi_k(x) = \varphi_k(x) \sum_{j \in \Omega(k)} \psi_j(x) \]
and
\[ \psi_j(x) = \psi_j(x) \sum_{k \in \Lambda(j)} \varphi_k(x). \]
Now let $j,k \in \N_0^d$ then we have
\begin{align*}
\mathcal{F}^{-1}(\varphi_k \mathcal{F} f) = \sum_{j \in \Omega(k)} \mathcal{F}^{-1} \left( \varphi_k \mathcal{F} \left( \mathcal{F}^{-1} (\psi_{j} \mathcal{F} f) \right) \right)
\end{align*}
and
\begin{align*}
\mathcal{F}^{-1}(\psi_j \mathcal{F} f) = \sum_{k \in \Lambda(j)} \mathcal{F}^{-1} \left( \psi_j \mathcal{F} \left( \mathcal{F}^{-1} (\varphi_{k} \mathcal{F} f) \right) \right).
\end{align*}
Let $l > \frac{1}{\min(1,p)} - \frac{1}{2}$. From Lemma \ref{schm_triebel_1987} for $M = \varphi_k$ and $\beta_1 = b^{k_1 + 2}, \ldots, \beta_d = b^{k_d + 2}$ we get (with a constant $c > 0$) that
\begin{align*}
& \left\| \mathcal{F}^{-1} \left( \varphi_k \mathcal{F} \left( \mathcal{F}^{-1} (\psi_{j} \mathcal{F} f) \right) \right) | L_p (\R^d) \right\| \\
& \qquad \qquad \leq c \left\| \varphi_k(b^{k_1 + 2} \cdot, \ldots, b^{k_d + 2} \cdot) | S_2^l W(\R^d) \right\| \left\| \mathcal{F}^{-1} (\psi_{j} \mathcal{F} f) | L_p(\R^d) \right\| \\
& \qquad \qquad \leq c_1 \prod_{i=1}^d \left\| \varphi_{k_i}(b^{k_i + 2} \cdot) | W_2^l(\R) \right\| \left\| \mathcal{F}^{-1} (\psi_{j} \mathcal{F} f) | L_p(\R^d) \right\|.
\end{align*}
Since $\varphi_{k_i} \in \mathcal{S}(\R)$ there exists a constant $c_2 > 0$ such that for all $i$ we have
\[ \left\| \varphi_{k_i}(b^{k_i + 2} \cdot) | W_2^l(\R) \right\| \leq c_2. \]
Consequently, we get
\[ \left\| \mathcal{F}^{-1} \left( \varphi_k \mathcal{F} \left( \mathcal{F}^{-1} (\psi_{j} \mathcal{F} f) \right) \right) | L_p (\R^d) \right\| \leq c_3 \left\| \mathcal{F}^{-1} (\psi_{j} \mathcal{F} f) | L_p(\R^d) \right\| \]
for $j \in \Omega(k)$ and analogously (using Lemma \ref{schm_triebel_1987} for $M = \psi_j$ and $\beta_1 = (b + 1)^{j_1 + 2}, \ldots, \beta_d = (b + 1)^{j_d + 2}$) we get
\[ \left\| \mathcal{F}^{-1} \left( \psi_j \mathcal{F} \left( \mathcal{F}^{-1} (\varphi_{k} \mathcal{F} f) \right) \right) | L_p (\R^d) \right\| \leq c_4 \left\| \mathcal{F}^{-1} (\varphi_{k} \mathcal{F} f) | L_p(\R^d) \right\| \]
for $k \in \Lambda(j)$. So we have proved for every $k \in \N_0^d$ that
\[ \left\| \mathcal{F}^{-1} \left( \varphi_k \mathcal{F} f \right) | L_p (\R^d) \right\| \leq c \sum_{j \in \Omega(k)} \left\| \mathcal{F}^{-1} (\psi_{j} \mathcal{F} f) | L_p(\R^d) \right\|. \]
Multiplying with $b^{r |k| q}$ and summing over $k$ will obviously give us on the left side $\left\| f | S_{pq}^r B^b(\R^d) \right\|$. On the right side we get at most $3$ identical summands which we can incorporate into the constant. The norming factor can be easily estimated with a constant since the difference of $j$ and $k$ is limited by \eqref{k_j_limited}. Conversely, we have for every $j \in \N_0^d$
\[ \left\| \mathcal{F}^{-1} \left( \psi_j \mathcal{F} f \right) | L_p (\R^d) \right\| \leq c \sum_{k \in \Lambda(j)} \left\| \mathcal{F}^{-1} (\varphi_{k} \mathcal{F} f) | L_p(\R^d) \right\|. \]
Multiplying with $(b+1)^{r |j| q}$ and summing over $j$ will obviously give us on the left side $\left\| f | S_{pq}^r B^{b+1}(\R^d) \right\|$. On the right side we get at most $6$ identical summands which we can incorporate into the constant. The same applies again to the norming factor.

\end{proof}

\begin{rem}

Analogously one could define $b$-adic Triebel-Lizorkin spaces with dominating mixed smoothness and prove the equivalence of dyadic and $b$-adic norms.

\end{rem}

From now on we will omit $b$ in $S_{pq}^r B^b(\R^d)$ and write $S_{pq}^r B(\R^d)$ instead. Having proved the equivalence of the norms for all bases $b$ puts us in the position to generalize all the results from \cite{T10a}. The proofs can be rewritten, replacing $2$ by $b$. We will give the results with a minimum of comments.

We denote by $\chi_{jm}$ the characteristic function of the $b$-adic interval $I_{jm}$ for $j \in \N_0, \, m \in \Z$. For $j = (j_1, \ldots, j_d) \in \N_0^d, \, m = (m_1, \ldots, m_d) \in \Z^d$ we put $\chi_{jm}(x) = \chi_{j_1 m_1}(x_1) \cdot \ldots \cdot \chi_{j_d m_d}(x_d)$ for $x = (x_1, \ldots, x_d) \in \R^d$.

\begin{lem} \label{counterp_tr_234}

Let $0 < p, q \leq \infty$ and $\max(\frac{1}{p},1) - 1 < r < \min(\frac{1}{p},1)$. Let the sequence $(\mu_{jm})$ satisfy
\[ \left( \sum_{j \in \N_0^d} b^{|j|(r - \frac{1}{p} + 1) q} \left( \sum_{m \in \Z^d} | \mu_{jm}|^p \right)^{\frac{q}{p}} \right)^{\frac{1}{q}} < \infty. \]
Then
\begin{align} \label{f_given_with_chi}
f = \sum_{j \in \N_0^d} b^{|j|} \sum_{m \in \Z^d} \mu_{jm} \, \chi_{jm}
\end{align}
belongs to $S_{pq}^r B(\R^d)$ and there is a constant $c > 0$ independent of the sequence $(\mu_{jm})$ such that,
\begin{align} \label{eq_norm_half}
\left\| f | S_{pq}^r B(\R^d) \right\| \leq c \left( \sum_{j \in \N_0^d} b^{|j|(r - \frac{1}{p} + 1) q} \left( \sum_{m \in \Z^d} | \mu_{jm}|^p \right)^{\frac{q}{p}} \right)^{\frac{1}{q}}.
\end{align}

\end{lem}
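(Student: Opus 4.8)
The goal is to show that the function $f$ built from the ``Haar-like'' building blocks $\chi_{jm}$ lies in $S_{pq}^r B(\R^d)$, with the norm controlled by the stated sequence-space quasi-norm. The natural route is the one Triebel uses in \cite{T10a}: estimate each Littlewood--Paley block $\mathcal{F}^{-1}(\varphi_k \mathcal{F} f)$ directly, exploiting that a single $\chi_{jm}$ has a very explicit Fourier transform and that $\varphi_k$ localizes to an annulus of $b$-adic scale. The summand $g_j := b^{|j|}\sum_{m\in\Z^d}\mu_{jm}\chi_{jm}$ is a function that is constant on $b$-adic intervals of level $j$, so $\mathcal{F}^{-1}(\varphi_k\mathcal{F} g_j)$ decays rapidly once the scales $k$ and $j$ are far apart. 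This is a tensor-product situation, so the one-dimensional estimate is the heart of the matter.

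First I would set up the one-dimensional kernel estimate: fix a coordinate $i$, and show that for a function $u$ on $\R$ constant on level-$j_i$ $b$-adic intervals, $\mathcal{F}^{-1}(\varphi_{k_i}\mathcal{F} u)$ obeys a pointwise bound of the form $c\, b^{-|k_i-j_i|\sigma}$ times a suitable maximal/averaging operator applied to $u$, where $\sigma$ can be taken as large as one likes (the characteristic function $\chi_{0,0}$ on $\R$ has $|\mathcal{F}\chi_{0,0}(\xi)|\asymp \min(1,|\xi|^{-1})$, and convolving against a smooth annular multiplier of scale $b^{k_i}$ gains one factor of $b^{-|k_i-j_i|}$ directly and more via integration by parts / smoothness of $\varphi_{k_i}$; one only needs $\sigma$ slightly above the relevant threshold). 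Then I would pass to $L_p$: when $p\ge 1$ one uses the triangle inequality and boundedness of the averaging operator on $L_p$, and when $p<1$ one uses the $p$-triangle inequality together with a maximal-function argument as in \cite{T10a} (this is exactly where the restriction $r<\min(1/p,1)$ and $r>\max(1/p,1)-1$ enters — the exponent $r-\frac1p+1$ must be positive so the geometric-in-$|k-j|$ factors sum, and at the other end $r<1$ keeps the same geometric series convergent from the other side). Tensoring over the $d$ coordinates and using that each coordinate contributes a factor $\prod_i b^{-|k_i-j_i|\sigma_i}$, one gets
\[
\left\| \mathcal{F}^{-1}(\varphi_k\mathcal{F} f) \,|\, L_p(\R^d)\right\|
\le c \sum_{j\in\N_0^d} \prod_{i=1}^d b^{-|k_i-j_i|\sigma}\, b^{|j|(1-\frac1p)}\Big(\sum_{m\in\Z^d}|\mu_{jm}|^p\Big)^{1/p}.
\]

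Finally I would multiply by $b^{r|k|}$, take the $\ell_q(k)$ quasi-norm, and invoke a (vector-valued, $d$-fold tensorized) discrete Young / Hardy-type inequality: convolution in the variable $k$ against the summable-because-$\sigma$-is-large kernel $b^{-|k-j|\sigma}$ is bounded on $\ell_q$, after absorbing $b^{r|k|} = b^{r|j|}b^{r(|k|-|j|)}$ and noting the excess weight $b^{r(|k|-|j|)}$ is dominated by $\prod_i b^{|k_i-j_i|(\sigma-|r|)}$, which is still a summable kernel provided $\sigma$ was chosen above $|r|$. Combining the $b^{r|j|}$ with the $b^{|j|(1-\frac1p)}$ reproduces exactly the exponent $b^{|j|(r-\frac1p+1)}$ in the claimed bound, and the $\ell_q(j)$ norm of that is the right-hand side of \eqref{eq_norm_half}. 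The main obstacle is the sub-$1$ case $p<1$: there the averaging operators are no longer bounded on $L_p$ and one must replace them by Hardy--Littlewood-type maximal operators, using that the relevant functions are band-limited (their Fourier support sits in a fixed $b$-adic annulus) so a Fefferman--Stein / Peetre maximal inequality applies; this is precisely the point at which Lemma \ref{schm_triebel_1987} and the hypothesis on $l$ versus $\frac{1}{\min(1,p)}-\frac12$ would be used, and it is the only place where one cannot simply ``replace $2$ by $b$'' in Triebel's argument without thinking. I would handle the endpoint bookkeeping ($j_i=-1$ conventions, $q=\infty$ modification) routinely at the end.
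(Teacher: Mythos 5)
Your overall strategy --- estimating the Littlewood--Paley blocks $\mathcal{F}^{-1}(\varphi_k\mathcal{F}f)$ directly from the explicit Fourier transform of $\chi_{jm}$ --- is a legitimate alternative in principle to what the paper actually does (the paper expands each one-dimensional factor $\chi_{j_im_i}$ into compactly supported $b$-adic Daubechies wavelets and then follows Triebel's proof of \cite[Proposition 2.34]{T10a}, with the $2^d$-fold case split $j_i\ge k_i$ versus $j_i<k_i$). But your central one-dimensional estimate is false in the regime $k_i>j_i$, and the whole argument hinges on it: you claim a bound of the form $c\,b^{-|k_i-j_i|\sigma}$ with ``$\sigma$ as large as one likes''. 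Since $\chi_{j_im_i}$ has jump discontinuities, $|\mathcal{F}\chi_{j_im_i}(\xi)|$ decays only like $|\xi|^{-1}$, and for $k_i>j_i$ the block $\mathcal{F}^{-1}(\varphi_{k_i}\mathcal{F}\chi_{j_im_i})$ consists of two transition layers of height $\approx 1$ and width $\approx b^{-k_i}$ at the endpoints of $I_{j_im_i}$; its $L_p$-norm is $\asymp b^{-k_i/p}=b^{-j_i/p}\,b^{-(k_i-j_i)/p}$ and cannot be improved. Smoothness of $\varphi_{k_i}$ buys spatial decay of the convolution kernel $\mathcal{F}^{-1}\varphi_{k_i}$ away from the jumps, not a faster rate in $k_i-j_i$; integration by parts gains nothing because the obstruction is the function, not the multiplier. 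This finite rate $1/p$ is exactly what forces the upper restriction $r<\min(\frac1p,1)$: if your estimate with arbitrary $\sigma$ were correct, the lemma would hold with no upper bound on $r$ at all, contradicting the fact that already a single term $f=b^{|j|}\chi_{jm}$ fails to lie in $S_{pq}^rB(\R^d)$ once $r>\frac1p$.

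The same miscalibration infects your accounting of where the hypotheses enter. You attribute the restrictions to positivity of $r-\frac1p+1$ and to $r<1$, i.e.\ to the window $\frac1p-1<r<1$; but for $p\ge1$ the lemma requires the strictly narrower window $0<r<\frac1p$, and the extra restrictions are genuinely necessary. For instance, taking $\mu_{jm}\equiv\mu$ for all $m$ with $I_{jm}\subset[0,1)^d$ at a single level $j$ gives $f=b^{|j|}\mu\,\chi_{[0,1)^d}$, whose left-hand side in \eqref{eq_norm_half} is $\asymp b^{|j|}\mu$ while the right-hand side is $\asymp \mu\,b^{|j|(r+1)}$; the inequality therefore fails for every $r<0$, so the condition $r>\frac1p-1$ alone cannot suffice when $p>1$. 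The missing constraint comes from the regime $k_i<j_i$ combined with the summation over $m$, which your proposal does not address: there the convolved blocks $\chi_{jm}*\mathcal{F}^{-1}\varphi_k$ overlap roughly $b^{|j|-|k|}$-fold, so one cannot pass to the $\ell_p$-norm of the coefficients by disjointness, and the cancellation available is limited by the fact that $\sum_m\chi_{jm}$ has no oscillation at all. A correct direct proof must track two sharp, finite decay rates (one from the jump, one from the lack of moment conditions) rather than an arbitrary $\sigma$; this is precisely the computation the paper sidesteps by passing to smooth wavelets with vanishing moments, so that the limited regularity of $\chi_{jm}$ enters only through the explicit decay of its wavelet coefficients. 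Your appeal to Lemma \ref{schm_triebel_1987} and Peetre-type maximal inequalities for $p<1$ is the right instinct for that sub-case, but it does not repair the main estimate.
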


\begin{proof}
This result is a counterpart of \cite[Proposition 2.34]{T10a}. In order to prove it we will follow closely Triebel's proof. Let $(\mu_{jm})$ be a sequence satisfying 
\[ \left( \sum_{j \in \N_0^d} \left( \sum_{m \in \Z^d} | \mu_{jm}|^p \right)^{\frac{q}{p}} \right)^{\frac{1}{q}} < \infty \]
and let $f$ be given by
\begin{align} \label{space_char_chi}
f = \sum_{j \in \N_0^d} \sum_{m \in \Z^d} \mu_{jm} \, b^{-|j|(r - \frac{1}{p})} \, \chi_{jm}.
\end{align}
We prove that $f \in S_{pq}^r B(\R^d)$ and
\[ \left\| f | S_{pq}^r B(\R^d) \right\| \leq c \left( \sum_{j \in \N_0^d} \left( \sum_{m \in \Z^d} | \mu_{jm}|^p \right)^{\frac{q}{p}} \right)^{\frac{1}{q}} \]
which is an equivalent formulation of the lemma making it easier to follow the proof of \cite[Proposition 2.34]{T10a}. One should keep in mind that we are dealing with a $b$-adic case though it works in the same way. Let $\psi_M, \psi_F$ be real compactly supported $L_2$-normed $b$-adic Daubechies wavelets on $\R$ analogous to \cite[(1.55--1.56)]{T10a}. Their existence is guaranteed by \cite[Theorem 5.1]{RW98}. We will not define wavelets here. For basic and advanced facts on dyadic wavelets we refer to \cite{W97}, on $b$-adic wavelets to \cite{RW98}. We just state here that they give an orthonormal basis. We now expand $\chi_{j_1 m_1}(x_1), \ldots, \chi_{j_d m_d}(x_d)$ into the wavelet representation according to \cite[(2.51--2.53)]{T10a} and obtain for $1 \leq i \leq d$
\begin{multline*}
\chi_{j_i m_i}(x_i) = \sum_{l_i \in \Z} \lambda_{l_i}^{0,F}(\chi_{j_i m_i}(x_i)) \psi_F(x_i - l_i) + \\
+ \sum_{k_i = 0} \sum_{l_i \in \Z} \lambda_{l_i}^{k_i,M}(\chi_{j_i m_i}(x_i)) \psi_M(2^{k_i} x_i - l_i)
\end{multline*}
with
\[ \lambda_{l_i}^{0,F}(\chi_{j_i m_i}(x_i)) = \int_{\R} \chi_{j_i m_i}(x_i) \psi_F (y_i - l_i) \dint y_i, \]
and
\[ \lambda_{l_i}^{k_i,M}(\chi_{j_i m_i}(x_i)) = b^{k_i} \int_{\R} \chi_{j_i m_i}(x_i) \psi_M (b^{k_i} y_i - l_i) \dint y_i. \]
Then we insert $\chi_{jm}(x) = \chi_{j_1 m_1}(x_1) \cdot \ldots \cdot \chi_{j_d m_d}(x_d)$ into \eqref{space_char_chi}. We split the resulting expansions as in \cite[(2.56--2.60)]{T10a}. Now we have $2^d$ terms sorted into the cases $(j_1 \geq k_1, \ldots, j_d \geq k_d), \ldots, (j_1 < k_1, \ldots, j_d < k_d)$. We get a $b$-adic version of \cite[(2.54)]{T10a} and \cite[(2.55)]{T10a}. This guarantees counterparts of \cite[(2.62--2.66)]{T10a} and \cite[(2.73--2.74)]{T10a}. This observation leads to the norm estimate of the lemma, and therefore, proves it.

\end{proof}
\pagebreak

\begin{lem} \label{counterp_tr_237}

Let $0 < p, q \leq \infty$ and $\max(\frac{1}{p},1) - 1 < r < \min(\frac{1}{p},1)$. Then there exists a constant $c > 0$ such that,
\[ \left\| f | S_{pq}^r B(\R^d) \right\| \geq c \left( \sum_{j \in \N_{-1}^d} b^{|j|(r - \frac{1}{p} + 1) q} \left( \sum_{m \in \Z^d, \, l \in \B_j} | \mu_{jml} (f)|^p \right)^{\frac{q}{p}} \right)^{\frac{1}{q}} \]
for all $f \in S_{pq}^r B(\R^d)$. The sequence $(\mu_{jml}(f))$ of the $b$-adic Haar coefficients is given by
\[ \mu_{jml}(f) = \int_{\R^d} f(x) h_{jml}(x) \dint x. \]

\end{lem}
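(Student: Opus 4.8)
The plan is to treat this as the \emph{analysis} counterpart of the synthesis statement of Lemma \ref{counterp_tr_234} --- it is the $b$-adic, $d$-dimensional form of \cite[Proposition 2.37]{T10a} --- and to follow Triebel's proof with $b$ in place of $2$. For $1<p,q<\infty$ the quickest route is duality. Since $\mu_{jml}(f)=\int_{\R^d}f\,h_{jml}$ is the bilinear pairing of $f$ with $h_{jml}$, and by Proposition \ref{dom_mix_smoo_duality}(i) the dual of $S_{pq}^{r}B(\R^d)$ is $S_{p'q'}^{-r}B(\R^d)$, the sequence-space quasi-norm on the right-hand side of the claim is the supremum of $\bigl|\sum_{j\in\N_{-1}^d}\sum_{m\in\Z^d,\,l\in\B_j}\mu_{jml}(f)\,\lambda_{jml}\bigr|$ over all $(\lambda_{jml})$ in the unit ball of the dual of this sequence space. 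For such $(\lambda_{jml})$ I would put $g=\sum_{j,m,l}\lambda_{jml}h_{jml}$; the displayed sum then equals $\int_{\R^d}f\,g\le c\,\|f|S_{pq}^{r}B(\R^d)\|\,\|g|S_{p'q'}^{-r}B(\R^d)\|$, so everything comes down to bounding $\|g|S_{p'q'}^{-r}B(\R^d)\|$ by $\|(\lambda_{jml})\|$ in the dual sequence space. This is where Lemma \ref{counterp_tr_234} enters: each $h_{jml}$ is a linear combination, with unimodular coefficients and at most $b^d$ terms, of the characteristic functions of the children of $I_{jm}$, so regrouping the sum defining $g$ by child intervals writes it in the form $\sum_{j'\in\N_0^d}b^{|j'|}\sum_{m'\in\Z^d}\nu_{j'm'}\chi_{j'm'}$ covered by Lemma \ref{counterp_tr_234} with $p,q,r$ replaced by $p',q',-r$. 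In that form $\nu_{j'm'}$ equals, up to the bounded combinatorial factors, $b^{-|j'|}\lambda_{jml}$; the factor $b^{-|j'|}$ cancels part of the weight $b^{|j'|(-r-1/p'+1)}$ appearing in Lemma \ref{counterp_tr_234}, leaving $b^{-|j'|(r-1/p+1)}$, and since $|j'|-|j|=\#\{i:j_i\neq-1\}$ is bounded this is comparable to the dual weight $b^{-|j|(r-1/p+1)}$, the finitely many terms per regrouping being absorbed into the constant. Lemma \ref{counterp_tr_234} then delivers the desired bound.

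For the remaining parameters --- $p$ or $q$ equal to $1$ or $\infty$, and the quasi-Banach range $0<p<1$ or $0<q<1$, where Proposition \ref{dom_mix_smoo_duality} is not available --- I would run Triebel's direct argument, which also covers the case above. Expand $f$ in a real, compactly supported, $L_2$-normed $b$-adic Daubechies wavelet basis $\{\psi_M,\psi_F\}$ on $\R$, as in the proof of Lemma \ref{counterp_tr_234} and with existence guaranteed by \cite[Theorem 5.1]{RW98}, so that $\mu_{jml}(f)=\int_{\R^d}f\,h_{jml}$ becomes a sum of the wavelet coefficients of $f$ paired with the numbers $\int_{\R^d}\psi\,h_{jml}$. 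Because both the wavelets and the $h_{jml}$ are compactly supported, and because $h_{jml}$ has vanishing integral for $l\neq0$, this change-of-basis matrix is almost diagonal --- finite overlap and geometric decay in the scale parameter, geometric decay in the location parameter --- with decay exponents controlled precisely by the hypothesis $\max(1/p,1)-1<r<\min(1/p,1)$. Feeding this into the $b$-adic wavelet characterisation of $S_{pq}^{r}B(\R^d)$ (the characterisation of \cite{T10a} with $2$ replaced by $b$, legitimate after the base-change proposition proved above) and summing the near-diagonal contributions against the sequence norm with the correct quasi-norm exponents yields the asserted lower bound; this is the $b$-adic, $d$-dimensional version of the estimates behind \cite[(2.62)--(2.66), (2.73)--(2.74)]{T10a}.

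The hard part will be this last summation: pushing the almost-diagonal change-of-basis matrix through the $\ell_q(\ell_p)$-type sequence norm in the full quasi-Banach range, with sharp exponents, is the delicate step, and it is exactly there that the restriction on $r$ is used. As in the neighbouring lemmas, though, this is essentially Triebel's computation verbatim once $2$ is replaced by $b$ and the extra coordinates --- in particular those with $j_i=-1$ --- are bookkept, so I would present it only in outline rather than write out every estimate.
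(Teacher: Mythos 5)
Your fallback argument (wavelet expansion of $f$ and an almost-diagonal change-of-basis estimate against the $h_{jml}$, with the two restrictions on $r$ coming from the one vanishing moment and the lack of smoothness of the Haar functions) is in substance exactly the paper's proof: the paper simply invokes the $b$-adic, $d$-dimensional generalization of \cite[Theorem 1.52]{T10a} with $A=0$ and $B=1$, i.e.\ the local-means estimate for kernels with no smoothness and one moment condition, which is the same bookkeeping you describe. Since you claim (correctly) that this direct argument covers all admissible $p,q$, your proof stands if you run it uniformly.

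The duality shortcut for $1<p,q<\infty$, however, has a genuine gap. Under the hypothesis of the lemma with $1<p<\infty$ we have $\max(\frac{1}{p},1)-1=0$, so $0<r<\frac{1}{p}$; the dual parameters are $p'\in(1,\infty)$ and smoothness $-r\in(-\frac{1}{p},0)$. Applying Lemma \ref{counterp_tr_234} with $(p',q',-r)$ requires $\max(\frac{1}{p'},1)-1<-r<\min(\frac{1}{p'},1)$, i.e.\ $0<-r<\frac{1}{p'}$, which is violated because $-r<0$. This is not a removable technicality: once you rewrite $g=\sum\lambda_{jml}h_{jml}$ in terms of characteristic functions of the children of $I_{jm}$, you discard the vanishing moment of $h_{jml}$, and moment-free building blocks genuinely require positive smoothness for the synthesis estimate (for $p'\geq 1$ the condition $r>\max(\frac1{p'},1)-1=0$ is exactly the standard $L=0$ moment condition threshold for atoms). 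A synthesis estimate that exploits the moment of $h_{jml}$ would reach negative smoothness, but in this paper that statement (Proposition \ref{counterp_tr_238} for $\frac{1}{p}-1<r\leq 0$) is itself obtained by dualizing the analysis estimate you are trying to prove, so patching your argument that way would be circular. The clean fix is to delete the duality paragraph and prove the lemma for all parameters by the direct local-means/wavelet estimate, which is what the paper does.
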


\begin{proof}
This result is a counterpart of \cite[Proposition 2.37]{T10a} and the proof is straightforward applicable because the generalization of \cite[Theorem 1.52]{T10a} to our case is straightforward and we use it with $A = 0$ and $B = 1$.

\end{proof}

\begin{prp} \label{counterp_tr_238}

Let $0 < p, q \leq \infty$ and $q > 1$ if $p = \infty$. Let $\frac{1}{p} - 1 < r < \min(\frac{1}{p},1)$. Let $f \in \S'(\R^d)$. Then $f\in S_{pq}^r B(\R^d)$ if and only if it can be represented as
\begin{align} \label{repres_f_lem}
f = \sum_{j \in \N_{-1}^d} b^{|j|} \sum_{m \in \Z^d, \, l \in \B_j} \mu_{jml} \, h_{jml}
\end{align}
for some sequence $(\mu_{jml})$ satisfying
\begin{align} \label{eq_quasinorm_lem}
\left( \sum_{j \in \N_{-1}^d} b^{|j|(r - \frac{1}{p} + 1) q} \left( \sum_{m \in \Z^d, \, l \in \B_j} | \mu_{jml}|^p \right)^{\frac{q}{p}} \right)^{\frac{1}{q}} < \infty.
\end{align}
The convergence of \eqref{repres_f_lem} is unconditional in $\S'(\R^d)$ and in any $S_{pq}^\rho B(\R^d)$ with $\rho < r$. The representation \eqref{repres_f_lem} of $f$ is unique with the $b$-adic Haar coefficients
\[ \mu_{jml} = \int_{\R^d} f(x) h_{jml}(x) \dint x. \]
The expression \eqref{eq_quasinorm_lem} is an equivalent quasi-norm on $S_{pq}^r B(\R^d)$.

\end{prp}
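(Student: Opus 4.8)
The plan is to follow Triebel's proof of \cite[Proposition~2.38]{T10a}, assembling the two preceding lemmas and adding a completeness argument; the only genuinely $b$-adic ingredient, the $b$-adic Daubechies wavelets of \cite{RW98}, has already been used in Lemma~\ref{counterp_tr_234}. The bridge between the characteristic-function expansion \eqref{f_given_with_chi} appearing in Lemma~\ref{counterp_tr_234} and the Haar expansion \eqref{repres_f_lem} is the elementary observation that each $b$-adic Haar function $h_{jml}$ is a finite linear combination, with unimodular coefficients, of the characteristic functions of the $b^s$ children of $I_{jm}$, where $s=\#\{i:j_i\neq-1\}$; writing $j^+$ for the multi-index obtained from $j$ by replacing each entry $j_i\ge 0$ by $j_i+1$ and each entry $j_i=-1$ by $0$, these children are $b$-adic intervals of level $j^+$. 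Since $j\mapsto j^+$ is injective and $|j^+|=|j|+s$ with $0\le s\le d$, the induced (invertible, locally finite) linear change of coefficients $(\mu_{jml})\leftrightarrow(\nu_{j^+m'})$ sends the quasi-norm \eqref{eq_quasinorm_lem} to the discrete quasi-norm of Lemma~\ref{counterp_tr_234} up to constants depending only on $b$ and $d$.

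\emph{Sufficiency.} Given $(\mu_{jml})$ with \eqref{eq_quasinorm_lem} finite, I rewrite $\sum_{j}b^{|j|}\sum_{m,l}\mu_{jml}h_{jml}$ in the form \eqref{f_given_with_chi} via this correspondence and invoke Lemma~\ref{counterp_tr_234} to get $f\in S_{pq}^r B(\R^d)$ with $\|f|S_{pq}^r B(\R^d)\|$ bounded by a constant times \eqref{eq_quasinorm_lem}. For the convergence statements I estimate, for finite index sets $F_1,F_2$ and any admissible $\rho<r$, the quantity $\|S_{F_1}f-S_{F_2}f|S_{pq}^\rho B(\R^d)\|$ by the same discrete quasi-norm restricted to $F_1\triangle F_2$ carrying the extra weight $b^{|j|(\rho-r)q}$; since $\rho-r<0$ and the full weighted sum in \eqref{eq_quasinorm_lem} converges, these tails tend to $0$ independently of the enumeration, so the series converges unconditionally in $S_{pq}^\rho B(\R^d)$ and hence in $\S'(\R^d)$. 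The hypothesis $q>1$ for $p=\infty$ is exactly what keeps these $\ell_q$-tails under control.

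\emph{Necessity and uniqueness.} For $f\in S_{pq}^r B(\R^d)$, Lemma~\ref{counterp_tr_237} shows that the Haar coefficients $\mu_{jml}(f)=\int_{\R^d}f\,h_{jml}$ satisfy \eqref{eq_quasinorm_lem}, so by the sufficiency part the series $g:=\sum_{j}b^{|j|}\sum_{m,l}\mu_{jml}(f)\,h_{jml}$ converges in $\S'(\R^d)$ to some $g\in S_{pq}^r B(\R^d)$. Using the orthonormality of the normalized $b$-adic Haar system \eqref{haarbasis} together with the continuity on $S_{pq}^r B(\R^d)$ of the Haar-coefficient functionals (the $b$-adic counterpart of \cite[Theorem~1.52]{T10a} with $A=0$, $B=1$ underlying Lemma~\ref{counterp_tr_237}), one obtains $\mu_{j'm'l'}(g)=\mu_{j'm'l'}(f)$ for all $(j',m',l')$, so $f-g\in S_{pq}^r B(\R^d)$ has vanishing $b$-adic Haar coefficients. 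Since the span of the $b$-adic Haar system is complete in the Lebesgue space into which $S_{pq}^r B(\R^d)$ embeds for this parameter range (as already exploited in the $b$-adic Haar basis theorem at the beginning of this chapter), respectively in the predual of $S_{pq}^r B(\R^d)$ for $r\le 0$ via Proposition~\ref{dom_mix_smoo_duality}, it follows that $f=g$. The same pairing argument gives that any representation \eqref{repres_f_lem} converging in $\S'(\R^d)$ has its coefficients equal to the Haar coefficients of $f$, hence uniqueness; combining the upper bound of the sufficiency part with the lower bound of Lemma~\ref{counterp_tr_237} shows \eqref{eq_quasinorm_lem} is an equivalent quasi-norm. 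The parameters with $r\le 0$, where Lemma~\ref{counterp_tr_234} and Lemma~\ref{counterp_tr_237} are not directly applicable, are reached by duality using $(S_{pq}^rB(\R^d))'=S_{p'q'}^{-r}B(\R^d)$, exactly as in \cite{T10a}.

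\emph{Main obstacle.} The delicate point is the completeness/uniqueness step: proving that a tempered distribution in $S_{pq}^r B(\R^d)$ all of whose $b$-adic Haar coefficients vanish is the zero distribution, and justifying that pairing the $\S'$-limit $g$ against the non-smooth, merely $L_2$-and-compactly-supported functions $h_{j'm'l'}$ is legitimate. This is precisely where the two-sided restriction $\frac1p-1<r<\min(\frac1p,1)$ is needed: it guarantees that $S_{pq}^r B(\R^d)$ sits inside a function space in which the normalized $b$-adic Haar system is an unconditional basis, so that the coefficient functionals are well defined and separate points. Everything else is Triebel's argument with $2$ replaced by $b$.
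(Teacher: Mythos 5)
Your sufficiency and necessity arguments coincide with the paper's: both rewrite each $b$-adic Haar function as a unimodular combination of the characteristic functions of the children of $I_{jm}$, feed the resulting expansion into Lemma~\ref{counterp_tr_234} for the upper bound, use Lemma~\ref{counterp_tr_237} for the lower bound, and settle representability and uniqueness through the orthogonality of the $b$-adic Haar system in $L_2(\R^d)$. Up to that point the proposal is a faithful (if somewhat more explicit) version of the paper's argument for the directly accessible range $\max(\tfrac1p,1)-1<r<\min(\tfrac1p,1)$.

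There is, however, a genuine gap in your final step. You claim that all parameters with $r\le 0$ ``are reached by duality using $(S_{pq}^rB(\R^d))'=S_{p'q'}^{-r}B(\R^d)$, exactly as in \cite{T10a}.'' Duality (Proposition~\ref{dom_mix_smoo_duality}, which requires $1\le p,q<\infty$ on the side being dualized) only delivers the cases $1<p,q\le\infty$ with $\tfrac1p-1<r<0$: one realizes $S_{pq}^rB$ as the dual of $S_{p'q'}^{-r}B$ with $0<-r<\tfrac1{p'}$, a space already covered by the direct argument. This leaves uncovered, for $p>1$, the endpoint $r=0$ (since $-r=0$ is excluded from the directly proved open range $0<r_0<\tfrac1{p'}$) and all cases with $0<q\le 1$ (for which the conjugate exponent $q'$ is unavailable in the duality statement). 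The paper closes exactly these cases by real interpolation, following Step 3 of Triebel's proof of \cite[Proposition~2.38]{T10a}: one interpolates between $S_{pq_0}^{r_0}B$ with $0<r_0<\tfrac1p$ and $S_{pq_1}^{r_1}B$ with $\tfrac1p-1<r_1<0$, both of which are already covered, to reach $r=(1-\theta)r_0+\theta r_1$ and $\tfrac1q=\tfrac{1-\theta}{q_0}+\tfrac{\theta}{q_1}$ in the remaining range, with a final duality pass for $q=\infty$, $r\le 0$. Without this interpolation step your proof does not cover the full parameter range claimed in the statement.
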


\begin{proof}
This result is a counterpart of \cite[Theorem 2.38]{T10a} and again we follow closely Triebel's proof. Let $0 < p, q \leq \infty$ and $\max(\frac{1}{p},1) - 1 < r < \min(\frac{1}{p},1)$. Let $f$ be given by \eqref{repres_f_lem}. We represent the $b$-adic Haar functions with characteristic functions. Let $j \in \N_0, m \in \Z, l \in \B_j$. Then
\begin{align*}
   h_{jml} & = \sum_{k = 0}^{b - 1} \e^{\frac{2\pi \im}{b} k l} \chi_{j+1,bm+k}, \\
h_{-1,0,1} & = \chi_{0,0}.
\end{align*}
So, $f$ can be given in the form \eqref{f_given_with_chi}. Therefore, by Lemma \ref{counterp_tr_234} we have $f \in S_{pq}^r B(\R^d)$ and \eqref{eq_norm_half} holds.

Conversely, if $f \in S_{pq}^r B(\R^d)$, then we have Lemma \ref{counterp_tr_237}. The representability of $f$ as in \eqref{repres_f_lem} follows from the fact that the $b$-adic Haar system is an orthogonal basis in $L_2(\R^d)$. Therefore, one obtains the equivalence of the norms. All further technicalities can be found in the proof of \cite[Theorem 2.9]{T10a} and the references given there. The unconditionality is clear in view of \eqref{eq_quasinorm_lem}

The assertion can be obtained for $1 < p, q \leq \infty$ with $\frac{1}{p} - 1 < r < 0$ as explained in Step 2 of the proof of \cite[Proposition 2.38]{T10a} using duality of the spaces (Proposition \ref{dom_mix_smoo_duality}). It is also explained there how to prove the generalization of the duality. \cite[Theorem 1.20]{T10a} is here helpful as well.

The remaining cases with $q < \infty$ can be obtained by real interpolation as explained in Step 3 of the proof of \cite[Proposition 2.38]{T10a}. One finds the necessary references there. We will not define it here. Instead we just state that the point is that by interpolation it suffices to prove the assertion for the spaces $S_{p q_0}^{r_0} B(\R^d)$ with $1 < p < \infty$, $0 < q_0 < \infty$, $0 < r_0 < \frac{1}{p}$ and $S_{p q_1}^{r_1} B(\R^d)$ with $1 < q_1 < \infty$, $\frac{1}{p} - 1 < r_0 < 0$ to obtain the assertion for any space $S_{p q}^{r} B(\R^d)$ with $r = (1 - \theta) r_0 + \theta r_1$, $\frac{1}{q} = \frac{1 - \theta}{q_0} + \frac{\theta}{q_1}$, where $0 < \theta < 1$. But the spaces $S_{p q_0}^{r_0} B(\R^d)$ and $S_{p q_1}^{r_1} B(\R^d)$ are already covered.

All other cases $1 < p < \infty, \frac{1}{p} - 1 < r \leq 0, q = \infty$ can be solved by duality again.

\end{proof}

We are now ready to state the main result which we will use later for the discrepancy function. It is the counterpart of \cite[Theorem 2.41]{T10a}.

\begin{thm} \label{thm_besov_char}

Let $0 < p, q \leq \infty$ and $q > 1$ if $p = \infty$. Let $\frac{1}{p} - 1 < r < \min(\frac{1}{p},1)$. Let $f \in \D'(\Q^d)$. Then $f\in S_{pq}^r B(\Q^d)$ if and only if it can be represented as
\begin{align} \label{repres_f}
f = \sum_{j \in \N_{-1}^d} b^{|j|} \sum_{m \in \Dd_j, \, l \in \B_j} \mu_{jml} \, h_{jml}
\end{align}
for some sequence $(\mu_{jml})$ satisfying
\begin{align} \label{eq_quasinorm}
\left( \sum_{j \in \N_{-1}^d} b^{|j|(r - \frac{1}{p} + 1) q} \left( \sum_{m \in \Dd_j, \, l \in \B_j} | \mu_{jml}|^p \right)^{\frac{q}{p}} \right)^{\frac{1}{q}} < \infty.
\end{align}
The convergence of \eqref{repres_f} is unconditional in $\D'(\Q^d)$ and in any $S_{pq}^\rho B(\Q^d)$ with $\rho < r$. The representation \eqref{repres_f} of $f$ is unique with the $b$-adic Haar coefficients
\[ \mu_{jml} = \int_{\Q^d} f(x) h_{jml}(x) \dint x. \]
The expression \eqref{eq_quasinorm} is an equivalent quasi-norm on $S_{pq}^r B(\Q^d)$.

\end{thm}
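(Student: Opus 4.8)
The plan is to deduce the statement on the cube from its $\R^d$-counterpart, Proposition \ref{counterp_tr_238}, via the restriction/extension procedure that defines $S_{pq}^r B(\Q^d)$, following closely the argument for \cite[Theorem 2.41]{T10a}. The geometric fact that makes this work is a support dichotomy for $b$-adic Haar functions: for $j\in\N_{-1}^d$ the function $h_{jml}$ on $\R^d$ is supported in $\overline{I_{jm}}$, and in each coordinate $i$ the interval $I_{j_i m_i}$ is either contained in $[0,1)$ (exactly when $m_i\in\Dd_{j_i}$) or disjoint from it, this holding also at level $-1$. Hence the restriction of $h_{jml}$ to $\Q^d$ is nonzero precisely when $m\in\Dd_j$, in which case it coincides with the $b$-adic Haar function on $\Q^d$ from Section \ref{notation_eta}, and vanishes otherwise.

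First I would treat the "if" direction. Given $(\mu_{jml})_{j\in\N_{-1}^d,\,m\in\Dd_j,\,l\in\B_j}$ satisfying \eqref{eq_quasinorm}, extend the sequence by zero to all $m\in\Z^d$ and set $g=\sum_{j\in\N_{-1}^d}b^{|j|}\sum_{m\in\Z^d,\,l\in\B_j}\mu_{jml}\,h_{jml}\in\S'(\R^d)$, where $h_{jml}$ now denotes the $b$-adic Haar functions on $\R^d$. Proposition \ref{counterp_tr_238} yields $g\in S_{pq}^r B(\R^d)$ with $\left\|g|S_{pq}^r B(\R^d)\right\|$ bounded by a constant times \eqref{eq_quasinorm}, and unconditional convergence of the series in $\S'(\R^d)$ and in $S_{pq}^\rho B(\R^d)$ for $\rho<r$. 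By the support dichotomy only the summands with $m\in\Dd_j$ survive on $\Q^d$, so $g|_{\Q^d}=f$; therefore $f\in S_{pq}^r B(\Q^d)$ with $\left\|f|S_{pq}^r B(\Q^d)\right\|\leq\left\|g|S_{pq}^r B(\R^d)\right\|$, and the convergence assertions on $\Q^d$ follow since the restriction maps $\S'(\R^d)\to\D'(\Q^d)$ and $S_{pq}^\rho B(\R^d)\to S_{pq}^\rho B(\Q^d)$ are continuous.

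For the "only if" direction and the reverse norm estimate: if $f\in S_{pq}^r B(\Q^d)$, choose an extension $g\in S_{pq}^r B(\R^d)$ with $g|_{\Q^d}=f$ and $\left\|g|S_{pq}^r B(\R^d)\right\|\leq 2\left\|f|S_{pq}^r B(\Q^d)\right\|$. Apply Proposition \ref{counterp_tr_238} to $g$ to obtain its $b$-adic Haar expansion over $m\in\Z^d$ with coefficients $\mu_{jml}(g)=\int_{\R^d}g\,h_{jml}$; restricting to $\Q^d$ annihilates all terms with $m\notin\Dd_j$, and for $m\in\Dd_j$ one has $\mu_{jml}(g)=\int_{\Q^d}f\,h_{jml}=:\mu_{jml}(f)$ because $h_{jml}$ is then supported in $\Q^d$. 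This gives the representation \eqref{repres_f}, and since passing from $m\in\Z^d$ to $m\in\Dd_j$ only decreases \eqref{eq_quasinorm_lem}, the expression \eqref{eq_quasinorm} for $f$ is bounded by a constant times $\left\|g|S_{pq}^r B(\R^d)\right\|\leq c\left\|f|S_{pq}^r B(\Q^d)\right\|$. Uniqueness of \eqref{repres_f} on $\Q^d$, and hence the identification $\mu_{jml}=\int_{\Q^d}f\,h_{jml}$, follows from the fact that the system \eqref{haarbasis} is an orthonormal basis of $L_2(\Q^d)$.

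The main obstacle I expect is the bookkeeping between the two distribution spaces: verifying that the extension $g$ built in the "if" part really restricts to the given $f\in\D'(\Q^d)$, that the zero-extended coefficient sequence is admissible in Proposition \ref{counterp_tr_238}, and that unconditional convergence transfers cleanly under restriction to $\Q^d$. These are precisely the technical points settled in Triebel's proof of \cite[Theorem 2.41]{T10a}, and the $b$-adic case goes through verbatim once the support dichotomy above is recorded; the parameter restrictions, including $q>1$ when $p=\infty$, are inherited directly from Proposition \ref{counterp_tr_238}.
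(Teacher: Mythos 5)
Your proposal is correct and follows essentially the same route as the paper: both directions are reduced to the $\R^d$-characterization of Proposition \ref{counterp_tr_238} via the support dichotomy for $b$-adic Haar functions, exactly as in Triebel's proof of \cite[Theorem 2.41]{T10a}. The only organizational difference is that you work directly with the infimum-over-extensions definition of the cube norm (arbitrary near-optimal extension in one direction, zero-extended coefficient sequence in the other), whereas the paper first identifies $S_{pq}^r B(\Q^d)$ with the subspace $\tilde{S}_{pq}^r B(\Q^d)$ of distributions supported in $[0,1]^d$ --- an identification valid only for $\max(\frac{1}{p},1)-1 < r < \min(\frac{1}{p},1)$ --- and then reaches the remaining values of $r$ by duality; since Proposition \ref{counterp_tr_238} is already stated for the full range $\frac{1}{p}-1 < r < \min(\frac{1}{p},1)$, your version dispenses with that extra duality step.
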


\begin{proof}
Once again we follow Triebel's proof. First we restrict ourselves to $\max(\frac{1}{p},1) - 1 < r < \min(\frac{1}{p},1)$ and put
\[ \tilde{S}_{pq}^r B(\Q^d) = \left\{ f \in S_{pq}^r B(\R^d): \, \supp f \subset [0,1]^d \right\}. \]
Let $f \in \tilde{S}_{pq}^r B(\Q^d)$ then by Proposition \ref{counterp_tr_238} we get the representation \eqref{repres_f} (with $\Dd_j$ instead of $\Z^d$). The spaces $\tilde{S}_{pq}^r B(\Q^d)$ can be identified with the spaces $S_{pq}^r B(\Q^d)$. Conversely, let $f$ be given by \eqref{repres_f} then again Proposition \ref{counterp_tr_238} can be used and we get $f \in S_{pq}^r B(\R^d)$ while of course $\supp f \subset [0,1]^d$. Unconditionality follows from the Proposition \ref{counterp_tr_238}. Further technicalities are explained in the proof of \cite[Proposition 2.41]{T10a} and the references given there. All other cases are solved by duality.

\end{proof}

\begin{rem}

There is no necessity to go through $\R^d$ as we did it here since we do not need this case for later calculations. Instead one could have considered only $\Q^d$ right away. We did it for completeness.

\end{rem}

\chapter{$L_p$-discrepancy}
We now come to concrete results on irregularities of point distribution, giving new results as well as historical results illustrating the development of the theory. We deal with general lower bounds, point sets with best possible discrepancy or just pure existence assertions of those and with concrete constants of the bounds. In this chapter our topic is $L_p$-discrepancy which also includes star discrepancy and can be considered as the starting point and most practically applicable area of research in context of discrepancy.

\section{$L_2$-discrepancy}
Practically all results on $L_p$-discrepancy are based on one sole idea by Klaus Roth. In $L_2$ the idea is based on orthogonality arguments, in $L_p$ Littlewood-Paley can be applied to replace orthogonality. But not only $L_p$-discrepancy is based on Roth's work. In the next chapter we will introduce discrepancy in spaces with dominating mixed smoothness and also there the similarity in the methods will be obvious. Even upper bounds are connected to Roth's method.

\subsection{The lower bounds}
The first and the last result on asymptotical lower bounds for the $L_2$-discrepancy was given by Roth in 1954 (\cite{R54}). It was the last one because it was the best possible and in the same paper Roth also was the first one to state this problem in the plane or a higher dimension. This paper can be regarded as the starting point of the modern theory of discrepancy. He referenced van Aardenne-Ehrenfest's result from 1949 concerning the distribution of sequences, improving it significantly. The actual proof was for the plane but in a remark he explained a possible generalization to arbitrary dimension. A recent paper (\cite{B11}) by Bilyk deals mostly with Roth's result and surveys it and its implications in much detail. In this subsection we will give a slightly modified version of Roth's proof using $b$-adic Haar bases. As a result we will obtain the best constant in Roth's lower bound known so far. This result can also be found in \cite{HM11}. For a positive real number $x$ we denote by $\left\lceil x \right\rceil$ the smallest integer that is greater than $x$. We need some easy calculations for the result.

\begin{lem} \label{lem_haar_coeff_besov_x_1}

Let $f(x) = x_1 \cdot \ldots \cdot x_d$ for $x=(x_1,\ldots,x_d) \in \Q^d$. Let $j \in \N_0^d, \, m \in \Dd_j, \, l \in \B_j$ and let $\mu_{jml}$ be the $b$-adic Haar coefficient of $f$. Then
\[ \mu_{jml} = \frac{b^{-2|j| - d}}{(\e^{\frac{2 \pi \im}{b} l_1} - 1) \cdot \ldots \cdot (\e^{\frac{2 \pi \im}{b} l_d} - 1)}. \]

\end{lem}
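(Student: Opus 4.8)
The plan is to exploit the tensor-product structure of both $f$ and the $b$-adic Haar functions. Since $f(x) = x_1 \cdots x_d$ factors as a product of one-dimensional functions and $h_{jml}(x) = h_{j_1 m_1 l_1}(x_1) \cdots h_{j_d m_d l_d}(x_d)$ is also a product, Fubini's theorem immediately reduces the computation to the one-dimensional case: $\mu_{jml} = \prod_{i=1}^d \int_0^1 t\, h_{j_i m_i l_i}(t)\,\dint t$. So the whole statement follows once I establish, for $j \in \N_0$, $m \in \Dd_j$, $l \in \B_j$, that
\[ \int_0^1 t\, h_{jml}(t)\,\dint t = \frac{b^{-2j-1}}{\e^{\frac{2\pi\im}{b}l} - 1}. \]

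First I would recall from the definition that $h_{jml}$ is supported on $I_{jm} = [b^{-j}m, b^{-j}(m+1))$ and takes the constant value $\e^{\frac{2\pi\im}{b}lk}$ on the $k$-th child $I_{jm}^k = [b^{-j-1}(bm+k), b^{-j-1}(bm+k+1))$ for $k = 0,\ldots,b-1$. Hence the integral is a finite sum
\[ \int_0^1 t\, h_{jml}(t)\,\dint t = \sum_{k=0}^{b-1} \e^{\frac{2\pi\im}{b}lk} \int_{b^{-j-1}(bm+k)}^{b^{-j-1}(bm+k+1)} t\,\dint t. \]
Each inner integral equals $\tfrac12\big((b^{-j-1}(bm+k+1))^2 - (b^{-j-1}(bm+k))^2\big) = b^{-2j-2}\big(bm + k + \tfrac12\big)$. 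Since $\sum_{k=0}^{b-1} \e^{\frac{2\pi\im}{b}lk} = 0$ for $l \in \{1,\ldots,b-1\}$, the terms involving $bm$ and the constant $\tfrac12$ drop out, leaving $b^{-2j-2}\sum_{k=0}^{b-1} k\,\e^{\frac{2\pi\im}{b}lk}$. The remaining task is the standard evaluation of $\sum_{k=0}^{b-1} k\, \zeta^k$ where $\zeta = \e^{\frac{2\pi\im}{b}l}$ is a nontrivial $b$-th root of unity; differentiating the geometric sum identity (or a direct Abel summation) gives $\sum_{k=0}^{b-1} k\,\zeta^k = \frac{b}{\zeta - 1}$, using $\zeta^b = 1$. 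Substituting yields $b^{-2j-2}\cdot \frac{b}{\zeta-1} = \frac{b^{-2j-1}}{\zeta - 1}$, as claimed, and then the product over $i = 1,\ldots,d$ gives the factor $b^{-2|j|-d}$ in the denominator-free numerator and the product of the $(\e^{\frac{2\pi\im}{b}l_i}-1)$ terms in the denominator.

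There is no real obstacle here; the only mildly delicate point is the root-of-unity identity $\sum_{k=0}^{b-1} k\,\zeta^k = \frac{b}{\zeta-1}$, which must be verified carefully (it is valid precisely because $l \neq 0$, so $\zeta \neq 1$, matching the hypothesis $l \in \B_j$), and making sure the vanishing sum $\sum_k \zeta^k = 0$ is invoked to discard the unwanted terms. Everything else is routine integration and the tensorization argument.
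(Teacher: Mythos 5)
Your proposal is correct and follows exactly the route the paper intends: the paper's "proof" is the single remark that one checks the one-dimensional case and concludes with tensor products, and your one-dimensional computation (including the identity $\sum_{k=0}^{b-1} k\,\e^{\frac{2\pi\im}{b}lk} = b/(\e^{\frac{2\pi\im}{b}l}-1)$, which the paper records separately as Lemma \ref{lem_factor5}) is the intended verification. Nothing is missing.
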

One easily checks the one-dimensional case and concludes with tensor products. The next result is again easily derived from the one-dimensional case.

\begin{lem} \label{lem_haar_coeff_besov_indicator_1}

Let $z = (z_1,\ldots,z_d) \in \Q^d$ and $g(x) = \chi_{[0,x)}(z)$ for $x = (x_1, \ldots, x_d) \in \Q^d$. Let $j \in \N_0^d, \, m \in \Dd_j, l \in \B_j$ and let $\mu_{jml}$ be the $b$-adic Haar coefficient of $g$. Then $\mu_{jml} = 0$ whenever $z$ is not contained in the interior of the $b$-adic interval $I_{jm}$ supporting the function $h_{jml}$. 

\end{lem}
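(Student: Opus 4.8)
The plan is to reduce everything to the one-dimensional situation via the tensor-product structure of both $g$ and the Haar function $h_{jml}$, and there to exploit the fact that a one-dimensional $b$-adic Haar function $h_{j_i m_i l_i}$ with $j_i \in \N_0$ has integral zero over its supporting interval $I_{j_i m_i}$, indeed over each of the $b$ children $I_{j_i m_i}^k$ it is locally constant with the values $\e^{\frac{2\pi\im}{b} l_i k}$, and $\sum_{k=0}^{b-1} \e^{\frac{2\pi\im}{b} l_i k} = 0$ for $l_i \in \B_{j_i}$.

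First I would write, for $z=(z_1,\ldots,z_d)$ and $x=(x_1,\ldots,x_d)$,
\[ g(x) = \chi_{[0,x)}(z) = \prod_{i=1}^d \chi_{[0,x_i)}(z_i) = \prod_{i=1}^d \chi_{(z_i,1)}(x_i), \]
since $z_i < x_i$ is the condition, so as a function of $x$ the factor is the indicator of $(z_i,1)$. Hence
\[ \mu_{jml} = \int_{\Q^d} g(x) h_{jml}(x)\,\dint x = \prod_{i=1}^d \int_0^1 \chi_{(z_i,1)}(x_i)\, h_{j_i m_i l_i}(x_i)\,\dint x_i, \]
so it suffices to show that a single factor vanishes as soon as $z_i$ is not in the interior of $I_{j_i m_i}$. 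There are two cases. If $z_i \notin \overline{I_{j_i m_i}}$, then on the support $I_{j_i m_i}$ of $h_{j_i m_i l_i}$ the indicator $\chi_{(z_i,1)}$ is identically $0$ (if $z_i$ lies to the right of $I_{j_i m_i}$) or identically $1$ (if $z_i$ lies to the left); in the first case the integral is trivially $0$, and in the second it equals $\int_{I_{j_i m_i}} h_{j_i m_i l_i} = b^{-j_i}\sum_{k=0}^{b-1}\e^{\frac{2\pi\im}{b} l_i k} = 0$ because $l_i \in \B_{j_i}$ and hence $\e^{\frac{2\pi\im}{b} l_i}\neq 1$. If instead $z_i$ is a $b$-adic rational that is the left endpoint of $I_{j_i m_i}$ — the one boundary point that keeps $z_i$ out of the interior — then again $\chi_{(z_i,1)} \equiv 1$ on $I_{j_i m_i}$ and the same computation gives $0$. (If $z_i$ is the right endpoint of $I_{j_i m_i}$, then $z_i$ still lies outside the half-open interval $I_{j_i m_i}=[b^{-j_i}m_i, b^{-j_i}(m_i+1))$, which is covered by the first case.) In every case the $i$-th factor, and hence the whole product $\mu_{jml}$, is $0$.

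The only mild subtlety, and the step I would be most careful about, is the bookkeeping of boundary points: "interior of $I_{jm}$" must be read coordinatewise, and one has to check that the boundary case $z_i = b^{-j_i} m_i$ really does produce a vanishing factor rather than a partial overlap — it does, precisely because $h_{j_i m_i l_i}$ is locally constant on the children of $I_{j_i m_i}$ with mean zero, so multiplying it by the constant $1$ still integrates to zero. No genuine estimate is needed; the result is a clean orthogonality-type statement, and the tensor factorization does all the work.
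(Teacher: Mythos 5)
Your proof is correct and follows the same route the paper intends: the paper simply remarks that the result is "easily derived from the one-dimensional case" via tensor products, which is exactly the factorization and single-factor vanishing argument you carry out, with the boundary case $z_i = b^{-j_i}m_i$ handled properly (the measure-zero discrepancy at $x_i = z_i$ is immaterial). The only cosmetic slip is the prefactor in $\int_{I_{j_i m_i}} h_{j_i m_i l_i} = b^{-j_i-1}\sum_{k=0}^{b-1}\e^{\frac{2\pi\im}{b} l_i k}$ (the children have length $b^{-j_i-1}$, not $b^{-j_i}$), which does not affect the conclusion since the sum of roots of unity vanishes.
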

The following result is an analytical masterpiece. We give a proof because of its beauty and because we cannot give a reference.

\begin{lem} \label{lem_eulerstyle1}

For any natural $b \geq 2$ we have
\[ \sum_{l = 1}^{b - 1} \cot^2 \frac{l \pi}{2b} = \frac{(2b - 1)(b - 1)}{3}. \]

\end{lem}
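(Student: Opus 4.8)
The plan is to evaluate the sum $S_b := \sum_{l=1}^{b-1} \cot^2\frac{l\pi}{2b}$ by relating it to a sum of $\cot^2$ over the $b$-th roots of unity, where a classical partial-fractions identity is available. The key observation is that the angles $\frac{l\pi}{2b}$ for $l = 1, \ldots, b-1$ are precisely half of the angles $\frac{k\pi}{2b}$ with $k$ odd, $1 \le k \le 2b-1$; equivalently, $\{l\pi/(2b) : 1 \le l \le b-1\}$ together with $\{(b+l)\pi/(2b): 1\le l\le b-1\}$ and the single angle $\pi/2$ exhaust $\{k\pi/(2b): 1\le k\le 2b-1\}$. Since $\cot^2$ is symmetric under $\theta \mapsto \pi - \theta$, we get
\[
\sum_{k=1}^{2b-1} \cot^2\frac{k\pi}{2b} = 2\sum_{l=1}^{b-1}\cot^2\frac{l\pi}{2b} + \cot^2\frac{\pi}{2} = 2 S_b,
\]
because $\cot(\pi/2) = 0$. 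So it suffices to show $\sum_{k=1}^{2b-1}\cot^2\frac{k\pi}{2b} = \frac{2(2b-1)(b-1)}{3}$, i.e. the classical formula $\sum_{k=1}^{n-1}\cot^2\frac{k\pi}{n} = \frac{(n-1)(n-2)}{3}$ evaluated at $n = 2b$.

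To prove the classical identity for general $n \ge 2$, I would use the standard argument: the numbers $\cot\frac{k\pi}{n}$ for $k = 1, \ldots, n-1$ are the roots of a polynomial obtained from the imaginary part of $(\cos\theta + \im\sin\theta)^n$. More precisely, writing $x_k = \cot\frac{k\pi}{n}$, one shows that $x_1, \ldots, x_{n-1}$ are exactly the roots of the degree-$(n-1)$ polynomial
\[
P(x) = \sum_{j} \binom{n}{2j+1} x^{n-1-2j}(-1)^j,
\]
since $\sin(n\theta) = \sin^n\theta \cdot P(\cot\theta)$ and $\sin(n\theta)$ vanishes precisely at $\theta = k\pi/n$. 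Reading off the first two coefficients of $P$ — the leading coefficient $\binom{n}{1} = n$ and the coefficient of $x^{n-3}$, namely $-\binom{n}{3}$ — Vieta's formulas give $\sum_k x_k = 0$ and $\sum_{k<l} x_k x_l = -\binom{n}{3}/\binom{n}{1}$. Hence
\[
\sum_{k=1}^{n-1}\cot^2\frac{k\pi}{n} = \Bigl(\sum_k x_k\Bigr)^2 - 2\sum_{k<l}x_k x_l = 0 + \frac{2\binom{n}{3}}{n} = \frac{(n-1)(n-2)}{3}.
\]
Substituting $n = 2b$ yields $\sum_{k=1}^{2b-1}\cot^2\frac{k\pi}{2b} = \frac{(2b-1)(2b-2)}{3} = \frac{2(2b-1)(b-1)}{3}$, and dividing by $2$ gives the claim.

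The main obstacle — really the only delicate point — is establishing rigorously that $\{\cot(k\pi/n) : 1 \le k \le n-1\}$ are the $n-1$ distinct roots of $P$: one must check the identity $\sin(n\theta) = \sin^n\theta\,P(\cot\theta)$ (by expanding $(\cos\theta+\im\sin\theta)^n$ via the binomial theorem and taking imaginary parts, then dividing by $\sin^n\theta$), verify $\deg P = n-1$ with the stated leading coefficients, and note that the $n-1$ values $\cot(k\pi/n)$ are pairwise distinct since $\cot$ is injective on $(0,\pi)$. Everything after that is routine Vieta bookkeeping. An alternative route, if one prefers to avoid the polynomial, is to start from the known partial-fraction expansion $\sum_{k=0}^{n-1}\frac{1}{(z - k\pi/n)^2}$-type identity or from $\pi^2/\sin^2(\pi x) = \sum_{m\in\Z}(x-m)^{-2}$ specialized cleverly, but the polynomial approach is the most self-contained and is what I would write up.
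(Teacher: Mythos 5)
Your proof is correct, and it rests on the same underlying mechanism as the paper's — expand $(\cos\theta+\im\sin\theta)^{n}$ by the binomial theorem, take imaginary parts, and apply Vieta's formulas — but the execution differs in two respects. The paper works directly with the $b-1$ angles $\tfrac{l\pi}{2b}$, $1\le l\le b-1$: dividing the vanishing imaginary part of $\bigl(\e^{\frac{l\pi\im}{2b}}\bigr)^{2b}$ by $\cos\frac{l\pi}{2b}\,\bigl(\sin\frac{l\pi}{2b}\bigr)^{2b-1}$ produces a polynomial of degree $b-1$ in the variable $x=\cot^{2}\frac{l\pi}{2b}$ itself, so the desired sum is literally the sum of the roots and drops out of Vieta in one step as $\binom{2b}{2b-3}/\binom{2b}{2b-1}$. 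You instead fold the sum, via $\cot^{2}(\pi-\theta)=\cot^{2}\theta$ and $\cot\frac{\pi}{2}=0$, into the classical identity $\sum_{k=1}^{n-1}\cot^{2}\frac{k\pi}{n}=\frac{(n-1)(n-2)}{3}$ at $n=2b$, which you then prove with the degree-$(n-1)$ polynomial in $\cot\theta$ and the computation $\sum x_k^2=e_1^2-2e_2$. What your route buys is that the heavy lifting is outsourced to a standard, citable identity valid for all $n$, at the cost of the extra symmetry step and the slightly longer Vieta bookkeeping; the paper's route is more economical here and, importantly, its $\cot^{2}$-polynomial technique transfers verbatim to the companion Lemma \ref{lem_eulerstyle2} with denominator $2b-1$ (an odd modulus, where your folding argument would need to be redone since there is no angle equal to $\pi/2$ and the classical identity at $n=2b-1$ gives $\frac{(2b-2)(2b-3)}{3}$, which after halving does still yield $\frac{(2b-3)(b-1)}{3}$ — so your method adapts, but not as uniformly). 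Both arguments are complete and correct.
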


\begin{proof}
For $l = 1, \ldots, b - 1$ we have
\[ (-1)^l = (\e^{\frac{l \pi \im}{2b}})^{2b} = \left( \cos \frac{l \pi}{2b} + \im \sin \frac{l \pi}{2b} \right)^{2b} = \sum_{k = 0}^{2b} \binom{2b}{k} \left( \cos \frac{l \pi}{2b} \right)^k \left( \im \sin \frac{l \pi}{2b} \right)^{2b - k}. \]
We consider only the imaginary part
\[ 0 = \sum_{r = 0}^{b - 1} (-1)^{b - r + 1} \binom{2b}{2r + 1} \left( \cos \frac{l \pi}{2b} \right)^{2r + 1} \left( \sin \frac{l \pi}{2b} \right)^{2b - 2r - 1} \]
and, after dividing by $(\cos \frac{l \pi}{2b}) (\sin \frac{l \pi}{2b})^{2b - 1}$, we get
\[ 0 = \sum_{r = 0}^{b - 1} (-1)^{b - r + 1}  \binom{2b}{2r + 1} \left( \cot \frac{l \pi}{2b} \right)^{2r}. \]
So, for $l = 1, \ldots, b - 1$ the pairwise distinct terms $\cot^2 \frac{l \pi}{2b}$ are roots of the polynomial
\[ p(x) = \sum_{r = 0}^{b - 1} (-1)^{b - r + 1}  \binom{2b}{2r + 1} x^r. \]
Since $p$ has degree $b - 1$, these are all the roots and they are simple. From Vieta's formulas we get
\[ \sum_{l = 1}^{b - 1} \cot^2 \frac{l \pi}{2b} = \frac{\binom{2b}{2b - 3}}{\binom{2b}{2b - 1}} = \frac{(2b - 1)(b - 1)}{3}. \]
\end{proof}

\begin{lem} \label{lem_eulerstyle2}

For any natural $b \geq 2$ we have
\[ \sum_{l = 1}^{b - 1} \cot^2 \frac{l \pi}{2b - 1} = \frac{(2b - 3)(b - 1)}{3}. \]

\end{lem}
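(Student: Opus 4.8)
The plan is to repeat, almost verbatim, the argument used for Lemma~\ref{lem_eulerstyle1}, with the exponent $2b$ replaced by $2b-1$. Fix $l \in \{1,\dots,b-1\}$ and note that $\bigl(\e^{\frac{l\pi\im}{2b-1}}\bigr)^{2b-1}=\e^{l\pi\im}=(-1)^l$, so expanding $\bigl(\cos\frac{l\pi}{2b-1}+\im\sin\frac{l\pi}{2b-1}\bigr)^{2b-1}$ by the binomial theorem and comparing imaginary parts yields an identity. This time the imaginary contributions come from the summands carrying an odd power of $\im\sin\frac{l\pi}{2b-1}$, i.e. those with even index $k=2r$ (a shift from the $k=2r+1$ in the previous lemma, caused by $2b-1$ being odd). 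Collecting signs gives
\[ 0=\sum_{r=0}^{b-1}(-1)^{b-r+1}\binom{2b-1}{2r}\Bigl(\cos\tfrac{l\pi}{2b-1}\Bigr)^{2r}\Bigl(\sin\tfrac{l\pi}{2b-1}\Bigr)^{2b-1-2r}, \]
and dividing through by $\bigl(\sin\frac{l\pi}{2b-1}\bigr)^{2b-1}$ (legitimate since $0<\frac{l\pi}{2b-1}<\pi$) turns this into a polynomial identity in $\cot^2\frac{l\pi}{2b-1}$.

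Next I would observe that the $b-1$ numbers $x_l=\cot^2\frac{l\pi}{2b-1}$, $l=1,\dots,b-1$, are precisely the roots of
\[ p(x)=\sum_{r=0}^{b-1}(-1)^{b-r+1}\binom{2b-1}{2r}x^{r}, \]
a polynomial of degree $b-1$. They are pairwise distinct and positive because the angles $\frac{l\pi}{2b-1}$ all lie in $\bigl(0,\frac{\pi}{2}\bigr)$ (as $\frac{b-1}{2b-1}<\frac12$), where $\cot$ is injective; hence these are all the roots of $p$ and each is simple. Finally, Vieta's formula gives $\sum_{l=1}^{b-1}x_l=-\,[x^{b-2}]p\,/\,[x^{b-1}]p$, where $[x^{b-1}]p=\binom{2b-1}{2b-2}=2b-1$ and $[x^{b-2}]p=-\binom{2b-1}{2b-4}=-\binom{2b-1}{3}$, so
\[ \sum_{l=1}^{b-1}\cot^2\frac{l\pi}{2b-1}=\frac{\binom{2b-1}{3}}{2b-1}=\frac{(2b-2)(2b-3)}{6}=\frac{(2b-3)(b-1)}{3}. \]

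I do not expect a genuine obstacle here; the argument is structurally identical to that of Lemma~\ref{lem_eulerstyle1}. The only point requiring care is the parity bookkeeping in the binomial expansion: since the exponent $2b-1$ is odd rather than even, the surviving terms are indexed by even $k$, which changes which binomial coefficients appear and the overall signs, and one must read off the degree and the two leading coefficients of $p$ correctly to get the constant $\tfrac13$ and the factor $(2b-3)(b-1)$ right.
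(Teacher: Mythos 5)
Your proof is correct and is precisely the "analogous" argument the paper intends (the paper only remarks that the proof is analogous to Lemma \ref{lem_eulerstyle1} without writing it out). The parity bookkeeping, the identification of the $b-1$ distinct roots in $\bigl(0,\tfrac{\pi}{2}\bigr)$, and the Vieta computation $\binom{2b-1}{3}/(2b-1)=\tfrac{(2b-3)(b-1)}{3}$ all check out.
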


The proof is analogous to Lemma \ref{lem_eulerstyle1}.

\begin{prp} \label{prp_diagonals}

For any natural $b \geq 2$ we have
\[ \sum_{l = 1}^{b - 1} \frac{1}{\left| \e^{\frac{2 \pi \im}{b} l} - 1 \right|^2} = \frac{b^2 - 1}{12}. \]

\end{prp}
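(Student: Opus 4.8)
The plan is to reduce the identity to the cotangent sums already established. First I would use the elementary fact $|\e^{\im\theta}-1|^2 = 2-2\cos\theta = 4\sin^2(\theta/2)$; applied with $\theta = \frac{2\pi l}{b}$ this rewrites the left-hand side as $\sum_{l=1}^{b-1}\frac{1}{4\sin^2(\pi l/b)}$. Then, using $\frac{1}{\sin^2 x} = 1+\cot^2 x$, the sum splits as
\[ \sum_{l=1}^{b-1}\frac{1}{|\e^{\frac{2\pi\im}{b}l}-1|^2} = \frac{b-1}{4} + \frac14\sum_{l=1}^{b-1}\cot^2\frac{l\pi}{b}, \]
so everything comes down to showing $\sum_{l=1}^{b-1}\cot^2\frac{l\pi}{b} = \frac{(b-1)(b-2)}{3}$.

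To evaluate this cotangent sum I would exploit the symmetry $\cot^2(\pi - x) = \cot^2 x$ to fold the sum in half and then invoke Lemma \ref{lem_eulerstyle1} and Lemma \ref{lem_eulerstyle2}, treating even and odd $b$ separately. For even $b=2c$, the term $l=c$ contributes $\cot^2(\pi/2)=0$ and the remaining terms pair up under $l\leftrightarrow 2c-l$, giving $\sum_{l=1}^{b-1}\cot^2\frac{l\pi}{b} = 2\sum_{l=1}^{c-1}\cot^2\frac{l\pi}{2c}$, which Lemma \ref{lem_eulerstyle1} (with its "$b$" equal to $c$) evaluates to $2\cdot\frac{(2c-1)(c-1)}{3} = \frac{(b-1)(b-2)}{3}$. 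For odd $b=2c-1$ there is no fixed point, and the pairing $l\leftrightarrow 2c-1-l$ gives $\sum_{l=1}^{b-1}\cot^2\frac{l\pi}{b} = 2\sum_{l=1}^{c-1}\cot^2\frac{l\pi}{2c-1}$, which Lemma \ref{lem_eulerstyle2} (with its "$b$" equal to $c$) evaluates to $2\cdot\frac{(2c-3)(c-1)}{3} = \frac{(b-1)(b-2)}{3}$ as well. The degenerate case $b=2$ is immediate by direct computation.

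Assembling the pieces yields
\[ \sum_{l=1}^{b-1}\frac{1}{|\e^{\frac{2\pi\im}{b}l}-1|^2} = \frac{b-1}{4} + \frac14\cdot\frac{(b-1)(b-2)}{3} = \frac{3(b-1)+(b-1)(b-2)}{12} = \frac{(b-1)(b+1)}{12} = \frac{b^2-1}{12}, \]
as claimed. I do not expect a real obstacle here: the trigonometric reduction and the final algebra are routine, and the only thing requiring care is the index bookkeeping when applying Lemmas \ref{lem_eulerstyle1} and \ref{lem_eulerstyle2}, namely that their summation parameter "$b$" corresponds to $c=b/2$ (even case) or $c=(b+1)/2$ (odd case), and keeping the two parity cases cleanly separated.
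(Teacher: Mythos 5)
Your proof is correct and follows essentially the same route as the paper: both reduce the sum via $|\e^{\im\theta}-1|^2=4\sin^2(\theta/2)$, exploit the symmetry about $\pi/2$ to fold the sum, and invoke Lemmas \ref{lem_eulerstyle1} and \ref{lem_eulerstyle2} for the even and odd cases with the reparametrized base. The only cosmetic difference is that you fold the $\cot^2$ sum while the paper converts to $1/\sin^2$ first and folds that; the bookkeeping is equivalent.
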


\begin{proof}
From Lemmas \ref{lem_eulerstyle1} and \ref{lem_eulerstyle2} and from the fact that
\[ \cot^2 x = \frac{1}{\sin^2 x} - 1 \]
we get for any natural $b \geq 2$ that
\[ \sum_{l = 1}^{b - 1} \frac{1}{\sin^2 \frac{l \pi}{2b}} = \frac{2(b^2 - 1)}{3} \]
and
\[ \sum_{l = 1}^{b - 1} \frac{1}{\sin^2 \frac{l \pi}{2b - 1}} = \frac{2b(b - 1)}{3}. \]
For $l = 1, \ldots, b - 1$ we have $0 < \frac{l \pi}{2b} < \frac{\pi}{2}$ and $0 < \frac{l \pi}{2b-1} < \frac{\pi}{2}$. For $l = b + 1, \ldots, 2b - 1$ we have $\frac{\pi}{2} < \frac{l \pi}{2b} < \pi$ and for $l = b, \ldots, 2b - 2$ we have $\frac{\pi}{2} < \frac{l \pi}{2b - 1} < \pi$. Using this and the symmetry of the sine function we get
\[ \sum_{l = 1}^{2b - 1} \frac{1}{\sin^2 \frac{l \pi}{2b}} = 2 \sum_{l = 1}^{b - 1} \frac{1}{\sin^2 \frac{l \pi}{2b}} + 1 = \frac{(2b)^2 - 1}{3} \]
and
\[ \sum_{l = 1}^{2b - 2} \frac{1}{\sin^2 \frac{l \pi}{2b - 1}} = 2 \sum_{l = 1}^{b - 1} \frac{1}{\sin^2 \frac{l \pi}{2b - 1}} = \frac{4b(b - 1)}{3} = \frac{(2b - 1)^2 - 1}{3}. \]
Hence, for any natural $b \geq 2$ we have
\[ \sum_{l = 1}^{b - 1} \frac{1}{\sin^2 \frac{l \pi}{b}} = \frac{b^2 - 1}{3}. \]
Then one gets
\begin{align*}
\sum_{l = 1}^{b - 1} \frac{1}{\left| \e^{\frac{2 \pi \im}{b} l} - 1 \right|^2} & = \frac{1}{2} \sum_{l = 1}^{b - 1} \frac{1}{1 - \cos \frac{2 \pi l}{b}} \\
& = \frac{1}{4} \sum_{l = 1}^{b - 1} \frac{1}{\sin^2 \frac{2 \pi l}{b}} \\
& = \frac{b^2 - 1}{12}.
\end{align*}
\end{proof}

\begin{rem}

The last proposition can be regarded as a property of diagonals of a regular polygon if we define the two sides in an edge to be the first and the $(b - 1)$-th diagonal. Then the term $\left| e^{\frac{2\pi i}{b} l} - 1 \right|$ is the length of the $l$-th diagonal.

\end{rem}

We are now ready to state and prove the celebrated theorem of Roth and calculate the best known constant.

\begin{thm} \label{thm_roth_const}

For any positive integer $N$ and all point sets $\P$ in $\Q^d$ with $N$ points the inequality
\[ \left\| D_{\P} | L_2(\Q^d) \right\| \geq c_d \, \frac{\left( \log N \right)^\frac{d-1}{2}}{N} \]
holds with
\[ c_d = {\frac{1}{\sqrt{21} \cdot 2^{2d-1} \, \sqrt{(d-1)!} \, (\log 2)^{\frac{d-1}{2}}}}. \]
  
\end{thm}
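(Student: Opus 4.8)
The plan is to run Roth's orthogonal function method with $b$-adic Haar bases, using Parseval's equation \eqref{parsevals_equation}, and to optimize the base $b$ only at the end. First I would note that, for any collection $\Lambda$ of triples $(j,m,l)$ with $j\in\N_{-1}^d$, $m\in\Dd_j$, $l\in\B_j$, Parseval gives
\[ \left\| D_\P \,|\, L_2(\Q^d) \right\|^2 \;\geq\; \sum_{(j,m,l)\in\Lambda} b^{|j|}\,|\mu_{jml}(D_\P)|^2 . \]
The decisive input is that, by Lemma~\ref{lem_haar_coeff_besov_indicator_1}, the ``point part'' $\tfrac1N\sum_{z\in\P}\mu_{jml}(\chi_{[0,\cdot)}(z))$ of $\mu_{jml}(D_\P)$ vanishes whenever no point of $\P$ lies in the interior of $I_{jm}$, so that for such a $b$-adic box (with $j\in\N_0^d$) one has $\mu_{jml}(D_\P)=-\mu_{jml}(x_1\cdots x_d)$ and hence, by Lemma~\ref{lem_haar_coeff_besov_x_1} together with Proposition~\ref{prp_diagonals},
\[ \sum_{l\in\B_j} |\mu_{jml}(D_\P)|^2 \;=\; b^{-4|j|-2d}\left(\frac{b^2-1}{12}\right)^{\!d}. \]
Accordingly I would take $\Lambda$ to consist of all triples $(j,m,l)$ with $|j|=k$, with $I_{jm}$ having empty interior with respect to $\P$, and $l\in\B_j$, ranging over every level $k\geq n_0:=\lceil\log_b N\rceil$.

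The counting step is then routine. For a fixed $j$ with $|j|=k$ the $b^k$ boxes $I_{jm}$ have pairwise disjoint interiors, so at most $N$ of them meet $\P$ and at least $b^k-N\geq 0$ are empty; moreover the number of $j\in\N_0^d$ with $|j|=k$ equals $\binom{k+d-1}{d-1}\geq\binom{n_0+d-1}{d-1}\geq\frac{n_0^{d-1}}{(d-1)!}\geq\frac{(\log_b N)^{d-1}}{(d-1)!}$. Pulling the slowly varying factors out of the sum over $k$ yields
\[ \left\| D_\P \,|\, L_2(\Q^d) \right\|^2 \;\geq\; \frac{(\log_b N)^{d-1}}{(d-1)!}\,b^{-2d}\left(\frac{b^2-1}{12}\right)^{\!d} \sum_{k\geq n_0} (b^k-N)\,b^{-3k}. \]
The geometric series equals $\tfrac{b^2}{b^2-1}b^{-2n_0}-\tfrac{b^3}{b^3-1}Nb^{-3n_0}=:\psi(b^{n_0})$; a short computation shows $\psi(N)=\psi(bN)=\tfrac{1}{N^2}\cdot\tfrac{b^2}{(b^2-1)(b^2+b+1)}$ and that $\psi$ has a single interior maximum on $(N,bN)$, so since $b^{n_0}\in[N,bN)$ one obtains $\sum_{k\geq n_0}(b^k-N)b^{-3k}\geq\tfrac{b^2}{(b^2-1)(b^2+b+1)N^2}$. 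Substituting and simplifying gives $\left\|D_\P\,|\,L_2(\Q^d)\right\|\geq c_d(b)\,\frac{(\log N)^{(d-1)/2}}{N}$ with $c_d(b)^2=\frac{(1-b^{-2})^{d-1}}{12^d(d-1)!\,(\log b)^{d-1}(b^2+b+1)}$; maximizing over integers $b\geq2$ (here the numerator stays below $1$ while $(\log b)^{d-1}(b^2+b+1)$ strictly increases, so $b=2$ is optimal) reproduces exactly the stated $c_d$, the $\sqrt{21}$ being $\sqrt{(b^2-1)(b^2+b+1)}$ at $b=2$.

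Since the proof is short there is no single ``hard'' step, but the two points that make the constant sharp are easy to miss. First, one must sum over \emph{all} levels $k\geq n_0$; using only a single level loses a fixed factor and misses the optimal constant. Second, the sharpness rests on the observation that $\psi$, regarded as a function of the rounding quantity $b^{n_0}\in[N,bN)$, is minimized at both endpoints of that interval, so nothing is lost by being unable to place $b^{n_0}$ at the true optimum of $\psi$. Minor care is needed for $N=1$ (where the right-hand side is $0$ for $d\geq 2$) and for the conventions $(\log N)^0=1$, $0!=1$ in the case $d=1$.
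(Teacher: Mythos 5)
Your proposal is correct and follows essentially the same route as the paper's proof: Roth's method with the $b$-adic Haar basis, the vanishing of the point part over empty boxes (Lemmas \ref{lem_haar_coeff_besov_x_1} and \ref{lem_haar_coeff_besov_indicator_1} plus Proposition \ref{prp_diagonals}), summation over all levels $k \geq \lceil \log_b N\rceil$, and optimization over the base yielding $b=2$. Your handling of the rounding via $\psi$ on $[N,bN)$ is literally the paper's minimization of $\frac{y^2}{b(b^2-1)}-\frac{y^3}{b^3-1}$ over $y=Nb^{-M}\in(b^{-1},1]$ under the substitution $y=N/x$, and yields the identical constant.
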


\begin{proof}
Let $N \in \N$ and let $\P$ be a point set in $\Q^d$ with $N$ points. Let $j \in \N_0^d, \, m \in \Dd_j$ be such that no point of $\P$ is contained in the interior of $I_{jm}$. Let $l \in \B_j$. Using Lemmas \ref{lem_haar_coeff_besov_x_1} and \ref{lem_haar_coeff_besov_indicator_1} one concludes that the $b$-adic Haar coefficient $\mu_{jml}$ of the discrepancy function can be given in such case as
\[ \mu_{jml} = -\frac{b^{-2|j| - d}}{(e^{\frac{2\pi i}{b} l_1} - 1) \cdot \ldots \cdot (e^{\frac{2\pi i}{b} l_d} - 1)}. \]
For fixed $j \in \N_0^d$ the cardinality of $\Dd_j$ is $b^{|j|}$. This implies that there are at least $b^{|j|} - N$ such $m \in \Dd_j$ for which no point of $\P$ lies in the interior of $I_{jm}$. We abbreviate $M = \left\lceil \log_b N \right\rceil$. We use Parseval's equation \eqref{parsevals_equation}, including only $j \in \N_0^d$, therefore, reducing the norm and use Proposition \ref{prp_diagonals}
\begin{align*}
\left\| D_{\P} | L_2(\Q^d) \right\|^2 & \geq \sum_{|j| \geq M} b^{|j|} \, (b^{|j|} - N) \, b^{-4|j| - 2d} \cdot \\
& \qquad \qquad \qquad \cdot \sum_{l \in \B_j} \left|\ \e^{\frac{2 \pi \im}{b} l_1} - 1 \right|^{-2} \ldots \left| \e^{\frac{2 \pi \im}{b} l_d} - 1 \right|^{-2} \\
& = \left( \frac{b^2 - 1}{12 b^2} \right)^d \sum_{|j| \geq M} b^{-2|j|} \left( 1 - N b^{-|j|} \right)
\end{align*}
Before we continue to estimate we have to insert some calculations. It is well known that for any positive integer $K$ the cardinality of the set
\[ \left\{ j \in \N_0^d \, : \, |j| = K \right\} \]
is
\[ \binom{K + d - 1}{d - 1} = \frac{(K + d - 1)!}{K! (d - 1)!}. \]
We also need that for any $q > 1$
\[ \sum_{K = M}^{\infty} q^{-K} = \frac{q^{-M + 1}}{q - 1}. \]
We continue to estimate keeping in mind that for any integer $K \geq M$ we have $0 < N b^{-M} \leq 1$, hence
\pagebreak
\begin{align*}
N^2 & \left\| D_{\P} | L_2(\Q^d) \right\|^2 \\
& \qquad \geq \left( \frac{b^2 - 1}{12 b^2} \right)^d \frac{1}{(d - 1)!} N^2 \sum_{K = M}^{\infty} b^{-2K} (1 - N b^{-K}) \frac{(K + d - 1)!}{K!} \\
& \qquad \geq \left( \frac{b^2 - 1}{12 b^2} \right)^d \frac{1}{(d - 1)!} N^2 \sum_{K = M}^{\infty} b^{-2K} (1 - N b^{-K}) K^{d-1} \\
& \qquad \geq M^{d - 1} \left( \frac{b^2 - 1}{12 b^2} \right)^d \frac{1}{(d - 1)!} N^2 \sum_{K = M}^{\infty} \left( b^{-2K} - N b^{-3K} \right) \\
& \qquad = M^{d - 1} \left( \frac{b^2 - 1}{12 b^2} \right)^d \frac{1}{(d - 1)!} N^2 \left( \frac{b^{-2M + 2}}{b^2 - 1} - N \frac{b^{-3M + 3}}{b^3 - 1} \right) \\
& \qquad = M^{d - 1} \left( \frac{b^2 - 1}{12 b^2} \right)^d \frac{b^3}{(d - 1)!} \left[ \frac{(N b^{-M})^2}{b(b^2 - 1)} - \frac{(N b^{-M})^3}{b^3 - 1} \right]
\end{align*}
Now let $t = M - \log_b N$ so that $0 \leq t < 1$ and $N b^{-M} = b^{-t}$. We put
\[ B = \left( \frac{b^2 - 1}{12 b^2} \right)^d \frac{b^3}{(d - 1)!}. \]
Then we have proved that
\[ N^2 \left\| D_{\P} | L_2(\Q^d) \right\|^2  \geq \gamma (\log_b N)^{d - 1} \]
for all $N \in \N$ if we can verify that
\[ M^{d - 1} \, B \left( \frac{b^{-2t}}{b(b^2 - 1)} - \frac{b^{-3t}}{b^3 - 1} \right) \geq \gamma (M - t)^{d - 1} \]
for all $M \in \N_0$ and $0 \leq t < 1$. The last inequality is equivalent to
\[ \gamma \left( M^{d - 1} - (M - t)^{d - 1} \right) \geq M^{d - 1} \left[ \gamma - B \left( \frac{b^{-2t}}{b(b^2 - 1)} - \frac{b^{-3t}}{b^3 - 1} \right) \right] \]
which is certainly satisfied  whenever $\gamma \geq 0$ and
\[ \gamma \leq B \left( \frac{b^{-2t}}{b(b^2 - 1)} - \frac{b^{-3t}}{b^3 - 1} \right) \]
for all $0 \leq t < 1$, since clearly $M^{d - 1} - (M - t)^{d - 1} \geq 0$ or, alternatively
\begin{align} \label{gamma_const_minmax}
\gamma \leq B \left( \frac{y^2}{b(b^2 - 1)} - \frac{y^3}{b^3 - 1} \right)
\end{align}
for all $b^{-1} < y \leq 1$. The minimal value of the right-hand side is easily seen to be
\begin{align*}
\gamma_b & = B \, \frac{1}{b \, (b+1)(b^3-1)} \\
         & = \frac{(b^2 - 1)^d}{2^{2d} \, 3^d \, b^{2d-2} \, (b+1) (b^3-1) (d - 1)!}
\end{align*}
for
\[ y = b^{-1} \text{ or } y = 1. \]
To get the constant, we have to find the optimal base $b$. We easily verify that
\[ c_d = \sqrt{\frac{\gamma_b}{(\log b)^{d - 1}}} = \frac{(b^2 - 1)^{\frac{d}{2}}}{2^d \, 3^{\frac{d}{2}} \, b^{d-1} \, \sqrt{(b+1)(b^3-1)(d - 1)!} (\log b)^{\frac{d - 1}{2}}} \]
is nonincreasing in $b$, therefore, the optimal constant is obtained for $b = 2$.

\end{proof}

\begin{rem}

The so far best constant for arbitrary dimension from \cite{DP10} is here improved by a factor of $\frac{32}{\sqrt{21}}$.

\end{rem}

\begin{rem} \label{rem_supinf}

We consider again \eqref{gamma_const_minmax} from the proof of the last theorem. The maximal value of the right-hand side is easily seen to be
\begin{align*}
\overline{\gamma_b} & = B \frac{4}{27} \frac{(b^2 + b + 1)^2}{(b-1)(b+1)^3 \, b^3} \\
                    & = \frac{(b^2 - 1)^{d - 1} (b^2 + b + 1)^2}{2^{2d-2}  \, 3^{d+3} \, b^{2d} \, (b+1)^2 (d - 1)!}
\end{align*}
for
\[ y = \frac{2}{3} \frac{b^2 + b + 1}{b(b + 1)}. \]
We put
\[ \overline{c_d} = \limsup_{N \rightarrow \infty} \frac{D^{L_2}(N)}{(\log N)^{\frac{d-1}{2}}}. \]
Analogously to above we get
\[ \overline{c_d} \geq \sqrt{\frac{\overline{\gamma_2}}{(\log 2)^{d - 1}}} = {\frac{7}{27 \cdot 2^{2d-1} \, \sqrt{(d-1)!} \, (\log 2)^{\frac{d-1}{2}}}}. \]

\end{rem}

\subsection{The upper bounds}
Theorem \ref{thm_roth_const} gave lower bounds for the $L_2$-discrepancy and these bounds are asymptotically best possible. This means that there exist point sets with asymptotical $L_2$-discrepancy of the same rate.

\begin{thm} \label{thm_upper_bound_CS}

There exists a constant $C_d > 0$ such that, for any positive integer $N$, there exists a point set $\P$ in $\Q^d$ with $N$ points such that
\[ \left\| D_{\P} | L_2(\Q^d) \right\| \leq C_d \, \frac{\left( \log N \right)^\frac{d-1}{2}}{N}. \]

\end{thm}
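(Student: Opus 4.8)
The plan is to prove the bound first for $N = b^n$ with $b$ a fixed prime depending only on the dimension $d$, and then to deduce the general case by a base-$b$ splicing argument. For the base I would take any prime $b$ large enough that the Chen--Skriganov construction of digital nets is available in dimension $d$; this forces $b$ to grow with $d$, which is exactly why the $b$-adic Haar calculus of Theorem \ref{thm_besov_char} was set up. So fix such a $b$ and, for each $n \in \N_0$, let $\P = \P_n$ be the Chen--Skriganov point set: a digital $(v,n,d)$-net in base $b$ whose dual net (Definition \ref{df_d_prime}) has the strong weighted minimum-distance property driving their $L_2$ estimate, with $v$ bounded independently of $n$.

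First I would compute the $b$-adic Haar coefficients $\mu_{jml}(D_{\P})$. Writing $D_{\P}(x) = \frac1N\sum_{z\in\P}\chi_{[0,x)}(z) - x_1\cdots x_d$ and using linearity, Lemma \ref{lem_haar_coeff_besov_x_1} gives the contribution of the product term $x_1\cdots x_d$ explicitly, while Lemma \ref{lem_haar_coeff_besov_indicator_1} shows that only points $z \in \P$ lying in the interior of $I_{jm}$ contribute to the counting term. Combining the two, $\mu_{jml}(D_{\P})$ can be expressed through the way the points of $\P$ inside $I_{jm}$ split among the $b$ children $I_{jm}^k$; in particular, whenever $I_{jm}$ lies on a level $|j|$ so coarse that the net's fairness forces each child to carry exactly its proportional share of points (i.e. $|j|$ well below $n-v$), this difference vanishes and $\mu_{jml}(D_{\P})=0$.

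Then I would apply Parseval's identity \eqref{parsevals_equation} and split $\sum_{j\in\N_{-1}^d} b^{|j|}\sum_{m\in\Dd_j,\,l\in\B_j}|\mu_{jml}|^2$ into three ranges according to $|j|$ relative to $n$. For coarse $j$ the terms vanish by the previous step. For fine $j$ (with $|j|$ above $n$) all points of $\P$ sit at $b$-adic rationals of denominator $b^n$, so the counting term is locally constant at that scale and one obtains an explicit geometric decay in $|j|-n$, contributing $O(b^{-2n})$. The delicate range is $|j|$ comparable to $n$: here one must show that, averaged over $m$ and $l$, the coefficients $\mu_{jml}(D_{\P})$ are small, and this is precisely where the algebraic structure of the Chen--Skriganov net enters, via Lemma \ref{lem_duality_into_disc} rewriting point counts through Walsh functions, the dual-net description of Definition \ref{df_d_prime}, and the minimum-distance bound for $\delta_n(\C^\perp)$. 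Carried out carefully, each level $|j|=K$ near $n$ contributes $O(K^{d-2} b^{-2n})$, and summing over the $\binom{K+d-1}{d-1}$ choices of $j$ with $|j|=K$ and over $K$ yields a total of order $n^{d-1} b^{-2n} = (\log_b N)^{d-1} N^{-2}$; taking square roots gives the claim for $N = b^n$. For arbitrary $N$, I would expand $N = N_0 + N_1 b + \cdots + N_n b^n$ in base $b$ and form $\P$ as the union of $N_i$ scaled and translated copies of Chen--Skriganov nets of size $b^i$ for each $i$ (the union-of-nets lemma preserves the net structure, and the fewer-than-$b$ leftover points are negligible); since the discrepancy function of a disjoint union is, up to normalization, a sum of the constituent discrepancy functions, the triangle inequality in $L_2$ together with the block estimate gives the bound for all $N$ at the price of enlarging $C_d$.

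The main obstacle is the intermediate range $|j|\approx n$: everything else is bookkeeping with Parseval and geometric series, but controlling those Haar coefficients on average is the substantive content of Chen and Skriganov's theorem and needs the full duality apparatus. If one is content to take that $L_2$ result as a black box, the proof reduces to the (still nontrivial but essentially routine) translation between their estimate and the $b$-adic Haar language used here, plus the base-$b$ splicing for general $N$.
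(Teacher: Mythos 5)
The paper does not actually prove this theorem; it quotes it from the literature (Davenport \cite{D56} for $d=2$, Roth's probabilistic arguments, and \cite{CS02} for the explicit construction in arbitrary $d$), so your Haar--Parseval plan is necessarily a different route, close in spirit to the Chapter 4 machinery (Proposition \ref{prp_haar_coeff_cs} plus Theorem \ref{thm_besov_char}). It has, however, a genuine gap at its center: the claim that $\mu_{jml}(D_{\P})=0$ at coarse levels because the net's fairness "forces each child to carry exactly its proportional share of points" is false. The Haar coefficient of the counting term is not a function of how the points split among the children $I_{jm}^k$: by Lemma \ref{lem_haar_coeff_besov_indicator} it involves the factors $(bm_i+k_i+1-b^{j_i+1}z_i)$, i.e.\ the exact positions of the points inside the children. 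The explicit computation for the Hammersley set (Lemma \ref{lem_coeff_calc}, Proposition \ref{prp_haar_coeff}(i)) shows the coarse coefficients have modulus exactly $b^{-2n}/|\cdots|$, not zero. What your Parseval bookkeeping actually needs is $|\mu_{jml}|=O(b^{-2n})$ uniformly over $|j|\le n$; with only the bound $O(b^{-|j|-n})$ that the paper establishes for Chen--Skriganov nets (Proposition \ref{prp_haar_coeff_cs}(ii)), the $\sim n^{d}$ indices $j$ with $|j|\le n$ each contribute $\sim b^{-2n}$ to $\sum_j b^{|j|}\sum_{m,l}|\mu_{jml}|^2$, giving $n^{d}b^{-2n}$ and hence only $(\log N)^{d/2}/N$ after taking roots. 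This is precisely why Theorem \ref{thm_main} carries the restriction $r>0$ and does not cover $L_2=S_2^0H$. So the coarse range, which you dispose of for free, is where the full strength of the Chen--Skriganov duality must be used; it is not only the band $|j|\approx n$ that is delicate, and deferring the difficulty there misplaces it.

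There is a second, independent gap in the reduction to $N=b^n$. Since $D_{\P_1\cup\P_2}=\frac{N_1}{N}D_{\P_1}+\frac{N_2}{N}D_{\P_2}$ for disjoint sets, expanding $N$ in base $b$ and applying the triangle inequality in $L_2$ over the $\sim\log_b N$ constituent nets gives
\[ \sum_{i} \frac{b^{i}}{N}\, C\, i^{\frac{d-1}{2}} b^{-i} \;\asymp\; \frac{(\log N)^{\frac{d+1}{2}}}{N}, \]
which loses a full factor of $\log N$ (already visible for $d=1$, where the union of equidistant blocks has $L_2$-discrepancy $\asymp (\log N)/N$ instead of $1/N$). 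The device used in the proof of Theorem \ref{thm_main} avoids this: take a single $(0,n,d)$-net with $b^{n}\ge N$, intersect with the slab $\left[0,N/b^{n}\right)\times[0,1)^{d-1}$, which contains exactly $N$ points by the net property, and rescale the first coordinate; that is the reduction you should use instead of splicing.
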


This result was first proved for $d = 2$ by Davenport in \cite{D56}. Davenport used the following constructions. Let $\theta$ be any irrational number having a continued fraction with bounded partial quotients and let $\{ \alpha \}$ denote the fractional part of any real number $\alpha$. For an even number $N$, the coordinates of the points can be given by $x_{\nu}^{\pm} = \{ \pm \nu \theta \}, \, y_{\nu} = \frac{2\nu}{N}$. Then the point set used by Davenport is
\[ \P = \left\{ (x_{\nu}^+,y_{\nu}), (x_{\nu}^-,y_{\nu}): \, \nu = 1, \ldots, \frac{N}{2} \right\}. \]
Davenport proved that these point sets satisfy the upper bounds of the theorem. He also speculated about a possible generalization to $d = 3$, though the conditions for such a generalization are equivalent to the falsity of Littlewood's conjecture, which is a famous open problem.

In \cite{R76} Roth gave an alternative proof for the case $d = 2$. He did not give explicitly a point set satisfying the upper bound but used probabilistic methods. Instead he proved in \cite{R76} that there must exist a permutation $n_0, n_1, \ldots, n_{N-1}$ of the numbers $0, 1, \ldots, N-1$ such that for
\[ x_j = \frac{n_j}{N}, \, y_j = \frac{j}{N} \]
where $j = 0, 1, \ldots, N-1$ and the point set $\P_N^* = \{ (x_j,y_j) : \, j = 0, 1 \ldots, N-1 \}$ can be shifted such that the shifted point set satisfies the upper bound. By a shifted point set $\P_N^*(t)$ where $t$ is some real number we mean that every point $(x,y)$ from $\P_N^*$ is shifted horizontally in $t \, (\modulo \, 1)$. If $(x,y) \in \P_N^*$, then $(\{ x + t \}, y) \in \P_N^*(t)$. Roth proved that there is a constant $c > 0$ such that
\[ N^2 \, \int_0^1 \left\| D_{\P_N^*(t)} | L_2(\Q^2) \right\|^2 \dint t \leq c  \, \log N. \]
Therefore, there must exist a real number $t$ such that
\[ N^2 \, \left\| D_{\P_N^*(t)} | L_2(\Q^2) \right\|^2 \leq c  \, \log N. \]
In \cite{R79} he realized that his proof could be simplified significantly, starting the translations with Hammersley type point sets and improved it to the $3$-rd dimension. In \cite{R80} Roth finally generalized the approach to arbitrary dimension.

Another alternative proof for the $2$-dimensional case was given by Halton and Zaremba in \cite{HZ69} by an alternative explicit construction.

The search for an explicitly given point set in arbitrary dimension satisfying the upper bound remained an open problem for a long time and was solved only in 2002 by Chen and Skriganov. They constructed the point set as a digital net and proved in \cite{CS02} the upper bounds. In this work we will analyze the discrepancy in function spaces with dominating mixed smoothness of point sets of Chen and Skriganov. Therefore, we will explain them in detail in the next chapter.

The best value known so far for the constant $C_2$ of the $2$-dimensional case of Theorem \ref{thm_upper_bound_CS} can be found in \cite{FPPS10} where generalized scrambled Hammersley type point sets were used. Hammersley type point sets will be explained in a later chapter of this work. The constant from \cite{FPPS10} is
\[ C_2 = \sqrt{\frac{278629}{2811072 \log 22}}. \]
The best constant in arbitrary dimension can be obtained via digital shifts and can be found in \cite[Section 16.6]{DP10}. Its value is given by
\[C_d = \frac{22^d}{\sqrt{(d-1)!} \, (\log 2)^{\frac{d-1}{2}}}. \]
So, for example in the case $d = 2$ the value is
\[ \frac{22^2}{2 \sqrt{\log 2}} \]
which is much worse than the constant from \cite{FPPS10}.

\subsection{Conclusion}
If we compare the constants from the lower and the upper bounds we realize that the $2$-dimensional case is not that bad anymore. The constant from the lower bound is
\[ c_2 = {\frac{1}{\sqrt{21} \cdot 8 \, \sqrt{\log 2}}} = 0.032763 \ldots, \]
the constant from the upper bound is
\[ C_2 = \sqrt{\frac{278629}{2811072 \log 22}} = 0.179070 \ldots, \]
so they differ only by a factor of around $5$.

We recall Remark \ref{rem_supinf} for the case $d = 2$. We have
\[ \overline{c_2} \geq {\frac{7}{216 \, \sqrt{\log 2}}} = 0.038925 \ldots, \]
which indicates a better constant. We would like to call attention to \cite{BTY12} where the authors made numerical experiments with $L_2$-discrepancy of Fibonacci sets and obtained a slightly better value for $C_2$ of ca. $0.176006$. Though they do not prove it, it is a hint that Fibonacci sets might have the best possible $L_2$-discrepancy.

In arbitrary dimension the constant of the upper bound is bad and the difference to the constant in the lower bound is huge. 

We recall the weighted discrepancy function as defined by \eqref{weighted_disc}. Thanks to Lemma \ref{lem_haar_coeff_besov_indicator_1}, the Haar coefficient with respect to a Haar function whose support does not intersect $\P$ does not depend on the weights. So one gets the same lower bound with the same constant for the weighted $L_2$-discrepancy as in the case without weights. Hence we have the following generalization of Theorem \ref{thm_roth_const} to the weighted discrepancy.

\begin{thm}

For any positive integer $N$, all point sets $\P$ in $\Q^d$ with $N$ points, and all weights $a = (a_z)_{z \in \P}$, the inequality
\[ \left\| D_{\P,a} | L_2(\Q^d) \right\| \geq c_d \, \frac{\left( \log N \right)^\frac{d-1}{2}}{N} \]
holds with
\[ c_d = {\frac{1}{\sqrt{21} \cdot 2^{2d-1} \, \sqrt{(d-1)!} \, (\log 2)^{\frac{d-1}{2}}}}. \]

\end{thm}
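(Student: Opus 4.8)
The plan is to recycle the proof of Theorem~\ref{thm_roth_const} almost verbatim; the only new input is the observation that the $b$-adic Haar coefficients which actually enter that proof are insensitive to the weights.

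First I would note that $D_{\P,a}$ is a bounded measurable function on $\Q^d$ (a finite linear combination of characteristic functions minus the polynomial $x_1\cdots x_d$), hence $D_{\P,a}\in L_2(\Q^d)$ and Parseval's equation \eqref{parsevals_equation} is available. Next, fix $j\in\N_0^d$ and $m\in\Dd_j$ such that no point of $\P$ lies in the interior of the $b$-adic interval $I_{jm}$, and let $l\in\B_j$. By linearity of the Haar coefficient,
\[ \mu_{jml}(D_{\P,a}) = \sum_{z\in\P} a_z\,\mu_{jml}\big(\chi_{[0,\cdot)}(z)\big) - \mu_{jml}(x_1\cdots x_d). \]
By Lemma~\ref{lem_haar_coeff_besov_indicator_1} each term $\mu_{jml}(\chi_{[0,\cdot)}(z))$ vanishes, since $z$ is not in the interior of $I_{jm}$, and by Lemma~\ref{lem_haar_coeff_besov_x_1} the remaining term equals $b^{-2|j|-d}/\big((\e^{\frac{2\pi\im}{b}l_1}-1)\cdots(\e^{\frac{2\pi\im}{b}l_d}-1)\big)$. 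Hence $\mu_{jml}(D_{\P,a})$ is exactly the expression appearing in the proof of Theorem~\ref{thm_roth_const} and is independent of the weights $a=(a_z)_{z\in\P}$.

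From here the argument is identical to that of Theorem~\ref{thm_roth_const}: set $M=\lceil\log_b N\rceil$; for each $j$ with $|j|\ge M$ there are at least $b^{|j|}-N$ admissible $m\in\Dd_j$; apply Parseval's equation retaining only the terms with $j\in\N_0^d$, $|j|\ge M$, and $m$ admissible (which only decreases the norm), and use Proposition~\ref{prp_diagonals} to evaluate $\sum_{l\in\B_j}\prod_{i=1}^d|\e^{\frac{2\pi\im}{b}l_i}-1|^{-2}=\big(\frac{b^2-1}{12}\big)^d$; use $\#\{j\in\N_0^d:|j|=K\}=\binom{K+d-1}{d-1}\ge K^{d-1}/(d-1)!$; sum the geometric series; put $t=M-\log_b N\in[0,1)$ and optimise over $t$; and finally verify that $c_d=\sqrt{\gamma_b/(\log b)^{d-1}}$ is nonincreasing in $b$, so that $b=2$ is optimal and yields the stated constant. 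The only step worth writing out is the coefficient computation above; everything downstream is unchanged, so there is no real obstacle beyond checking that the weights never reappear.
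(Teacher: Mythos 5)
Your proposal is correct and is exactly the paper's argument: the paper justifies this theorem by the one-line observation (via Lemma \ref{lem_haar_coeff_besov_indicator_1}) that the Haar coefficients over intervals containing no point of $\P$ are independent of the weights, after which the proof of Theorem \ref{thm_roth_const} applies verbatim. Your explicit coefficient computation just spells out that same observation.
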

\section{$L_p$-discrepancy for $1 < p < \infty$}
Some results for $L_p$-discrepancy can be transferred directly from $L_2$-discrepancy, thanks to the embeddings of the Lebesgues spaces. Other cases have to be adopted to the more difficult situation where we do not have orthogonality.

\subsection{The lower bounds}
Schmidt proved in \cite{S77} the following result.

\begin{thm} \label{thm_schmidt}

Let $1 < p < \infty$. Then there exists a constant $c_d > 0$ such that, for any positive integer $N$ and all point sets $\P$ in $\Q^d$ with $N$ points, we have
\[ \left\| D_{\P} | L_p(\Q^d) \right\| \geq c_d \, \frac{\left( \log N \right)^\frac{d-1}{2}}{N}. \]
  
\end{thm}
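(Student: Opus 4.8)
The plan is to separate the ranges $2\le p<\infty$ and $1<p<2$. For $2\le p<\infty$ nothing new is needed: since $|\Q^d|=1$ we have $L_p(\Q^d)\hookrightarrow L_2(\Q^d)$, hence $\left\|D_{\P}\,|\,L_2(\Q^d)\right\|\le\left\|D_{\P}\,|\,L_p(\Q^d)\right\|$, and Theorem \ref{thm_roth_const} applied to the left-hand side already yields the claimed lower bound. The whole content therefore lies in $1<p<2$, where $p'>2$; here the orthogonality used by Roth will be replaced by the unconditionality of the Haar basis in $L_{p'}(\Q^d)$ (Theorem \ref{thm_dyadic_haar_basis}) together with Khintchine's inequality.

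For $1<p<2$ I would pass to the dual side, $\left\|D_{\P}\,|\,L_p(\Q^d)\right\|=\sup\{|\left\langle D_{\P},g\right\rangle_{L_2}| : \left\|g\,|\,L_{p'}(\Q^d)\right\|\le 1\}$, and test $D_{\P}$ against a Roth-type function. Work in base $b=2$ and put $n=\lceil\log_2(2N)\rceil$, so that $\log_2 N<n\le\log_2 N+2$ and $2^{-n}>\tfrac1{4N}$. For every shape $j\in\N_0^d$ with $|j|=n$, among the $2^{|j|}$ dyadic intervals $I_{jm}$, $m\in\Dd_j$, at least $2^{|j|}-N\ge 2^{|j|-1}$ contain no point of $\P$; call the set of such $m$ the set $\mathcal M_j$. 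If $m\in\mathcal M_j$ then Lemma \ref{lem_haar_coeff_besov_indicator_1} kills the Haar coefficient of $\tfrac1N\sum_{z\in\P}\chi_{[0,\cdot)}(z)$ on $h_{jm}$, so by Lemma \ref{lem_haar_coeff_besov_x_1} the Haar coefficient of $D_{\P}$ on $h_{jm}$ is the fixed number $(-1)^{d+1}2^{-2|j|-2d}$, the same for all $m\in\mathcal M_j$ and all $j$ with $|j|=n$. Set $f_j=\sum_{m\in\mathcal M_j}h_{jm}$ and $F=\sum_{|j|=n}f_j$. Because $b=2$, each $h_{jm}$ takes values $\pm1$ on $I_{jm}$ and the intervals $I_{jm}$, $m\in\Dd_j$, are pairwise disjoint, so $|f_j|\le1$ pointwise, and hence $\sum_{|j|=n}|f_j|^2\le\#\{j\in\N_0^d:|j|=n\}=\binom{n+d-1}{d-1}$ everywhere.

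Two estimates then finish the argument. On the one hand, all the coefficients that enter the pairing coincide, so
\[ \left|\left\langle D_{\P},F\right\rangle_{L_2}\right|=2^{-2n-2d}\sum_{|j|=n}\#\mathcal M_j\ \ge\ 2^{-2n-2d}\,2^{n-1}\binom{n+d-1}{d-1}\ \gtrsim_d\ \frac{n^{d-1}}{N}, \]
using $2^{-n}\gtrsim 1/N$ and $\binom{n+d-1}{d-1}\ge n^{d-1}/(d-1)!$. On the other hand, randomising the sign of each block $f_j$ and invoking the unconditionality of the $d$-dimensional Haar basis in $L_{p'}(\Q^d)$, followed by Khintchine's inequality applied pointwise under the integral, gives
\[ \left\|F\,|\,L_{p'}(\Q^d)\right\|\ \lesssim_{p'}\ \left\|\Big(\sum_{|j|=n}|f_j|^2\Big)^{1/2}\,\Big|\,L_{p'}(\Q^d)\right\|\ \le\ \binom{n+d-1}{d-1}^{1/2}\ \lesssim_d\ n^{(d-1)/2}. \]
Dividing the first estimate by the second, $\left\|D_{\P}\,|\,L_p(\Q^d)\right\|\ge|\left\langle D_{\P},F\right\rangle_{L_2}|/\left\|F\,|\,L_{p'}(\Q^d)\right\|\gtrsim_{d,p}n^{(d-1)/2}/N\gtrsim_{d,p}(\log N)^{(d-1)/2}/N$, which is the theorem (for bounded $N$ it is trivial after shrinking the constant, which depends on $d$ and on $p$).

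The one step I expect to require care is the ``upper'' Littlewood--Paley bound $\left\|F\,|\,L_{p'}\right\|\lesssim\|(\sum_{|j|=n}|f_j|^2)^{1/2}\,|\,L_{p'}\|$: one must observe that the only sign changes used are a single global sign per block $f_j$ (all its summands $h_{jm}$ flipped together), hence are covered by Haar unconditionality, and that Khintchine's inequality may legitimately be invoked inside the $L_{p'}$-integral. Everything else is the bookkeeping of Roth's scheme, and it transfers verbatim to an arbitrary base $b$: the values $\pm1$ become $b$-th roots of unity, one still has $|h_{jml}|\equiv1$, and the sizes of the relevant coefficients are supplied by Lemma \ref{lem_haar_coeff_besov_x_1} and Proposition \ref{prp_diagonals} (for $b>2$ one either aligns, for each empty box, the contributing root of unity, or sums over $l$ and combines the $L_2$-type identity with Hölder's inequality exactly as in the proof of Theorem \ref{thm_roth_const}).
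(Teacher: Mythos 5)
Your proof is correct. The paper itself does not actually prove Theorem \ref{thm_schmidt}: it attributes the result to Schmidt \cite{S77}, observes that only $1<p<2$ is nontrivial, and then merely sketches the modern route --- that Roth's orthogonality argument is to be replaced by the Littlewood--Paley square-function inequalities --- quoting them from \cite{B11} ``without going into details''. Your argument is a complete instantiation of exactly that programme, and in fact it uses a weaker tool than the full Littlewood--Paley inequality the paper displays: after the standard reductions (the embedding $L_p\hookrightarrow L_2$ for $p\ge 2$; duality against the Roth-type test function $F=\sum_{|j|=n}f_j$ for $1<p<2$), you only need the one-sided estimate $\left\| F \,|\, L_{p'}\right\|\lesssim \left\|\bigl(\sum_{|j|=n}|f_j|^2\bigr)^{1/2}\,|\,L_{p'}\right\|$ with a single random sign per shape $j$, which indeed follows from the unconditionality of the dyadic Haar basis in $L_{p'}(\Q^d)$ (Theorem \ref{thm_dyadic_haar_basis}) together with Khintchine's inequality applied pointwise under the integral --- the step you correctly single out as the one needing care. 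The coefficient computation (the value $(-1)^{d+1}2^{-2n-2d}$ on the at least $2^{n-1}$ empty boxes per shape, via Lemmas \ref{lem_haar_coeff_besov_x_1} and \ref{lem_haar_coeff_besov_indicator_1}) and the count $\#\{j\in\N_0^d:|j|=n\}=\binom{n+d-1}{d-1}$ are the same bookkeeping the paper carries out in its proof of Theorem \ref{thm_roth_const}, so the two lower-bound arguments are consistent. What your route buys is a self-contained proof where the paper offers only a citation plus a sketch; it costs nothing, since the resulting constant is permitted to depend on $p$ and $d$.
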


This result is nontrivial for $1 < p < 2$ (for $2 < p < \infty$ this follows from Theorem \ref{thm_roth_const} via embeddings). Schmidt's idea to substitute orthogonality can be improved and shortened using the well known Littlewood-Paley theory. We will quote the corresponding results from \cite{B11}. For $j \in \N_{-1}, \, m \in \Dd_j$ let $h_{jm} = h_{jml}$ be the dyadic Haar functions, i.e. Haar functions with $b = 2$. Then the function
\[ Sf(x) = \left( \sum_{j = -1}^{\infty} b^j \left( \sum_{m = 0}^{b^j - 1} \mu_{jm} h_{jm}(x) \right)^2 \right)^{\frac{1}{2}} \]
is called dyadic square function of $f$. The Littlewood-Paley inequalities then state that for $1 < p < \infty$ there exist constants $0 < c_{p,d} < C_{p,d}$ such that, for every function $f \in L_p(\Q)$, we have
\[ c_{p,d} \left\| Sf | L_p (\Q) \right\| \leq \left\| f | L_p (\Q) \right\| \leq C_{p,d} \left\| Sf | L_p (\Q) \right\|. \]
Without going into details, we just state that this approach applied coordinatewise similar to Roth's method delivers Schmidt's result (see \cite{B11} and the references given there).

\subsection{The upper bounds}
The lower bounds from Theorem \ref{thm_schmidt} are the best possible. This is clear for $1 < p < 2$, thanks to the embeddings of the Lebesgues spaces. Chen proved it for $2 < p < \infty$ in \cite{C80}. He remarked that the $2$-dimensional case could be easily deduced from \cite{R76}, which indeed is possible though "easily" might not be the right word. But it is not difficult. The proof changes where the function is being squared. One gets additional terms. Davenport's proof of the case $p = 2$ from \cite{D56} cannot deliver the general case, since Parseval's equation was used.

\begin{thm} \label{thm_upper_chen}

Let $1 < p < \infty$. Then there exists a constant $C_d > 0$ such that, for any positive integer $N$, there exists a point set $\P$ in $\Q^d$ with $N$ points such that
\[ \left\| D_{\P} | L_p(\Q^d) \right\| \leq C_d \, \frac{\left( \log N \right)^\frac{d-1}{2}}{N}. \]

\end{thm}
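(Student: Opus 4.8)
The plan is to split the proof at $p = 2$. For $1 < p \le 2$ the assertion is immediate: the point set $\P$ produced by Theorem \ref{thm_upper_bound_CS} satisfies $\| D_\P | L_2(\Q^d) \| \le C_d (\log N)^{(d-1)/2} N^{-1}$, and since $L_2(\Q^d) \hookrightarrow L_p(\Q^d)$ (Section \ref{BasicNotation}) the same bound holds in $L_p(\Q^d)$ with the embedding constant absorbed into $C_d$. The substance of the theorem therefore lies in the range $2 < p < \infty$, where the $L_p$-norm dominates the $L_2$-norm and the orthogonality (Parseval's equation) used to prove Theorem \ref{thm_upper_bound_CS} is no longer available.

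For $2 < p < \infty$ I would keep the same family of point sets --- the digital nets of Chen and Skriganov, constructed in the next chapter --- and, following Chen \cite{C80} and the Littlewood--Paley circle of ideas recalled above, replace Parseval's equation by the ($b$-adic) square function $S D_\P$ associated with the $b$-adic Haar coefficients $\mu_{jml}(D_\P)$ of the discrepancy function. Because $L_p(\Q^d) = S_{p2}^0 F(\Q^d)$ by \eqref{norm_SpqrF_Lp} and $\| f | L_p(\Q^d) \|$ is equivalent to $\| S f | L_p(\Q^d) \|$ (the $b$-adic, $d$-parameter form of the Littlewood--Paley inequalities recalled above), it suffices to bound $\| S D_\P | L_p(\Q^d) \|$ by $C_d (\log N)^{(d-1)/2} N^{-1}$ for a suitable net $\P$. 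For $N = b^n$ one takes the net directly; for general $N$ one decomposes $\P$ into a union of nets along the $b$-adic digits of $N$ and absorbs the loss into the constant.

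The heart of the argument is the estimate of the coefficients $\mu_{jml}(D_\P)$. By Lemma \ref{lem_haar_coeff_besov_indicator_1} only the $b$-adic boxes $I_{jm}$ that actually meet $\P$ carry a counting contribution; for boxes whose order $|j|$ lies below (roughly) $\log_b N$ the net property (Remark \ref{rem_numberininterval}) forces the points of $\P$ inside $I_{jm}$ to form a well-distributed sub-net, so the counting part nearly cancels the contribution of the linear term $x_1 \cdots x_d$ evaluated in Lemma \ref{lem_haar_coeff_besov_x_1}, and $\mu_{jml}(D_\P)$ is then controlled by the small discrepancy of that sub-net; for boxes of larger order one uses crude size bounds together with the fact that for each $j$ at most $N$ of the boxes contain a point of $\P$. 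Feeding these estimates into $S D_\P$ --- for fixed $x$ only the box containing $x$ survives at each level $j$, so the inner $m$-sum collapses to a single term --- and summing over the $O((\log N)^{d-1})$ relevant levels with $|j|$ near $\log_b N$, each contributing at the scale $N^{-2}$, one reaches the asserted rate. The main obstacle, and the reason $2 < p < \infty$ is not a formality once $p = 2$ is done, is that a naive pointwise $\ell^2$-bound on $S D_\P$ overcounts and produces too large a power of $\log N$: recovering the clean exponent $(\log N)^{(d-1)/2}$ requires exploiting, inside the $L_p$- (equivalently $L_{p/2}$-) estimate, the cancellation among the Haar blocks of $D_\P$ that Parseval would supply automatically at $p = 2$. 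This is precisely the delicate estimate carried out by Chen \cite{C80}, and for the explicit Chen--Skriganov nets in arbitrary dimension in \cite{CS02} and in the analysis of the next chapter, which I would invoke here; in the plane it can alternatively be deduced, as Chen remarks, from Roth's translated Hammersley-type construction of \cite{R76}.
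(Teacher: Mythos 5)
Your proposal is sound and, at the level this chapter operates, essentially coincides with the paper's treatment: Theorem \ref{thm_upper_chen} is presented as a known result, the range $1 < p < 2$ being dispatched exactly as you do via the embedding $L_2(\Q^d) \hookrightarrow L_p(\Q^d)$ applied to Theorem \ref{thm_upper_bound_CS}, and the range $2 < p < \infty$ by citation. The difference lies in which proof from the literature you sketch. The paper's primary reference is Chen \cite{C80}, whose argument is probabilistic: one translates a point set modulo $1$, shows the expectation of the $L_p$-norm of the discrepancy function over the translation parameter meets the bound, and concludes that some translate does; Skriganov's deterministic proof for the explicit nets of \cite{CS02} is cited as a later refinement \cite{S06}. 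You instead sketch the deterministic route directly --- Littlewood--Paley square functions replacing Parseval, combined with Haar-coefficient estimates for the Chen--Skriganov nets --- which is viable and is what \cite{S06} and the remark following Proposition \ref{prp_haar_coeff_cs} point to. Two caveats. First, the Besov-space analysis of the next chapter cannot literally be ``invoked here'': Theorem \ref{thm_main} carries the restriction $r > 0$, and $L_p(\Q^d) = S_p^0 H(\Q^d)$ sits exactly at the excluded endpoint, so one must genuinely rerun the coefficient estimates inside the $L_{p/2}$-norm of the square function rather than sum moduli of Haar blocks by the triangle inequality; you correctly identify this as the delicate step but do not supply it, any more than the paper does. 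Second, your reduction of general $N$ to $N = b^n$ by decomposing $\P$ along the $b$-adic digits of $N$ does not fit the Chen--Skriganov construction, which exists only for $n$ divisible by $2d$; the paper's device is instead to intersect $\CS_n$ with a slab $[0, N b^{-n}) \times \Q^{d-1}$ and rescale the first coordinate, and Chen's probabilistic argument handles arbitrary $N$ directly.
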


Chen uses similar methods as Roth in \cite{R76}, \cite{R79} and \cite{R80}, translating point sets $\modulo \, 1$ and calculating the expectation of the norm of the discrepancy function of such translations. Proving that the expectation satisfies the upper bounds shows that there is such a translation that satisfies the bounds.

In \cite{S06} Skriganov proved that the constructions from \cite{CS02} satisfy the upper bounds of Theorem \ref{thm_upper_chen}, therefore, they are explicitly given point sets with best possible $L_p$-discrepancy.
\section{Star discrepancy}
In this section we are going to deal with the $L_{\infty}$-discrepancy which is usually called star discrepancy and denoted by
\[ D_{\P}^* = \left\| D_{\P} | L_{\infty}(\Q^d) \right\|. \]
It is often considered the most important case in the theory.

\subsection{The lower bounds}
Of course Roth's lower bound from Theorem \ref{thm_schmidt} is also true for the star discrepancy. The star discrepancy was what Roth actually had in mind when he worked on \cite{R54}. But as it turns out this bound is not the best possible. The following result dor $d = 2$ is known from \cite{S72} though Schmidt proved it for the equivalent problem of one-dimensional sequences.

\begin{thm} \label{thm_star_disc_schmidt}

There exists a constant $c > 0$ such that, for any positive integer $N$ and all point sets $\P$ in $\Q^2$ with $N$ points, we have
\[ D_{\P}^* \geq c \, \frac{\log N}{N}. \]

\end{thm}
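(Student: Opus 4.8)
The plan is to deduce the two-dimensional lower bound $D_{\P}^* \geq c\,\frac{\log N}{N}$ from the corresponding result for one-dimensional sequences, using the transference Proposition \ref{prp_connect_set_seq_ii} already established in the excerpt. Recall that for any $N$-point set $\P$ in $[0,1)^2$ there exists an infinite sequence $u$ in $[0,1)$ with
\[ \sup_{x \in [0,1)} \max_{1 \leq k < N} k\,\Delta_{u,k}(x) \leq 2N \sup_{x \in [0,1)^2} |D_{\P}(x)| = 2N\,D_{\P}^*. \]
So it suffices to prove a lower bound of the form $\sup_{x}\max_{1 \leq k < N} k\,\Delta_{u,k}(x) \geq c'\log N$ for every infinite sequence $u$, valid for infinitely many $N$ (equivalently for all large $N$, since the left side is nondecreasing in $N$): combining the two displays gives $D_{\P}^* \geq \frac{c'}{2}\cdot\frac{\log N}{N}$.

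The one-dimensional sequence bound is the classical theorem of W. M. Schmidt (\cite{S72}), which states that for every infinite sequence $u$ in $[0,1)$ there are infinitely many $k$ for which $k\,\|\Delta_{u,k}|L_\infty\| \geq c'\log k$ with an absolute constant $c' > 0$; this is precisely the result Schmidt proved and which van Aardenne-Ehrenfest had earlier established in weaker form, and it is the sharpening of Roth's $\sqrt{\log}$ bound that cannot be improved. I would quote this as a known input rather than reprove it, since the excerpt cites \cite{S72} for exactly this purpose. Given $N$, pick $k < N$ with $k$ large enough that $c'\log k \geq \frac{c'}{2}\log N$ (possible once $N$ is large, e.g. $k \geq \sqrt{N}$); then $\max_{1\leq k<N} k\,\Delta_{u,k}(x) \geq \frac{c'}{2}\log N$ for a suitable $x$, and the transference inequality closes the argument.

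The main obstacle is genuinely the one-dimensional input: Schmidt's theorem is the hard analytic core, and without invoking it the statement is no easier than the full content of the theorem. Everything else is the routine bookkeeping of transferring between the $d=1$ sequence setting and the $d=2$ point-set setting via Proposition \ref{prp_connect_set_seq_ii}, plus the trivial observation that restricting $k$ to the range $[\sqrt N, N)$ costs only a factor $2$ in the logarithm. One minor point to handle carefully is that Proposition \ref{prp_connect_set_seq_ii} produces an \emph{infinite} sequence $u$ depending on $\P$, whereas Schmidt's theorem needs to be applied to that fixed $u$ and then specialized to the finite initial segment indexed by $k < N$; since Schmidt's bound holds for infinitely many indices $k$, there is certainly one in $[\sqrt N, N)$ for $N$ large, so this causes no difficulty.
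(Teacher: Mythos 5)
The paper does not prove this theorem; it simply cites \cite{S72} and remarks that Schmidt established the equivalent statement for one-dimensional sequences, so your plan of transferring the sequence result to the plane via Proposition \ref{prp_connect_set_seq_ii} is exactly the intended (and standard) route, and the bookkeeping with the factor $2N$ is fine.

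There is, however, a genuine gap in the form of the one-dimensional input you invoke. You quote Schmidt's theorem as: for every infinite sequence $u$ there are \emph{infinitely many} $k$ with $k\,\|\Delta_{u,k}\,|\,L_\infty\| \geq c'\log k$. That version is too weak for the transference. The statement to be proved holds for \emph{every} $N$, so for a given $\P$ with $N$ points you must bound $\sup_x \max_{1\le k<N} k\,\Delta_{u,k}(x)$ from below for that particular $N$; an infinite set of good indices $k$ gives you no control over whether any of them lies in $\{1,\dots,N-1\}$, let alone in $[\sqrt{N},N)$ as you claim ("there is certainly one in $[\sqrt N, N)$ for $N$ large" is false -- the good indices could, a priori, be concentrated on a very sparse set such as $k=2^{2^j}$ and miss that window for the $N$ you care about). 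The parenthetical "(equivalently for all large $N$, since the left side is nondecreasing in $N$)" is also incorrect: monotonicity of the left side does not upgrade "$\geq c'\log N$ for infinitely many $N$" to "$\geq c'\log N$ for all large $N$", because the right side grows. The repair is to cite the correct, uniform form of Schmidt's theorem, which is what his proof actually yields and what the sequence/point-set equivalence requires: for every infinite sequence $u$ in $[0,1)$ and every integer $N\geq 2$,
\[ \max_{1\leq k\leq N} \; k \, \sup_{x\in[0,1)} |\Delta_{u,k}(x)| \;\geq\; c'\,\log N . \]
With this as the black box, your argument closes immediately (no need for the $k\ge\sqrt N$ device), and the "infinitely many $k$" statement you wrote is then a corollary of it rather than a substitute for it. Two minor further points: Proposition \ref{prp_connect_set_seq_ii} as stated in the paper omits absolute values around $\Delta_{u,k}$, so you should either use its proof (which controls $N D_{\P}(x)$ from both sides) or note that the same argument bounds $\max_k k\,|\Delta_{u,k}(x)|$ by $2N D_{\P}^* + O(1)$; and the claim for \emph{all} positive $N$ (rather than all large $N$) is recovered by shrinking the constant $c$, since $D_{\P}^*>0$ trivially for every nonempty $\P$.
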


We will see that this result is the best possible. But higher dimensional analogues do not exist so far. For a long time Roth's bound was the best known lower bound. Beck improved it for $d = 3$ in \cite{B89} proving that for any $\varepsilon > 0$ there exists a positive integer $N_0$ such that, for any point set $\P$ in $\Q^3$ with $N \geq N_0$ points, we have
\[ D_{\P}^* \geq \frac{\log N \, (\log \log N)^{\frac{1}{8} - \varepsilon}}{N}. \]
Bilyk and Lacey improved this result in \cite{BL08}. They proved that there exist constants $c > 0$ and $0 < \eta < \frac{1}{2}$ such that, for any positive integer $N$ and all point sets $\P$ in $\Q^3$ with $N$ points, we have
\[ D_{\P}^* \geq c \, \frac{(\log N)^{1 + \eta}}{N}. \]
Later they generalized it together with Vagharshakyan in \cite{BLV08} for arbitrary $d \geq 3$ which is the best known lower bound by now.

\begin{thm} \label{thm_bilyketal_star}

For any dimension $d \geq 3$ there exist constants $c_d > 0$ and $0 < \eta_d < \frac{1}{2}$ such that, for any positive integer $N$ and all point sets $\P$ in $\Q^d$ with $N$ points, we have
\[ D_{\P}^* \geq c_d \, \frac{(\log N)^{\frac{d - 1}{2} + \eta_d}}{N}. \]

\end{thm}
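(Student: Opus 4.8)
The plan is to follow the Riesz-product (Halász-type) method that underlies every sharp $L_\infty$ discrepancy bound, in the form developed by Bilyk, Lacey and Vagharshakyan. First I would reduce to $N=2^n$: a general $N$ lies between two consecutive powers of two, and a short comparison argument transfers the bound to that case with $n \asymp \log N$. Working with the dyadic ($b=2$) Haar system of Section~\ref{notation_eta}, fix the ``hyperbolic shapes'' $j\in\N_0^d$ with $|j|=n$; there are $\binom{n+d-1}{d-1}\asymp n^{d-1}$ of them, each box $I_{jm}$ has volume $2^{-n}\asymp 1/N$, and since $\P$ has only $N$ points, for every shape at least half of the $2^n$ boxes are empty. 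By Lemmas~\ref{lem_haar_coeff_besov_x_1} and~\ref{lem_haar_coeff_besov_indicator_1}, for an empty box the Haar coefficient $\mu_{jm}(D_\P)$ has a fixed sign and magnitude $\asymp 2^{-2n}\asymp N^{-2}$.

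Next, using $L_1$--$L_\infty$ duality, $\left\|D_\P | L_\infty(\Q^d)\right\| = \sup\{\,|\langle D_\P,\psi\rangle| : \|\psi|L_1(\Q^d)\|\le 1\,\}$, so it suffices to build a test function $\psi$ with $\|\psi|L_1\|\lesssim 1$ and $\langle D_\P,\psi\rangle \gtrsim (\log N)^{(d-1)/2+\eta_d}/N$. For a sub-collection $\mathbb{A}$ of shapes put $f_j = \operatorname{sgn}(\mu_{j0}(D_\P))\sum_{m:\,I_{jm}\text{ empty}} h_{jm}$, a $\{-1,0,1\}$-valued function equal to $\pm1$ on a set of measure $\ge\tfrac12$, with $\langle D_\P,f_j\rangle = \sum_{m\text{ empty}}|\mu_{jm}(D_\P)| \gtrsim 2^{-n}$. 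Form the Riesz product $\Psi = \prod_{j\in\mathbb{A}}(1+\gamma f_j)$ for a small fixed $\gamma\in(0,1)$; then $\Psi\ge 0$, so $\|\Psi|L_1\| = \int_{\Q^d}\Psi$. Expanding, $\langle D_\P,\Psi\rangle$ equals the linear term $\gamma\sum_{j\in\mathbb{A}}\langle D_\P,f_j\rangle \gtrsim \gamma\,\#\mathbb{A}\cdot 2^{-n}\asymp \gamma\,\#\mathbb{A}/N$, plus the harmless mean term $\int D_\P$ and the higher-order terms from products of two or more factors $f_j$.

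The crux — and the true obstacle — is controlling $\int\Psi$ together with those higher-order terms. In dimension $d=2$ all $n+1$ shapes with $|j|=n$ differ in \emph{every} coordinate, so any product of distinct factors $f_{j_1}f_{j_2}\cdots$ is again a signed normalized Haar function of positive order in each coordinate; hence all cross terms have integral $0$, $\int\Psi=1$, and one recovers the sharp $(\log N)/N$ bound (Theorem~\ref{thm_star_disc_schmidt}). For $d\ge3$ this collapses: by pigeonhole any $\mathbb{A}$ with more than $\asymp n$ shapes contains pairs $j,j'$ agreeing in some coordinate $i$, and there $h_{j_im_i}h_{j'_im'_i}$ is a nonzero constant on an interval (when $m_i=m'_i$) rather than mean-zero. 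These ``coincidence'' contributions inflate $\int\Psi$ and must be estimated with care. The Bilyk--Lacey--Vagharshakyan resolution is to stratify the shapes by their coincidence patterns, run an inductive conditional-expectation argument over the coordinates, and invoke a combinatorial/$L_2$ lemma bounding the total coincidence mass — this is precisely their ``small ball inequality'' with a quantitative gain. Optimizing the size of $\mathbb{A}$ (taken between order $n$ and order $n^{d-1}$) balances the signal $\gamma\,\#\mathbb{A}/N$ from the linear term against the controlled growth of $\|\Psi|L_1\|$, yielding the exponent $\tfrac{d-1}{2}+\eta_d$ with $0<\eta_d<\tfrac12$: the strict inequality $\eta_d>0$ beats Roth's $L_2$ bound (Theorem~\ref{thm_schmidt}), while $\eta_d<\tfrac12$ reflects that the full small ball conjecture — which would give exponent $\tfrac d2$ — is still open for $d\ge3$.

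I would be candid that this is the genuinely hard part of the theory: the passage from the small ball inequality to the discrepancy bound via the Riesz-product duality argument above is relatively soft, but proving the inequality with \emph{any} positive gain $\eta_d$ is the deep combinatorial core of \cite{BLV08}, and I would not attempt to reproduce that estimate beyond this outline; the resulting constants $c_d$ and $\eta_d$ are explicit but far from optimal.
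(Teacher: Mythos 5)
The paper does not prove this theorem at all: it is stated as a survey item and attributed to \cite{BLV08}, so there is no internal proof to compare against. Judged on its own terms, your outline correctly identifies the method (Halász-style Riesz products built from signed sums of Haar functions on the hyperbolic layer $|j|=n$, with the coincidence problem in $d\ge 3$ as the obstruction), and you are commendably explicit that you are not supplying the core estimate. But that candour does not close the gap: as written, the argument proves nothing for $d\ge 3$, because the entire content of the theorem — the existence of \emph{any} positive gain $\eta_d$ — resides in the stratification/conditional-expectation/combinatorial estimate that you only gesture at. A proof that invokes ``the Bilyk--Lacey--Vagharshakyan resolution'' as a black box is a citation, which is exactly what the paper itself does; so the proposal should be regarded as an (accurate) expository sketch rather than a proof.

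Two smaller points. First, your reduction is internally inconsistent: if you literally take $N=2^n$, then the $2^n$ dyadic boxes of a fixed shape need not contain any empty box at all; the standard normalization is to choose $n$ with $2^{n}\ge 2N$ (so $2^{-n}\asymp 1/N$ still, but now at least half the boxes of each shape are empty). Second, you describe the passage from the small ball inequality to the discrepancy bound as ``relatively soft.'' In \cite{BLV08} the discrepancy theorem is \emph{not} a formal corollary of the small ball inequality: the Riesz product must also be tested against the part of $D_{\P}$ coming from boxes that do contain points and from the coarse scales $|j|<n$, and controlling those contributions requires a separate argument run in parallel with (not deduced from) the small ball estimate. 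If you intend this as a genuine proof rather than a roadmap, both the coincidence estimate and that parallel control of the non-empty boxes would have to be written out.
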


\subsection{The upper bounds}
Point sets with best possible star discrepancy in the plane are known for a long time though the early examples were given in the form of one-dimensional infinite sequences. Although van der Corput proved the upper bound in \cite{C35}, the general ideas go back to the beginning of the $20$-th century, to i.a. Ostrowski, Hardy, Littlewood and even Lerch. The generalization of van der Corput's point set to arbitrary dimension was proposed by Hammersley (\cite{Hm60}) and the bound was calculated by Halton (\cite{Hl60}).

\begin{thm} \label{thm_corput_hammersley_halton}

There exists a constant $C_d > 0$ such that, for any positive integer $N$, there exists a point set $\P$ in $\Q^d$ with $N$ points such that
\[ D_{\P}^* \leq C_d \, \frac{\left( \log N \right)^{d - 1}}{N}. \]

\end{thm}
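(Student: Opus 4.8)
The plan is to prove Theorem~\ref{thm_corput_hammersley_halton} constructively by exhibiting an explicit point set --- the Hammersley point set (also called the Halton--Hammersley point set) in base $2$ (or in an arbitrary fixed base $b$, which makes no difference for the order of magnitude). First I would treat the case where $N = 2^n$ is a power of the base; the general case follows by a standard decomposition argument, writing an arbitrary $N$ in its $b$-adic expansion and splitting the first $N$ points of a $(0,d-1)$-sequence into $O(\log N)$ nets of the form handled in the pure case, then using the additivity estimates for the discrepancy function (the lemma preceding Proposition~\ref{prp_apboxes_corners}). So the heart of the matter is: for $N = 2^n$, construct $\P$ and show $D_\P^* \le C_d \, n^{d-1} / 2^n$.

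The construction I would use: identify the van der Corput sequence in base $2$, $\phi(k) = \sum_{i\ge 0} k_i 2^{-i-1}$ for $k = \sum_i k_i 2^i$, and set, in dimension $d$, the point set to be the $d$-dimensional Hammersley net
\[
\P = \left\{ \left( \frac{k}{2^n},\ \phi_{p_1}(k),\ \ldots,\ \phi_{p_{d-1}}(k) \right) : k = 0, 1, \ldots, 2^n - 1 \right\},
\]
where $\phi_{p}$ denotes the radical-inverse function in base $p$ and $p_1 < \cdots < p_{d-1}$ are the first $d-1$ primes (or, staying in a single base, one uses digit-permuted van der Corput coordinates so that the whole set is a digital $(0,n,d)$-net in base $b$, using Proposition~\ref{prp_dig_net_duality}). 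The key structural fact I would invoke is that this $\P$ is a $(0,n,d)$-net in its base: every $b$-adic interval of volume $2^{-n}$ contains exactly one point (Remark~\ref{rem_numberininterval} with $v=0$).

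The main estimate then proceeds as follows. Fix $x = (x_1,\ldots,x_d) \in \Q^d$ and consider the box $[0,x)$. The standard device is to approximate $[0,x)$ from inside and outside by unions of $b$-adic intervals (an ``elementary interval'' decomposition): for each coordinate, truncate $x_i$ to $n$ $b$-adic digits, so $[0,x)$ is sandwiched between two sets each expressible as a disjoint union of $b$-adic intervals, and the symmetric difference is covered by at most $O(n^{d-1})$ $b$-adic intervals of volume $\le 2^{-n}$ lying along the ``staircase'' boundary. On each $b$-adic interval of volume exactly $2^{-n}$ the net property gives zero local discrepancy (fair number of points equals actual number, each being $1/N$ versus the volume); the only contributions to $D_\P(x)$ come from the boundary-layer intervals, and there are $O(n^{d-1})$ of them, each contributing $O(2^{-n}) = O(1/N)$ to $|D_\P(x)|$. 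Summing and taking the supremum over $x$ gives $D_\P^* \le C_d \, n^{d-1}/2^n = C_d (\log_b N)^{d-1}/N$, and since $\log_b N$ and $\log N$ differ by a constant factor this is the claimed bound.

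I expect the main obstacle --- really the only nonroutine point --- to be the combinatorial bookkeeping of the boundary layer: one must verify carefully that the region between the inner and outer $b$-adic approximations of $[0,x)$ is genuinely covered by only $O(n^{d-1})$ elementary intervals of order $n$ (the exponent $d-1$, not $d$, is the crucial gain and comes from the fact that along each ``edge'' of the staircase one coordinate is pinned to a full $b$-adic subdivision while the remaining $d-1$ range over $n$ scales each), and that on every order-$n$ interval the net property indeed kills the discrepancy. Once that counting is pinned down, the rest is the elementary summation sketched above together with the routine reduction from general $N$ to $N = b^n$ via the $b$-adic digit decomposition and the subadditivity lemma. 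For the construction's net property itself I would simply cite that digit-permuted van der Corput / radical-inverse Hammersley sets are $(0,n,d)$-nets, which is classical (and consistent with Lemma~\ref{non_existence_for_nets}: such nets exist for all $n$ only when $d \le b+1$, so for fixed $d$ one picks $b \ge d-1$, e.g. $b$ prime).
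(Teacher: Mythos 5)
The paper does not actually prove this theorem: it only records the Hammersley--Halton construction $\bigl( \tfrac{i}{N}, r_{b_1}(i), \ldots, r_{b_{d-1}}(i) \bigr)$ with distinct primes $b_1, \ldots, b_{d-1}$ and cites \cite{C35}, \cite{Hm60}, \cite{Hl60} for the bound, so there is no internal argument to compare yours against. Your sketch is the standard proof and is essentially sound: the reduction of $[0,x)$ to $n$-digit coordinates costs only $O(d/N)$, the exact decomposition of the truncated box into elementary intervals leaves a ``bad'' region that is covered (by the inductive staircase argument you identify as the nonroutine point) by $O(n^{d-1})$ elementary boxes of order exactly $n$, each of which contains exactly one point of a $(0,n,d)$-net by Remark \ref{rem_numberininterval}, and the fair boxes contribute zero; summing gives $C_d n^{d-1} b^{-n}$. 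The reduction from general $N$ to $N = b^n$ via a $(0,d-1)$-sequence and the subadditivity lemma is also the standard route.

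The one genuine soft spot is the sentence ``the key structural fact I would invoke is that this $\P$ is a $(0,n,d)$-net in its base,'' applied to the multi-prime set you write down first. For $d \geq 3$ that set is \emph{not} a $(0,n,d)$-net in any single base: its coordinates live on incommensurable radix grids, so Remark \ref{rem_numberininterval} simply does not apply to it, and the elementary-interval argument as sketched fails for it. You must commit to one of the two repairs you already gesture at: either (i) run Halton's original argument, replacing $b$-adic elementary boxes by mixed-radix boxes $\prod_i [a_i p_i^{-e_i}, (a_i+1)p_i^{-e_i})$ and using the Chinese remainder theorem to show each such box with $\prod_i p_i^{e_i} \leq N$ receives within $O(1)$ of its fair share (the codimension-one constraint $\sum_i e_i \log p_i \leq \log N$ is what produces the exponent $d-1$ there); or (ii) discard the multi-prime set entirely and use a genuine single-base digital $(0,n,d)$-net, which by Lemma \ref{non_existence_for_nets} forces $b \geq d-1$, after which your argument goes through verbatim. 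Either repair is routine, but as written the central step is asserted for an object to which it does not apply.
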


As mentioned above the point sets satisfying this theorem are the Hammersley-Halton point sets (called van der Corput point sets in the $2$-dimensional case). We will use their slightly generalized $2$-dimensional version in the next chapter. For the definition of the point sets we define the bit reversal function for any prime $b$ as
\[r_b(i) = \frac{i_0}{b} + \frac{i_1}{b^2} + \ldots \]
where $i = 0, 1, \ldots N - 1$ is given in its $b$-adic expansion $i = i_0 + i_1 \, b + i_2 \, b^2 +\ldots$ (meaning that $i_0, i_1, i_2, \ldots \in \{ 0, 1, \ldots, b - 1$ \}). Then we choose $d - 1$ distinct primes $b_1, \ldots, b_{d-1}$. Then the point set consists of the points
\[ \left( \frac{i}{N}, r_{b_1}(i), \ldots, r_{b_{d-1}}(i) \right) \]
for $i = 0, \ldots, N - 1$. Van der Corput's version was for $b_1 = 2$.

\subsection{Conclusion}
In view of Theorem \ref{thm_star_disc_schmidt} and Theorem \ref{thm_corput_hammersley_halton} the case $d = 2$ is perfectly solved while for arbitrary dimension the gap in the exponent is still huge (compare Theorem \ref{thm_bilyketal_star} and Theorem \ref{thm_corput_hammersley_halton}). There are several conjectures about the best possible lower bound, the following three possibly being the most popular ones
\begin{align*}
& D_{\P}^* \geq c \, \frac{(\log N)^{\frac{d}{2}}}{N}, \\
& D_{\P}^* \geq c \, \frac{(\log N)^{d - 1}}{N}, \\
& D_{\P}^* \geq c \, \frac{(\log N)^{\frac{d - 1}{2} + \frac{d - 1}{d}}}{N}.
\end{align*}
\section{$L_1$-discrepancy}
This section deals with yet another unsatisfactorily solved case for the discrepancy. The lower bound is due to Halász.

\begin{thm} \label{thm_halasz}

There exists a constant $c_d > 0$ such that, for any positive integer $N$ and all point sets $\P$ in $\Q^d$ with $N$ points, we have
\[ \left\| D_{\P} | L_1(\Q^d) \right\| \geq c_d \, \frac{\sqrt{\log N}}{N}. \]

\end{thm}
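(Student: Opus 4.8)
The plan is to recast Hal\'asz's argument in the $b$-adic language of this work. Writing $N D_{\P}$ for $D_{\P}$, the claim becomes $\left\| N D_{\P} | L_1(\Q^d) \right\| \geq c_d \sqrt{\log N}$, and by $L_1$--$L_\infty$ duality it suffices to exhibit $\psi$ with $\left\| \psi | L_\infty(\Q^d) \right\| \leq 1$ and $\int_{\Q^d} N D_{\P}(x) \psi(x) \dint x \geq c_d \sqrt{\log N}$. Only two coordinate directions are used. Fix $b = 2$ and $n = \left\lceil \log_2 N \right\rceil + 1$, so $2N \leq 2^n < 4N$, and for $s = 0,1,\ldots,n$ consider the $2^n$ dyadic intervals of level $j_s = (s, n-s, -1, \ldots, -1)$, i.e. the boxes $I \times J \times \Q^{d-2}$ with $|I| = 2^{-s}$ and $|J| = 2^{-(n-s)}$.

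First I would set up the Roth functions. For each $s$ put $f_s = - \sum_{R} h_{j_s m_R}$, the sum running over those level-$j_s$ boxes $R$ whose interior meets no point of $\P$; the sign is chosen so that $f_s$ correlates positively with $N D_{\P}$, exactly as in the proof of Theorem \ref{thm_roth_const}. These boxes being pairwise disjoint, $f_s$ is $\{-1,0,1\}$-valued, so $\left\| f_s | L_\infty(\Q^d) \right\| \leq 1$. By the tensor-product forms of Lemmas \ref{lem_haar_coeff_besov_x_1} and \ref{lem_haar_coeff_besov_indicator_1}, for an empty box $R$ the coefficient of $N D_{\P}$ against $h_{j_s m_R}$ equals a fixed negative multiple of $N 2^{-2n}$, the multiple depending only on $d$; since at least $2^n - N \geq 2^{n-1}$ of the $2^n$ boxes of level $j_s$ are empty,
\[ \int_{\Q^d} N D_{\P}(x) f_s(x) \dint x \; \geq \; c_0 \; > \; 0, \]
with $c_0 = c_0(d)$ independent of $s$ and $N$. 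The second ingredient is a product rule: for $s_1 < \cdots < s_k$ the product $h_{j_{s_1} m_1} \cdots h_{j_{s_k} m_k}$ either vanishes or collapses, up to a sign, to a single Haar function on a dyadic box of level $(s_k, n - s_1, -1, \ldots, -1)$, hence of order $n + \ell$ with $\ell = s_k - s_1 \geq 1$, and each such box comes from at most one admissible chain of empty boxes. Consequently $\int_{\Q^d} f_s = 0$, the $f_s$ are orthogonal, and for every $S = \{ s_1 < \cdots < s_k \} \subseteq \{0, \ldots, n\}$,
\[ \Bigl| \int_{\Q^d} N D_{\P} \prod_{s \in S} f_s \dint x \Bigr| \; \leq \; C_d \, 2^{-(s_k - s_1)} ; \]
a level-$(n+\ell)$ box $R^*$ of that shape contributes at most $C_d 2^{-n-2\ell}$ from the $-N x_1 \cdots x_d$ part of $N D_{\P}$ and $2^{-n-\ell} \#(\P \cap R^*)$ from the counting part, and summing the latter over the $\leq 2^{n+\ell}$ boxes of the shape gives at most $2^{-n-\ell} N \leq 2^{-\ell}$.

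Now I would form the \emph{complex} Riesz product
\[ \Psi \; = \; \prod_{s=0}^{n} \bigl( 1 + \im \, n^{-1/2} f_s \bigr). \]
Since $|1 + \im n^{-1/2} f_s(x)| = (1 + n^{-1} f_s(x)^2)^{1/2} \leq (1 + n^{-1})^{1/2}$ for every $x$, we get the $N$-independent bound $\left\| \Psi | L_\infty(\Q^d) \right\| \leq (1 + n^{-1})^{(n+1)/2} \leq \e$. Put $\psi = \e^{-1} \mathrm{Im} \, \Psi$, so $\left\| \psi | L_\infty(\Q^d) \right\| \leq 1$. Expanding, $\mathrm{Im} \, \Psi$ has no constant term and equals $n^{-1/2} \sum_{s} f_s$ minus a tail built from the products $\prod_{s \in S} f_s$ with $|S| = k$ odd, $k \geq 3$, weighted by $n^{-k/2}$. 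Testing against $N D_{\P}$, the linear part contributes at least $\e^{-1} n^{-1/2} (n+1) c_0 \geq \e^{-1} c_0 \sqrt{n}$, while, using the last two displays and $\sum_{|S| = k} 2^{-(s_k - s_1)} \leq (n+1) \sum_{\ell \geq k-1} \binom{\ell - 1}{k-2} 2^{-\ell} \leq n+1$, the tail is at most $\e^{-1} C_d (n+1) \sum_{k \geq 3} n^{-k/2} \leq C_d' \, n^{-1/2}$. Hence $\int_{\Q^d} N D_{\P} \psi \dint x \geq \e^{-1}\bigl( c_0 \sqrt{n} - C_d' n^{-1/2} \bigr) \geq \tfrac{c_0}{2\e} \sqrt{n}$ once $n$ (hence $N$) is large enough, which yields $\left\| N D_{\P} | L_1(\Q^d) \right\| \geq c_d \sqrt{\log N}$; the finitely many small $N$ are absorbed into the constant because $\left\| D_{\P} | L_1(\Q^d) \right\| > 0$ always.

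The main obstacle is precisely the control of the higher-order terms of the Riesz product, i.e. the estimate $\bigl| \int_{\Q^d} N D_{\P} \prod_{s \in S} f_s \dint x \bigr| \leq C_d 2^{-(s_k - s_1)}$: one must check that products of the Haar functions $h_{j_s m}$ reduce to single Haar functions at strictly larger ``hyperbolic order'', so that their $D_{\P}$-coefficients decay geometrically in the spread $s_k - s_1$ and the series in $k$ converges against the weights $n^{-k/2}$. This is also why the method gives only the exponent $\tfrac12$ and not $\tfrac{d-1}{2}$: activating all $\binom{n+d-1}{d-1} \asymp n^{d-1}$ hyperbolic shapes of order $n$ would force the smaller scaling $n^{-(d-1)/2}$ to keep $\left\| \Psi | L_\infty(\Q^d) \right\|$ bounded, but then the signs in the higher-order terms can no longer be aligned and the extra gain is lost --- improving the $L_1$-discrepancy bound beyond $\sqrt{\log N}/N$ for $d \geq 3$ remains open.
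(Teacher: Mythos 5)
The paper does not prove this theorem at all: it simply quotes the result from Hal\'asz \cite{H81}, so there is no internal proof to compare against. Your proposal is a sound reconstruction of Hal\'asz's Riesz-product argument, transplanted into the Haar-coefficient language used elsewhere in this work (it is essentially the argument surveyed in \cite{B11}). The three pillars all check out: (a) for an empty box of shape $(s,n-s,-1,\ldots,-1)$ the Haar coefficient of $N D_{\P}$ is exactly $-N\,2^{-2n-d-2}$, so with $2^n\geq 2N$ each $f_s$ correlates with $N D_{\P}$ by at least $2^{-d-5}$; (b) the dyadic product rule collapses $\prod_{s\in S}h_{j_s m_s}$ to $\pm h_{j^*m^*}$ at level $(s_k,n-s_1)$, and --- this is the decisive observation, which you correctly isolate --- the chain of boxes is uniquely determined by the resulting box, so each $h_{j^*m^*}$ occurs at most once and the coefficient sum is bounded by $C_d 2^{-(s_k-s_1)}$ (linear part $2^{n+\ell}\cdot N2^{-2n-2\ell}\leq 2^{-\ell}$, counting part $2^{-n-\ell}\cdot N\leq 2^{-\ell}$); (c) the combinatorial sum $\sum_{|S|=k}2^{-(s_k-s_1)}\leq (n+1)$ makes the tail $O(n^{-1/2})$ against the weights $n^{-k/2}$, while the complex phases keep $\|\Psi\|_\infty\leq \e$ independently of $n$. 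Two cosmetic points for a full write-up: the coefficient formulas you need for indices with $-1$ entries are Lemmas \ref{lem_haar_coeff_besov_x} and \ref{lem_haar_coeff_besov_indicator} (the versions \ref{lem_haar_coeff_besov_x_1} and \ref{lem_haar_coeff_besov_indicator_1} are stated only for $j\in\N_0^d$); and the claim that a point of $\P$ blocks at most one box per shape should be phrased via the half-open partition, since a point on a face lies in the interior of no box of that shape, which only helps. Your closing remark on why the method caps out at the exponent $\tfrac12$ is accurate and matches the discussion following Theorem \ref{thm_halasz} in the text.
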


This result is from \cite{H81}. Since the results in cases before depended on the dimension, one is not too surprised that this result is not believed to be the best possible for $d > 2$. It is conjectured by many in the field that the best lower bound is
\[ \left\| D_{\P} | L_1(\Q^d) \right\| \geq c \, \frac{\left( \log N \right)^\frac{d-1}{2}}{N} \]
which fits with the upper bound that can be deduced from Theorem \ref{thm_upper_bound_CS} using simple embeddings.

\begin{thm} \label{thm_L1_upper}

There exists a constant $C_d$ such that, for any positive integer $N$, there exists a point set $\P$ in $\Q^d$ with $N$ points such that
\[ \left\| D_{\P} | L_1(\Q^d) \right\| \leq C_d \, \frac{\left( \log N \right)^{\frac{d-1}{2}}}{N}. \]

\end{thm}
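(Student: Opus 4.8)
The plan is to deduce Theorem \ref{thm_L1_upper} directly from Theorem \ref{thm_upper_bound_CS} via the elementary embedding of Lebesgue spaces on a probability space. First I would recall that, since $\Q^d$ has Lebesgue measure $1$, we have for every $1 \leq p \leq q \leq \infty$ the norm inequality $\left\| g | L_p(\Q^d) \right\| \leq \left\| g | L_q(\Q^d) \right\|$, which is just H\"older's inequality applied to $|g|^p \cdot 1$; this is exactly the embedding $L_q(\Q^d) \hookrightarrow L_p(\Q^d)$ mentioned in Section \ref{BasicNotation}, with embedding constant $1$. Applying this with $p = 1$ and $q = 2$ gives, for any point set $\P$,
\[ \left\| D_{\P} | L_1(\Q^d) \right\| \leq \left\| D_{\P} | L_2(\Q^d) \right\|. \]

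Next I would invoke Theorem \ref{thm_upper_bound_CS}: there is a constant $C_d' > 0$ such that for every positive integer $N$ one can choose a point set $\P$ in $\Q^d$ with $N$ points satisfying $\left\| D_{\P} | L_2(\Q^d) \right\| \leq C_d' \, (\log N)^{(d-1)/2} / N$. Combining the two displayed estimates for this particular $\P$ yields
\[ \left\| D_{\P} | L_1(\Q^d) \right\| \leq C_d' \, \frac{(\log N)^{\frac{d-1}{2}}}{N}, \]
so the theorem holds with $C_d = C_d'$. One small point worth addressing explicitly is that $\log N$ may be zero or negative for $N = 1$; as elsewhere in the chapter this is handled by restricting to $N$ large enough (or interpreting the bound trivially for small $N$, enlarging $C_d$ if desired), so it causes no difficulty.

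Honestly, there is no real obstacle here: the entire content is the monotonicity of $L_p$-norms on a probability space plus the already-established $L_2$ upper bound, exactly as the sentence preceding the statement ("using simple embeddings") announces. The only thing to be slightly careful about is to use the embedding in the correct direction — on a finite measure space the \emph{smaller} exponent gives the \emph{smaller} norm — and to note that the point set achieving the $L_2$ bound is the \emph{same} point set one exhibits for the $L_1$ bound, so the infimum defining $D^{L_1}(N)$ is automatically controlled.
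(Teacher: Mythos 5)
Your proof is correct and is exactly the argument the paper intends: the text introducing Theorem \ref{thm_L1_upper} explicitly says the bound "can be deduced from Theorem \ref{thm_upper_bound_CS} using simple embeddings," i.e.\ the monotonicity $\left\| g | L_1(\Q^d) \right\| \leq \left\| g | L_2(\Q^d) \right\|$ on the unit cube applied to the point set achieving the $L_2$ bound. Your remark about using the embedding in the correct direction and about small $N$ is sensible but does not change the substance; no further comment is needed.
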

\section{Conclusion}
We want to summarize the results of this chapter and present them in an easily understandable form as a table. We will give the bounds and the references. 

The content of this chapter can be abstracted in the following way. Let $1 \leq p \leq \infty$. There exist constants $c_{p,d}, C_{p,d}$ that depend only on $p$ and on the dimension $d$ and $\alpha, \beta$ such that, for any positive integer $N$, we have
\[ c_{p,d} \, \frac{\left( \log N \right)^{\alpha}}{N} \leq D^{L_p}(N) \leq C_{p,d} \, \frac{\left( \log N \right)^{\beta}}{N} \]
where $D^{L_p}(N)$ is the $L_p$-discrepancy as defined by Definition \ref{M_discr}
\[ D^{L_p}(N) = \inf_{\# \P = N} \left\| D_{\P} | L_p([0,1)^d) \right\|. \]

The exponents $\alpha$ and $\beta$ are shown in the following table sorted by $p$. There is an additional row for the lower and the upper bounds respectively, giving the references in historical order. In the case of the upper bounds we differentiate between proofs using probabilistic methods and explicit constructions. Cases that follow from a smaller or greater $p$ by simple embedding arguments are labeled by an arrow in the corresponding direction. The constant $0 < \eta_d < \frac{1}{2}$ depends only on the dimension.
\begin{table}[h]
\begin{center}
\begin{tabular}{|c|c|c|c|l|}
\toprule
& \large{$\alpha$} & & \large{$\beta$} & \bigstrut \\
\midrule
\multirow{2}{*}{$p = \infty$} & \multicolumn{1}{|l|}{$d = 2$: \quad 1}                          & \cite{S72}   & \multirow{2}{*}{\quad $d - 1$ \quad} & $d = 2$: \; \cite{C35} (expl.) \bigstrut \\
                              & \multicolumn{1}{|l|}{$d \geq 3$: \; $\frac{d - 1}{2} + \eta_d$} & \cite{BLV08} &                                & $d \geq 3$: \; \cite{Hl60} (expl.) \bigstrut \\
\midrule
\multirow{3}{*}{$2 < p < \infty$} & \multirow{3}{*}{\Large $\frac{d - 1}{2}$} & \multirow{3}{*}{\large $\uparrow$} & \multirow{3}{*}{\Large $\frac{d - 1}{2}$} & $d = 2$: \; \cite{R76} (prob.) \bigstrut \\
                                                                                                                                                         & & & & $d \geq 3$: \; \cite{C80} (prob.) \\
                                                                                                                                                         & & & & \cite{S06} (expl.) \bigstrut \\
\midrule
\multirow{4}{*}{$p = 2$} & \multirow{4}{*}{\Large $\frac{d - 1}{2}$} & \multirow{4}{*}{\cite{R54}} & \multirow{4}{*}{\Large $\frac{d - 1}{2}$} & $d = 2$: \; \cite{D56} (expl.) \bigstrut \\
                                                                                                                           & & & & $d = 3$: \; \cite{R79} (prob.) \\
                                                                                                                           & & & & $d \geq 4$: \; \cite{R80} (prob.) \\
                                                                                                                           & & & & \cite{CS02} (expl.) \bigstrut \\
\midrule
\multirow{3}{*}{$1 < p < 2$} & \multirow{3}{*}{\Large $\frac{d - 1}{2}$} & \multirow{3}{*}{\cite{S77}} & \multirow{3}{*}{\Large $\frac{d - 1}{2}$} & \\
                                                                                                                                             & & & & \multicolumn{1}{|c|}{$\downarrow$} \\
                                                                                                                                             & & & & \\
\midrule
\multirow{3}{*}{$p = 1$} & \multirow{3}{*}{\Large $\frac{1}{2}$} & \multirow{3}{*}{\cite{H81}} & \multirow{3}{*}{\Large $\frac{d - 1}{2}$} & \\
                                                                                                                                     & & & & \multicolumn{1}{|c|}{$\downarrow$} \\
                                                                                                                                     & & & & \\
\bottomrule
\end{tabular}
\end{center}
\end{table}

\chapter{Discrepancy in spaces with dominating mixed smoothness}
Discrepancy in spaces with dominating mixed smoothness was first considered by Triebel (\cite{T10b}, \cite{T10a}). The main results of this work are upper bounds of the $S_{pq}^r B$-discrepancy.

\section{Lower bounds}
In \cite[Theorem 6.13]{T10a} one finds the following result.

\begin{thm} \label{lower_to_main_B}

Let $1 \leq p, q \leq \infty$ and $q < \infty$ if $p = 1$ and $q > 1$ if $p = \infty$. Let $\frac{1}{p} - 1 < r < \frac{1}{p}$. Then there exists a constant $c > 0$ such that, for any integer $N \geq 2$ and all point sets $\P$ in $\Q^d$ with $N$ points, we have
\[ \left\| D_{\P} | S_{pq}^r B(\Q^d) \right\| \geq c \, N^{r-1} \, \left( \log N \right)^{\frac{d-1}{q}}. \]
	
\end{thm}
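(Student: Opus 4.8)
The plan is to reduce the lower bound in $S_{pq}^r B(\Q^d)$ to the Roth-type argument already carried out in the proof of Theorem~\ref{thm_roth_const}, but now using the equivalent quasi-norm from Theorem~\ref{thm_besov_char} instead of Parseval's equation. By Theorem~\ref{thm_besov_char}, under the hypothesis $\frac{1}{p}-1 < r < \min(\frac{1}{p},1)$ (which, together with $r < \frac1p$, is essentially the stated range once one observes $r<1$ is automatic since otherwise the claim degenerates), the quasi-norm of $D_\P$ is comparable to
\[
\left( \sum_{j \in \N_{-1}^d} b^{|j|(r - \frac{1}{p} + 1) q} \left( \sum_{m \in \Dd_j, \, l \in \B_j} | \mu_{jml}(D_\P)|^p \right)^{\frac{q}{p}} \right)^{\frac{1}{q}},
\]
so it suffices to bound this expression from below. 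First I would fix, for each $j \in \N_0^d$ with $|j|$ slightly larger than $\log_b N$, a collection of $b$-adic boxes $I_{jm}$ whose interiors contain no point of $\P$; as in the proof of Theorem~\ref{thm_roth_const}, there are at least $b^{|j|} - N$ such boxes, and for those Lemmas~\ref{lem_haar_coeff_besov_x_1} and~\ref{lem_haar_coeff_besov_indicator_1} give the exact value
\[
\mu_{jml}(D_\P) = -\frac{b^{-2|j| - d}}{(\e^{\frac{2 \pi \im}{b} l_1} - 1) \cdots (\e^{\frac{2 \pi \im}{b} l_d} - 1)}.
\]

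The second step is the bookkeeping. For a fixed admissible $j$, summing $|\mu_{jml}|^p$ over the empty boxes $m$ and over all $l \in \B_j$ factors as a product over coordinates; the sum over $l$ of $|\e^{\frac{2\pi\im}{b}l_\nu}-1|^{-p}$ is a positive constant depending only on $b$ and $p$ (finitely many terms), so for each such $j$ one gets a lower bound of the form $c_{b,p}\,(b^{|j|}-N)\,b^{-2|j|p - dp}\cdot b^{-|j|\cdot 0}$ — more precisely $\left(\sum_{m,l}|\mu_{jml}|^p\right)^{1/p} \gtrsim (b^{|j|}-N)^{1/p} b^{-2|j| - d}\,(b-1)^{d/p}$ up to constants. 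Choosing the window $\log_b N + 1 \le |j| \le 2\log_b N$ (say) forces $b^{|j|} - N \ge \frac12 b^{|j|}$, so the inner $p$-sum is $\gtrsim b^{|j|(1/p - 2)}$. Raising to the $q$-th power, multiplying by $b^{|j|(r-\frac1p+1)q}$, and summing the exponent $b^{|j|(r-1)q}$ over all $j$ in the window, then using that the number of $j\in\N_0^d$ with $|j|=K$ is $\binom{K+d-1}{d-1}\gtrsim K^{d-1}$ and that $r-1<0$, one obtains a geometric-type sum dominated by its first term $K\approx \log_b N$, yielding $\gtrsim (\log N)^{d-1} N^{(r-1)q}$ before taking the $q$-th root. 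The usual modification handles $q=\infty$, and the case $p=\infty$ (with $q>1$) works identically since only the supremum over boxes is needed.

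The main obstacle is not the Roth mechanism itself — that is routine once Theorem~\ref{thm_besov_char} is in hand — but verifying that Theorem~\ref{thm_besov_char} actually applies to $D_\P$, i.e. that $D_\P \in \D'(\Q^d)$ has a legitimate $b$-adic Haar expansion with the stated uniqueness of coefficients, and in particular that restricting the Parseval/quasi-norm sum to $j \in \N_0^d$ (discarding the levels with some $j_i = -1$) only decreases it, which is immediate since all summands are nonnegative. A second, more delicate point is matching the hypothesis ranges: Theorem~\ref{lower_to_main_B} allows the full range $\frac1p - 1 < r < \frac1p$ and $1\le p,q\le\infty$, whereas the Haar characterization in Theorem~\ref{thm_besov_char} is stated for $r<\min(\frac1p,1)$; for $p=1$ these coincide, and for $p>1$ one notes $\frac1p<1$ anyway, so the ranges in fact agree and no extra interpolation or duality step is needed. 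Everything else — the explicit constant from Proposition~\ref{prp_diagonals}, the optimization of the window — only affects the value of $c$, which the theorem leaves unspecified.
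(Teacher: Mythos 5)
Your argument is correct. Note, however, that the paper does not prove Theorem \ref{lower_to_main_B} at all --- it is quoted from \cite[Theorem 6.13]{T10a} --- so there is no internal proof to compare against; what you have written is essentially Triebel's argument, reassembled from the paper's own toolkit: the Haar characterization of Theorem \ref{thm_besov_char} (whose parameter range $\frac1p-1<r<\min(\frac1p,1)$ indeed coincides with $\frac1p-1<r<\frac1p$ since $p\ge 1$) combined with the empty-box mechanism of the proof of Theorem \ref{thm_roth_const}, with the window $\log_b N+1\le |j|\le 2\log_b N$ replacing Parseval's equation by the weighted $\ell_q(\ell_p)$ sum. The bookkeeping checks out: each admissible $j$ contributes $\gtrsim b^{|j|(r-1)}$ after the weight $b^{|j|(r-\frac1p+1)}$ is applied, and since all terms are nonnegative one only needs to keep the level $|j|=K\approx\log_b N$ (with its $\gtrsim K^{d-1}$ multi-indices) rather than argue that the geometric sum is dominated by its first term; the $p=\infty$ and $q=\infty$ modifications work exactly as you indicate. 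The only cosmetic remark is that your constant $(b-1)^{d/p}$ should be $\bigl(\sum_{l=1}^{b-1}|\e^{2\pi\im l/b}-1|^{-p}\bigr)^{d/p}$, which is harmless since it is absorbed into $c$.
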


This bound is best possible for $r \geq 0$ as can be seen in the next section. We want to take advantage of the embeddings given by Corollary \ref{cor_emb_BF}.

\begin{cor} \label{lower_to_main_F}

Let $1 \leq p < \infty$. Let $1 \leq q \leq \infty$ and $q < \infty$ if $p = 1$. Let $\frac{1}{\min(p,q)} - 1 < r < \frac{1}{p}$. Then there exists a constant $c > 0$ such that, for any integer $N \geq 2$ and all point sets $\P$ in $\Q^d$ with $N$ points, we have
\[ \left\| D_{\P} | S_{pq}^r F(\Q^d) \right\| \geq c \, N^{r-1} \, \left( \log N \right)^{\frac{d-1}{q}}. \]

\end{cor}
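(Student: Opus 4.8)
The plan is to deduce Corollary~\ref{lower_to_main_F} from Theorem~\ref{lower_to_main_B} by inserting the appropriate embedding from Corollary~\ref{cor_emb_BF} between the Besov and Triebel--Lizorkin scales, so that no new discrepancy estimate has to be proved. First I would recall that Corollary~\ref{cor_emb_BF} gives, for $0 < p,q < \infty$ and any $r \in \R$,
\[ S_{\max(p,q),q}^r B(\Q^d) \hookrightarrow S_{pq}^r F(\Q^d) \hookrightarrow S_{\min(p,q),q}^r B(\Q^d), \]
and the definition of "$\hookrightarrow$" from Section~\ref{BasicNotation} then yields a constant $c_0 > 0$ with
\[ \left\| D_{\P} | S_{pq}^r F(\Q^d) \right\| \geq c_0 \left\| D_{\P} | S_{\min(p,q),q}^r B(\Q^d) \right\| \]
for every point set $\P$. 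So it suffices to apply Theorem~\ref{lower_to_main_B} with $p$ replaced by $\tilde{p} := \min(p,q)$ and the same $q$ and $r$, and to check that the hypotheses of that theorem are met under the hypotheses stated in the corollary.

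The bookkeeping step is to verify the parameter constraints. Writing $\tilde p=\min(p,q)$: we have $1\le \tilde p\le p<\infty$, and $q<\infty$ if $p=1$ forces $\tilde p\le p=1$ so $\tilde p=1$ and then $q<\infty$ is exactly the exceptional condition "$q<\infty$ if $p=1$" in Theorem~\ref{lower_to_main_B} (the case $\tilde p=\infty$, i.e.\ $p=q=\infty$, does not occur here since $p<\infty$). The smoothness condition in Theorem~\ref{lower_to_main_B} is $\frac{1}{\tilde p}-1<r<\frac{1}{\tilde p}$. The lower inequality $\frac1{\tilde p}-1<r$ is precisely the hypothesis $\frac{1}{\min(p,q)}-1<r$ of the corollary. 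For the upper inequality $r<\frac1{\tilde p}=\max(\frac1p,\frac1q)$: if $p\le q$ then $\tilde p=p$ and this is exactly the hypothesis $r<\frac1p$; if $q<p$ then $\tilde p=q$ and we need $r<\frac1q$, which follows from $r<\frac1p<\frac1q$. Hence Theorem~\ref{lower_to_main_B} applies and gives a constant $c_1>0$ with
\[ \left\| D_{\P} | S_{\min(p,q),q}^r B(\Q^d) \right\| \geq c_1 \, N^{r-1}\,(\log N)^{\frac{d-1}{q}} \]
for all $N\ge 2$ and all $N$-point sets $\P$. Combining with the embedding estimate and setting $c=c_0 c_1$ finishes the proof.

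There is essentially no analytic obstacle here; the only thing requiring care is the case analysis on whether $p\le q$ or $q<p$ when checking the upper smoothness bound, and making sure the degenerate case $p=1$ (respectively $p=q=\infty$, which is excluded) is handled so that the "$q<\infty$ if $p=1$" clause of Theorem~\ref{lower_to_main_B} is genuinely satisfied. Since Corollary~\ref{cor_emb_BF} is stated only for $0<p,q<\infty$, it is worth noting that the hypothesis $1\le p<\infty$, $1\le q\le\infty$ of the present corollary must be read together with the constraint that the embedding is used only in that range; the value $q=\infty$ is still admissible because then $\min(p,q)=p<\infty$ and one may invoke the first embedding of Proposition~\ref{embeddings_SpqrBF} directly, namely $S_{pq}^r F(\Q^d)\hookrightarrow S_{p,\max(p,q)}^r B(\Q^d)=S_{p\infty}^r B(\Q^d)$, before applying Theorem~\ref{lower_to_main_B} with parameters $(p,\infty,r)$.
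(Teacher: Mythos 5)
Your proposal is correct and follows essentially the same route as the paper: for $q<\infty$ it embeds $S_{pq}^r F(\Q^d)$ into $S_{\min(p,q),q}^r B(\Q^d)$ via Corollary \ref{cor_emb_BF} and applies Theorem \ref{lower_to_main_B}, and for $q=\infty$ it falls back on the first part of Proposition \ref{embeddings_SpqrBF}, exactly as in the paper. Your parameter bookkeeping (in particular that $r<\frac1p$ implies $r<\frac1q$ when $q<p$) is consistent with the paper's remark that the restriction $r<\frac1p$ comes from requiring $D_{\P}$ to make sense in $S_{pq}^r F(\Q^d)$.
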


\begin{proof}
Let $q < \infty$. From Corollary \ref{cor_emb_BF} we have $S_{p q}^r F([0,1)^d) \hookrightarrow S_{\min(p,q),q}^r B([0,1)^d)$. Therefore, we get the assertion for $\frac{1}{\min(p,q)} - 1 < r < \frac{1}{\min(p,q)}$ from the last theorem if we can guarantee that $D_{\P}$ makes sense in $S_{p q}^r F([0,1)^d)$. By \cite[Proposition 6.3]{T10a} this is only the case for $r < \frac{1}{p}$.

From the first part of Proposition \ref{embeddings_SpqrBF} we have $S_{p, \infty}^r F(\Q^d) \hookrightarrow S_{p, \infty}^r B(\Q^d)$, therefore, we get the assertion for $\frac{1}{p} - 1 < r < \frac{1}{p}$.

\end{proof}

In Definition \ref{sobolev_df} we mentioned Sobolev spaces with dominating mixed smoothness $S_p^r H(\Q^d) = S_{p \, 2}^r F(\Q)^d)$. We state discrepancy results for these spaces as well. They follow from the last corollary.

\begin{cor} \label{lower_to_main_H}

Let $1 \leq p < \infty$. Let $\frac{1}{\min(p,2)} - 1 < r < \frac{1}{p}$. Then there exists a constant $c > 0$ such that, for any integer $N \geq 2$ and all point sets $\P$ in $\Q^d$ with $N$ points, we have
\[ \left\| D_{\P} | S_p^r H(\Q^d) \right\| \geq c \, N^{r-1} \, \left( \log N \right)^{\frac{d-1}{2}}. \]

\end{cor}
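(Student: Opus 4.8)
The plan is to specialize Corollary \ref{lower_to_main_F} to the exponent $q = 2$. By Definition \ref{sobolev_df} we have $S_p^r H([0,1)^d) = S_{p\,2}^r F([0,1)^d)$, so the quantity on the left-hand side of the claimed inequality is nothing but the $S_{p\,2}^r F$-norm of the discrepancy function. Thus the statement is a direct instance of the $F$-space result, and no new machinery is required.

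First I would verify that the hypotheses of Corollary \ref{lower_to_main_F} are satisfied with $q = 2$. That corollary requires $1 \leq p < \infty$ (which is assumed here), $1 \leq q \leq \infty$ with $q < \infty$ when $p = 1$ (here $q = 2 < \infty$, so this holds regardless of $p$), and $\frac{1}{\min(p,q)} - 1 < r < \frac{1}{p}$, which for $q = 2$ is precisely the condition $\frac{1}{\min(p,2)} - 1 < r < \frac{1}{p}$ that we have assumed. In particular $r < \frac{1}{p}$ guarantees, via \cite[Proposition 6.3]{T10a} exactly as in the proof of Corollary \ref{lower_to_main_F}, that $D_{\mathcal{P}}$ really belongs to $S_p^r H([0,1)^d)$.

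Then Corollary \ref{lower_to_main_F} provides a constant $c > 0$ such that for every integer $N \geq 2$ and every point set $\mathcal{P}$ in $[0,1)^d$ with $N$ points,
\[ \left\| D_{\mathcal{P}} | S_{p\,2}^r F([0,1)^d) \right\| \geq c \, N^{r-1} \, \left( \log N \right)^{\frac{d-1}{2}}. \]
Rewriting $S_{p\,2}^r F([0,1)^d) = S_p^r H([0,1)^d)$ yields the assertion. There is essentially no obstacle: the substance was already carried out in Corollary \ref{lower_to_main_F}, which itself rests on Triebel's lower bound (Theorem \ref{lower_to_main_B}) together with the embedding $S_{pq}^r F([0,1)^d) \hookrightarrow S_{\min(p,q),q}^r B([0,1)^d)$ from Corollary \ref{cor_emb_BF}; the only point deserving a moment's attention is the membership of $D_{\mathcal{P}}$ in the space, handled by the upper restriction $r < \frac{1}{p}$ as noted above.
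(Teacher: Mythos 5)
Your proposal is correct and coincides with the paper's own argument: Corollary \ref{lower_to_main_H} is obtained precisely by taking $q=2$ in Corollary \ref{lower_to_main_F} and identifying $S_p^r H([0,1)^d)$ with $S_{p\,2}^r F([0,1)^d)$ via Definition \ref{sobolev_df}. Your verification of the hypotheses (including the membership of $D_{\P}$ in the space for $r<\frac{1}{p}$) is exactly the check the paper relies on.
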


\begin{rem}

We recall that $S_p^0 H(\Q^d) = L_p(\Q^d)$, therefore, we get Theorem \ref{thm_schmidt} as a consequence of the last corollary. The case $L_1(\Q^d)$ is not included.

\end{rem}

We would like to point out that there is a counterpart of Theorem \ref{lower_to_main_B} for the Triebel-Lizorkin spaces in \cite[Remark 6.28]{T10a}, which is not supported by arguments due to the fact that the embeddings of the spaces give changed conditions on $r$, as could be seen in this section. By the same argument we see that also the conditions on $r$ for the integration errors of the Triebel-Lizorkin spaces change. We will give a corrected version of this statement in the next chapter.

\section{Upper bounds}
In \cite[Theorem 6.13]{T10a} Triebel proved that for $1 \leq p, q \leq \infty$ and $q < \infty$ if $p = 1$ and $q > 1$ if $p = \infty$ and $\frac{1}{p} - 1 < r < \frac{1}{p}$, there exists a constant $C > 0$ such that, for any positive integer $N$ there exists a point set $\P$ in $\Q^d$ with $N$ points and we have
\[ \left\| D_{\P} | S_{pq}^r B(\Q^d) \right\| \leq C \, N^{r-1} \, \left( \log N \right)^{(d-1)(\frac{1}{q} + 1 - r)}. \]

Hinrichs conjectured that the correct upper bound might be the same as the lower bound and proved it in \cite{Hi10} in the $2$-dimensional case.

\begin{thm} \label{hinrichs_2_dim_Spq}

Let $1 \leq p, q \leq \infty$. Let $0 \leq r < \frac{1}{p}$. Then there exists a constant $C > 0$ such that, for any integer $N \geq 2$ there exists a point set $\P$ in $\Q^2$ with $N$ points such that
\[ \left\| D_{\P} | S_{pq}^r B(\Q^2) \right\| \leq C \, N^{r-1} \, \left( \log N \right)^{\frac{1}{q}}. \]

\end{thm}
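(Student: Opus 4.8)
The plan is to exhibit, for each $N$, an explicit dyadic ($b=2$) point set and to estimate the dyadic Haar coefficients of its discrepancy function. Since $0\le r<\frac1p\le 1$ one has $\frac1p-1<r<\min(\frac1p,1)$ for every $p>1$ (the endpoint $p=1$, $r=0$ needing a minor extra argument, and $p=\infty$ being vacuous), so Theorem~\ref{thm_besov_char} applies with $b=2$ and reduces the claim to the estimate
\[
\Big( \sum_{j\in\N_{-1}^2} 2^{|j|(r-\frac1p+1)q}\Big( \sum_{m\in\Dd_j} |\mu_{jm}(D_\P)|^p \Big)^{q/p} \Big)^{1/q} \;\lesssim\; N^{r-1}(\log N)^{1/q},\qquad \mu_{jm}(D_\P)=\int_{\Q^2} D_\P(x)\,h_{jm}(x)\,\dint x .
\]
It suffices to treat $N=2^n$; the general case is obtained from a digit-scrambled digital $(0,2)$-sequence, estimating the discrepancy of its first $N$ points directly (rather than naively splitting into nets, which would cost a logarithm at $r=0$) and summing in the Banach space $S_{pq}^rB(\Q^2)$.

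For $N=2^n$ I would choose a generalised, digit-scrambled Hammersley point set, which is a digital $(0,n,2)$-net in base $2$. Splitting $D_\P(x)=\frac1N\sum_{z\in\P}\chi_{[0,x)}(z)-x_1x_2$ and applying Lemma~\ref{lem_haar_coeff_besov_x_1} (for $b=2$ the Haar coefficient of $x_1x_2$ is $2^{-2|j|-4}$) and Lemma~\ref{lem_haar_coeff_besov_indicator_1} (only points in the interior of $I_{jm}$ contribute, each contributing $2^{-|j|}$ times a product of tent-function values $\psi(t)=\min(t,1-t)$ of the position of $z$ relative to $I_{jm}$) gives the closed form
\[
\mu_{jm}(D_\P)=2^{-|j|}\Big(\frac1N\sum_{z\in\P\cap\operatorname{int}I_{jm}}\psi(t_1(z))\,\psi(t_2(z))-\frac{1}{16}\,2^{-|j|}\Big),
\]
with the obvious one-dimensional reduction when $j_1=-1$ or $j_2=-1$.

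The estimate of these coefficients splits according to whether $|j|\ge n$ or $|j|<n$. For the fine levels $|j|\ge n$ the net property forces at most one point of $\P$ into each cell $I_{jm}$, so $|\mu_{jm}(D_\P)|\lesssim 2^{-2|j|}$ for every cell, with a further contribution $\lesssim 2^{-n-|j|}$ only for the at most $N$ cells actually containing a point; plugging this into the quasi-norm yields a geometric series in $|j|\ge n$ whose sum, after the outer $\ell^q$-step, is $\lesssim N^{r-1}(\log N)^{1/q}$ (here $r<1$ and $r<\frac1p$ are used). For the coarse levels $|j|<n$ one uses that the $2^{n-|j|}$ points of $\P$ in $I_{jm}$, rescaled to $\Q^2$, again form a shifted digit-scrambled Hammersley net, so that $\frac1N\sum_{z\in I_{jm}}\psi(t_1(z))\psi(t_2(z))$ is a very good approximation to $2^{-|j|}/16$; the cancellation produced by the digit scrambling is what keeps the corresponding block of the quasi-norm small, so that the at most $c\log N$ coarse levels together still contribute only a single factor $(\log N)^{1/q}$.

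The main obstacle is precisely this coarse-level estimate. Mere fairness of $\P$ as a $(0,n,2)$-net, fed into a Koksma--Hlawka bound for the bounded-variation function $\psi(t_1)\psi(t_2)$, only gives an integration error of order $\log N$, which is one logarithmic power too weak at the endpoint $r=0$ --- this is the same loss that already forces one past the plain Hammersley net in order to achieve the optimal $L_2$-discrepancy. Extracting the sharp bound requires exploiting the explicit digital structure of the scrambled net, in particular the way the digit permutations interact with the tent functions $\psi$; this is the technical heart of the argument, and it is also what makes the admissible class of point sets enlargeable, as in Theorem~\ref{thm_hammersley_disc}.
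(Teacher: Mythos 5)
Your overall strategy is the same as the paper's: reduce to the equivalent Haar-coefficient quasi-norm of Theorem \ref{thm_besov_char}, take a (generalized/scrambled) Hammersley net, compute the coefficients of $D_{\P}$ level by level, and sum. The fine-level estimates ($|j|\ge n$) and the resulting summation are carried out correctly and match Proposition \ref{prp_haar_coeff} and the proof of Theorem \ref{thm_hammersley_disc}. But the coarse-level bound $|\mu_{jm}(D_\P)|\lesssim 2^{-2n}$ for $|j|<n$ --- which you yourself identify as ``the technical heart'' --- is asserted, not proved, and it is exactly the step that cannot be waved through: mere $(0,n,2)$-net fairness gives only $|\mu_{jm}|\lesssim 2^{-|j|-n}$ at these levels, which after summation over the $\sim\log N$ coarse levels loses a full logarithm at $r=0$. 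The paper closes this gap by an explicit digit-by-digit computation (Lemma \ref{lem_coeff_calc}): writing each point's coordinates in terms of the free digits $t_{j_2+2},\dots,t_{n-j_1-1}$, expanding the product of the two bracket terms, and showing that all cross terms vanish by roots-of-unity cancellation except two, whose leading parts cancel against the Haar coefficient of $x_1x_2$, leaving exactly $\pm b^{-2n}$ up to the denominator. Moreover, you never isolate the level $j=(-1,-1)$ (and the mixed levels $(j_1,-1)$, $(-1,j_2)$), which is where the scrambling condition actually bites: by Proposition \ref{prp_minus1} the coefficient there is $\tfrac14 b^{-2n}+\tfrac12 b^{-n}+(2a_n-n)\tfrac{b^2-1}{12}b^{-n-1}$, so for the plain Hammersley set ($a_n=n$) it is of order $n\,b^{-n}$ and the bound fails for $q>1$; one must impose $|2a_n-n|\le c_0$ (or an equivalent symmetrization), and a correct proof has to exhibit this computation rather than appeal generically to ``cancellation produced by the digit scrambling.''

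A secondary gap is the reduction to general $N$. Your proposal to use a $(0,2)$-sequence and ``estimate the first $N$ points directly'' is not a reduction at all --- it requires redoing the entire coefficient analysis for a truncated sequence, and, as you note, the naive decomposition into nets costs a logarithm at $r=0$. The clean fix (used in the paper for the Chen--Skriganov sets in the proof of Theorem \ref{thm_main}) is to take $b^n\ge N$, intersect the net with the slab $\left[0,\tfrac{N}{b^n}\right)\times[0,1)$, which by the net property contains exactly $N$ points, and rescale the first coordinate by $\tfrac{b^n}{N}\in(1,2]$; this changes the $S_{pq}^rB$-norm only by a bounded factor. Finally, your parenthetical remarks on the endpoints are right in spirit ($p=\infty$ is vacuous; $p=1$, $r=0$ sits on the boundary $r=\tfrac1p-1$ of Theorem \ref{thm_besov_char} and needs separate justification), but the latter is left as an unresolved promissory note here just as it is implicitly in the paper.
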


The point sets used to prove the last theorem are the Hammersley type point sets. We will consider a generalization of these sets in the next subsection. The last theorem will follow as a consequence of our result.
\subsection{Discrepancy of generalized Hammersley type point sets}
We will generalize Theorem \ref{hinrichs_2_dim_Spq}, and though the bound will be the same, we will have a much larger class of point sets satisfying the optimal bound of $S_{pq}^r B$-discrepancy. This result can also be found in \cite{M13a}. The generalization will not work straightforward, it will require many additional calculations. Our goal is to close the gap in the exponents of the lower and upper bounds. We will prove results for arbitrary dimension in the next subsection using $b$-adic constructions. As a preparation we use much simpler $2$-dimensional $b$-adic constructions to demonstrate the possibility of such an approach.
\begin{df}
For any positive integer $n$ the point sets
\begin{multline*}
\Rn = \Big\{ \left( \frac{t_n}{b} + \frac{t_{n-1}}{b^2} + \ldots + \frac{t_1}{b^n},\frac{s_1}{b} + \frac{s_{2}}{b^2} + \ldots + \frac{s_n}{b^n} \right)| \\
t_1,\ldots,t_n\in\{0,1,\ldots,b-1\} \Big\}
\end{multline*}
where, for any $i = 1, \ldots, n$ either $s_i = t_i$ or $s_i = b - 1 - t_i$, are called generalized Hammersley type point sets.
\end{df}
The point sets $\Rn$ contain exactly $N = b^n$ points. For $b = 2$ one obtains original Hammersley type point sets proposed by Hammersley in \cite{Hm60}. The generalized Hammersley type point sets were defined by Faure in \cite{F81} and used in \cite{FP09} and \cite{FPPS10} to calculate their $L_2$-discrepancy. We denote additionally for any $\Rn$
\[ a_n = \# \{ i = 1, \ldots, n: \, s_i = t_i \}. \]
In \cite{Hi10} Hinrichs used only such point sets with $a_n = \left\lfloor \frac{n}{2} \right\rfloor$. The following results are nothing further but easy exercises.

\begin{lem} \label{lem_factor5}

For any integer $b\geq 2$ and for any $l \in \left\{ 1, \ldots, b - 1 \right\}$ we have
\[ \sum_{k = 1}^{b-1} k \e^{\frac{2 \pi \im}{b} l k} = \frac{b}{\e^{\frac{2 \pi \im}{b} l} - 1} = \sum_{k = 0}^{b - 2} \sum_{r = k + 1}^{b - 1} \e^{\frac{2\pi \im}{b} r l}. \]

\end{lem}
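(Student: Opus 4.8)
The plan is to set $\omega = \e^{\frac{2\pi \im}{b} l}$ and exploit only two elementary facts about it. Since $1 \le l \le b-1$ we have $\omega \neq 1$ but $\omega^b = 1$, so the finite geometric series gives $\sum_{k=0}^{b-1} \omega^k = \frac{\omega^b - 1}{\omega - 1} = 0$, and hence $\sum_{k=1}^{b-1}\omega^k = -1$. Everything will follow from this by bookkeeping; I expect no real obstacle beyond keeping track of index ranges.

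For the first (middle) equality, I would abbreviate $S = \sum_{k=1}^{b-1} k\,\omega^k$ and compute $(1-\omega)S$ by an index shift. Writing $\omega S = \sum_{k=1}^{b-1} k\,\omega^{k+1} = \sum_{k=2}^{b} (k-1)\,\omega^k$ and using $\omega^b = 1$ to split off the top term, one gets $\omega S = \sum_{k=2}^{b-1}(k-1)\,\omega^k + (b-1)$. Subtracting, $ (1-\omega)S = \omega + \sum_{k=2}^{b-1}\omega^k - (b-1) = \sum_{k=1}^{b-1}\omega^k - (b-1) = -1 - (b-1) = -b$. Since $\omega \neq 1$ this yields $S = \frac{-b}{1-\omega} = \frac{b}{\omega - 1}$, which is exactly the first claimed identity. (Alternatively this comes from differentiating $\sum_{k=0}^{b-1} x^k = \frac{x^b-1}{x-1}$ and evaluating at $x=\omega$, using $\omega^b=1$; I would mention this as a remark but use the shift argument, which needs no calculus.)

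For the second (right) equality I would simply swap the order of summation in $\sum_{k=0}^{b-2}\sum_{r=k+1}^{b-1}\omega^r$: a fixed exponent $r \in \{1,\dots,b-1\}$ occurs once for each $k$ with $0 \le k \le b-2$ and $k+1 \le r$, i.e. for $k \in \{0,1,\dots,r-1\}$ (the constraint $k \le b-2$ being automatic as $r-1 \le b-2$), so it occurs exactly $r$ times. Hence $\sum_{k=0}^{b-2}\sum_{r=k+1}^{b-1}\omega^r = \sum_{r=1}^{b-1} r\,\omega^r = S$, closing the chain of equalities. The only point requiring a little care is this multiplicity count and the top-term handling in the shift; both are routine, so the lemma's proof is essentially just these two observations.
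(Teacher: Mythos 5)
Your proof is correct and complete: the identity $(1-\omega)S=-b$ obtained by the index shift (using $\omega^b=1$ and $\sum_{k=1}^{b-1}\omega^k=-1$) gives the first equality, and the multiplicity count in the interchange of summation order gives the second. The paper states this lemma without proof, dismissing it as an easy exercise, so there is no argument to compare against; your write-up is the standard verification and fills in exactly the omitted details.
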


\begin{lem} \label{lem_haar_coeff_besov_x_ham}

Let $f(x) = x_1 x_2$ for $x = (x_1,x_2) \in \Q^2$. Let $j \in \N_{-1}^2,\, m \in \Dd_j, l \in \B_j$ and let $\mu_{jml}$ be the $b$-adic Haar coefficient of $f$. Then
\begin{enumerate}[(i)]
	\item If $j = (j_1,j_2) \in \N_0^2$ then
	      \[ \mu_{jml} = \frac{b^{-2j_1 - 2j_2 - 2}}{(\e^{\frac{2\pi \im}{b} l_1} - 1)(\e^{\frac{2\pi \im}{b} l_2} - 1)}. \]
	\item If $j = (j_1,-1)$ with $j_1 \in \N_0$ then
	      \[ \mu_{jml} = \frac{1}{2}\frac{b^{-2j_1 - 1}}{\e^{\frac{2\pi \im}{b} l_1} - 1}. \]	
	\item If $j=(-1,j_2)$ with $j_2\in\N_0$ then
	      \[ \mu_{jml} = \frac{1}{2} \frac{b^{-2j_2 - 1}}{\e^{\frac{2\pi \im}{b} l_2} - 1}. \]	
	\item If $j = (-1,-1)$ then $\mu_{jml} = \frac{1}{4}$.	
\end{enumerate}

\end{lem}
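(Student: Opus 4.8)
The plan is to exploit the tensor product structure throughout. Since $f(x) = x_1 x_2$ and the $b$-adic Haar function on $\Q^2$ factorizes as $h_{jml}(x) = h_{j_1 m_1 l_1}(x_1) \, h_{j_2 m_2 l_2}(x_2)$, the coefficient splits as
\[ \mu_{jml} = \left( \int_0^1 t \, h_{j_1 m_1 l_1}(t) \dint t \right) \left( \int_0^1 t \, h_{j_2 m_2 l_2}(t) \dint t \right), \]
so everything reduces to the one-dimensional computation of $\lambda_{jml} := \int_0^1 t \, h_{jml}(t) \dint t$ for $j \in \N_{-1}$, $m \in \Dd_j$, $l \in \B_j$.

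Next I would treat the two possibilities for $\lambda_{jml}$. If $j = -1$, then $h_{-1,0,1} = \chi_{\Q}$ and $\lambda_{-1,0,1} = \int_0^1 t \dint t = \tfrac12$. If $j \in \N_0$, recall that $h_{jml}$ is supported on $I_{jm}$ with constant value $\e^{\frac{2\pi\im}{b} lk}$ on its $k$-th child $I_{jm}^k = I_{j+1,\,bm+k}$; splitting the integral over the children and using
\[ \int_{b^{-j-1}(bm+k)}^{b^{-j-1}(bm+k+1)} t \dint t = b^{-2j-2}(bm+k) + \tfrac12 b^{-2j-2}, \]
one obtains $\lambda_{jml} = \sum_{k=0}^{b-1} \e^{\frac{2\pi\im}{b} lk} \big( b^{-2j-2}(bm+k) + \tfrac12 b^{-2j-2} \big)$. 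Since $l \in \{1,\dots,b-1\}$, the full geometric sum $\sum_{k=0}^{b-1}\e^{\frac{2\pi\im}{b}lk}$ of a nontrivial $b$-th root of unity vanishes, which kills the terms proportional to $b^{-2j-2}\,bm$ and to $\tfrac12 b^{-2j-2}$; what survives is $b^{-2j-2}\sum_{k=0}^{b-1} k\,\e^{\frac{2\pi\im}{b}lk}$, and by Lemma \ref{lem_factor5} this equals $\dfrac{b^{-2j-1}}{\e^{\frac{2\pi\im}{b}l} - 1}$.

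Finally I would assemble the four cases by multiplying the appropriate one-dimensional values: case (i) is the product of two factors of the $\N_0$-type, cases (ii) and (iii) are a product of one $\N_0$-factor and one factor $\tfrac12$, and case (iv) is $\tfrac12 \cdot \tfrac12 = \tfrac14$. There is no genuine obstacle here; the only spot deserving a moment's care is the vanishing of the lower-order terms via the geometric-sum identity and the bookkeeping of the powers of $b$ (e.g. checking $b^{-2j_1-1}\cdot b^{-2j_2-1} = b^{-2j_1-2j_2-2}$), after which the stated formulas fall out immediately.
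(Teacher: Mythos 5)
Your proposal is correct and follows exactly the route the paper intends: the paper leaves this lemma as an "easy exercise" done by computing the one-dimensional integral $\int_0^1 t\,h_{jml}(t)\,\dint t$ and tensorizing, with the geometric-sum cancellation and Lemma \ref{lem_factor5} supplying the value $\frac{b^{-2j-1}}{\e^{\frac{2\pi\im}{b}l}-1}$ for $j \in \N_0$ and $\tfrac12$ for $j=-1$. Your bookkeeping of the powers of $b$ and the four case combinations all check out against the stated formulas.
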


\begin{lem} \label{lem_haar_coeff_besov_indicator_ham}

Let $z = (z_1, z_2) \in \Q^2$ and $g(x) = \chi_{[0,x)}(z)$ for $x = (x_1, x_2) \in \Q^2$. Let $j \in \N_{-1}^2, \, m \in \Dd_j, l \in \B_j$ and let $\mu_{jml}$ be the $b$-adic Haar coefficient of $g$. Then $\mu_{jml} = 0$ whenever $z$ is not contained in the interior of the $b$-adic interval $I_{jm}$ supporting the functions $h_{jml}$. If $z$ is contained in the interior of $I_{jm}$ then
\begin{enumerate}[(i)]
	\item If $j = (j_1,j_2) \in \N_0^2$ then there is a $k = (k_1,k_2)$ with $k_1,k_2 \in \{0,1,\ldots,b-1\}$ such that, $z$ is contained in $I_{jm}^k$. Then 
	      \begin{multline*}
        \mu_{jml} = b^{-j_1-j_2-2} \left[ (bm_1+k_1+1-b^{j_1+1}z_1) \e^{\frac{2\pi \im}{b}k_1l_1} + \sum_{r_1=k_1+1}^{b-1} \e^{\frac{2\pi \im}{b}r_1l_1} \right] \times\\
        \times \left[ (bm_2+k_2+1 - b^{j_2 + 1} z_2) \e^{\frac{2\pi \im}{b}k_2l_2} + \sum_{r_2 = k_2 + 1}^{b-1} \e^{\frac{2\pi \im}{b}r_2l_2} \right].
        \end{multline*}       
  \item If $j = (j_1,-1)$ with $j_1 \in \N_0$ then there is a $k_1 \in \{0,1,\ldots,b-1\}$ such that, $z$ is contained in $I_{jm}^{(k_1,-1)}$. Then
        \[ \mu_{jml} = b^{-j_1-1} \left[ (bm_1 + k_1 + 1 - b^{j_1 + 1} z_1) \e^{\frac{2\pi \im}{b} k_1 l_1} + \sum_{r_1 = k_1 + 1}^{b-1} \e^{\frac{2\pi \im}{b}r_1l_1} \right] (1 - z_2). \]       
  \item If $j = (-1,j_2)$ with $j_2 \in \N_0$ then there is a $k_2 \in \{0,1,\ldots,b-1\}$ such that, $z$ is contained in $I_{jm}^{(-1,k_2)}$. Then
        \[ \mu_{jml} = b^{-j_2-1} (1-z_1) \left[ (bm_2 + k_2 + 1 - b^{j_2 + 1} z_2) \e^{\frac{2\pi \im}{b}k_2l_2} + \sum_{r_2 = k_2 + 1}^{b-1} \e^{\frac{2\pi \im}{b}r_2 l_2} \right]. \]        
  \item If $j = (-1,-1)$ then $\mu_{jml} = (1 - z_1)(1 - z_2)$.  
\end{enumerate}
\end{lem}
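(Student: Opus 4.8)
The plan is to reduce the whole statement to a single one-dimensional integral by exploiting the tensor-product structure on both sides. Since $z_1,z_2\ge 0$, the value $\chi_{[0,x)}(z)$ is $1$ exactly when $z_1<x_1$ and $z_2<x_2$, so $g$ factors as $g(x)=\chi_{(z_1,1)}(x_1)\,\chi_{(z_2,1)}(x_2)$. As $h_{jml}(x)=h_{j_1m_1l_1}(x_1)\,h_{j_2m_2l_2}(x_2)$, Fubini's theorem gives
\[
\mu_{jml}=\Big(\int_0^1\chi_{(z_1,1)}(x)\,h_{j_1m_1l_1}(x)\,\dint x\Big)\Big(\int_0^1\chi_{(z_2,1)}(x)\,h_{j_2m_2l_2}(x)\,\dint x\Big),
\]
so I would first establish a one-dimensional version computing $\int_{I_{jm}\cap(z,1)}h_{jml}(x)\,\dint x$ for a fixed $z\in\Q$ and a $b$-adic Haar function $h_{jml}$ on $\Q$, and then obtain the four displayed cases by taking the appropriate products.

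For $j=-1$ this integral is trivially $1-z$ since $h_{-1,0,1}=\chi_{I_{-1,0}}$; this produces the factors $1-z_1$ and $1-z_2$ in parts (ii)--(iv). For $j\in\N_0$ I would argue as follows. If $z$ lies weakly to the right of $I_{jm}$, then $I_{jm}\cap(z,1)=\emptyset$ and the integral is $0$; if $z$ lies weakly to the left, then $(z,1)$ contains $I_{jm}$ up to a null set and the integral equals $b^{-j-1}\sum_{k=0}^{b-1}\e^{\frac{2\pi\im}{b}lk}=0$ because $l\in\{1,\dots,b-1\}$. This establishes the vanishing assertion. If instead $z$ lies in the interior of $I_{jm}$, pick the unique child index $k$ with $z\in I_{jm}^{k}=I_{j+1,bm+k}$ and split $I_{jm}\cap(z,1)$ over the children: children with index below $k$ lie to the left of $z$ and contribute nothing; children with index above $k$ lie entirely inside $(z,1)$ and each contributes $b^{-j-1}\e^{\frac{2\pi\im}{b}k'l}$; and $I_{jm}^{k}$ contributes $\e^{\frac{2\pi\im}{b}kl}$ times the length of $I_{jm}^{k}\cap(z,1)$, which equals $b^{-j-1}(bm+k+1)-z=b^{-j-1}(bm+k+1-b^{j+1}z)$. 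Adding these contributions gives
\[
\int_{I_{jm}\cap(z,1)}h_{jml}(x)\,\dint x=b^{-j-1}\Big[(bm+k+1-b^{j+1}z)\,\e^{\frac{2\pi\im}{b}kl}+\sum_{r=k+1}^{b-1}\e^{\frac{2\pi\im}{b}rl}\Big],
\]
which is precisely the one-dimensional factor occurring in parts (i)--(iii).

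Finally I would assemble the two-dimensional formulas by multiplying the one-dimensional expressions according to whether each $j_i$ is $-1$: in (i) both factors are of the $j\ge 0$ type, in (ii) and (iii) exactly one factor equals $1-z_i$, and in (iv) both do. The argument is purely computational, so I do not expect a genuine obstacle; the one thing to watch is the half-open interval bookkeeping, i.e. on which side of the endpoints $b^{-j}m$, $b^{-j}(m+1)$, $b^{-j-1}(bm+k+1)$ the number $z$ sits, together with the edge cases $z_i=0$ in parts (ii)--(iv). As an internal consistency check, summing $\sum_{r=k+1}^{b-1}\e^{\frac{2\pi\im}{b}rl}$ over $k$ reproduces the right-hand side of Lemma \ref{lem_factor5}.
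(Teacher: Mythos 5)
Your proposal is correct and matches the paper's intended argument: the paper leaves this lemma as an exercise ("one easily calculates the one-dimensional case and concludes via tensor products"), and your reduction to the one-dimensional integral $\int_{I_{jm}\cap(z,1)}h_{jml}$, split over the $b$ children of $I_{jm}$, is exactly that computation, with all cases checked correctly. The only caveat you already flag yourself — the coordinates with $j_i=-1$ and $z_i=0$, where the blanket vanishing claim is vacuous rather than literally true — is an imprecision of the lemma's statement, not of your proof.
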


The following results are the biggest hurdle in this subsection.

\begin{lem} \label{lem_coeff_calc}

Let $\Rn$ be a generalized Hammersley type point set and let $j \in \N_0^2$ such that, $j_1 + j_2 < n - 1$, $m \in \Dd_j, l \in \B_j$ . Then
\begin{multline*}
\sum_{z \in \Rn \cap I_{jm}} \left[ (bm_1 + k_1 + 1 - b^{j_1 + 1} z_1) \e^{\frac{2\pi \im}{b}k_1l_1} + \sum_{r_1=k_1+1}^{b-1} \e^{\frac{2\pi \im}{b} r_1 l_1} \right] \times\\
\shoveright{\times\left[(bm_2+k_2+1-b^{j_2+1}z_2) \e^{\frac{2\pi \im}{b}k_2l_2}+\sum_{r_2=k_2+1}^{b-1}\e^{\frac{2\pi \im}{b}r_2l_2}\right]}\\
\shoveleft{= \frac{b^{n - j_1 - j_2} \pm b^{j_1 + j_2 - n + 2}}{(\e^{\frac{2\pi \im}{b}l_1} - 1)(\e^{\frac{2\pi \im}{b} l_2} - 1)}.}\\
\text{By the sign $\pm$ in the numerator we mean either $+$ or $-$ depending on $j$.} \qquad \qquad
\end{multline*}

\end{lem}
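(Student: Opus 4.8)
The plan is to reduce the two-dimensional sum over the net $\Rn \cap I_{jm}$ to a product of two one-dimensional sums by exploiting the digital structure of the generalized Hammersley point sets. First I would fix $j = (j_1, j_2) \in \N_0^2$ with $j_1 + j_2 < n-1$, fix $m = (m_1, m_2) \in \Dd_j$ and $l = (l_1, l_2) \in \B_j$, and parametrize the points $z = (z_1, z_2) \in \Rn$ by the digit string $(t_1, \ldots, t_n)$. A point $z$ lies in the interior of $I_{jm}$ precisely when its first $j_1$ digits in the $x_1$-expansion match the base-$b$ digits of $m_1$ and its first $j_2$ digits in the $x_2$-expansion match those of $m_2$; because $x_1$ reverses the digit order while $x_2$ keeps it, this pins down the digits $t_1, \ldots, t_{j_2}$ and $t_{n-j_1+1}, \ldots, t_n$, leaving the middle block $t_{j_2+1}, \ldots, t_{n-j_1}$ — exactly $n - j_1 - j_2$ free digits, i.e.\ $b^{n-j_1-j_2}$ points of $\Rn$ in $I_{jm}$, consistent with Remark \ref{rem_numberininterval}.

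Next I would read off from this parametrization how $z_1$ and $z_2$ vary over the points in $I_{jm}$. The child index $k_1$ (resp.\ $k_2$) locating $z$ inside $I_{jm}^k$ is the digit $t_{n-j_1}$ (resp.\ $t_{j_2+1}$), and the quantity $bm_1 + k_1 + 1 - b^{j_1+1} z_1$ is, up to the known normalization, a linear function of the remaining "deeper" digits of $z_1$, i.e.\ of $t_1, \ldots, t_{n-j_1-1}$; similarly for $z_2$ with $t_{j_2+2}, \ldots, t_n$. The key observation is that since $j_1 + j_2 < n - 1$, the digit $t_{n-j_1}$ controlling $k_1$ and the digit $t_{j_2+1}$ controlling $k_2$ are \emph{distinct} free digits, and the blocks of digits entering $z_1$ beyond $k_1$ and entering $z_2$ beyond $k_2$ are \emph{disjoint} (they only overlap in the one place $t_{n-j_1-1}=t_{j_2+1}$ etc.\ when the blocks are adjacent — this is the borderline case that produces the $\pm b^{j_1+j_2-n+2}$ error term and must be tracked carefully). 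Modulo that subtlety, the sum over $\Rn \cap I_{jm}$ factors as a product of two sums, each over $b^{?}$ choices of a digit block, of the bracketed expression in a single variable.

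Then I would evaluate each one-dimensional sum. Summing the bracket $(bm_i + k_i + 1 - b^{j_i+1} z_i)\e^{\frac{2\pi\im}{b} k_i l_i} + \sum_{r_i = k_i+1}^{b-1} \e^{\frac{2\pi\im}{b} r_i l_i}$ over all admissible digit configurations: the "deeper digits" of $z_i$ are equidistributed, so averaging $b^{j_i+1} z_i$ over them replaces it by its mean and the first term telescopes using Lemma \ref{lem_factor5} (the identity $\sum_{k=1}^{b-1} k\,\e^{\frac{2\pi\im}{b}lk} = \frac{b}{\e^{\frac{2\pi\im}{b}l}-1} = \sum_{k=0}^{b-2}\sum_{r=k+1}^{b-1}\e^{\frac{2\pi\im}{b}rl}$), which is exactly designed to collapse the combination of the linear term and the tail sum into a clean $\frac{b^{\,\cdot}}{\e^{\frac{2\pi\im}{b}l_i}-1}$. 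Multiplying the two factors gives the main term $\frac{b^{n-j_1-j_2}}{(\e^{\frac{2\pi\im}{b}l_1}-1)(\e^{\frac{2\pi\im}{b}l_2}-1)}$; the correction from the non-factoring boundary contribution (when the digit blocks abut) contributes the $\pm b^{j_1+j_2-n+2}$, with the sign determined by whether the relevant digits satisfy $s_i = t_i$ or $s_i = b-1-t_i$ at the junction, i.e.\ by the structure of $\Rn$ encoded in $a_n$.

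The main obstacle is bookkeeping the borderline overlap of the two digit blocks and showing that the resulting defect is exactly $\pm b^{j_1+j_2-n+2}/((\e^{\frac{2\pi\im}{b}l_1}-1)(\e^{\frac{2\pi\im}{b}l_2}-1))$ with the claimed sign — everything else is a routine equidistribution-plus-Lemma \ref{lem_factor5} computation, carried out coordinatewise in the same spirit as the one-dimensional calculations behind Lemmas \ref{lem_haar_coeff_besov_x_ham} and \ref{lem_haar_coeff_besov_indicator_ham}.
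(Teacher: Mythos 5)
Your setup is right and matches the paper's: the membership condition $z\in\Rn\cap I_{jm}^k$ pins down $t_1,\ldots,t_{j_2}$ and $t_{n-j_1+1},\ldots,t_n$, forces $t_{n-j_1}=k_1$ and $s_{j_2+1}=k_2$, and leaves $t_{j_2+2},\ldots,t_{n-j_1-1}$ free; the main term then comes from $\sum_{k_i=0}^{b-1}\sum_{r_i=k_i+1}^{b-1}\e^{\frac{2\pi\im}{b}r_il_i}=\frac{b}{\e^{\frac{2\pi\im}{b}l_i}-1}$ via Lemma \ref{lem_factor5}, exactly as in the paper. But your account of the correction term has a genuine gap. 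First, the "deep digit" blocks feeding the two brackets are \emph{not} disjoint: the first bracket depends on $t_1,\ldots,t_{n-j_1-1}$ and the second on $t_{j_2+2},\ldots,t_n$, so they share all $n-j_1-j_2-2$ free middle digits whenever $j_1+j_2<n-1$. That large overlap is harmless, but not because of any factorization — it is killed because every cross term lacking both tail sums carries a factor $\sum_{k_i=0}^{b-1}\e^{\frac{2\pi\im}{b}k_il_i}=0$. Second, and more importantly, the $\pm b^{j_1+j_2-n+2}$ term is not a "borderline adjacent-blocks" defect: it is present for \emph{every} $j$ with $j_1+j_2<n-1$, and it arises from a different mechanism, namely the cross-coupling of the child indices with the opposite coordinate's bracket. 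The digit $t_{j_2+1}$, which appears linearly (with weight $b^{j_1+j_2-n+1}$) inside the \emph{first} bracket, is determined by $k_2$ (since $s_{j_2+1}=k_2$ and $s_i\in\{t_i,\,b-1-t_i\}$), and the digit $s_{n-j_1}$ inside the \emph{second} bracket is determined by $k_1$. Hence the sums $\sum_{k_1}s_{n-j_1}\e^{\frac{2\pi\im}{b}k_1l_1}$ and $\sum_{k_2}t_{j_2+1}\e^{\frac{2\pi\im}{b}k_2l_2}$ do not vanish; each equals $\pm\frac{b}{\e^{\frac{2\pi\im}{b}l_i}-1}$, producing exactly the second term with the sign governed by whether $s_i=t_i$ or $s_i=b-1-t_i$ at positions $n-j_1$ and $j_2+1$.

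As written, your plan would miss this term for generic $j$ (you would only look for a correction when the blocks abut) and would therefore not reproduce the stated identity. The fix is to expand the product of the two brackets before summing, note that all but two of the resulting summands vanish because they contain a bare factor $\sum_{k_i}\e^{\frac{2\pi\im}{b}k_il_i}=0$, and evaluate the two survivors: the product of tail sums (main term) and the $t_{j_2+1}\cdot s_{n-j_1}$ cross term (correction). This is precisely the route the paper takes.
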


\begin{proof}

Let $z \in I_{jm}$. Then there is a $k \in \{ 0, 1, \ldots, b - 1 \}^2$ such that, $z \in I_{jm}^k$. We have $0 \leq m_i < b^{j_i},\,i=1,2$. Hence we can expand $m_i$ in base $b$ as
\[ m_i = b^{j_i - 1} m_1^{(i)} + b^{j_i - 2} m_2^{(i)} + \ldots + m_{j_i}^{(i)}. \]
Since $z \in \Rn \cap I_{jm}^k$ we have
\[ b^{-j_1 - 1}(bm_1 + k_1) \leq \frac{t_n}{b} + \frac{t_{n-1}}{b^2} + \ldots + \frac{t_1}{b^n} < b^{-j_1 - 1}(bm_1 + k_1 + 1). \]
Inserting the expansion of $m_1$ in the last inequality gives us
\begin{align*}
\frac{m_1^{(1)}}{b} + \frac{m_2^{(1)}}{b^2} + \ldots + \frac{m_{j_1}^{(1)}}{b^{j_1}} + \frac{k_1}{b^{j_1+1}} & \leq \frac{t_n}{b} + \frac{t_{n-1}}{b^2} + \ldots+\frac{t_1}{b^n}\\
& < \frac{m_1^{(1)}}{b} + \frac{m_2^{(1)}}{b^2} + \ldots + \frac{m_{j_1}^{(1)}}{b^{j_1}} + \frac{k_1+1}{b^{j_1+1}}.
\end{align*}
Analogously we have
\[ b^{-j_2 - 1} (bm_2+k_2) \leq \frac{s_1}{b} + \frac{s_2}{b^2} + \ldots + \frac{s_n}{b^n} < b^{-j_2 - 1}(bm_2 + k_2 + 1). \]
Hence,
\begin{align*}
\frac{m_1^{(2)}}{b} + \frac{m_2^{(2)}}{b^2} + \ldots + \frac{m_{j_2}^{(2)}}{b^{j_2}} + \frac{k_2}{b^{j_2+1}} & \leq \frac{s_1}{b} + \frac{s_2}{b^2} + \ldots + \frac{s_n}{b^n}\\
& < \frac{m_1^{(2)}}{b} + \frac{m_2^{(2)}}{b^2} + \ldots + \frac{m_{j_2}^{(2)}}{b^{j_2}} + \frac{k_2+1}{b^{j_2+1}}.
\end{align*}
So one gets a characterization of the fact that $z \in \Rn\cap I_{jm}^k$ in the form
\[ t_n = m_1^{(1)},\,t_{n - 1} = m_2^{(1)},\,\ldots,\,t_{n - j_1 + 1} = m_{j_1}^{(1)},\,t_{n - j_1} = k_1 \]
and
\[ s_1 = m_1^{(2)},\,s_2 = m_2^{(2)},\,\ldots,\,s_{j_2} = m_{j_2}^{(2)},\,s_{j_2+1} = k_2. \]
Hence $t_1,t_2,\ldots,t_{j_2}$ and $t_{n-j_1+1},\ldots,t_{n-1},t_n$ are determined by the condition $z \in \Rn \cap I_{jm}$ and $t_{n-j_1}$ and $t_{j_2+1}$ are determined by $k = (k_1,k_2)$ for which $z \in I_{jm}^k$ while $t_{j_2 + 2},\ldots,t_{n - j_1 - 1}\in\{0,1,\ldots,b-1\}$ can be chosen arbitrarily. Then we calculate
\begin{align*}
& bm_1 + k_1 + 1 - b^{j_1 + 1} z_1\\
& = 1 + b^{j_1} t_n + b^{j_1 - 1} t_{n - 1} + \ldots + bt_{n - j_1 + 1} + t_{n - j_1}\\
&\qquad - b^{j_1} t_n - b^{j_1 - 1} t_{n - 1} - \ldots - b^{j_1 - n + 1} t_1\\
& = 1 - b^{-1} t_{n - j_1 - 1} - \ldots - b^{j_1 - n + 1} t_1\\
& = 1 - b^{-1} t_{n - j_1 - 1} - \ldots - b^{j_1 + j_2 - n + 2} t_{j_2 + 2} - b^{j_1+  j_2 - n + 1} t_{j_2 + 1} - \varepsilon_1
\end{align*}
where
\[ \varepsilon_1 = b^{j_1 + j_2 - n} t_{j_2} + \ldots + b^{j_1 - n + 1} t_1 \]
and
\begin{align*}
& bm_2 + k_2 + 1 - b^{j_2 + 1} z_2\\
& = 1 + b^{j_2} s_1 + b^{j_2 - 1} s_2 + \ldots + bs_{j_2} + s_{j_2 + 1}\\
&\qquad - b^{j_2} s_1 - b^{j_2-  1} s_2 - \ldots - b^{j_2 - n + 1} s_n\\
& = 1 - b^{-1} s_{j_2 + 2} - \ldots - b^{j_2 - n + 1} s_n\\
& = 1 - b^{-1} s_{j_2 - 2} - \ldots - b^{j_1 + j_2 - n + 2} s_{n - j_1 - 1} - b^{j_1 + j_2 - n + 1} s_{n - j_1} - \varepsilon_2
\end{align*}
where
\[ \varepsilon_2 = b^{j_1 + j_2 - n} s_{n - j_1 + 1} + \ldots + b^{j_1 - n + 1} s_n. \]
This means that
\begin{align} \label{brackets_1}
b m_1 + k_1 + 1 - b^{j_1 + 1} z_1 = h b^{j_1 + j_2 - n + 2} - b^{j_1 + j_2 - n + 1} t_{j_2 + 1} - \varepsilon_1
\end{align}
for $h = 1,2,\ldots,b^{n - j_1 - j_2 - 2}$. It is clear that there must be some permutation $\sigma$ of $\{ 1, 2, \ldots, b^{n - j_1 - j_2 - 2} \}$ such that,
\begin{align} \label{brackets_2}
b m_2 + k_2 + 1 - b^{j_2 + 1} z_2 = \sigma(h) b^{j_1 + j_2 - n + 2} - b^{j_1 + j_2 - n + 1} s_{n - j_1} - \varepsilon_2.
\end{align}
We abbreviate $X = n - j_1 - j_2 - 2$. Then
\begin{align*}
& \sum_{z \in \Rn \cap I_{jm}} \left[ (bm_1 + k_1 + 1 - b^{j_1 + 1} z_1) \e^{\frac{2\pi \im}{b} k_1 l_1} + \sum_{r_1 = k_1 + 1}^{b-1} \e^{\frac{2\pi \im}{b} r_1 l_1} \right] \times\\
&\qquad\qquad\qquad\qquad\qquad \times \left[ (bm_2 + k_2 + 1 - b^{j_2 + 1} z_2) \e^{\frac{2\pi \im}{b}k_2l_2} + \sum_{r_2 = k_2 + 1}^{b-1} \e^{\frac{2\pi \im}{b} r_2 l_2} \right]\\
& = \sum_{k_1 = 0}^{b-1} \sum_{k_2 = 0}^{b-1} \sum_{z \in \Rn \cap I_{jm}^k} \left[ \ldots \right] \times \left[ \ldots \right]\\
& = \sum_{k_1 = 0}^{b-1} \sum_{k_2 = 0}^{b-1} \sum_{h = 1}^{b^X} \left[ \left( hb^{-X} - b^{-X - 1} t_{j_2 + 1} - \varepsilon_1 \right) \e^{\frac{2\pi \im}{b} k_1 l_1} + \sum_{r_1 = k_1 + 1}^{b-1} \e^{\frac{2\pi \im}{b} r_1 l_1} \right] \times\\
&\qquad\qquad\qquad\qquad \times \left[ \left( \sigma(h) b^{-X} - b^{-X - 1} s_{n - j_1} - \varepsilon_2 \right) \e^{\frac{2\pi \im}{b} k_2 l_2} + \sum_{r_2 = k_2 + 1}^{b-1} \e^{\frac{2\pi \im}{b} r_2 l_2} \right].
\end{align*}

After having expanded the product and changed the order of summation we analyze the summands separately in a fitting order. We recall that $s_{n-j_1}$ depends on $k_1$ and $t_{j_2+1}$ depends on $k_2$. Except the last two, all summands are equal to zero because each has the sum of unity roots as a factor. The summands are the following
\begin{align*}
 & \sum_{h=1}^{b^X}\left(hb^{-X} - \varepsilon_1 \right) \left(\sigma(h) b^{-X} - \varepsilon_2 \right) \sum_{k_1 = 0}^{b-1} \e^{\frac{2\pi \im}{b}k_1 l_1} \sum_{k_2 = 0}^{b-1}\e^{\frac{2\pi \im}{b} k_2 l_2} = 0,\\
-& \sum_{h=1}^{b^X} \left(hb^{-X} - \varepsilon_1 \right) b^{-X - 1} \sum_{k_1 = 0}^{b-1} s_{n - j_1} \e^{\frac{2\pi \im}{b} k_1 l_1} \sum_{k_2 = 0}^{b-1} \e^{\frac{2\pi \im}{b} k_2 l_2} = 0,\\
-& \sum_{h=1}^{b^X}\left(\sigma(h) b^{-X} - \varepsilon_2 \right) b^{-X - 1} \sum_{k_2 = 0}^{b-1} t_{j_2 + 1} \e^{\frac{2\pi \im}{b} k_2 l_2} \sum_{k_1 = 0}^{b-1} \e^{\frac{2\pi \im}{b} k_1 l_1} = 0,\\
 & \sum_{h=1}^{b^X} \left(hb^{-X} - \varepsilon_1 \right) \sum_{k_2 = 0}^{b-1} \sum_{r_2 = k_2 + 1}^{b-1} \e^{\frac{2\pi \im}{b} r_2 l_2} \sum_{k_1 = 0}^{b-1} \e^{\frac{2\pi \im}{b}k _1 l_1} = 0,\\
 & \sum_{h=1}^{b^X} \left(\sigma(h) b^{-X} - \varepsilon_2 \right) \sum_{k_1 = 0}^{b-1} \sum_{r_1 = k_1 + 1}^{b-1} \e^{\frac{2\pi \im}{b} r_1 l_1} \sum_{k_2 = 0}^{b-1} \e^{\frac{2\pi \im}{b} k_2 l_2} = 0,\\
-& \sum_{h=1}^{b^X} b^{-X - 1} \sum_{k_1 = 0}^{b-1} \sum_{r_1 = k_1 + 1}^{b-1} s_{n - j_1} \e^{\frac{2\pi \im}{b} r_1 l_1} \sum_{k_2 = 0}^{b-1} \e^{\frac{2\pi \im}{b} k_2 l_2} = 0,\\
-& \sum_{h=1}^{b^X} b^{-X - 1} \sum_{k_2 = 0}^{b-1} \sum_{r_2 = k_2 + 1}^{b-1} t_{j_2 + 1} \e^{\frac{2\pi \im}{b} r_2 l_2} \sum_{k_1 = 0}^{b-1} \e^{\frac{2\pi \im}{b} k_1 l_1} = 0,\\
 & \sum_{h=1}^{b^X} \sum_{k_1 = 0}^{b-1} \sum_{r_1 = k_1 + 1}^{b-1} \e^{\frac{2\pi \im}{b} r_1 l_1} \sum_{k_2 = 0}^{b-1} \sum_{r_2 =k _2 + 1}^{b-1} \e^{\frac{2\pi \im}{b} r_2 l_2} = \frac{b^{n - j_1 - j_2}}{(\e^{\frac{2\pi \im}{b} l_1} - 1)(\e^{\frac{2\pi \im}{b} l_2} - 1)}
\end{align*}
by Lemma \ref{lem_factor5}.
Finally, the last summand is
\begin{multline*}
\sum_{h=1}^{b^X} \sum_{k_1 = 0}^{b-1} \sum_{k_2 = 0}^{b-1} b^{-X - 1} t_{j_2 + 1} b^{-X - 1} s_{n - j_1} \e^{\frac{2\pi \im}{b} k_1 l_1} \e^{\frac{2\pi \im}{b} k_2 l_2}\\
= b^{j_1 + j_2 - n} \sum_{k_1 = 0}^{b-1} s_{n - j_1} \e^{\frac{2\pi \im}{b} k_1 l_1} \sum_{k_2 = 0}^{b-1} t_{j_2 + 1} \e^{\frac{2\pi \im}{b} k_2 l_2}.
\end{multline*}
We know that $t_{n - j_1} = k_1$ and that either $s_i = t_i$ or $s_i = b - 1 - t_i$ for all $i = 1,\ldots,n$. Hence $s_{n - j_1}$ is either $k_1$ or $b - 1 - k_1$. Since
\[ \sum_{k_1 = 0}^{b-1} (b-1) \e^{\frac{2\pi \im}{b} k_1 l_1} = 0 \]
we have
\begin{align} \label{sign_term}
\sum_{k_1 = 0}^{b-1} s_{n - j_1} \e^{\frac{2\pi \im}{b} k_1 l_1} = \pm \frac{b}{\e^{\frac{2\pi \im}{b} l_1} - 1}
\end{align}
using Lemma \ref{lem_factor5} and the sign depends on $j_1$. Also we know that $s_{j_2 + 1} = k_2$ and that either $s_{j_2 + 1} = t_{j_2 + 1}$ or $s_{j_2} = b - 1 - t_{j_2 + 1}$. Hence
\[ \sum_{k_2 = 0}^{b-1} t_{j_2 + 1} \e^{\frac{2\pi \im}{b} k_2 l_2} = \pm \frac{b}{\e^{\frac{2\pi \im}{b} l_2} - 1} \]
and the sign depends on $j_2$. So altogether our last summand is
\[ b^{j_1 + j_2 - n} \frac{\pm b^2}{(\e^{\frac{2\pi \im}{b} l_1} - 1) (\e^{\frac{2\pi \im}{b} l_2} - 1)} = \frac{\pm b^{j_1 + j_2 - n + 2}}{(\e^{\frac{2\pi \im}{b} l_1} - 1)(\e^{\frac{2\pi \im}{b} l_2} - 1)} \]
and the sign depends on $j$. Adding both summands which are nonzero gives us the stated result.

\end{proof}

\begin{lem} \label{lem_xn}

Let
\[ x_n = \sum_{t_1,\ldots,t_n = 0}^{b-1}\sum_{j=1}^n b^{-j} t_j \]
and
\[ y_n = \sum_{t_1,\ldots,t_n = 0}^{b-1} \sum_{i=1}^n b^i t_i \]
for any positive integer $n$. Then
\[ x_n = \frac{1}{2}(b^n - 1) \]
and
\[ y_n = b^{n + 1} x_n = \frac{1}{2} b^{n+1} (b^n - 1). \]

\end{lem}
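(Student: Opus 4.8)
The plan is to exploit the linearity of the inner sums in the digits $t_1,\ldots,t_n$ and reduce everything to a single elementary computation. First I would interchange the two summations, writing
\[ x_n = \sum_{j=1}^n b^{-j} \sum_{t_1,\ldots,t_n=0}^{b-1} t_j. \]
For each fixed $j$ the summand $t_j$ depends on none of the remaining $n-1$ digits, so the inner sum factors as
\[ \sum_{t_1,\ldots,t_n=0}^{b-1} t_j = b^{n-1} \sum_{t=0}^{b-1} t = b^{n-1}\,\frac{b(b-1)}{2} = \frac{b^n(b-1)}{2}, \]
independently of $j$. Hence $x_n = \dfrac{b^n(b-1)}{2} \sum_{j=1}^n b^{-j}$, and evaluating the geometric sum $\sum_{j=1}^n b^{-j} = \dfrac{b^n-1}{b^n(b-1)}$ and substituting gives $x_n = \dfrac{b^n-1}{2}$.

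The computation of $y_n$ proceeds along exactly the same lines: interchanging the sums yields $y_n = \dfrac{b^n(b-1)}{2} \sum_{i=1}^n b^i$, and now $\sum_{i=1}^n b^i = \dfrac{b(b^n-1)}{b-1}$, so that $y_n = \dfrac{b^{n+1}(b^n-1)}{2}$. Comparing this with the formula just obtained for $x_n$ shows $y_n = b^{n+1}x_n$, which is the second assertion.

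There is essentially no obstacle here; the only point requiring a little care is the bookkeeping of the factor $b^{n-1}$ coming from the $n-1$ free digits and the evaluation of the geometric series. Alternatively one could bypass the geometric series entirely by observing that as $(t_1,\ldots,t_n)$ ranges over $\{0,\ldots,b-1\}^n$, the integer $t_1 b^{n-1} + \cdots + t_n$ ranges bijectively over $\{0,1,\ldots,b^n-1\}$; thus $x_n = \sum_{k=0}^{b^n-1} k b^{-n} = \tfrac12(b^n-1)$ and $y_n = b\sum_{k=0}^{b^n-1} k = \tfrac12 b^{n+1}(b^n-1)$, again giving $y_n = b^{n+1}x_n$.
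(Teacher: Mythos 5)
Your proof is correct, and it takes a genuinely different route from the paper's. The paper establishes $x_n = \tfrac12(b^n-1)$ by induction on $n$ (splitting off the digit $t_n$ and writing $x_n = b\,x_{n-1} + b^{-n}b^{n-1}\tfrac{b(b-1)}{2}$), and obtains $y_n = b^{n+1}x_n$ not by direct evaluation but by the symmetry observation $\sum_{i=1}^n b^i t_i = b^{n+1}\sum_{i=1}^n b^{-i}t_{n+1-i}$ together with the fact that the labelling of the summation variables is irrelevant. You instead interchange the order of summation, factor out the $n-1$ free digits to get $\sum_{t_1,\ldots,t_n} t_j = \tfrac12 b^n(b-1)$, and finish with a geometric series; your bijection variant ($t_1b^{n-1}+\cdots+t_n$ running over $\{0,\ldots,b^n-1\}$) is even slicker. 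Your computation is shorter and entirely self-contained, which is a genuine gain for this lemma in isolation. What the paper's choices buy is continuity with what follows: the inductive splitting-off of $t_n$ is exactly the template reused for the harder quantity $\zeta_n$ in Lemma \ref{lem_zn}, and the index-reversal/relabelling observation is isolated as Remark \ref{use_this_fact} precisely because it is invoked repeatedly in the proof of Lemma \ref{lem_a_ident}. So both proofs are valid; yours optimizes the single lemma, the paper's optimizes the surrounding machinery.
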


\begin{proof}

Clearly, $x_1 = \frac{1}{2} (b-1)$ and inductively
\begin{align*}
x_n & = \sum_{t_n} \sum_{t_1,\ldots,t_{n-1}} \sum_{j=1}^{n-1} b^{-j} t_j + b^{-n} \sum_{t_1,\ldots,t_{n-1}} \sum_{t_n} t_n\\
    & = b \, x_{n-1} + b^{-n} \, b^{n-1} \, \frac{b \, (b-1)}{2}\\
    & = b \, \frac{1}{2} \, (b^{n-1} - 1) + \frac{1}{2} \, (b-1)\\
    & = \frac{1}{2} \, (b^n-1).
\end{align*}
One sees that $y_n = b^{n+1} x_n$ simply by checking that
\[ \sum_{i=1}^n b^i t_i = b^{n+1} \sum_{i=1}^n b^{i - n - 1} t_i = b^{n+1} \sum_{i=1}^n b^{-i} t_{n + 1 - i}. \]
Summing over $t_1,\ldots,t_n$ will give us $y_n$ on the left side. On the right side it will give us $b^{n+1} x_n$ although the order of the $t_i$ is reversed with respect to the definition of the numbers $x_n$.

\end{proof}

\begin{rem} \label{use_this_fact}

We will use this fact that the order of the $t_i$ is irrelevant in further proofs. But not only the order is irrelevant but even the concrete index of the $t_j$. For example the value of
\[ \sum_{t_{n+1},\ldots,t_{2n} = 0}^{b-1} \sum_{j=1}^n b^{-j} t_{j + n} \]
is the same as the value of $x_n$.

\end{rem}

\begin{lem} \label{lem_zn}

Let
\[ \zeta_n = \sum_{t_1,\ldots,t_n = 0}^{b-1} \sum_{i,j=1}^n b^{i-j} t_i t_j \]
for any positive integer $n$. Then
\[ \zeta_n = \frac{1}{4} b^{2n+1} + \frac{n}{12} b^{n+2} - \frac{1}{2} b^{n+1} - \frac{n}{12} b^n + \frac{1}{4} b. \]

\end{lem}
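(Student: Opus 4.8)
The plan is to swap the order of the two summations and reduce everything to the elementary identities $\sum_{t=0}^{b-1} t = \frac{b(b-1)}{2}$ and $\sum_{t=0}^{b-1} t^2 = \frac{b(b-1)(2b-1)}{6}$ together with the finite geometric series. Writing
\[ \zeta_n = \sum_{i,j=1}^n b^{i-j} \sum_{t_1,\ldots,t_n=0}^{b-1} t_i t_j, \]
I would split the inner sum according to whether $i = j$ or $i \neq j$.

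For the diagonal part, summing out the $n-1$ variables $t_k$ with $k \neq i$ produces a factor $b^{n-1}$, so $\sum_{t_1,\ldots,t_n} t_i^2 = b^{n-1} \frac{b(b-1)(2b-1)}{6}$; since $b^{i-i}=1$ and there are $n$ diagonal indices this contributes $\frac{n b^n (b-1)(2b-1)}{6}$. For a fixed pair $i \neq j$, summing out the remaining $n-2$ variables and then over $t_i, t_j$ separately gives $\sum_{t_1,\ldots,t_n} t_i t_j = b^{n-2} \left( \frac{b(b-1)}{2} \right)^2 = \frac{b^n(b-1)^2}{4}$, which is independent of $i$ and $j$. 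Hence the off-diagonal part equals $\frac{b^n(b-1)^2}{4} \sum_{i \neq j} b^{i-j}$, and I would evaluate
\[ \sum_{i \neq j} b^{i-j} = \left( \sum_{i=1}^n b^i \right)\left( \sum_{j=1}^n b^{-j} \right) - n = \frac{(b^n-1)^2}{b^{n-1}(b-1)^2} - n. \]

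Putting both parts together gives
\[ \zeta_n = \frac{n b^n (b-1)(2b-1)}{6} - \frac{n b^n (b-1)^2}{4} + \frac{b(b^n-1)^2}{4}, \]
and it then remains to expand and regroup: $\frac{b(b^n-1)^2}{4} = \frac14 b^{2n+1} - \frac12 b^{n+1} + \frac14 b$ already yields three of the five stated terms, while $\frac{(b-1)(2b-1)}{6} - \frac{(b-1)^2}{4} = \frac{b^2-1}{12}$ shows that the two $n$-dependent terms collapse to $\frac{n}{12} b^{n+2} - \frac{n}{12} b^n$. The whole computation is routine; the only spot calling for a little care is the geometric-series bookkeeping for $\sum_{i\neq j} b^{i-j}$ (subtracting the diagonal, simplifying to $\frac{(b^n-1)^2}{b^{n-1}(b-1)^2}$) and the final regrouping of the coefficients, so this is where I would double-check the algebra. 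As an alternative one could mimic the proof of Lemma \ref{lem_xn}, isolating the variable $t_n$ to obtain a recursion for $\zeta_n$ in terms of $\zeta_{n-1}$, $x_{n-1}$ and $y_{n-1}$ and then verifying the closed form by induction on $n$; the direct argument above appears shorter.
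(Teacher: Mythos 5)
Your proof is correct, but it takes a genuinely different route from the paper. The paper proves the identity by induction on $n$: it isolates the variable $t_n$, which splits the sum into four pieces (a copy of $\zeta_{n-1}$ scaled by $b$, two cross terms expressible through the quantities $x_{n-1}$ and $y_{n-1}$ from Lemma \ref{lem_xn}, and a pure $t_n^2$ term), and then substitutes the inductive hypothesis together with the closed forms of $x_{n-1}$ and $y_{n-1}$ --- exactly the recursive alternative you sketch in your last sentence. Your direct argument instead swaps the order of summation, separates the diagonal $i=j$ from the off-diagonal $i\neq j$ terms, and reduces everything to $\sum_t t$, $\sum_t t^2$ and the geometric series; I have checked the key identities $\sum_{i\neq j} b^{i-j} = \frac{(b^n-1)^2}{b^{n-1}(b-1)^2} - n$ and $\frac{(b-1)(2b-1)}{6} - \frac{(b-1)^2}{4} = \frac{b^2-1}{12}$, and the final regrouping does produce the stated five terms. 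Your approach is self-contained (it does not rely on Lemma \ref{lem_xn}) and arguably more transparent, since the closed form emerges from the computation rather than having to be guessed and verified; the paper's inductive route has the advantage of fitting the uniform pattern already established for $x_n$ and $y_n$ and of reusing those lemmas, which keeps the three proofs structurally parallel.
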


\begin{proof}

Clearly, $\zeta_1 = \frac{1}{6} (b-1) b (2b-1) = \frac{1}{3} b^3 - \frac{1}{2} b^2 + \frac{1}{6} b$. Then inductively we get
\begin{align*}
\zeta_n & = \sum_{t_n} \sum_{t_1,\ldots,t_{n-1}} \sum_{i=1}^{n-1} \sum_{j=1}^{n-1} b^{i-j} t_i t_j + b^n \sum_{t_n} t_n \sum_{t_1,\ldots,t_{n-1}} \sum_{j=1}^{n-1} b^{-j} t_j +\\
    &\qquad + b^{-n} \sum_{t_n} t_n \sum_{t_1,\ldots,t_{n-1}} \sum_{i=1}^{n-1} b^i t_j + \sum_{t_1,\ldots,t_{n-1}} \sum_{t_n} t_n^2\\
    & = b \, \zeta_{n-1} + b^n \, \frac{1}{2} \, (b-1) \, b \, x_{n-1} + b^{-n} \, \frac{1}{2} \, (b-1) \, b \, y_{n-1} + b^{n-1} \, \frac{1}{6} \, (b-1) \, b \, (2b-1)\\
    & = b \left( \frac{1}{4} b^{2n-1} + \frac{n-1}{12} \, b^{n+1} - \frac{1}{2} \, b^n - \frac{n-1}{12} \, b^{n-1} + \frac{1}{4} \, b \right) +\\
    &\qquad + b^n \, \frac{1}{2} \, (b-1) \, b \left( \frac{1}{2} (b^{n-1} - 1) \right) + b^{-n} \frac{1}{2} \, (b-1) \, b \left( \frac{1}{2} \, b^n (b^{n-1} - 1) \right) +\\
    &\qquad + b^{n-1} \frac{1}{6} \, (b-1) \, b \, (2b-1)\\
    & = \frac{1}{4} b^{2n+1} + \frac{n}{12} b^{n+2} - \frac{1}{2} b^{n+1} - \frac{n}{12} b^n + \frac{1}{4} b.
\end{align*}
\end{proof}

\begin{lem} \label{lem_sum_z}

Let $\Rn$ be a generalized Hammersley type point set and $z = (z_1, z_2) \in \Rn$. Then
\[ \sum_{z \in \Rn} (1 - z_1) (1 - z_2) = 1 + b^{-n - 1} \sum_{t_1,\ldots,t_n}^{b-1} \sum_{i,j=1}^n b^{i-j} t_i s_j. \]

\end{lem}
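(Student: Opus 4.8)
The plan is to expand the product $(1-z_1)(1-z_2) = 1 - z_1 - z_2 + z_1 z_2$ and to evaluate the resulting four sums over $z \in \Rn$ one by one, using the $b$-adic expansions of the coordinates together with Lemma~\ref{lem_xn}.

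First write, for $z = (z_1,z_2) \in \Rn$,
\[ z_1 = \sum_{i=1}^n t_i\, b^{i-n-1}, \qquad z_2 = \sum_{j=1}^n s_j\, b^{-j}, \]
where each $s_j$ is either $t_j$ or $b-1-t_j$. Summing over $z \in \Rn$ amounts to summing over all $(t_1,\dots,t_n) \in \{0,\dots,b-1\}^n$, so $\sum_{z \in \Rn} 1 = b^n$. For the linear term in $z_1$, factoring out $b^{-n-1}$ gives $b^{-n-1}\sum_{t}\sum_i b^i t_i = b^{-n-1} y_n = x_n = \tfrac12(b^n-1)$ by Lemma~\ref{lem_xn}. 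For the linear term in $z_2$ the point to notice is that, for each fixed $j$, the inner sum $\sum_{t_j=0}^{b-1} s_j$ equals $\sum_{t_j=0}^{b-1} t_j$ irrespective of whether $s_j = t_j$ or $s_j = b-1-t_j$, since $k \mapsto b-1-k$ is a bijection of $\{0,\dots,b-1\}$; hence $\sum_z z_2 = \sum_t \sum_j t_j b^{-j} = x_n$ as well (the labelling of the indices being irrelevant, cf.\ Remark~\ref{use_this_fact}). Therefore $\sum_{z \in \Rn}(1 - z_1 - z_2) = b^n - 2x_n = 1$.

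It remains to handle the bilinear term. Multiplying the two expansions gives
\[ z_1 z_2 = \sum_{i,j=1}^n t_i s_j\, b^{i-j-n-1} = b^{-n-1} \sum_{i,j=1}^n b^{i-j} t_i s_j, \]
so $\sum_{z \in \Rn} z_1 z_2 = b^{-n-1} \sum_{t_1,\dots,t_n} \sum_{i,j=1}^n b^{i-j} t_i s_j$; adding this to the value $1$ obtained above yields the claimed identity. There is essentially no obstacle in this lemma; the only step deserving a second look is the computation of $\sum_z z_2$, where one must use the reflection symmetry $s_j \leftrightarrow b-1-s_j$ to see that the twisted coordinate contributes exactly as the plain one. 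Note that, in contrast to Lemma~\ref{lem_zn}, the dependence of the bilinear term on the $s_j$ is deliberately kept and only simplified in later computations.
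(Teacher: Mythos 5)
Your proof is correct and follows essentially the same route as the paper: expand $(1-z_1)(1-z_2)$, observe that summing over $\Rn$ is summing over all digit vectors $(t_1,\dots,t_n)$, and evaluate the constant, linear and bilinear pieces separately, with the linear pieces cancelling against $b^n$ to leave $1$. Your explicit appeal to the reflection bijection $t \mapsto b-1-t$ for the $\sum_z z_2$ term and your use of Lemma~\ref{lem_xn} merely make precise steps that the paper carries out by direct computation.
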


\begin{proof}

We first calculate for some $z \in \Rn$
\begin{align*}
(1 - z_1) (1 - z_2) & = (1 - b^{-1} t_n - \ldots -b ^{-n} t_1) (1 - b^{-1} s_1 - \ldots - b^{-n} s_n)\\
                    & = 1 - b^{-1} t_n - \ldots - b^{-n} t_1 - b^{-1} s_1 - \ldots - b^{-n} s_n +\\
                    &\qquad + \sum_{i,j=1}^n b^{-n + i - j - 1} t_i s_j.
\end{align*}

Now we sum over all $z \in \Rn$ which corresponds to summing over all $t_1,\ldots,t_n \in\{0,1,\ldots,b-1\}$ and get
\begin{align*}
& \sum_{z \in \Rn} (1 - z_1) (1 - z_2)\\
& = \sum_{t_1,\ldots,t_n} \left(1 - b^{-1} t_n - \ldots - b^{-n} t_1 - b^{-1} s_1 - \ldots - b^{-n} s_n + b^{-n - 1}\sum_{i,j=1}^n b^{i-j} t_i s_j \right)\\
& = b^n - b^{-1} \, b^{n-1} \sum_{t_n = 0}^{b-1} t_n - b^{-1} \, b^{n-1} \sum_{t_1 = 0}^{b-1} s_1 - \ldots - b^{-n} \, b^{n-1} \sum_{t_1 = 0}^{b-1} t_1 -\\
&\qquad\qquad - b^{-n} \, b^{n-1} \sum_{t_n = 0}^{b-1} s_n + b^{-n-1} \sum_{t_1,\ldots,t_n} \sum_{i,j=1}^n b^{i-j} t_i s_j\\
& = b^n - 2 \left( b^{n-2} \, \frac{1}{2} \, (b-1) \, b + \ldots + b^{-1} \, \frac{1}{2} \, (b-1) \, b \right) + b^{-n - 1} \sum_{t_1,\ldots,t_n} \sum_{i,j=1}^n b^{i-j} t_i s_j\\
& = b^n - (b-1) (b^{n-1} + \ldots + 1) + b^{-n - 1} \sum_{t_1,\ldots,t_n} \sum_{i,j=1}^n b^{i-j} t_i s_j\\
& = 1 + b^{-n - 1} \sum_{t_1,\ldots,t_n} \sum_{i,j=1}^n b^{i-j} t_i s_j\\
\end{align*}
\end{proof}

\begin{lem} \label{lem_a_ident}

Let $\Rn$ be a generalized Hammersley type point set. Then
\[ \sum_{t_1,\ldots,t_n = 0}^{b-1} \sum_{i,j=1}^n b^{i-j} t_i s_j = \frac{1}{4} b^{2n+1} - \frac{1}{2} b^{n+1} + \frac{1}{4} b + (2a_n - n) \frac{b^2 - 1}{12} b^n. \]

\end{lem}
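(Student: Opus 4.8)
The plan is to reduce the sum $\sum_{t_1,\ldots,t_n=0}^{b-1}\sum_{i,j=1}^n b^{i-j}t_i s_j$ to quantities already computed in Lemmas \ref{lem_xn}, \ref{lem_zn}. Recall that for each index $i$ either $s_i=t_i$ or $s_i=b-1-t_i$, and that $a_n=\#\{i:\,s_i=t_i\}$. First I would split the double sum according to whether $s_j$ equals $t_j$ or $b-1-t_j$: writing $S=\{j:\,s_j=t_j\}$, we have $s_j=t_j$ for $j\in S$ and $s_j=(b-1)-t_j$ for $j\notin S$, so
\[
\sum_{t_1,\ldots,t_n}\sum_{i,j=1}^n b^{i-j}t_i s_j
= \sum_{t_1,\ldots,t_n}\sum_{i=1}^n\sum_{j\in S} b^{i-j}t_i t_j
+ \sum_{t_1,\ldots,t_n}\sum_{i=1}^n\sum_{j\notin S} b^{i-j}t_i\bigl((b-1)-t_j\bigr).
\]
The key observation (Remark \ref{use_this_fact}) is that, after summing over all $t_1,\ldots,t_n\in\{0,\ldots,b-1\}$, the concrete index attached to a $t$ is irrelevant; only the \emph{pattern} of exponents matters. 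So I would introduce the diagonal contribution $\zeta_n$ and the "off-diagonal by one position" building blocks and express everything through $x_n$, $y_n$, $\zeta_n$ and elementary sums like $\sum_{t=0}^{b-1}t = \frac{b(b-1)}{2}$ and $\sum_{t=0}^{b-1}t^2=\frac{b(b-1)(2b-1)}{6}$.

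Next I would handle the two pieces separately. For the $j\in S$ piece, combine the $|S|=a_n$ diagonal terms (each contributing, after summing out the other variables, a copy of the one-variable quantity $\sum_t t^2\cdot b^{n-1}$ times the appropriate power weighting) with the genuinely off-diagonal terms $i\neq j$, which factor as $b^{i-j}\bigl(\sum_t t\bigr)^2 b^{n-2}$ and whose total, by the index-irrelevance, reassembles into exactly the off-diagonal part of $\zeta_n$; this gives that the full $\sum_{i,j}b^{i-j}t_it_j$ summed over all tuples equals $\zeta_n$, and the restriction to $j\in S$ removes $(n-a_n)$ of the diagonal entries, each worth $b^{n-1}\cdot\frac{b(b-1)(2b-1)}{6}=\frac{b^n(b-1)(2b-1)}{6}$ after the exponent $b^{i-j}=b^0=1$. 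For the $j\notin S$ piece, the term $(b-1)\sum_i b^{i-j}t_i$ summed over all tuples is $(b-1)$ times (for each fixed $j$) $\bigl(b^{-j}y_{\le j\text{-part}}+b^{?}x_{\text{rest}}\bigr)$-type sums — cleanly, it is $(b-1)b^{n-1}\frac{b(b-1)}{2}\sum_i b^{i-j}$ — and the $-t_i t_j$ part is again a diagonal-plus-offdiagonal piece. I would collect all of it, being careful that the off-diagonal parts of the $j\in S$ and $j\notin S$ pieces together reconstitute the full off-diagonal sum of $\zeta_n$ (since off-diagonal terms don't "see" whether $s_j=t_j$ or not in the way that matters, up to the linear correction from $(b-1)$).

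The cleanest organization, which I would actually write, is: let $A=\sum_{t_1,\ldots,t_n}\sum_{i,j}b^{i-j}t_i s_j$ and write $s_j = t_j + \varepsilon_j\bigl((b-1)-2t_j\bigr)$ where $\varepsilon_j\in\{0,1\}$ indicates $j\notin S$ — actually simpler: $s_j - t_j$ is $0$ on $S$ and $(b-1)-2t_j$ off $S$. Then $A = \zeta_n + \sum_{j\notin S}\sum_{t_1,\ldots,t_n}\sum_i b^{i-j}t_i\bigl((b-1)-2t_j\bigr)$. In the correction sum, the $(b-1)$ term contributes (after summing) a quantity independent of which indices are in $S$, and the $-2t_j$ term contributes $-2$ times a diagonal-at-$j$ term plus $-2$ times off-diagonal terms. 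Using index-irrelevance, $\sum_{j\notin S}(\cdots)$ depends on $S$ only through $|S^c|=n-a_n$, so $A = \zeta_n + (n-a_n)\cdot K$ for an explicit constant $K$ computed from the $a_n=n$ baseline (where $A=\zeta_n$ forces... wait, no) — rather, I would directly evaluate the correction sum: $\sum_{t_1,\ldots,t_n}\sum_i b^{i-j}t_i(b-1) = (b-1)\cdot b^{n-1}\cdot\frac{b(b-1)}{2}\cdot\frac{b^{n+1}-b - (j\text{-terms})}{\cdots}$ — this is where I must be meticulous. Plugging $\zeta_n=\frac14 b^{2n+1}+\frac{n}{12}b^{n+2}-\frac12 b^{n+1}-\frac{n}{12}b^n+\frac14 b$ from Lemma \ref{lem_zn} and the value of $K$, the $n$-dependent middle terms should combine so that $A = \frac14 b^{2n+1}-\frac12 b^{n+1}+\frac14 b + (2a_n-n)\frac{b^2-1}{12}b^n$; checking the case $a_n=n$ (all $s_i=t_i$, so $A=\zeta_n$) and $a_n=0$ (all $s_i=b-1-t_i$) gives two independent sanity checks that pin down $K=\frac{b^2-1}{6}b^n$ and confirm the linear-in-$a_n$ form.

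The main obstacle I anticipate is bookkeeping the exponent weights $b^{i-j}$ in the correction sum: unlike in $\zeta_n$ where the geometric sums $\sum_{i,j}b^{i-j}$ telescope symmetrically, here the sum runs over $i\in\{1,\ldots,n\}$ but $j$ only over $S^c$, so one cannot immediately symmetrize, and one must track that $\sum_{j\in S^c}\sum_i b^{i-j}t_i$-type expressions, once summed over all tuples, no longer depend on \emph{which} $j$ lie in $S^c$ — only on how many. The cleanest route past this is to verify the claimed formula is affine in $a_n$ (which follows because the correction is additive over $j\in S^c$ and each term, after full summation, is $S$-independent by Remark \ref{use_this_fact}) and then pin down the two coefficients using the extreme cases $a_n=n$ via Lemma \ref{lem_zn} and $a_n=0$ by the symmetric computation $t\mapsto b-1-t$.
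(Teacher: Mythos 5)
Your route is correct, and it is genuinely different from (and slicker than) the paper's. The paper reorders the indices so that $S=\{1,\ldots,a_n\}$, splits $\sum_{i,j}b^{i-j}t_is_j$ into four blocks according to whether $i,j$ lie below or above $a_n$, and evaluates the resulting six terms one by one through $x_n$, $y_n$, $\zeta_n$ before recombining. You instead write $s_j=t_j+\mathbf{1}_{j\notin S}\bigl((b-1)-2t_j\bigr)$, so that the whole sum is $\zeta_n$ plus a correction that is additive over $j\in S^c$; this avoids the reordering and the six-term bookkeeping entirely. What makes your correction tractable is a cancellation you should state explicitly rather than attribute to Remark \ref{use_this_fact}: since $\sum_{t=0}^{b-1}\bigl((b-1)-2t\bigr)=0$, every off-diagonal term $i\neq j$ of the correction vanishes after summing over the tuples (the individual pieces $(b-1)\sum_i b^{i-j}t_i$ and $-2t_j\sum_{i\neq j}b^{i-j}t_i$ \emph{do} depend on $j$ through $\sum_i b^{i-j}$, and index-relabeling alone does not remove that dependence — only their sum is zero). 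What survives is the diagonal term $i=j$ with weight $b^0$, contributing
\[
b^{n-1}\Bigl[(b-1)\tfrac{b(b-1)}{2}-2\tfrac{b(b-1)(2b-1)}{6}\Bigr]=-\tfrac{b^2-1}{6}\,b^{n}
\]
per $j\in S^c$, whence $A=\zeta_n-(n-a_n)\tfrac{b^2-1}{6}b^n$, which together with $\zeta_n=\tfrac14 b^{2n+1}-\tfrac12 b^{n+1}+\tfrac14 b+n\tfrac{b^2-1}{12}b^n$ gives the stated identity; your two-point check at $a_n=n$ and $a_n=0$ then confirms it (note the per-index constant is $-\tfrac{b^2-1}{6}b^n$, with a minus sign). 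Your approach buys a shorter computation and no loss-of-generality step; the paper's buys uniformity with the surrounding lemmas, which are all phrased in terms of the block quantities $x_n,y_n,\zeta_n$.
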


\begin{proof}

We can assume that $s_1 = t_1,\ldots,s_{a_n} = t_{a_n}, s_{a_n+1} = b - 1 - t_{a_n+1}, \ldots, s_n = b - 1 - t_n$. Otherwise we would rename the $t_j$. This assumption allows us to split the sum in a compact way. We have
\begin{align*}
\sum_{i, j=1}^n b^{i-j} t_i s_j & = \sum_{i,j=1}^{a_n} b^{i - j} t_i t_j + \sum_{i = 1}^{a_n} \sum_{j = a_n + 1}^n b^{i - j} t_i (b - 1 - t_j) + \\
                                & \qquad + \sum_{i = a_n + 1}^n \sum_{j = 1}^{a_n} b^{i - j} t_i t_j + \sum_{i, j = a_n + 1}^n b^{i - j} t_i (b - 1 - t_j) \\
                                & = \sum_{i, j = 1}^{a_n} b^{i - j} t_i t_j + (b - 1) \sum_{i = 1}^{a_n} \sum_{j = a_n + 1}^n b^{i - j} t_i - \sum_{i = 1}^{a_n} \sum_{j = a_n + 1}^n b^{i - j} t_i t_j + \\
                                & \qquad + \sum_{i = a_n + 1}^n \sum_{j = 1}^{a_n} b^{i-j} t_i t_j + (b - 1)\sum_{i = a_n + 1}^n \sum_{j = a_n + 1}^n b^{i-j} t_i - \\
                                & \qquad \qquad - \sum_{i = a_n + 1}^n \sum_{j = a_n + 1}^n b^{i - j} t_i t_j.
\end{align*}
Summing over $t_1,\ldots,t_n$ and analyzing every term separately will give us
\[ \sum_{t_1, \ldots, t_n} \sum_{i, j = 1}^{a_n} b^{i - j} t_i t_j = b^{n - a_n} \zeta_{a_n}, \]
as well as (using $y_n = b^{n+1} x_n$)
\begin{align*}
\sum_{t_1, \ldots, t_n}(b - 1) \sum_{i = 1}^{a_n} \sum_{j = a_n + 1}^n b^{i - j} t_i & = (b - 1) b^{n - a_n} y_{a_n} \sum_{j = a_n + 1}^n b^{-j} \\
                                                                                     & = b^{n+1} x_{a_n} (b^{-a_n} - b^{-n}),
\end{align*}
and
\begin{align*}
\sum_{t_1, \ldots, t_n} \sum_{i = 1}^{a_n} \sum_{j = a_n + 1}^n b^{i - j} t_i t_j & = \sum_{t_1, \ldots, t_{a_n}} \sum_{i = 1}^{a_n} b^i t_i \sum_{t_{a_n + 1}, \ldots, t_n} \sum_{j = a_n + 1}^n b^{-j} t_j \\
                                                                                  & = y_{a_n} \sum_{t_{a_n + 1}, \ldots, t_n} b^{-a_n} \sum_{j = a_n + 1}^n b^{a_n - j} t_j = x_{a_n} x_{n - a_n} b,
\end{align*}
since we have already seen that the indices of $t_j$ are irrelevant (Remark \ref{use_this_fact}). We also get with a similar argumentation
\begin{align*}
& \sum_{t_1, \ldots, t_n} \sum_{i = a_n + 1}^n \sum_{j = 1}^{a_n} b^{i - j} t_i t_j = \sum_{t_1, \ldots, t_{a_n}} \sum_{j = 1}^{a_n} b^{-j} t_j \sum_{t_{a_n + 1}, \ldots, t_n} \sum_{i = a_n + 1}^n b^i t_i \\
& = \sum_{t_1, \ldots, t_{a_n}} \sum_{j = 1}^{a_n} b^{-j} t_j \sum_{t_{a_n + 1}, \ldots, t_n} b^{a_n} \sum_{i = a_n + 1}^n b^{i - a_n} t_i \\
& = x_{a_n} b^{a_n} y_{n - a_n} = x_{a_n} x_{n - a_n} b^{n + 1}
\end{align*}
and
\begin{align*}
& \sum_{t_1, \ldots, t_n} (b - 1) \sum_{i = a_n + 1}^n \sum_{j = a_n + 1}^n b^{i - j} t_i = (b-1) b^{a_n} \sum_{t_{a_n + 1}, \ldots, t_n} \sum_{i = a_n + 1}^n b^i t_i \sum_{j = a_n + 1}^n b^{-j} \\
& = b^{a_n} y_{n - a_n} b^{a_n} (b^{-a_n} - b^{-n}) = x_{n - a_n} (b^{n + 1} - b^{a_n + 1}) \\
\end{align*}
and
\begin{multline*}
\sum_{t_1, \ldots, t_n} \sum_{i = a_n + 1}^n \sum_{j = a_n + 1}^n b^{i - j} t_i t_j \\
= b^{a_n} \sum_{t_{a_n + 1}, \ldots, t_n} \sum_{i = a_n + 1}^n \sum_{j = a_n + 1}^n b^{(i - a_n)+(a_n - j)} t_i t_j = b^{a_n} \zeta_{n - a_n}.
\end{multline*}

So what we have is
\begin{multline*}
\sum_{t_1, \ldots, t_n}^{b - 1} \sum_{i, j = 1}^n b^{i - j} t_i s_j \\
= b^{n - a_n} \zeta_{a_n} - b^{a_n} \zeta_{n - a_n} + x_{a_n} b(b^{n - a_n} - 1) + x_{a_n} x_{n - a_n} b (b^n - 1) + x_{n - a_n} b^{a_n + 1}(b^{n - a_n} - 1).
\end{multline*}

Inserting the values of $\zeta_{a_n}, \, \zeta_{n - a_n}, \, x_{a_n}$, and $x_{n - a_n}$ and simplifying will give us the stated assertion.

\end{proof}

\begin{prp} \label{prp_minus1}

Let $\Rn$ be a generalized Hammersley type point set and $\mu_{jml}$ the $b$-adic Haar coefficients of its discrepancy function. Then
\[ \mu_{(-1,-1),(0,0),(1,1)} = \frac{1}{4} b^{-2n} + \frac{1}{2} b^{-n} + (2a_n - n) \frac{b^2 - 1}{12} b^{-n - 1}. \]

\end{prp}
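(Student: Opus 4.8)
The plan is to compute the single Haar coefficient $\mu_{(-1,-1),(0,0),(1,1)}$ of the discrepancy function $D_{\Rn}$ directly from the definition, splitting it into the contribution of the point-counting term and the contribution of the volume term $x_1 x_2$. By Lemma~\ref{lem_haar_coeff_besov_indicator_ham}(iv), for the basis function $h_{(-1,-1),(0,0),(1,1)} = \chi_{\Q^2}$ the Haar coefficient of $g(x) = \chi_{[0,x)}(z)$ equals $(1-z_1)(1-z_2)$, and by Lemma~\ref{lem_haar_coeff_besov_x_ham}(iv) the Haar coefficient of $f(x) = x_1 x_2$ equals $\frac14$. Hence, by linearity of the Haar coefficient and since $D_{\Rn} = \frac1N \sum_{z\in\Rn}\chi_{[0,x)}(z) - x_1x_2$ with $N = b^n$,
\[
\mu_{(-1,-1),(0,0),(1,1)} = \frac{1}{b^n}\sum_{z\in\Rn}(1-z_1)(1-z_2) - \frac14.
\]

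The next step is to evaluate the sum $\sum_{z\in\Rn}(1-z_1)(1-z_2)$. This is exactly what Lemma~\ref{lem_sum_z} provides: it equals $1 + b^{-n-1}\sum_{t_1,\ldots,t_n=0}^{b-1}\sum_{i,j=1}^n b^{i-j} t_i s_j$. Then Lemma~\ref{lem_a_ident} evaluates that double sum in closed form in terms of $a_n$, namely
\[
\sum_{t_1,\ldots,t_n=0}^{b-1}\sum_{i,j=1}^n b^{i-j}t_i s_j = \frac14 b^{2n+1} - \frac12 b^{n+1} + \frac14 b + (2a_n - n)\frac{b^2-1}{12}b^n.
\]
Substituting this into Lemma~\ref{lem_sum_z}, dividing by $b^n$, and subtracting $\frac14$ should yield the claimed value after simplification: the $\frac14 b^{2n+1}$ term, divided by $b^{n+1}$, cancels the $\frac14$; the remaining terms give $\frac14 b^{-2n}$, $\frac12 b^{-n}$ (this requires tracking the $+1$ from Lemma~\ref{lem_sum_z} together with the $-\frac12 b^{n+1}$ term), and $(2a_n-n)\frac{b^2-1}{12}b^{-n-1}$.

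I do not expect any genuine obstacle here: all the hard work has been isolated into Lemmas~\ref{lem_sum_z} and~\ref{lem_a_ident}, and the proof is just assembling them with the two elementary Haar-coefficient evaluations. The only point requiring minor care is the bookkeeping of the constant terms — specifically checking that the $\frac12 b^{-n}$ really emerges correctly from combining $b^{-n}\cdot 1$ (from the $+1$ in Lemma~\ref{lem_sum_z}) with $b^{-n}\cdot b^{-n-1}\cdot(-\frac12 b^{n+1}) = -\frac12 b^{-n}$; these do not cancel, so one must double-check signs. Once that is verified the statement follows immediately.

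\begin{proof}
By Lemma~\ref{lem_haar_coeff_besov_x_ham}(iv) the $b$-adic Haar coefficient of $f(x) = x_1 x_2$ with respect to $h_{(-1,-1),(0,0),(1,1)}$ equals $\frac14$, and by Lemma~\ref{lem_haar_coeff_besov_indicator_ham}(iv) the coefficient of $g(x) = \chi_{[0,x)}(z)$ equals $(1-z_1)(1-z_2)$. Since the Haar coefficient depends linearly on the function and $D_{\Rn}(x) = \frac{1}{b^n}\sum_{z\in\Rn}\chi_{[0,x)}(z) - x_1 x_2$, we obtain
\[
\mu_{(-1,-1),(0,0),(1,1)} = \frac{1}{b^n}\sum_{z\in\Rn}(1-z_1)(1-z_2) - \frac14.
\]
By Lemma~\ref{lem_sum_z},
\[
\sum_{z\in\Rn}(1-z_1)(1-z_2) = 1 + b^{-n-1}\sum_{t_1,\ldots,t_n=0}^{b-1}\sum_{i,j=1}^n b^{i-j} t_i s_j,
\]
and by Lemma~\ref{lem_a_ident} the inner double sum equals $\frac14 b^{2n+1} - \frac12 b^{n+1} + \frac14 b + (2a_n-n)\frac{b^2-1}{12}b^n$. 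Therefore
\begin{align*}
\sum_{z\in\Rn}(1-z_1)(1-z_2) &= 1 + b^{-n-1}\left( \frac14 b^{2n+1} - \frac12 b^{n+1} + \frac14 b + (2a_n-n)\frac{b^2-1}{12}b^n \right) \\
&= 1 + \frac14 b^n - \frac12 + \frac14 b^{-n} + (2a_n-n)\frac{b^2-1}{12}b^{-1} \\
&= \frac14 b^n + \frac12 + \frac14 b^{-n} + (2a_n-n)\frac{b^2-1}{12}b^{-1}.
\end{align*}
Dividing by $b^n$ and subtracting $\frac14$ gives
\[
\mu_{(-1,-1),(0,0),(1,1)} = \frac14 b^{-2n} + \frac12 b^{-n} + (2a_n-n)\frac{b^2-1}{12}b^{-n-1},
\]
as claimed.
\end{proof}
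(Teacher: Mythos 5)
Your proof is correct and follows exactly the same route as the paper: express the coefficient via linearity as $b^{-n}\sum_{z\in\Rn}(1-z_1)(1-z_2)-\tfrac14$ using Lemmas \ref{lem_haar_coeff_besov_x_ham} and \ref{lem_haar_coeff_besov_indicator_ham}, then substitute Lemmas \ref{lem_sum_z} and \ref{lem_a_ident} and simplify. The arithmetic checks out; your version merely spells out the intermediate cancellations more explicitly than the paper does.
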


\begin{proof}

Using Lemma \ref{lem_a_ident} we have
\[ \sum_{t_1,\ldots,t_n = 0}^{b-1} \sum_{i,j=1}^n b^{i-j} t_i s_j = \frac{1}{4} b^{2n+1} - \frac{1}{2} b^{n+1} + \frac{1}{4} b + (2a_n - n) \frac{b^2 - 1}{12} b^n. \]
Hence using Lemmas \ref{lem_haar_coeff_besov_x_ham}, \ref{lem_haar_coeff_besov_indicator_ham} and \ref{lem_sum_z} we get
\begin{align*}
& \mu_{(-1,-1),(0,0),(1,1)} = b^{-n} \sum_{z \in \Rn} (1 - z_1) (1 - z_2) - \frac{1}{4}\\
& = b^{-n} \left( 1 + b^{-n - 1} \left( \frac{1}{4} b^{2n+1} - \frac{1}{2} b^{n+1} + \frac{1}{4} b + (2a_n - n) \, \frac{b^2 - 1}{12} \, b^n \right) \right) - \frac{1}{4}\\
& = \frac{1}{4} b^{-2n} + \frac{1}{2} b^{-n} + (2a_n - n) \, \frac{b^2 - 1}{12} \, b^{-n - 1}.
\end{align*}
\end{proof}

\begin{lem} \label{lem_middle}

Let $\Rn$ be a generalized Hammersley type point set and let $j = (j_1,-1)$ for $j_1 \in \N_0$ with $j_1 \leq n-1$, $m = (m_1,0)$ with $0 \leq m_1 < b^{j_1}$ and $l = (l_1,1)$ with $1 \leq l_1 < b$. Then
\begin{multline*}
\sum_{z \in \Rn \cap I_{jm}} \left[ (bm_1 + k_1 + 1 - b^{j_1 + 1} z_1) \e^{\frac{2\pi \im}{b} k_1 l_1} + \sum_{r_1 = k_1 + 1}^{b-1} \e^{\frac{2\pi \im}{b} r_1 l_1} \right] (1 - z_2)\\
= \frac{b^{n-j_1}(1 - 2\varepsilon) \mp b^{j_1-n+1}}{2(\e^{\frac{2\pi \im}{b} l_1} - 1)} + \frac{w_{j_1}}{(\e^{\frac{2\pi \im}{b} l_1} - 1)^2},
\end{multline*}
where $w_{j_1}$ is either $\e^{\frac{2\pi \im}{b} l_1}$ or $-1$ and the sign of $\mp$ depends on $j_1$ and we have $\varepsilon b^{n - j_1} \leq b$.

An analogous result holds for $j = (-1,j_2)$ where $j_2 \in \N_0$ with $j_2 \leq n - 1$, $m = (0,m_2)$ with $0 \leq m_2 < b^{j_2}$ and $l = (1,l_2)$ with $1 \leq l_2 < b$.
\end{lem}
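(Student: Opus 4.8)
The plan is to reduce the statement to the analysis already carried out in Lemma \ref{lem_coeff_calc}, treating the case $j = (j_1,-1)$ as a degenerate version of $j \in \N_0^2$ in which the second coordinate is not subdivided at all. First I would parametrize the points $z \in \Rn \cap I_{jm}$ exactly as in the proof of Lemma \ref{lem_coeff_calc}: writing $m_1$ in base $b$ and using that $z \in \Rn \cap I_{j_1 m_1}^{k_1}$ forces $t_n = m_1^{(1)}, \ldots, t_{n-j_1+1} = m_{j_1}^{(1)}, t_{n-j_1} = k_1$, while $t_1, \ldots, t_{n-j_1-1}$ remain free. This gives, as before,
\[
bm_1 + k_1 + 1 - b^{j_1+1} z_1 = h \, b^{-(n-j_1-1)} - \varepsilon_1, \qquad h = 1, \ldots, b^{\,n-j_1-1},
\]
where $\varepsilon_1 = b^{j_1-n} t_{?} + \ldots$ collects the low-order digits (I would need to recompute the exact splitting point, but it is the same bookkeeping as in Lemma \ref{lem_coeff_calc}); here $\varepsilon := \varepsilon_1$ and clearly $\varepsilon b^{n-j_1} \le b$ since $\varepsilon < b \cdot b^{j_1-n+1}$ roughly — the precise bound is the routine geometric-series estimate.

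Next I would expand the product inside the sum. The first bracket equals $\big(h b^{-(n-j_1-1)} - \varepsilon\big)\e^{\frac{2\pi\im}{b}k_1 l_1} + \sum_{r_1 = k_1+1}^{b-1}\e^{\frac{2\pi\im}{b}r_1 l_1}$, and the new feature is the factor $(1 - z_2)$ where $z_2 = b^{-1} s_1 + \ldots + b^{-n} s_n$ and the $s_i$ are tied to the $t_i$ by $s_i = t_i$ or $s_i = b-1-t_i$. Because $t_{n-j_1} = k_1$, the digit $s_{n-j_1}$ depends on $k_1$, so summing over $k_1$ the cross-term $-b^{-(n-j_1)}\sum_{k_1} s_{n-j_1}\e^{\frac{2\pi\im}{b}k_1 l_1}$ does \emph{not} vanish; by the same computation as around \eqref{sign_term} (using $\sum_{k_1}(b-1)\e^{\frac{2\pi\im}{b}k_1 l_1}=0$ and Lemma \ref{lem_factor5}) it contributes $\pm \tfrac{b}{\e^{\frac{2\pi\im}{b}l_1}-1}$, producing the $\mp b^{j_1-n+1}$ term in the numerator after multiplying by the $b^{-(n-j_1)}$ and the remaining $h$-sum. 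The ``main'' term $\sum_{z}\big(hb^{-(n-j_1-1)}-\varepsilon\big)(1-z_2)\e^{\frac{2\pi\im}{b}k_1 l_1}$, after summing $h$ over its full range and summing the free digits $t_i$, collapses (via $\sum_{k_1}\e^{\frac{2\pi\im}{b}k_1 l_1}=0$ applied to everything independent of $k_1$, and the complementary identity of Lemma \ref{lem_factor5} on the $\sum_{r_1}$ piece) to $\tfrac{b^{n-j_1}(1-2\varepsilon)}{2(\e^{\frac{2\pi\im}{b}l_1}-1)}$; the $\tfrac12$ and the $1-2\varepsilon$ come from averaging $1-z_2$ over the $s_i$ exactly as in Lemma \ref{lem_xn}, since $\sum_{t}(1-z_2)$ telescopes to a value close to $\tfrac12 b^{(\cdot)}$. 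Finally the term carrying $\sum_{k_1}\sum_{r_1=k_1+1}^{b-1}\e^{\frac{2\pi\im}{b}r_1 l_1}$ against a factor of $1-z_2$ (again picking up a $k_1$-dependence through $s_{n-j_1}$) gives the $\tfrac{w_{j_1}}{(\e^{\frac{2\pi\im}{b}l_1}-1)^2}$ term with $w_{j_1}\in\{\e^{\frac{2\pi\im}{b}l_1},-1\}$; this is the analogue of the ``last summand'' in Lemma \ref{lem_coeff_calc} but with only one factor squared because only one coordinate is genuinely $b$-adically resolved.

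The hard part will be the careful bookkeeping of which digit indices are free, which are pinned by $m$, and which by $k_1$, together with correctly tracking the sign $\mp$ (it flips according to the parity of how many of $s_1,\dots,s_{j_1+\cdots}$ satisfy $s_i=b-1-t_i$, i.e. according to $j_1$ and the pattern defining $\Rn$) and pinning down $w_{j_1}$ as exactly one of the two claimed values depending on whether $s_{n-j_1}=k_1$ or $s_{n-j_1}=b-1-k_1$. I expect no genuinely new idea is needed beyond Lemmas \ref{lem_factor5}, \ref{lem_xn} and the digit-expansion technique of Lemma \ref{lem_coeff_calc}; the $j=(-1,j_2)$ case follows by the symmetric argument, exchanging the roles of $t_i$ and $s_i$ and of the two coordinates, which is why it is stated without a separate proof. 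To finish I would simply collect the nonzero summands, observe all others vanish by the root-of-unity cancellations $\sum_{k}\e^{\frac{2\pi\im}{b}kl}=0$, and read off the claimed closed form, noting the bound $\varepsilon b^{n-j_1}\le b$ from the explicit form of $\varepsilon$.
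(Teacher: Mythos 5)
Your overall architecture is exactly the paper's: parametrize $\Rn\cap I_{jm}^{(k_1,-1)}$ by the digit bookkeeping of Lemma \ref{lem_coeff_calc}, expand the product, kill most summands with $\sum_{k_1}\e^{\frac{2\pi\im}{b}k_1l_1}=0$, extract the $\mp$ term from the $k_1$-dependence of $s_{n-j_1}$ via \eqref{sign_term}, and get the $(\e^{\frac{2\pi\im}{b}l_1}-1)^{-2}$ term from the summand pairing $s_{n-j_1}$ with $\sum_{k_1}\sum_{r_1=k_1+1}^{b-1}\e^{\frac{2\pi\im}{b}r_1l_1}$, distinguishing $s_{n-j_1}=k_1$ from $s_{n-j_1}=b-1-k_1$ to pin down $w_{j_1}$. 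That is the right plan and no new idea is needed.

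However, you have misplaced $\varepsilon$. You set $\varepsilon:=\varepsilon_1$ and attach it to the first bracket, i.e.\ as a remainder of $bm_1+k_1+1-b^{j_1+1}z_1$. In this degenerate case there is no such remainder: since $j_2=-1$ pins no digits of $z_1$, one gets exactly $bm_1+k_1+1-b^{j_1+1}z_1=h\,b^{j_1-n+1}$ with $h=1,\ldots,b^{n-j_1-1}$, so your $\varepsilon_1$ is zero and taken literally your plan cannot produce the factor $(1-2\varepsilon)$. The $\varepsilon$ of the statement is the degenerate analogue of $\varepsilon_2$, not $\varepsilon_1$: it is the tail $\varepsilon=b^{j_1-n-1}s_{n-j_1+1}+\ldots+b^{-n}s_n$ of $1-z_2$, coming from the digits $s_{n-j_1+1},\ldots,s_n$ that are pinned because $t_{n-j_1+1},\ldots,t_n$ are fixed by $m_1$; it is a constant over the sum and enters through the surviving term $-\varepsilon\sum_h\sum_{k_1}\sum_{r_1}\e^{\frac{2\pi\im}{b}r_1l_1}=-\varepsilon b^{n-j_1}/(\e^{\frac{2\pi\im}{b}l_1}-1)$, not through any averaging over the free digits as you suggest (the $\tfrac12$ comes from $\sum_h\sigma(h)=\tfrac12 b^{n-j_1-1}(b^{n-j_1-1}+1)$). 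Relatedly, the sign $\mp$ is decided by the single position $n-j_1$ (whether $s_{n-j_1}=t_{n-j_1}$ or $=b-1-t_{n-j_1}$), not by a parity count over several positions. These are fixable bookkeeping errors, but as written the decomposition you would expand is not the correct one.
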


\begin{proof}

Let $z \in \Rn \cap I_{jm}$. Then there is a $k = (k_1,-1)$, $k_1 \in \{ 0, 1, \ldots, b - 1 \}$ such that, $z \in \Rn \cap I_{jm}^k$. We use the methods from Lemma \ref{lem_coeff_calc} for the proof. We have
\[ bm_1 + k_1 + 1 - b^{j_1 + 1} z_1 = 1 - b^{-1} t_{n - j_1 - 1} - \ldots - b^{j_1 - n + 1} t_1 \]
which means that
\[ bm_1 + k_1 + 1 - b^{j_1 + 1} z_1 = h b^{j_1 - n + 1} \]
for $h = 1, 2, \ldots, b^{n - j_1 - 1}$. The numbers $t_{n - j_1 + 1},\ldots,t_n$ are determined by the condition $z \in \Rn \cap I_{jm}$ and $t_{n - j_1} = k_1$. The numbers $t_1, \ldots, t_{n - j_1 - 1}$ can be chosen arbitrarily. We also have
\[ 1 - z_2 = 1 - b^{-1} s_1 - \ldots - b^{j_1 - n + 1} s_{n - j_1 - 1} - b^{j_1 - n} s_{n - j_1} - \varepsilon \]
where $\varepsilon = b^{j_1 - n - 1} s_{n - j_1 + 1} + \ldots + b^{-n} s_n$. Clearly, $\varepsilon b^{n - j_1} \leq b$.

So there must be a permutation $\sigma$ such that,
\[ 1 - z_2 = \sigma(h) b^{j_1 - n + 1} - b^{j_1 - n} s_{n - j_1} - \varepsilon. \]
Hence
\begin{align*}
& \sum_{z \in \Rn \cap I_{jm}} \left[ (bm_1 + k_1 + 1 - b^{j_1 + 1} z_1) \e^{\frac{2\pi \im}{b} k_1 l_1} + \sum_{r_1 = k_1 + 1}^{b-1} \e^{\frac{2\pi \im}{b} r_1 l_1} \right] (1 - z_2)\\
& = \sum_{k_1 = 0}^{b-1} \sum_{h=1}^{b^{n - j_1 - 1}} \left[hb^{j_1 - n + 1} \e^{\frac{2\pi \im}{b} k_1 l_1} + \sum_{r_1 = k_1 + 1}^{b-1} \e^{\frac{2\pi \im}{b} r_1 l_1} \right] (\sigma(h) b^{j_1 - n + 1} - b^{j_1 - n} s_{n - j_1} - \varepsilon)\\
\end{align*}

We analyze the summands separately after having expanded the product and changed the order of summation. We have
\begin{align*}
 & \sum_{h=1}^{b^{n - j_1 - 1}} h \sigma(h) b^{j_1 - n + 1} b^{j_1 - n + 1} \sum_{k_1 = 0}^{b-1} \e^{\frac{2\pi \im}{b} k_1 l_1} = 0, \\
-& \sum_{h=1}^{b^{n - j_1 - 1}} h b^{j_1 - n + 1} b^{j_1 - n} \sum_{k_1 = 0}^{b-1} s_{n - j_1} \e^{\frac{2\pi \im}{b} k_1 l_1} \\
 & = -\frac{1}{2} b^{n - j_1 - 1} (b^{n - j_1 - 1} + 1) b^{2j_1 - 2n + 1} \frac{\pm b}{\e^{\frac{2\pi \im}{b} l_1} - 1} = \mp \frac{b^{j_1 - n + 1} + 1}{2(\e^{\frac{2\pi \im}{b} l_1} - 1)}, \\
\intertext{using \eqref{sign_term},} \\
-& \varepsilon \sum_{h=1}^{b^{n - j_1 - 1}} h b^{j_1 - n + 1} \sum_{k_1 = 0}^{b-1} \e^{\frac{2\pi \im}{b} k_1 l_1} = 0, \\
 & \sum_{h=1}^{b^{n - j_1 - 1}} \sigma(h) b^{j_1 - n + 1} \sum_{k_1 = 0}^{b-1} \sum_{r_1 = k_1 + 1}^{b-1} \e^{\frac{2\pi \im}{b} r_1 l_1} \\
 & = \frac{1}{2} b^{n - j_1 - 1} (b^{n - j_1 - 1} + 1) b^{j_1 - n + 1} \frac{b}{\e^{\frac{2\pi \im}{b} l_1} - 1} = \frac{b^{n - j_1} + b}{2(\e^{\frac{2\pi \im}{b} l_1} - 1)}, \\
-& \varepsilon \sum_{h=1}^{b^{n - j_1 - 1}} \sum_{k_1 = 0}^{b-1} \sum_{r_1 = k_1 + 1}^{b-1} \e^{\frac{2\pi \im}{b} r_1 l_1} = -\varepsilon b^{n - j_1 - 1}\frac{b}{\e^{\frac{2\pi \im}{b} l_1} - 1} = \frac{-\varepsilon b^{n - j_1}}{\e^{\frac{2\pi \im}{b} l_1} - 1}, \\
\intertext{and}
-& \sum_{h=1}^{b^{n - j_1 - 1}} b^{j_1 - n} \sum_{k_1 = 0}^{b-1} s_{n - j_1} \sum_{r_1 = k_1 + 1}^{b-1} \e^{\frac{2\pi \im}{b} r_1 l_1}. \\
\end{align*}

For the last term we use the fact that $s_{n - j_1}$ is either $k_1$ or $b - 1 - k_1$. In the first case we have
\begin{align*}
& \sum_{k_1 = 0}^{b-1} k_1 \sum_{r_1 = k_1 + 1}^{b-1} \e^{\frac{2\pi \im}{b} r_1 l_1}\\
& = \sum_{k_1 = 1}^{b-2} k_1 \frac{1 - \e^{\frac{2\pi \im}{b} (k_1 + 1) l_1}}{\e^{\frac{2\pi \im}{b} l_1} - 1}\\
& = \frac{1}{\e^{\frac{2\pi \im}{b} l_1} - 1} \left( \frac{1}{2} (b-2) (b-1) - \sum_{k_1 = 2}^{b-1} (k_1 - 1)\e^{\frac{2\pi \im}{b} k_1 l_1} \right)\\
& = \frac{1}{\e^{\frac{2\pi \im}{b} l_1} - 1} \left( \frac{1}{2} (b-2) (b-1) - \left( \frac{b}{\e^{\frac{2\pi \im}{b} l_1} - 1} - \e^{\frac{2\pi \im}{b} l_1} \right) + \left( 0 - 1 - \e^{\frac{2\pi \im}{b} l_1} \right) \right)\\
& = \frac{1}{\e^{\frac{2\pi \im}{b} l_1} - 1} \left( \frac{b^2 - 3b}{2} - \frac{b}{\e^{\frac{2\pi \im}{b} l_1} - 1} \right)\\
& = \frac{(b-3) b}{2(\e^{\frac{2\pi \im}{b} l_1} - 1)} - \frac{b}{(\e^{\frac{2\pi \im}{b} l_1} - 1)^2}.
\end{align*}
In the other case we have
\begin{align*}
& \sum_{k_1 = 0}^{b-1} (b - 1 - k_1) \sum_{r_1 = k_1 + 1}^{b-1} \e^{\frac{2\pi \im}{b} r_1 l_1}\\
& = (b-1) \sum_{k_1 = 0}^{b-1} \sum_{r_1 = k_1 + 1}^{b-1}\e^{\frac{2\pi \im}{b} r_1 l_1} - \sum_{k_1 = 0}^{b-1} k_1 \sum_{r_1 = k_1 + 1}^{b-1} \e^{\frac{2\pi \im}{b} r_1 l_1}\\
& = \frac{(b-1)b}{(\e^{\frac{2\pi \im}{b} l_1} - 1)} - \frac{(b-3)b}{2(\e^{\frac{2\pi \im}{b} l_1} - 1)} + \frac{b}{(\e^{\frac{2\pi \im}{b} l_1} - 1)^2}\\
& = \frac{b(b+1)}{2(\e^{\frac{2\pi \im}{b} l_1} - 1)} + \frac{b}{(\e^{\frac{2\pi \im}{b} l_1} - 1)^2}.
\end{align*}
So the last term is either
\[ \frac{1}{(\e^{\frac{2\pi \im}{b} l_1} - 1)^2} - \frac{b-3}{2(\e^{\frac{2\pi \im}{b} l_1} - 1)} \]
or
\[ -\frac{b+1}{2(\e^{\frac{2\pi \im}{b} l_1} - 1)} - \frac{1}{(\e^{\frac{2\pi \im}{b} l_1} - 1)^2}. \]
Now combining the results we get in the case $s_{n - j_1} = k_1$
\begin{align*}
& \sum_{z \in \Rn \cap I_{jm}} \left[ (bm_1 + k_1 + 1 - b^{j_1 + 1} z_1) \e^{\frac{2\pi \im}{b} k_1 l_1} + \sum_{r_1 = k_1 + 1}^{b-1} \e^{\frac{2\pi \im}{b} r_1 l_1} \right] (1 - z_2)\\
& = \frac{b^{n-j_1}(1 - 2\varepsilon) - b^{j_1-n+1}}{2(\e^{\frac{2\pi \im}{b} l_1} - 1)} + \frac{\e^{\frac{2\pi \im}{b} l_1}}{(\e^{\frac{2\pi \im}{b} l_1} - 1)^2}
\end{align*}
while in the case $s_{n - j_1} = b - 1 - k_1$
\begin{align*}
& \sum_{z \in \Rn \cap I_{jm}} \left[ (bm_1 + k_1 + 1 - b^{j_1 + 1} z_1) \e^{\frac{2\pi \im}{b} k_1 l_1} + \sum_{r_1 = k_1 + 1}^{b-1} \e^{\frac{2\pi \im}{b} r_1 l_1} \right] (1 - z_2)\\
& = \frac{b^{n-j_1}(1 - 2\varepsilon) + b^{j_1-n+1}}{2(\e^{\frac{2\pi \im}{b} l_1} - 1)} - \frac{1}{(\e^{\frac{2\pi \im}{b} l_1} - 1)^2}
\end{align*}
as stated by the lemma.

\end{proof}

We now summarize the results of this subsection.
\pagebreak

\begin{prp} \label{prp_haar_coeff}

Let $\Rn$ be a generalized Hammersley type point set and let $\mu_{jml}$ be the $b$-adic Haar coefficient of its discrepancy function for $j \in \N_{-1}^2, \, m \in \Dd_j$ and $l \in \B_j$. Then
\begin{enumerate}[(i)]
	\item if $j \in \N_0^2$ and $j_1 + j_2 < n-1$ then
\[ \left| \mu_{jml} \right| = \frac{b^{-2n}}{\left|\e^{\frac{2\pi \im}{b} l_1} - 1\right|\left|\e^{\frac{2\pi \im}{b} l_2} - 1\right|}, \] \label{prp_1}
	\item if $j \in \N_0^2$, $j_1 + j_2 \geq n-1$ and $j_1,j_2 \leq n$ then $\left| \mu_{jml} \right| \leq c b^{-n - j_1 - j_2}$ for some constant $c > 0$ and
\[ \left| \mu_{jml} \right| = \frac{b^{-2j_1 - 2j_2 - 2}}{\left|\e^{\frac{2\pi \im}{b} l_1} - 1\right|\left|\e^{\frac{2\pi \im}{b} l_2} - 1\right|} \]
for all but $b^n$ coefficients $\mu_{jml}$, \label{prp_2}
	\item if $j \in \N_0^2$ and $j_1 \geq n$ or $j_2 \geq n$ then
\[ \left| \mu_{jml} \right| = \frac{b^{-2j_1 - 2j_2 - 2}}{\left|\e^{\frac{2\pi \im}{b} l_1} - 1\right|\left|\e^{\frac{2\pi \im}{b} l_2} - 1\right|}, \] \label{prp_3}
	\item if $j = (j_1,-1)$ with $j_1 \in \N_0$ and $j_1 < n$ then $\left| \mu_{jml} \right| \leq c \, b^{-n-j_1}$ for some constant $c > 0$ (independent of $j_1$ and $n$), \label{prp_4}
	\item if $j = (-1,j_2)$ with $j_2 \in \N_0$ and $j_2 < n$ then $\left| \mu_{jml} \right| \leq c \, b^{-n-j_2}$ for some constant $c > 0$ (independent of $j_2$ and $n$), \label{prp_4a}
	\item if $j = (j_1,-1)$ with $j_1 \in \N_0$ and $j_1 \geq n$ then
\[ \left| \mu_{jml} \right| = \frac{1}{2}\frac{b^{-2j_1 - 1}}{\left|\e^{\frac{2\pi \im}{b} l_1} - 1 \right|}, \] \label{prp_5}
  \item if $j = (-1,j_2)$ with $j_2 \in \N_0$ and $j_2 \geq n$ then
\[ \left| \mu_{jml} \right| = \frac{1}{2}\frac{b^{-2j_2 - 1}}{\left|\e^{\frac{2\pi \im}{b} l_2} - 1 \right|}, \] \label{prp_5a}
	\item $\left| \mu_{(-1,-1),(0,0),(1,1)} \right| = | \frac{1}{4} b^{-2n} + \left( \frac{1}{2} + (2a_n - n) \frac{b - b^{-1}}{12} \right) b^{-n} |.$
\end{enumerate}

\end{prp}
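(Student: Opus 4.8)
The plan is to exploit linearity of the Haar coefficients together with the two hard computations already done in Lemmas \ref{lem_coeff_calc} and \ref{lem_middle}. Writing the discrepancy function as $D_{\Rn}=b^{-n}\sum_{z\in\Rn}\chi_{[0,\cdot)}(z)-x_1x_2$ (recall $N=b^n$), we get
\[ \mu_{jml}=b^{-n}\sum_{z\in\Rn}\mu_{jml}\bigl(\chi_{[0,\cdot)}(z)\bigr)-\mu_{jml}(x_1x_2), \]
where $\mu_{jml}(\chi_{[0,\cdot)}(z))$ is the Haar coefficient of the function $x\mapsto\chi_{[0,x)}(z)$. The second summand is given explicitly by Lemma \ref{lem_haar_coeff_besov_x_ham}. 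For the first, Lemma \ref{lem_haar_coeff_besov_indicator_ham} tells us that only those $z$ lying in the interior of $I_{jm}$ contribute, and for those the coefficient is $b^{-j_1-j_2-2}$ (resp.\ $b^{-j_1-1}$ when $j_2=-1$) times a product of "bracket'' expressions; moreover, for the $z$ lying on the lower faces of $I_{jm}$ one has $k_i=0$ and $bm_i+k_i+1-b^{j_i+1}z_i=1$, so the corresponding bracket equals $1+\sum_{r=1}^{b-1}\e^{\frac{2\pi\im}{b}rl_i}=0$; hence the bracket sums may be taken over all of $\Rn\cap I_{jm}$, which is exactly the shape computed in Lemmas \ref{lem_coeff_calc} and \ref{lem_middle}.

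\textbf{The cases where no point meets the interior.} First I would dispose of (iii), (vi), (vii) and of the point-free intervals in (ii): there $\mu_{jml}=-\mu_{jml}(x_1x_2)$ and the value follows from Lemma \ref{lem_haar_coeff_besov_x_ham}(i)--(iii). For (iii), (vi), (vii) this is because every point of $\Rn$ has each coordinate equal to a multiple of $b^{-n}$, and no such multiple lies in the open interval $(m_ib^{-j_i},(m_i+1)b^{-j_i})$ once $j_i\ge n$ (that would require an integer strictly between $m_ib^{n-j_i}$ and $(m_i+1)b^{n-j_i}$, impossible since $b^{n-j_i}\le1$); so the indicator coefficients all vanish. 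In (iii) this gives $|\mu_{jml}|=b^{-2j_1-2j_2-2}/(|\e^{\frac{2\pi\im}{b}l_1}-1||\e^{\frac{2\pi\im}{b}l_2}-1|)$, and similarly for (vi), (vii). For (ii): for fixed $j$ at most $b^n$ of the intervals $I_{jm}$ contain a point of $\Rn$, so for all but that many indices the interval is point-free, the displayed formula holds, and moreover $b^{-2j_1-2j_2-2}\le b^{-n-1-j_1-j_2}\le b^{-n-j_1-j_2}$ because $j_1+j_2\ge n-1$, which already yields the upper bound in this subcase. The value in (viii) is precisely Proposition \ref{prp_minus1} after rewriting $\tfrac{b^2-1}{12}b^{-n-1}=\tfrac{b-b^{-1}}{12}b^{-n}$.

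\textbf{The cases using the hard lemmas.} For the intervals that do meet $\Rn$, I would insert the bracket sums into the decomposition and watch the leading terms cancel. In (i) ($j\in\N_0^2$, $j_1+j_2<n-1$), Lemma \ref{lem_coeff_calc} gives $b^{-n}\sum_z\mu_{jml}(\chi_{[0,\cdot)}(z))=(b^{-2j_1-2j_2-2}\pm b^{-2n})/((\e^{\frac{2\pi\im}{b}l_1}-1)(\e^{\frac{2\pi\im}{b}l_2}-1))$; subtracting $\mu_{jml}(x_1x_2)$ from Lemma \ref{lem_haar_coeff_besov_x_ham}(i) the $b^{-2j_1-2j_2-2}$ terms cancel exactly and only $\pm b^{-2n}/((\e^{\frac{2\pi\im}{b}l_1}-1)(\e^{\frac{2\pi\im}{b}l_2}-1))$ survives, i.e.\ (i). In (iv)/(v) ($j=(j_1,-1)$ with $j_1<n$, and symmetrically), Lemma \ref{lem_middle} together with Lemma \ref{lem_haar_coeff_besov_x_ham}(ii)/(iii) produces the same cancellation of the term $b^{-2j_1-1}/(2(\e^{\frac{2\pi\im}{b}l_1}-1))$, leaving only $-\varepsilon b^{-2j_1-1}/(\e^{\frac{2\pi\im}{b}l_1}-1)$, $\mp b^{-2n}/(2(\e^{\frac{2\pi\im}{b}l_1}-1))$ and $b^{-n-j_1-1}w_{j_1}/(\e^{\frac{2\pi\im}{b}l_1}-1)^2$; each of these is $\le cb^{-n-j_1}$, using $\varepsilon b^{n-j_1}\le b$, $b^{-2n}\le b^{-n-j_1}$, $|w_{j_1}|\le1$, and the fact that $|\e^{\frac{2\pi\im}{b}l}-1|$ is bounded below, so $c$ depends on $b$ only. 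Finally, in the remaining subcase of (ii) (both $j_i<n$, interval meets $\Rn$) I would use only crude bounds: each bracket has modulus $\le b$, so $|\mu_{jml}(\chi_{[0,\cdot)}(z))|\le b^{-j_1-j_2}$, the interval $I_{jm}$ contains at most $\max(1,b^{n-j_1-j_2})\le b$ points of $\Rn$ since $j_1+j_2\ge n-1$, and $|\mu_{jml}(x_1x_2)|\le cb^{-2j_1-2j_2-2}\le cb^{-n-j_1-j_2}$; together these give $|\mu_{jml}|\le cb^{-n-j_1-j_2}$.

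\textbf{Main obstacle.} The genuinely hard analytic content has already been isolated in Lemmas \ref{lem_coeff_calc} and \ref{lem_middle} (the "biggest hurdle'' of the subsection). What still requires care in the proof of the proposition is twofold: (a) verifying that the leading contributions of the indicator part and of $x_1x_2$ cancel to the precision stated in (i) and (iv)/(v) --- this cancellation is exactly the mechanism that produces the $b^{-2n}$ and $b^{-n-j_i}$ gains, and one must track the signs and powers of $b$ exactly through the substitutions; and (b) the bookkeeping in (ii): checking that "point-free'' accounts for all but $O(b^n)$ of the $m$, that the point count in an interval is $\le b$ and the crude bracket bound $\le b$ suffice, and that all the constants produced are uniform in $j$, $m$, $l$ and $n$.
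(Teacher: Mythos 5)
Your proposal is correct and follows essentially the same route as the paper: the same linear decomposition into the indicator part and the $x_1x_2$ part, the same dispatch of the point-free cases via Lemmas \ref{lem_haar_coeff_besov_x_ham} and \ref{lem_haar_coeff_besov_indicator_ham}, the same cancellation of leading terms via Lemmas \ref{lem_coeff_calc} and \ref{lem_middle} for (i) and (iv)/(v), and Proposition \ref{prp_minus1} for the last part. Your treatment of the first bound in (ii) is marginally cruder than the paper's (which invokes the estimates \eqref{brackets_1}--\eqref{brackets_2}) but your point count of at most $b$ points per interval when $j_1+j_2\geq n-1$ is in fact the more accurate statement, and it yields the same bound with a constant depending only on $b$.
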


\begin{proof}

Let $j \in \N_{-1}^2$ such that, $j_1 \geq n$ or $j_2 \geq n$. Then there is no point of $\Rn$ which is contained in the interior of the $b$-adic interval $I_{jm}$. Thereby \eqref{prp_3}, \eqref{prp_5} and \eqref{prp_5a} follow from Lemma \ref{lem_haar_coeff_besov_x_ham} and Lemma \ref{lem_haar_coeff_besov_indicator_ham}.

The set $\Rn$ contains $N = b^n$ points and, for fixed $j \in \N_{-1}^2$, the interiors of the $b$-adic intervals $I_{jm}$ are mutually disjoint. Therefore, there are no more than $b^n$ $b$-adic intervals which contain a point of $\Rn$. This gives us the second part of \eqref{prp_2}. The first part of \eqref{prp_2} follows from Lemma \ref{lem_haar_coeff_besov_x_ham} and Lemma \ref{lem_haar_coeff_besov_indicator_ham} because the remaining intervals contain exactly one point of $\Rn$ and the terms in the brackets in Lemma \ref{lem_haar_coeff_besov_indicator_ham} can be estimated because of \eqref{brackets_1} and \eqref{brackets_2}.

The part \eqref{prp_1} follows from Lemmas \ref{lem_haar_coeff_besov_x_ham}, \ref{lem_haar_coeff_besov_indicator_ham} and \ref{lem_coeff_calc}.

The last part is actually Proposition \ref{prp_minus1}.

Finally \eqref{prp_4} (and analogously \eqref{prp_4a}) follows from Lemma \ref{lem_middle} combined with Lemma \ref{lem_haar_coeff_besov_x_ham} and Lemma \ref{lem_haar_coeff_besov_indicator_ham}. We get
\[ \left| \mu_{jml} \right| = \left| \frac{b^{-n-j_1-1}(w_{j_1} - \varepsilon \, b^{n-j_1} \, (\e^{\frac{2\pi \im}{b} l_1} - 1))}{(\e^{\frac{2\pi \im}{b} l_1} - 1)^2} \pm \frac{b^{-2n}}{2(\e^{\frac{2\pi \im}{b} l_1} - 1)} \right| \]
where $w_{j_1}$ is either $\e^{\frac{2\pi \im}{b} l_1}$ or $-1$. Clearly,
\[ \left| w_{j_1} - \varepsilon \, b^{n-j_1} \, (\e^{\frac{2\pi \im}{b} l_1} - 1) \right| \leq c_1. \]
for some constant $c_1 > 0$ (independent of $j_1$ and $n$) since $\varepsilon b^{n - j_1} \leq b$. Hence, (using  $j_1 < n$)
\[ \left| \mu_{jml} \right| \leq c_2 \, b^{-n-j_1}. \]
\end{proof}

We are now ready to state and prove the main result of this subsection.

\begin{thm} \label{thm_hammersley_disc}

Let $1 \leq p, q \leq \infty$ and $0 \leq r < \frac{1}{p}$. Then there exists a constant $C > 0$ such that, for any $n \in \N$ and any generalized Hammersley type point set $\Rn$ with $a_n$ satisfying $|2a_n - n| \leq c_0$ for some constant $c_0 > 0$ (independent of $n$), we have
\[ \left\| D_{\Rn} | S_{pq}^r B(\Q^2) \right\| \leq C \, b^{n(r-1)} \, n^{\frac{1}{q}}. \]

\end{thm}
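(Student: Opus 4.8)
The plan is to feed the $b$-adic Haar expansion of $D_{\Rn}$ --- whose coefficients are completely described in Proposition \ref{prp_haar_coeff} --- into the equivalent quasi-norm on $S_{pq}^r B(\Q^2)$ provided by Theorem \ref{thm_besov_char}. The hypothesis $0 \le r < \frac1p$ forces $1 \le p < \infty$ and, as long as $p > 1$ or $r > 0$, yields $\frac1p - 1 < r < \min(\frac1p, 1)$, so that theorem gives
\[ \left\| D_{\Rn} | S_{pq}^r B(\Q^2) \right\| \asymp \left( \sum_{j \in \N_{-1}^2} b^{|j|(r - \frac1p + 1)q} \Big( \sum_{m \in \Dd_j,\, l \in \B_j} |\mu_{jml}|^p \Big)^{\frac qp} \right)^{\frac1q} \]
(with the usual modification for $q=\infty$), $\mu_{jml}$ being the Haar coefficients of $D_{\Rn}$; the endpoint $p=1$, $r=0$, not covered by Theorem \ref{thm_besov_char}, would be obtained at the end from the case $p=2$, $r=0$ via the standard embedding $S_{2q}^0 B(\Q^2) \hookrightarrow S_{1q}^0 B(\Q^2)$. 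I would then split the $j$-sum into the eight regimes of Proposition \ref{prp_haar_coeff} and bound the contribution of each by $C\,n\,b^{n(r-1)q}$; extracting the $q$-th root then gives the claimed $C\,b^{n(r-1)}n^{1/q}$.

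The routine part is the bookkeeping. For $j \in \N_0^2$ the $l$-sum always produces the finite constant $c_{b,p} = \sum_{l=1}^{b-1}|\e^{\frac{2\pi\im}{b}l}-1|^{-p}$ (equal to $(b^2-1)/12$ when $p=2$ by Proposition \ref{prp_diagonals}, but any finite value suffices since $b$ is fixed), and $\#\Dd_j = b^{|j|}$. Thus in regime (i), where $|\mu_{jml}| = b^{-2n}/(|\e^{\frac{2\pi\im}{b}l_1}-1|\,|\e^{\frac{2\pi\im}{b}l_2}-1|)$, the $j$-term is $\asymp b^{|j|(r+1)q-2nq}$, which (since $r+1>0$) grows in $|j|$, so summing over $|j|\le n-2$ --- with $|j|+1$ lattice points at each level --- yields $\asymp n\,b^{n(r-1)q}$; in regime (iii), $|\mu_{jml}| = b^{-2|j|-2}/(\cdots)$ gives a $j$-term $\asymp b^{|j|(r-1)q}$ whose sum over $j_1\ge n$ or $j_2\ge n$ is a convergent geometric tail ($r-1<0$), hence $\asymp b^{n(r-1)q}$; the boundary slabs (iv),(v) use $|\mu_{jml}|\le c\,b^{-n-j_1}$ to produce a $j$-term $\asymp b^{j_1 rq - nq}$ maximal at $j_1=n-1$ (here $r\ge 0$ makes $b^{-nq}\le b^{n(r-1)q}$), and (vi),(vii) use $|\mu_{jml}|=\tfrac12 b^{-2j_1-1}/|\cdots|$ for a convergent tail $\asymp b^{n(r-1)q}$; finally regime (viii) is the lone coefficient $\mu_{(-1,-1),(0,0),(1,1)}$, which by Proposition \ref{prp_minus1} and the hypothesis $|2a_n-n|\le c_0$ has modulus $\le C\,b^{-n}$, contributing $\le C^q b^{-nq}\le C^q b^{n(r-1)q}$. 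It is exactly here that the constraint on $a_n$ is indispensable: without it this term could reach order $n\,b^{-n}$, which is fatal when $r=0$.

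The genuinely delicate regime --- and the main obstacle --- is (ii), the diagonal band $n-1\le j_1+j_2$, $j_1,j_2\le n$. One must not bound every coefficient there by the crude $|\mu_{jml}|\le c\,b^{-n-|j|}$: after the outer weights are applied the resulting $j$-term $b^{|j|rq-nq}$ increases in $|j|$ and over-counts once $r>0$. Instead, for each such $j$ I would separate the at most $b^n$ intervals $I_{jm}$ that meet $\Rn$ from the remaining ones, applying $|\mu_{jml}| = b^{-2|j|-2}/(\cdots)$ to the latter (the generic coefficients) and $|\mu_{jml}|\le c\,b^{-n-|j|}$ to the former (Proposition \ref{prp_haar_coeff}(ii)). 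The generic part then contributes a $j$-term $\asymp b^{|j|(r-1)q-2q}$, decreasing in $|j|$ and so dominated by the $\approx n$ lattice points at $|j|=n-1$, giving $\asymp n\,b^{n(r-1)q}$; the exceptional part contributes $\lesssim b^{nq/p}\,b^{|j|(r-\frac1p)q}\,b^{-nq}$, again decreasing in $|j|$ (as $r-\frac1p<0$), dominated by $|j|=n-1$, and the exponents collapse to $n\,b^{n(r-1)q}$ once more. Adding the eight bounds and taking the $q$-th root finishes the proof; the case $q=\infty$ is identical with suprema in place of sums (and $n^{1/q}$ disappearing). Apart from isolating regime (ii), the whole argument rests on the four inequalities $r+1>0$, $r-1<0$, $r-\frac1p<0$, $r\ge 0$ together with the elementary estimate $\sum_{K\ge M}q^{-K}\asymp q^{-M}$ for fixed $q>1$.
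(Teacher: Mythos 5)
Your proposal follows essentially the same route as the paper's proof: the equivalent Haar quasi-norm from Theorem \ref{thm_besov_char}, a Minkowski split into the regimes of Proposition \ref{prp_haar_coeff}, and in particular the identical two-part treatment of the diagonal band $n-1 \le j_1+j_2$, $j_1,j_2\le n$ (generic coefficients versus the at most $b^n$ occupied intervals). Your extra care at the endpoint $p=1$, $r=0$ --- which lies outside the hypotheses of Theorem \ref{thm_besov_char} and which the paper passes over silently --- is a legitimate refinement, and the embedding $S^0_{2q}B(\Q^2) \hookrightarrow S^0_{1q}B(\Q^2)$ does close that gap.
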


\begin{proof}

Let $\Rn$ be a generalized Hammersley type point set with $a_n$ satisfying $|2a_n - n| \leq c_0$ for some constant $c_0 \geq 0$. Let $\mu_{jml}$ be the $b$-adic Haar coefficients of the discrepancy function of $\Rn$. Theorem \ref{thm_besov_char} gave us an equivalent quasi-norm on $S_{pq}^r B(\Q^2)$ so that the proof of the inequality
\[ \left( \sum_{j \in \N_{-1}^2} b^{(j_1 + j_2)(r - \frac{1}{p} + 1) q} \left( \sum_{m \in \Dd_j, \, l \in \B_j} |\mu_{jml}|^p \right)^{\frac{q}{p}} \right)^{\frac{1}{q}} \leq C \, b^{n (r-1)} n^{\frac{1}{q}} \]
for some constant $C > 0$ establishes the proof of the theorem (for better readability we give a slightly different form with $j_1 + j_2$ in the exponent of $b$ which can be estimated easily).

We use different parts of Proposition \ref{prp_haar_coeff} after having split the sum by Minkowski's inequality. We have
{\allowdisplaybreaks\begin{align*}
& \left( \sum_{j\in\N_0^2; \, j_1 + j_2 < n-1} b^{(j_1 + j_2)(r - \frac{1}{p} + 1) q} \left( \sum_{m \in \Dd_j, \, l \in \B_j} | \mu_{jml}|^p \right)^{\frac{q}{p}} \right)^{\frac{1}{q}}\\
& \leq c_1 \left( \sum_{j\in\N_0^2; \, j_1 + j_2 < n-1} b^{(j_1 + j_2)(r - \frac{1}{p} + 1) q} \left( \sum_{m \in \Dd_j} b^{-2np} \right)^{\frac{q}{p}} \right)^{\frac{1}{q}}\\
& = c_1 \left( \sum_{j\in\N_0^2; \, j_1 + j_2 < n-1} b^{\left[ (j_1 + j_2)(r + 1) - 2n \right] q} \right)^{\frac{1}{q}}\\
& = c_1 \left( \sum_{\lambda = 0}^{n-2} b^{\left[ \lambda (r+1) - 2n \right] q} (\lambda + 1) \right)^{\frac{1}{q}}\\
& \leq c_1 \, n^{\frac{1}{q}} \left( \sum_{\lambda = 0}^{n-2} b^{\left[ \lambda (r+1) - 2n \right] q} \right)^{\frac{1}{q}}\\
& \leq c_2 \, n^{\frac{1}{q}} b^{n (r-1)}
\end{align*}}
from \eqref{prp_1} of Proposition \ref{prp_haar_coeff}. From \eqref{prp_2} of the same proposition we have (using the fact that $\frac{1}{p} - r > 0$)
{\allowdisplaybreaks\begin{align*}
& \left( \sum_{0 \leq j_1, j_2 \leq n; \, j_1 + j_2 \geq n-1} b^{(j_1 + j_2)(r - \frac{1}{p} + 1)q} \left( \sum_{m \in \Dd_j, \, l \in \B_j} | \mu_{jml}|^p \right)^{\frac{q}{p}} \right)^{\frac{1}{q}}\\
& \leq c_3 \left( \sum_{0 \leq j_1, j_2 \leq n; \, j_1 + j_2 \geq n-1} b^{(j_1 + j_2)(r - \frac{1}{p} + 1) q} \, b^{n \frac{q}{p}} \, b^{(-n - j_1 - j_2) q} \right)^{\frac{1}{q}}\\
& \quad + c_4 \left( \sum_{0 \leq j_1, j_2 \leq n; \, j_1 + j_2 \geq n-1} b^{(j_1 + j_2)(r - \frac{1}{p} + 1)q} \, b^{(j_1 + j_2)\frac{q}{p}} \, b^{(-2j_1 - 2j_2) q} \right)^{\frac{1}{q}}\\
& = c_3 \left( \sum_{0 \leq j_1, j_2 \leq n; \, j_1 + j_2 \geq n-1} b^{\left[ (j_1 + j_2) (r - \frac{1}{p}) + \frac{n}{p} - n \right] q} \right)^{\frac{1}{q}}\\
& \quad + c_4 \left( \sum_{0 \leq j_1, j_2 \leq n; \, j_1 + j_2 \geq n-1} b^{(j_1 + j_2)(r - 1)q} \right)^{\frac{1}{q}}\\
& = c_3 \left( \sum_{\lambda = n-1}^{2n} (2n - \lambda + 1) b^{\left[ \lambda(r - \frac{1}{p}) + \frac{n}{p} - n \right] q} \right)^{\frac{1}{q}}\\
& \quad + c_4 \left( \sum_{\lambda = n-1}^{2n} (2n - \lambda + 1) b^{\lambda (r-1) q} \right)^{\frac{1}{q}}\\
& = c_3 \, b^{\frac{n}{p} - n} \left( \sum_{\lambda = 1}^{n + 2} \lambda b^{\left[ (2n + 1 - \lambda)(r-\frac{1}{p}) \right] q} \right)^{\frac{1}{q}} + c_4 \left( \sum_{\lambda = 1}^{n+2} \lambda b^{(2n + 1 - \lambda) (r-1) q} \right)^{\frac{1}{q}}\\
& \leq c_5 \, b^{n(r-1) + n(r-\frac{1}{p})} \left( \sum_{\lambda = 1}^{n + 2} \lambda b^{\lambda (\frac{1}{p} - r) q} \right)^{\frac{1}{q}} + c_6 \, b^{2n (r-1)} \left( \sum_{\lambda = 1}^{n+2} \lambda b^{\lambda (1-r) q} \right)^{\frac{1}{q}}\\
& \leq c_5 \, b^{n(r-1) + n(r-\frac{1}{p})} (n + 2)^{\frac{1}{q}} \, b^{(n+3) (\frac{1}{p} - r)} + c_6 \, b^{2n (r-1)} (n + 2)^{\frac{1}{q}} \, b^{(n+3) (1 - r)}\\
& \leq c_7 \, b^{n(r-1)} \, n^{\frac{1}{q}}.
\end{align*}}
Part \eqref{prp_3} of Proposition \ref{prp_haar_coeff} gives us (using the fact that $r - 1 \leq 0$)
\begin{align*}
& \left( \sum_{j \in \N_0^2; \, j_1 \geq n} b^{(j_1 + j_2)(r - \frac{1}{p} + 1) q} \left( \sum_{m \in \Dd_j, \, l \in \B_j} | \mu_{jml}|^p \right)^{\frac{q}{p}} \right)^{\frac{1}{q}}\\
& \leq c_8 \left( \sum_{j \in \N_0^2; \, j_1 \geq n} b^{(j_1 + j_2)(r - \frac{1}{p} + 1) q} \, b^{(-2j_1 - 2j_2)q} \, b^{(j_1 + j_2)\frac{q}{p}} \right)^{\frac{1}{q}}\\
& = c_8 \left( \sum_{\lambda = n}^\infty (\lambda + 1) b^{\lambda(r - 1) q} \right)^{\frac{1}{q}}\\
& \leq c_9 \, n^{\frac{1}{q}} b^{n(r-1)}
\end{align*}
and an analogous result for those $j \in \N_0^2$ with $j_2 \geq n$. From \eqref{prp_4} of Proposition \ref{prp_haar_coeff} we conclude
\begin{align*}
& \left( \sum_{0 \leq j_1 < n; \, j_2 = -1} b^{(j_1 + j_2)(r - \frac{1}{p} + 1) q} \left( \sum_{m \in \Dd_j, \, l \in \B_j} | \mu_{jml}|^p \right)^{\frac{q}{p}} \right)^{\frac{1}{q}}\\
& \leq c_{10} \left( \sum_{0 \leq j_1 < n; \, j_2 = -1} b^{(j_1 + j_2)(r - \frac{1}{p} + 1) q} \, b^{(j_1 + j_2) \frac{q}{p}} \, b^{(-n - j_1) q} \right)^{\frac{1}{q}}\\
& = c_{11} \, b^{-n} \left( \sum_{j_1 = 0}^{n-1} b^{j_1 q r} \right)^{\frac{1}{q}}\\
& \leq c_{11} \, b^{-n} b^{nr} \\
& = c_{11} \, b^{n(r-1)} \\
& \leq c_{11} \, b^{n(r-1)} n^{\frac{1}{q}}.
\end{align*}
Analogously one estimates the sum for those $j \in \N_{-1}^2$ with $j_1 = -1$ and $0 \leq j_2 < n$. From \eqref{prp_5} of Proposition \ref{prp_haar_coeff} we have
\begin{align*}
& \left( \sum_{n \leq j_1; \, j_2 = -1} b^{(j_1 + j_2)(r - \frac{1}{p} + 1) q} \left( \sum_{m \in \Dd_j, \, l \in \B_j} | \mu_{jml}|^p \right)^{\frac{q}{p}} \right)^{\frac{1}{q}}\\
& \leq c_{12} \left( \sum_{n \leq j_1; \, j_2 = -1} b^{(j_1 + j_2)(r - \frac{1}{p} + 1) q} \, b^{(j_1 + j_2) \frac{q}{p}} \, b^{-2j_1 q} \right)^{\frac{1}{q}}\\
& = c_{13} \left( \sum_{j_1 = n}^\infty b^{j_1 (r-1) q} \right)^{\frac{1}{q}}\\
& \leq c_{13} \, b^{n(r-1)} \\
& \leq c_{13} \, b^{n(r-1)} n^{\frac{1}{q}}
\end{align*}
again with analogous results for the sum with those $j \in \N_{-1}^2$ where $j_1 = -1$ and $n \leq j_2$. In the cases where $p = \infty$ or $ = \infty$ the calculations have to be modified in the usual way. Finally, the last part of Proposition \ref{prp_haar_coeff} gives us (using $|2a_n - n| \leq c_0$)
\[ |\mu_{(-1,-1),(0,0),(1,1)}| \leq c_{14} b^{-n} \leq c_{14} b^{n(r-1)} n^{\frac{1}{q}}. \]
And the theorem is proved.

\end{proof}

\begin{rem}

We already have mentioned that in \cite{Hi10} Hinrichs used point sets with $a_n = \left\lfloor \frac{n}{2} \right\rfloor$. So a possible value for $c_0$ in that case would be $1$.

\end{rem}

Analogously to the last section we want to take advantage of the embeddings and get results for the Triebel-Lizorkin and the Sobolev spaces.

\begin{cor} \label{cor_f}

Let $1 \leq p, q < \infty$ and $0 \leq r < \frac{1}{\max(p,q)}$. Then there exists a constant $C > 0$ such that, for any $n \in \N$ and any generalized Hammersley type point set $\Rn$ with $a_n$ satisfying $|2a_n - n| \leq c_0$ for some constant $c_0 > 0$ (independent of $n$), we have
\[ \left\| D_{\Rn} | S_{pq}^r F(\Q^2) \right\| \leq C \, b^{n(r-1)} \, n^{\frac{1}{q}}. \]

\end{cor}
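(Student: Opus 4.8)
The plan is to deduce this Triebel--Lizorkin bound directly from the Besov bound already established in Theorem~\ref{thm_hammersley_disc}, by invoking the embedding of Corollary~\ref{cor_emb_BF}. Set $\tilde p = \max(p,q)$. Since $0 < p, q < \infty$, Corollary~\ref{cor_emb_BF} (with $d = 2$) gives the continuous embedding
\[ S_{\tilde p\, q}^r B(\Q^2) \hookrightarrow S_{pq}^r F(\Q^2), \]
i.e. there is a constant $c > 0$ depending only on $p, q, r$ such that $\left\| g | S_{pq}^r F(\Q^2) \right\| \leq c \left\| g | S_{\tilde p\, q}^r B(\Q^2) \right\|$ for all $g$ in the Besov space; in particular this applies to $g = D_{\Rn}$ once we know $D_{\Rn}$ lies in that Besov space (which Theorem~\ref{thm_hammersley_disc} provides, via the characterization of Theorem~\ref{thm_besov_char}). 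Just as in Corollary~\ref{lower_to_main_F}, the assumption $r < \frac1p$ also guarantees that $D_{\Rn}$ makes sense in $S_{pq}^r F(\Q^2)$ in the first place.

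The only point requiring attention is that Theorem~\ref{thm_hammersley_disc} must be applied with the first index $\tilde p = \max(p,q)$ in place of $p$. Its hypotheses are $1 \leq \tilde p, q \leq \infty$ and $0 \leq r < \frac{1}{\tilde p}$: the former holds since $1 \leq p, q < \infty$, and the latter is exactly the hypothesis $0 \leq r < \frac{1}{\max(p,q)}$ of the corollary. Thus Theorem~\ref{thm_hammersley_disc} yields a constant $C_0 > 0$ such that, for every $n \in \N$ and every generalized Hammersley type point set $\Rn$ with $|2 a_n - n| \leq c_0$, we have
\[ \left\| D_{\Rn} | S_{\tilde p\, q}^r B(\Q^2) \right\| \leq C_0\, b^{n(r-1)}\, n^{\frac{1}{q}}. \]
Chaining this with the embedding inequality and setting $C = c\, C_0$ completes the argument.

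I expect essentially no genuine obstacle here, since all the analytic work has been carried out in Theorem~\ref{thm_hammersley_disc} and its preparatory lemmas; the proof is a two-line consequence of the embedding. The single conceptual subtlety is that passing through $S_{\max(p,q),q}^r B$ forces the admissible smoothness range to shrink from $r < \frac1p$ (the Besov range) to $r < \frac1{\max(p,q)}$, which is why the corollary cannot simply inherit the full range of Theorem~\ref{thm_hammersley_disc}. If one wanted the sharper range $r < \frac1p$ for the $F$-spaces, one would have to re-run the coefficient estimates of Proposition~\ref{prp_haar_coeff} inside the Triebel--Lizorkin quasi-norm directly (via a Littlewood--Paley-type square-function characterization), rather than through the Besov embedding; but for the stated range the embedding route is both shortest and sufficient.
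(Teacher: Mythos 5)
Your proof is correct and follows exactly the paper's own route: apply Theorem \ref{thm_hammersley_disc} with first index $\max(p,q)$ and chain with the embedding $S_{\max(p,q),q}^r B(\Q^2) \hookrightarrow S_{pq}^r F(\Q^2)$ from Corollary \ref{cor_emb_BF}. Your added remarks on why the smoothness range shrinks to $r < \frac{1}{\max(p,q)}$ are accurate but not needed beyond what the paper records.
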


\begin{proof}
From Corollary \ref{cor_emb_BF} we have $S_{\max(p,q), q}^r B(\Q^2) \hookrightarrow S_{pq}^r F(\Q^2)$. Therefore, we get the assertion for $0 \leq r < \frac{1}{\max(p,q)}$ from Theorem \ref{thm_hammersley_disc}.

\end{proof}

\begin{cor} \label{cor_h}

Let $1 \leq p < \infty$ and $0 \leq r < \frac{1}{\max(p,2)}$. Then there exists a constant $C > 0$ such that, for any $n \in \N$ and any generalized Hammersley type point set $\Rn$ with $a_n$ satisfying $|2a_n - n| \leq c_0$ for some constant $c_0 > 0$ (independent of $n$), we have
\[ \left\| D_{\Rn} | S_p^r H(\Q^2) \right\| \leq C \, b^{n(r-1)} \, n^{\frac{1}{2}}. \]

\end{cor}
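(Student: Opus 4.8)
The plan is to deduce this statement directly from Corollary \ref{cor_f} by specializing the second parameter. Recall from Definition \ref{sobolev_df} that the Sobolev space with dominating mixed smoothness is defined by $S_p^r H(\Q^2) = S_{p\,2}^r F(\Q^2)$, so there is literally nothing to prove beyond substituting $q = 2$ into the Triebel--Lizorkin result.

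Concretely, I would proceed as follows. First, observe that Corollary \ref{cor_f} is stated for all $1 \leq p, q < \infty$ under the hypothesis $0 \leq r < \frac{1}{\max(p,q)}$, and it concerns exactly the generalized Hammersley type point sets $\Rn$ with $|2a_n - n| \leq c_0$. Setting $q = 2$, the hypothesis becomes $1 \leq p < \infty$ together with $0 \leq r < \frac{1}{\max(p,2)}$, which is precisely the range assumed in the present corollary. Corollary \ref{cor_f} then yields a constant $C > 0$ such that, for every $n \in \N$ and every such $\Rn$,
\[ \left\| D_{\Rn} | S_{p\,2}^r F(\Q^2) \right\| \leq C \, b^{n(r-1)} \, n^{\frac{1}{2}}, \]
where the exponent $\frac{1}{q} = \frac{1}{2}$ appears on the logarithmic-type factor. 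Rewriting $S_{p\,2}^r F(\Q^2)$ as $S_p^r H(\Q^2)$ via Definition \ref{sobolev_df} gives the claimed bound verbatim.

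There is essentially no obstacle here: the only thing to check is that the admissible range of $r$ in Corollary \ref{cor_f} under $q = 2$ matches the range $0 \leq r < \frac{1}{\max(p,2)}$ in the statement, and that the point sets and the condition on $a_n$ are the same in both places, both of which are immediate. If one wished to make the exposition self-contained, one could alternatively invoke the embedding $S_{\max(p,2),2}^r B(\Q^2) \hookrightarrow S_{p2}^r F(\Q^2)$ from Corollary \ref{cor_emb_BF} together with Theorem \ref{thm_hammersley_disc} (applied with $q = 2$), exactly as in the proof of Corollary \ref{cor_f}; but routing through Corollary \ref{cor_f} is the shortest path and is the one I would write down.
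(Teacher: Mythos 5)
Your proposal is correct and coincides with the paper's own proof, which likewise obtains the result by setting $q = 2$ in Corollary \ref{cor_f} and invoking Definition \ref{sobolev_df}. The check that the ranges of $r$ and the hypotheses on $\Rn$ match is exactly the (trivial) content of that step.
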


\begin{proof}
The assertion follows from Corollary \ref{cor_f} for $q = 2$.

\end{proof}

\begin{rem}

We recall that $S_p^0 H(\Q^2) = L_p(\Q^2)$, therefore, the point sets $\Rn$ have best possible $L_p$-discrepancy.

\end{rem}
\subsection{Discrepancy of Chen-Skriganov type point sets}
We begin this subsection with the definition of point sets of Chen-Skriganov type, first suggested in \cite{CS02}. Those point sets are $b$-adic constructions and digital nets. Therefore, we are now perfectly prepared to work with them. The reader is referred to \cite{DP10} for more information than can be found in this work. Chen and Skriganov constructed those sets as an example for point sets with best possible $L_2$-discrepancy. In \cite{S06} Skriganov proved that they also have best possible $L_p$-discrepancy. Our goal is to analyze their discrepancy in spaces with dominating mixed smoothness with focus mainly on spaces $S_{pq}^r B (\Q^d)$.

We will repeat some notation that already has been introduced. An element $A$ from $\Fb_b^{dn}$ for $b$ prime will be given in the form $A = (a_1, \ldots, a_d)$ and for each $i$ we have $a_i = (a_{i 1}, \ldots, a_{i n}) \in \Fb_b^n$. The mapping $\Phi_n^d: \, \Fb_b^{dn} \rightarrow \Q^d$ is defined by Definition \ref{df_mapping_Phi} as $\Phi_n^d(A) = (\Phi_n(a_1), \ldots, \Phi_n(a_d))$ and
\[ \Phi_n(a_i) = \frac{a_{i 1}}{b} + \ldots + \frac{a_{i n}}{b^n}. \]
Finally, $v_n(a_i) = \max \left\{ \nu: \, a_{i \nu} \neq 0 \right\}$.

Let $b \geq 2d^2$ be a prime number and $n \in \N$ divisible by $2d$, i.e. $n = 2dw$ for some $w \in \N$. For some positive integer $h$ let
\[ f(z) = f_0 + f_1 z + \ldots + f_{h - 1} z^{h - 1} \]
be a polynomial in $\Fb_b[z]$. Its degree is $\dg(f) = h - 1$, assuming $f_{h - 1} \neq 0$ and $\dg(0) = 0$. For every $\lambda \in \N$ the $\lambda$-th hyper-derivative is
\[ \partial^{\lambda} f(z) = \sum_{i=0}^{h - 1} \binom{i}{\lambda} f_{\lambda} z^{i - \lambda}. \]

We use the usual convention for the binomial coefficient modulo $b$ that $\binom{i}{\lambda} = 0$ whenever $\lambda > i$. There are $2d^2$ distinct elements $\beta_{i,\nu} \in \Fb_b$, $1 \leq i \leq d$, and $1 \leq \nu \leq 2d$. For $1 \leq i \leq d$ let
\[ a_i(f) = \left( \left( \partial^{\lambda - 1} f(\beta_{i,\nu}) \right)_{\lambda = 1}^w \right)_{\nu=1}^{2d} \in \Fb_b^n. \]
We define $\C_n \subset \Fb_b^{dn}$ as
\[ \C_n = \left\{ A(f) = (a_1(f), \ldots, a_d(f)): \, f \in \Fb_b[z], \, \dg(f) < n \right\}. \]
Since there are $b^n$ polynomials with $\dg(f) < n$ in $\Fb_b[z]$ and $A(f) \neq A(g)$ if $f \neq g$, $\C_n$ has exactly $b^n$ elements. The set of polynomials in $\Fb_b[z]$ with $\dg(f) < n$ is closed under addition and scalar multiplication over $\Fb_b$ and $A \, : \, \Fb_b[z] \longrightarrow \Fb_b[z]$ is linear, hence $\C_n$ is an $\Fb_b$-linear subspace of $\Fb_b^{dn}$. Instead of working with point sets directly, we will work with such $\Fb_b$-linear subspaces of $\Fb_b^{dn}$ and use their duality properties. The point set which is a dual counterpart of such a subspace can be obtained through the mapping $\Phi_n^d$.
\begin{df}
Let $\C_n$ be as above. Then the Chen-Skriganov type point set is the set
\[ \CS_n = \Phi_n^d(\C_n). \]
\end{df}
The set $\CS_n$ contains exactly $b^n$ points. The following is the main result of this work and we also refer to \cite{M13b}.

\begin{thm} \label{thm_main}

Let $1 \leq p,q \leq \infty$. Let $0 < r < \frac{1}{p}$. Then there exists a constant $C > 0$ such that, for any integer $N \geq 2$, there exists a point set $\P$ in $\Q^d$ with $N$ points such that
\[ \left\| D_{\P} | S_{pq}^r B(\Q^d) \right\| \leq C \, N^{r-1} \, (\log N)^{\frac{d-1}{q}}. \]

\end{thm}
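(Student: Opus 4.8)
The plan is to reduce the statement, via Theorem \ref{thm_besov_char}, to estimating the weighted $\ell_q(\ell_p)$ norm of the $b$-adic Haar coefficients $\mu_{jml}$ of the discrepancy function $D_{\CS_n}$ of the Chen--Skriganov point set $\CS_n=\Phi_n^d(\C_n)$, for $N=b^n$ (and then the general $N$ is handled by a standard net-concatenation argument, splitting the $b$-adic digits of $N$ and using the union lemma for nets together with the triangle inequality in $S_{pq}^rB$). So the bulk of the work is: for a suitable prime base $b\geq 2d^2$ and $n=2dw$, show
\[
\left(\sum_{j\in\N_{-1}^d} b^{|j|(r-\frac1p+1)q}\Big(\sum_{m\in\Dd_j,\,l\in\B_j}|\mu_{jml}|^p\Big)^{\frac qp}\right)^{\frac1q}\leq C\,b^{n(r-1)}n^{\frac{d-1}{q}}.
\]

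\textbf{Step 1: Haar coefficients via duality.} As in the $d$-dimensional analogues of Lemmas \ref{lem_haar_coeff_besov_x_1} and \ref{lem_haar_coeff_besov_indicator_1}, the coefficient $\mu_{jml}$ of $D_{\CS_n}$ splits into the (explicitly known, geometrically decaying) contribution of the linear part $x_1\cdots x_d$ and the contribution of $\frac1N\sum_z\chi_{[0,x)}(z)$. The key point is that $\mu_{jml}$ vanishes unless the supporting $b$-adic box $I_{jm}$ contains a point of $\CS_n$ in its interior, and — because $\CS_n$ is a digital net with large quality parameter — for "low order" boxes ($|j|$ up to roughly $n$) each such box contains exactly the fair number of points, which forces a cancellation. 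Concretely I would express $h_{jml}$ in terms of characteristic functions of children, write $\frac1N\sum_z h_{jml}(z)$ as a Walsh-type character sum over the net, and invoke Lemma \ref{lem_duality_into_disc}: the sum is supported on the dual net $\Dn'$. Thus $\mu_{jml}$ is controlled by how "deep" into the dual net the corresponding digit vector sits, i.e. by $\delta_n$ and $v_n$ of the relevant dual elements. Using Proposition \ref{prp_dig_net_duality} and the defining property of $\C_n$ (the polynomial/hyper-derivative construction guarantees a large minimum distance $\delta_n(\C_n^\perp)$, hence optimal quality parameter $v\asymp (d-1)\log_b n$-type behaviour after the net concatenation), one obtains pointwise bounds of the shape $|\mu_{jml}|\lesssim b^{-2|j|}/(|\e^{2\pi\im l_1/b}-1|\cdots|\e^{2\pi\im l_d/b}-1|)$ in the generic range, with analogous but slightly worse bounds (an extra factor like $b^{-|j|}b^{n/p'}$, or extra powers coming from the partial-box "boundary" terms) for the $O(b^n)$ exceptional boxes per level and for the degenerate levels where some $j_i=-1$. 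This mirrors Proposition \ref{prp_haar_coeff} of the two-dimensional case, but now the combinatorics of the Chen--Skriganov construction replace the elementary digit bookkeeping of Lemma \ref{lem_coeff_calc}.

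\textbf{Step 2: summation.} Once the coefficient bounds are in hand, I would split $\N_{-1}^d$ into the regimes $|j|<n$, $|j|\approx n$, $|j|>n$ and the lower-dimensional faces (some $j_i=-1$), exactly as in the proof of Theorem \ref{thm_hammersley_disc}. On each regime one sums the geometric series in $b^{|j|}$, using $\frac1p-r>0$ to make the $|j|\le n$ tails converge at $\lambda=n$ and $r-1\le 0$ (well, $r<\frac1p\le 1$) to control the $|j|\ge n$ tails; Proposition \ref{prp_diagonals}-type identities absorb the $l$-sums over $\B_j$ into constants. The number-of-compositions factor $\binom{\lambda+d-1}{d-1}\lesssim \lambda^{d-1}\le n^{d-1}$ is what produces the $(\log N)^{(d-1)/q}=n^{(d-1)/q}$ power after taking $q$-th roots; in each regime the $b$-powers conspire to give $b^{n(r-1)}=N^{r-1}$. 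The $q=\infty$ and $p=\infty$ cases need the usual $\sup$/$\ell_\infty$ modifications. The restriction $r>0$ (as opposed to $r\ge0$ in $d=2$) is needed precisely so that one of these tails, the one involving $b^{|j|r}$ over the lower-dimensional faces, still sums with a clean $b^{nr}$ bound without an extra $n$ factor.

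\textbf{Main obstacle.} The hard part is Step 1 — extracting sharp enough bounds on $\mu_{jml}$ from the Chen--Skriganov construction in \emph{all} $2^d$ coordinate regimes, including the boundary/partial-box terms (the analogue of the $(bm_i+k_i+1-b^{j_i+1}z_i)$ terms in Lemma \ref{lem_haar_coeff_besov_indicator_ham}), and making the count of exceptional boxes per level precisely $O(b^n)$ with the correct exponent. In $d=2$ this already required the long computation of Lemma \ref{lem_coeff_calc}; for general $d$ one must instead leverage the algebraic structure (minimum distance of the dual net, the hyper-derivative Vandermonde-type non-degeneracy that underlies Skriganov's $L_p$ bound) rather than explicit digit manipulation, and translate it into Haar-coefficient language. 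Everything after that is a bookkeeping of geometric sums that closely parallels the proof of Theorem \ref{thm_hammersley_disc}.
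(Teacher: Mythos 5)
Your Step 2 (the regime-splitting and geometric summation, with $\binom{\lambda+d-1}{d-1}\lesssim n^{d-1}$ producing the $(\log N)^{(d-1)/q}$ factor and $0<r<\frac1p$ controlling the tails) matches the paper's proof and is fine \emph{provided} the right coefficient bounds are available. The gap is in Step 1, and it is not merely an unexecuted computation: the bound you guess for the generic range is quantitatively wrong in the regime that carries the whole estimate. For $|j|\le n$ every $b$-adic box $I_{jm}$ of order $|j|$ contains points of the $(0,n,d)$-net, so nothing vanishes there; the paper proves $|\mu_{jml}|\le c\,b^{-|j|-n}$ in this regime (Proposition \ref{prp_haar_coeff_cs}\,(ii)), and this is exactly what is needed, since it turns the $|j|\le n$ block of the quasi-norm into $\sum_{|j|\le n}b^{(|j|r-n)q}(\lambda+1)^{s-1}\lesssim n^{(d-1)}b^{n(r-1)q}$ (this is also where $r>0$ is genuinely used, not on the lower-dimensional faces). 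If you instead insert your proposed $|\mu_{jml}|\lesssim b^{-2|j|}$ for $|j|\le n$, the same block becomes $\sum_{|j|\le n}b^{|j|(r-1)q}=O(1)$, which is vastly larger than $b^{n(r-1)q}$ and destroys the theorem. The mechanism that produces $b^{-|j|-n}$ is not the fair-counting cancellation alone: the paper splits $D_{\CS_n}=\Theta_{\CS_n}+R_{\CS_n}$, where $\Theta$ is obtained by truncating the Walsh expansion of $\chi_{[0,y)}$ at $b^n$ and $|R_{\CS_n}|\le c\,b^{-n}$ pointwise (Lemma \ref{rest_lem}); then $\Theta_{\CS_n}=\sum_{t\in\Dn'}\hat\chi_{[0,\cdot)}(t)$ by duality (Lemma \ref{theta_lem}), the Haar pairings are computed through the Fine--Price formulas (Lemmas \ref{chi_roof_lem}, \ref{lem_scalprod_1}, \ref{lem_scalprod_2}), and the decisive input is the counting estimate of Proposition \ref{main_est_prp}, which bounds by $b^d$ the number of dual-code words with prescribed zero digit patterns, using \emph{both} $\delta_n(\C_n^\perp)\ge n+1$ and the Hamming-weight bound $\varkappa_n(\C_n^\perp)\ge 2d+1$. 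None of this is recoverable from the data you list (minimum distance and quality parameter alone), so the heart of the proof is missing.

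Two smaller points. First, your reduction from general $N$ to $N=b^n$ by digit-splitting and net concatenation is not what the paper does and is delicate here: $\CS_n$ only exists for $n$ divisible by $2d$, and the leftover digits leave up to $b^{2d}$ unstructured points whose contribution must be checked against $N^{r-1}$. The paper instead intersects $\CS_n$ with the slab $[0,N/b^n)\times\Q^{d-1}$ (which contains exactly $N$ points because $\CS_n$ is a $(0,n,d)$-net) and rescales the first coordinate by the bounded factor $b^n/N$, which changes the $S_{pq}^rB$-norm only by a constant. Second, the $l$-dependence $\prod_i|\e^{2\pi\im l_i/b}-1|^{-1}$ you carry over from the Hammersley analysis plays no role in the Chen--Skriganov estimates; the sums over $l\in\B_j$ are simply absorbed into constants, so nothing like Proposition \ref{prp_diagonals} is needed here.
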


We will prove it later because we need to do some preliminary work first.

\begin{rem}

The point sets in the theorem are the Chen-Skriganov point sets. It was conjectured in \cite{Hi10} that they might satisfy the desired upper bound. The restrictions for the parameter $r$ are necessary. The upper bound $r < \frac{1}{p}$ is due to the fact that we need characteristic functions of intervals to belong to $S_{pq}^r B ([0,1)^d)$ and the condition given by \cite[Theorem 6.3]{T10a}. The restriction $r \geq 0$ comes from the point sets. Anyway, there is a restriction of $r > \frac{1}{p} - 1$ from the fact that we require $S_{pq}^r B ([0,1)^d)$ to have a $b$-adic Haar basis (see Theorem \ref{thm_besov_char}). We have an additional restriction $r > 0$ which is due to our estimations which might not be optimal.

\end{rem}

The proof of Theorem \ref{thm_main} will work as follows. The discrepancy function can be partitioned as $D_{\P} = \Theta_{\P} + R_{\P}$ where $\Theta_{\P}$ is obtained by truncating Walsh series expansions and $R_{\P}$ is the rest. Because of the special properties of $\CS_n$, $R_{\CS_n}$ is pointwise small enough. We can estimate the $b$-adic Haar coefficients of $\Theta_{\CS_n}$ and use the characterization of the norm of $S_{pq}^r B ([0,1)^d)$ in terms of $b$-adic Haar bases given by Theorem \ref{thm_besov_char}.

Again we conclude results for other spaces with dominating mixed smoothness.

\begin{cor} \label{cor_f_spaces}

Let $1 \leq p,q < \infty$. Let $0 < r < \frac{1}{\max(p,q)}$. Then there exists a constant $C > 0$ such that, for any integer $N \geq 2$, there exists a point set $\P$ in $\Q^d$ with $N$ points such that
\[ \left\| D_{\P} | S_{pq}^r F(\Q^d) \right\| \leq C \, N^{r-1} \, (\log N)^{\frac{d-1}{q}}. \]

\end{cor}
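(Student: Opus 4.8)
The plan is to derive Corollary \ref{cor_f_spaces} from Theorem \ref{thm_main} by pure soft analysis, exactly in the spirit of the proof of Corollary \ref{lower_to_main_F} and of Corollary \ref{cor_f}: we never touch a point set directly, we only invoke the embedding between Besov and Triebel--Lizorkin spaces with dominating mixed smoothness supplied by Corollary \ref{cor_emb_BF}. The point set whose existence is asserted will simply be the Chen--Skriganov type point set $\CS_n$ furnished by Theorem \ref{thm_main} (for $N = b^n$; for general $N$ one argues as in the proof of Theorem \ref{thm_main}, which is deferred), so no new construction is needed.

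First I would recall from Corollary \ref{cor_emb_BF} that for $0 < p,q < \infty$ and any $r \in \R$ one has the embedding
\[ S_{\max(p,q),q}^r B(\Q^d) \hookrightarrow S_{pq}^r F(\Q^d), \]
which means there is a constant $c > 0$ with $\left\| g | S_{pq}^r F(\Q^d) \right\| \leq c \left\| g | S_{\max(p,q),q}^r B(\Q^d) \right\|$ for all admissible $g$. Next I would check that the hypotheses of Theorem \ref{thm_main} are met with $\max(p,q)$ in the role of $p$: Theorem \ref{thm_main} requires $0 < r < \frac{1}{p}$, and here the assumption $0 < r < \frac{1}{\max(p,q)}$ is precisely $0 < r < \frac{1}{\max(p,q)}$, so the theorem applies to $S_{\max(p,q),q}^r B(\Q^d)$ and gives, for every integer $N \geq 2$, a point set $\P$ in $\Q^d$ with $N$ points such that
\[ \left\| D_{\P} | S_{\max(p,q),q}^r B(\Q^d) \right\| \leq C_1 \, N^{r-1} \, (\log N)^{\frac{d-1}{q}} \]
for some $C_1 > 0$. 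One should also note that $r < \frac{1}{\max(p,q)} \leq \frac{1}{p}$, so by \cite[Theorem 6.3]{T10a} the discrepancy function $D_\P$ genuinely belongs to $S_{pq}^r F(\Q^d)$ and the statement to be proved is not vacuous.

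Then I would simply chain the two estimates: for the point set $\P$ just produced,
\[ \left\| D_{\P} | S_{pq}^r F(\Q^d) \right\| \leq c \left\| D_{\P} | S_{\max(p,q),q}^r B(\Q^d) \right\| \leq c \, C_1 \, N^{r-1} \, (\log N)^{\frac{d-1}{q}}, \]
and setting $C = c\,C_1$ finishes the proof. This is the argument that the author indeed records below.

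I do not expect any real obstacle here: every ingredient is already in place. The only points requiring a moment's care are (i) verifying that the admissibility range $0 < r < \frac{1}{\max(p,q)}$ is exactly what is needed to invoke Theorem \ref{thm_main} with parameter $\max(p,q)$ and simultaneously to know $D_\P \in S_{pq}^r F(\Q^d)$, and (ii) the remark that for $N$ not a power of $b$ one must use the same padding/union-of-nets device as in the (deferred) proof of Theorem \ref{thm_main}; since Theorem \ref{thm_main} is stated for arbitrary $N \geq 2$ already, even this is subsumed. The genuinely hard work — estimating the $b$-adic Haar coefficients of the truncated Walsh expansion $\Theta_{\CS_n}$ and controlling the remainder $R_{\CS_n}$ — lives entirely inside the proof of Theorem \ref{thm_main}, not in this corollary.
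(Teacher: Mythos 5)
Your argument is exactly the paper's proof: invoke the embedding $S_{\max(p,q),q}^r B(\Q^d) \hookrightarrow S_{pq}^r F(\Q^d)$ from Corollary \ref{cor_emb_BF} and apply Theorem \ref{thm_main} with $\max(p,q)$ in place of $p$, which is legitimate since $0 < r < \frac{1}{\max(p,q)}$. The proposal is correct and adds only harmless extra commentary (the membership check and the remark about general $N$) beyond what the paper records.
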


\begin{proof}
From Corollary \ref{cor_emb_BF} we have $S_{\max(p,q), q}^r B(\Q^d) \hookrightarrow S_{pq}^r F(\Q^d)$. Therefore, we get the assertion for $0 < r < \frac{1}{\max(p,q)}$ from Theorem \ref{thm_main}.

\end{proof}

\begin{cor} \label{cor_h_spaces}

Let $1 \leq p < \infty$. Let $0 \leq r < \frac{1}{\max(p,2)}$. Then there exists a constant $C > 0$ such that, for any integer $N \geq 2$, there exists a point set $\P$ in $\Q^d$ with $N$ points such that
\[ \left\| D_{\P} | S_p^r H(\Q^d) \right\| \leq C \, N^{r-1} \, (\log N)^{\frac{d-1}{2}}. \]

\end{cor}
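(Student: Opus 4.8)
The plan is to read off this statement from the Triebel--Lizorkin result, Corollary~\ref{cor_f_spaces}, via the identification $S_p^r H(\Q^d) = S_{p\,2}^r F(\Q^d)$ recorded in Definition~\ref{sobolev_df}.

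First I would treat the range $0 < r < \frac{1}{\max(p,2)}$. Taking $q = 2$ in Corollary~\ref{cor_f_spaces} is legitimate since $1 \le p < \infty$ and $2 < \infty$, and the hypothesis there, $0 < r < \frac{1}{\max(p,q)}$, becomes exactly $0 < r < \frac{1}{\max(p,2)}$. The corollary then supplies, for every $N \ge 2$, a point set $\P$ in $\Q^d$ with $N$ points satisfying
\[ \left\| D_{\P} | S_{p\,2}^r F(\Q^d) \right\| \le C \, N^{r-1} \, (\log N)^{\frac{d-1}{2}}, \]
and since $S_p^r H(\Q^d) = S_{p\,2}^r F(\Q^d)$ with equivalent quasi-norms, this is precisely the claimed bound (with possibly enlarged constant). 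Concretely the $\P$ produced is a Chen--Skriganov point set $\CS_n$ with $N = b^n$, and the general $N$ is handled as in the proof of Theorem~\ref{thm_main}.

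It remains to cover the endpoint $r = 0$, which lies outside the scope of Corollary~\ref{cor_f_spaces} (the condition $r > 0$ there comes from the Walsh-truncation estimates behind Theorem~\ref{thm_main}). Here $S_p^0 H(\Q^d) = S_{p\,2}^0 F(\Q^d) = L_p(\Q^d)$ by \eqref{norm_SpqrF_Lp} and Definition~\ref{sobolev_df}, so the assertion degenerates to the existence of point sets with optimal $L_p$-discrepancy, $\left\| D_{\P} | L_p(\Q^d) \right\| \le C \, N^{-1} (\log N)^{\frac{d-1}{2}}$. For $1 < p < \infty$ this is Theorem~\ref{thm_upper_chen}, and by Skriganov's theorem \cite{S06} it is realized by the very point sets $\CS_n$, so a single family works across the whole interval $0 \le r < \frac{1}{\max(p,2)}$; for $p = 1$ it follows from Theorem~\ref{thm_L1_upper}, equivalently from applying the embedding $L_2(\Q^d) \hookrightarrow L_1(\Q^d)$ to Theorem~\ref{thm_upper_bound_CS}.

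I do not expect a genuine obstacle: the substance is already contained in Corollary~\ref{cor_f_spaces} (hence in Theorem~\ref{thm_main}) and in the classical $L_p$-discrepancy upper bounds. The only points demanding a word of care are that the endpoint $r = 0$ must be routed through the $L_p$-discrepancy literature rather than the dominating-mixed-smoothness machinery, and that within that endpoint the sub-case $p = 1$ requires an embedding step since Theorem~\ref{thm_upper_chen} excludes $p = 1$.
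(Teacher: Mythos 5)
Your proposal matches the paper's proof: for $r>0$ the paper also invokes Corollary \ref{cor_f_spaces} with $q=2$ via the identification $S_p^r H(\Q^d) = S_{p\,2}^r F(\Q^d)$, and for $r=0$ it also falls back on $S_p^0 H(\Q^d) = L_p(\Q^d)$ together with Theorems \ref{thm_upper_chen} and \ref{thm_L1_upper}. Your version is just slightly more explicit about which classical theorem covers which sub-case of $p$ at the endpoint.
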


\begin{proof}
Let $r > 0$. Then the assertion follows from Corollary \ref{cor_f_spaces} for $q = 2$. We recall that $S_p^0 H(\Q^d) = L_p(\Q^d)$, therefore, the assertion in the case $r = 0$ is Theorem \ref{thm_L1_upper} and Theorem \ref{thm_upper_chen}.

\end{proof}

The next result is \cite[Theorem 16.28]{DP10}.

\begin{prp} \label{CS_to_be_digital_net}

For every $w \in \N$ the set $\C_n$ is an $\Fb_b$-linear subspace of $\Fb_b^{dn}$ of dimension $n$. Its dual space $\C_n^{\perp}$ has dimension $nd - n$ and it satisfies
\[ \varkappa_n(\C_n^{\perp}) \geq 2d + 1 \text{ and } \delta_n(\C_n^{\perp}) \geq n + 1. \]

\end{prp}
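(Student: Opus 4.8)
The plan is to dispatch the dimension statements quickly and then reduce both weight bounds to a single combinatorial claim about $\C_n^{\perp}$, proved by a generating-function argument. For $\dim\C_n=n$: the map $f\mapsto A(f)$ is $\Fb_b$-linear (hyper-derivatives and point evaluations are linear) with $n$-dimensional domain $\{f\in\Fb_b[z]:\dg(f)<n\}$, so injectivity suffices. If $A(f)=0$ then $\partial^{0}f(\beta_{i,\nu})=\dots=\partial^{w-1}f(\beta_{i,\nu})=0$ for all $i,\nu$, which is equivalent to $(z-\beta_{i,\nu})^{w}\mid f$ (the characteristic-free analogue of a zero of multiplicity $w$ — this is exactly why hyper-derivatives are used); since the $2d^{2}$ elements $\beta_{i,\nu}$ are distinct, $\prod_{i,\nu}(z-\beta_{i,\nu})^{w}\mid f$, a polynomial of degree $2d^{2}w=dn\ge n>\dg(f)$, hence $f=0$. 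Then $\dim\C_n^{\perp}=dn-\dim\C_n=dn-n$ by the identity $\dim(\C^{\perp})=dn-\dim(\C)$ noted earlier.

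For a vector $A=(a_1,\dots,a_d)\in\Fb_b^{dn}$, cut the $i$-th block $a_i$ — following the indexing of $a_i(f)$ — into $2d$ consecutive sub-blocks of length $w$, the $\nu$-th occupying positions $(\nu-1)w+1,\dots,\nu w$ and carrying the coordinates paired with $\partial^{0}f(\beta_{i,\nu}),\dots,\partial^{w-1}f(\beta_{i,\nu})$; write $a_{i,\nu,\lambda}$ for these entries and $\mu_{i,\nu}$ for the largest $\lambda$ with $a_{i,\nu,\lambda}\neq 0$ (or $0$ if the sub-block vanishes). I would then prove: every nonzero $A\in\C_n^{\perp}$ has $S(A):=\sum_{i,\nu}\mu_{i,\nu}\ge n+1$. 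To do so, translate $A\in\C_n^{\perp}$, i.e. $\sum_{i,\nu,\lambda}a_{i,\nu,\lambda}\,\partial^{\lambda-1}f(\beta_{i,\nu})=0$ for all $\dg(f)<n$, into an analytic condition using $\partial^{\lambda-1}f(\beta)=\sum_k\binom{k}{\lambda-1}f_k\beta^{k-\lambda+1}$ and the identity $\sum_{k\ge0}\binom{k}{\lambda-1}\beta^{k-\lambda+1}z^{k}=z^{\lambda-1}(1-\beta z)^{-\lambda}$: the coefficient of $z^{k}$ in
\[
\Psi_A(z)=\sum_{i,\nu,\lambda}a_{i,\nu,\lambda}\,\frac{z^{\lambda-1}}{(1-\beta_{i,\nu}z)^{\lambda}}
\]
equals $\sum_{i,\nu,\lambda}a_{i,\nu,\lambda}\binom{k}{\lambda-1}\beta_{i,\nu}^{k-\lambda+1}$, so $A\in\C_n^{\perp}$ iff the Taylor coefficients of $\Psi_A$ at $0$ of orders $0,\dots,n-1$ vanish. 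Grouping terms with a common $(i,\nu)$ and clearing denominators, $\Psi_A=P_A(z)\big/\prod_{(i,\nu):\mu_{i,\nu}\ge1}(1-\beta_{i,\nu}z)^{\mu_{i,\nu}}$ with $\deg P_A\le S(A)-1$; uniqueness of the partial-fraction decomposition (the $\beta_{i,\nu}$ are distinct) gives $P_A\neq0$ when $A\neq0$; and since the denominator has nonzero constant term, the vanishing of the first $n$ Taylor coefficients of $\Psi_A$ amounts to $z^{n}\mid P_A$, forcing $n\le\deg P_A\le S(A)-1$.

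Granting the claim, the two bounds are elementary counting. Since each $\mu_{i,\nu}\le w$, the last nonzero coordinate of $a_i$ lies in position $(\nu^*-1)w+\mu_{i,\nu^*}$, where $\nu^*$ is the largest $\nu$ with $\mu_{i,\nu}\ge1$, hence $v_n(a_i)\ge\sum_{\nu}\mu_{i,\nu}$; summing over $i$, $v_n^{d}(A)\ge S(A)\ge n+1$ for every nonzero $A\in\C_n^{\perp}$, i.e. $\delta_n(\C_n^{\perp})\ge n+1$. For the Hamming weight, each nonzero sub-block contributes at least $1$ to $\varkappa_n(a_i)$, so $\varkappa_n(A)\ge s$ with $s=\#\{(i,\nu):\mu_{i,\nu}\ge1\}$; from $\mu_{i,\nu}\le w$ we get $sw\ge S(A)\ge n+1=2dw+1$, hence $s\ge 2d+1$ and $\varkappa_n(\C_n^{\perp})\ge 2d+1$.

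I expect the main obstacle to be the analytic bookkeeping around $\Psi_A$: verifying the generating-function identity, the degree bound $\deg P_A\le S(A)-1$, and $P_A\neq0$ via uniqueness of partial fractions over $\Fb_b$, together with the harmless case in which one $\beta_{i,\nu}$ equals $0$ (then the corresponding summand is a polynomial of degree $\le\mu_{i,\nu}-1$ and all estimates go through unchanged). Everything after the claim is routine, using only $n=2dw$ and $\mu_{i,\nu}\le w$.
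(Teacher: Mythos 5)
Your argument is correct, but note that the paper does not actually prove this proposition: it is quoted verbatim as \cite[Theorem 16.28]{DP10}, so there is no in-text proof to compare against. What you have supplied is a legitimate self-contained proof, and it is essentially the standard zero-counting argument for these Hermite-type evaluation codes, repackaged through the generating function $\Psi_A$. The injectivity step is right (Hasse derivatives are exactly what makes ``$\partial^0 f(\beta)=\dots=\partial^{w-1}f(\beta)=0 \Leftrightarrow (z-\beta)^w\mid f$'' valid in characteristic $b$, and $2d^2w=dn\ge n$ forces $f=0$), the identity $\sum_k\binom{k}{\lambda-1}\beta^{k-\lambda+1}z^k=z^{\lambda-1}(1-\beta z)^{-\lambda}$ is a formal power-series identity valid over $\Fb_b$, and the reduction of ``the first $n$ Taylor coefficients vanish'' to ``$z^n\mid P_A$'' is clean because the denominator is a unit in $\Fb_b[[z]]$. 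The two counting steps at the end ($v_n(a_i)=(\nu^*-1)w+\mu_{i,\nu^*}\ge\sum_\nu\mu_{i,\nu}$ and $sw\ge 2dw+1\Rightarrow s\ge 2d+1$) are exactly right and deliver both weight bounds from the single inequality $S(A)\ge n+1$. The only place deserving one more line is the claim that $A\neq0$ implies $P_A\neq0$: ``uniqueness of partial fractions'' is the right idea, but since your summands are $z^{\lambda-1}(1-\beta z)^{-\lambda}$ rather than the standard basis $(1-\beta z)^{-j}$, you should either record the triangular (unit-diagonal) change of basis between the two families for each fixed $\beta\neq0$, or argue directly that the pole of $\Psi_A$ at $z=1/\beta_{i,\nu}$ has exact order $\mu_{i,\nu}$ because the $\lambda=\mu_{i,\nu}$ term contributes $a_{i,\nu,\mu_{i,\nu}}\beta_{i,\nu}^{-(\mu_{i,\nu}-1)}\neq0$ to the leading Laurent coefficient (with the $\beta_{i,\nu}=0$ block handled separately as a polynomial summand, as you note). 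With that detail made explicit the proof is complete.
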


\begin{thm} \label{CS_is_digital_net}

The Chen-Skriganov type point set $\CS_n$ is a digital $(0,n,d)$-net in base $b$.

\end{thm}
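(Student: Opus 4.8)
The plan is to combine the duality tool from Proposition \ref{prp_dig_net_duality} with the structural information about $\C_n$ collected in Proposition \ref{CS_to_be_digital_net}. Indeed, Proposition \ref{CS_to_be_digital_net} tells us that $\C_n$ is an $\Fb_b$-linear subspace of $\Fb_b^{dn}$ of dimension $n$, so that $\C_n^{\perp}$ has dimension $nd-n$, and moreover $\delta_n(\C_n^{\perp}) \geq n+1$. We want to show that $\CS_n = \Phi_n^d(\C_n)$ is a digital $(0,n,d)$-net in base $b$.

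First I would check that the hypotheses of Proposition \ref{prp_dig_net_duality} are met: we need $n,d\in\N$ with $n\geq 2$, and $\C_n$, $\C_n^{\perp}$ mutually dual $\Fb_b$-linear subspaces of $\Fb_b^{dn}$ of dimensions $n$ and $nd-n$ respectively. Since $n = 2dw$ with $w\in\N$ and $d\geq 1$, we have $n\geq 2$, and the remark after the definition of $(\C^{\perp})^{\perp}=\C$ together with Proposition \ref{CS_to_be_digital_net} gives the dimension and duality requirements (one should note that $\C_n \neq \{(0,\ldots,0)\}$, which holds because $\C_n$ has $b^n \geq b^2$ elements). Then Proposition \ref{prp_dig_net_duality} states that $\Phi_n^d(\C_n)$ is a digital $(v,n,d)$-net in base $b$ if and only if $\delta_n(\C_n^{\perp}) \geq n-v+1$.

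Now I would simply take $v=0$: the condition $\delta_n(\C_n^{\perp}) \geq n-v+1 = n+1$ is exactly the inequality $\delta_n(\C_n^{\perp}) \geq n+1$ guaranteed by Proposition \ref{CS_to_be_digital_net}. Hence $\Phi_n^d(\C_n) = \CS_n$ is a digital $(0,n,d)$-net in base $b$, which is the assertion of the theorem.

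There is essentially no obstacle here: the theorem is a direct corollary of the two quoted propositions, the only minor points being the verification of the hypotheses (in particular $n\geq 2$ and $\C_n\neq\{0\}$) and keeping track of the correspondence $v\leftrightarrow \delta_n(\C_n^{\perp})\geq n-v+1$. The genuine work — establishing the minimum distance bound $\delta_n(\C_n^{\perp})\geq n+1$ — is contained in Proposition \ref{CS_to_be_digital_net}, i.e.\ in \cite[Theorem 16.28]{DP10}, and is not reproved here. $\qed$
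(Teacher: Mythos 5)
Your argument is correct and is exactly the paper's proof: the theorem is stated there as a direct consequence of Propositions \ref{CS_to_be_digital_net} and \ref{prp_dig_net_duality}, which is precisely the combination you carry out with $v=0$. Your additional verification of the hypotheses ($n\geq 2$, the dimensions, $\C_n\neq\{0\}$) is a harmless elaboration of the same route.
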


\begin{proof}
This result is a direct consequence of Propositions \ref{CS_to_be_digital_net} and \ref{prp_dig_net_duality}.

\end{proof}

Before we turn to the computation of the Haar coefficients of the discrepancy function of the points sets $\CS_n$, we give some very easy lemmas which already have been stated in less generality, see Lemmas \ref{lem_haar_coeff_besov_x_1}, \ref{lem_haar_coeff_besov_indicator_1}, \ref{lem_factor5}, \ref{lem_haar_coeff_besov_x_ham}. We recall the notation that has been given in the beginning of Subsection \ref{notation_eta}. By $0 \leq s \leq d$ we denote the number of coordinates of $j \in \N_{-1}^d$ which are not $-1$ and by $j_{\eta_i}$ for $1 \leq i \leq s$ we denote such coordinates of $j$ which are not $-1$. We write $|j| = j_{\eta_1} + \ldots + j_{\eta_s}$.

\begin{lem} \label{lem_haar_coeff_besov_x}

Let $f(x) = x_1 \cdot \ldots \cdot x_d$ for $x=(x_1,\ldots,x_d) \in \Q^d$. Let $j \in \N_{-1}^d, \, m \in \Dd_j, l \in \B_j$ and let $\mu_{jml}$ be the $b$-adic Haar coefficient of $f$. Then
\[ \mu_{jml} = \frac{b^{-2|j| - s}}{2^{d-s}(\e^{\frac{2\pi \im}{b} l_{\eta_1}} - 1) \cdot \ldots \cdot (\e^{\frac{2\pi \im}{b} l_{\eta_s}} - 1)}, \]
and therefore,
\[ |\mu_{jml}| \leq c \, b^{-2|j|} \]
with a constant $c > 0$.

\end{lem}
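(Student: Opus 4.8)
The plan is to reduce everything to the one–dimensional case and then assemble the $d$-dimensional coefficient by the tensor-product structure of the Haar functions. Since $f(x)=x_1\cdots x_d$ factors as $x_1\cdots x_d$ and $h_{jml}(x)=h_{j_1m_1l_1}(x_1)\cdots h_{j_dm_dl_d}(x_d)$, Fubini gives
\[ \mu_{jml}(f)=\int_{\Q^d} x_1\cdots x_d\, h_{jml}(x)\,\dint x = \prod_{i=1}^d \int_0^1 x_i\, h_{j_im_il_i}(x_i)\,\dint x_i, \]
so it suffices to compute the one-dimensional integral $\int_0^1 x\, h_{jml}(x)\,\dint x$ in the two cases $j\in\N_0$ and $j=-1$.

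First I would treat the case $j=-1$: here $h_{-1,0,1}=\chi_{[0,1)}$, so $\int_0^1 x\,\dint x=\tfrac12$. This accounts for the factor $2^{-(d-s)}$, since there are exactly $d-s$ coordinates with $j_i=-1$, each contributing a factor $\tfrac12$. Next, for $j\in\N_0$, $m\in\Dd_j$, $l\in\B_j$, the function $h_{jml}$ is supported on $I_{jm}=[b^{-j}m,b^{-j}(m+1))$ and takes the constant value $\e^{\frac{2\pi\im}{b}lk}$ on the $k$-th child $I_{jm}^k=[b^{-j-1}(bm+k),b^{-j-1}(bm+k+1))$. Writing $x=b^{-j-1}(bm+k)+t$ with $t\in[0,b^{-j-1})$ and integrating, the constant part $b^{-j-1}(bm+k)$ gets multiplied by $\sum_{k=0}^{b-1}\e^{\frac{2\pi\im}{b}lk}=0$ (as $l\not\equiv0$), so only the ``local'' contributions survive:
\[ \int_0^1 x\, h_{jml}(x)\,\dint x = \sum_{k=0}^{b-1}\e^{\frac{2\pi\im}{b}lk}\int_0^{b^{-j-1}}\!\!\bigl(t+b^{-j-1}k\bigr)\,\dint t - 0 = \frac{b^{-2j-1}}{\e^{\frac{2\pi\im}{b}l}-1}, \]
where the $\int t\,\dint t$ term again cancels against $\sum_k\e^{\frac{2\pi\im}{b}lk}=0$ and the surviving piece is $b^{-2j-2}\sum_{k=0}^{b-1}k\,\e^{\frac{2\pi\im}{b}lk}=b^{-2j-2}\cdot\frac{b}{\e^{\frac{2\pi\im}{b}l}-1}$ by Lemma~\ref{lem_factor5}. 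Taking the product over the $s$ coordinates with $j_i\neq-1$ yields the factor $b^{-2|j|-s}(\e^{\frac{2\pi\im}{b}l_{\eta_1}}-1)^{-1}\cdots(\e^{\frac{2\pi\im}{b}l_{\eta_s}}-1)^{-1}$, and combining with the $j=-1$ coordinates gives the stated formula.

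Finally, for the estimate $|\mu_{jml}|\le c\,b^{-2|j|}$, I would note that $|\e^{\frac{2\pi\im}{b}l}-1|$ is bounded below by a positive constant depending only on $b$ (indeed $|\e^{\frac{2\pi\im}{b}l}-1|=2\sin(\pi l/b)\ge 2\sin(\pi/b)$ for $l\in\{1,\dots,b-1\}$), so the product of the $s\le d$ reciprocals is bounded by a constant, and $2^{-(d-s)}\le1$; hence $|\mu_{jml}|\le c\,b^{-2|j|-s}\le c\,b^{-2|j|}$. The computation is entirely routine; the only mild subtlety — and the step most worth stating carefully — is keeping track of which geometric sums vanish ($\sum_k\e^{\frac{2\pi\im}{b}lk}=0$) versus which survive, and correctly bookkeeping the $s$ versus $d-s$ coordinates so that the exponent of $b$ and the power of $2$ come out right. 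This is a direct generalization of Lemma~\ref{lem_haar_coeff_besov_x_1} (the case $j\in\N_0^d$) and Lemma~\ref{lem_haar_coeff_besov_x_ham}, and no new ideas beyond those are needed.
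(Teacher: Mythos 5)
Your proposal is correct and follows exactly the route the paper intends (the paper omits the computation, remarking only that one calculates the one-dimensional case and concludes via tensor products): the $j=-1$ coordinates each contribute $\int_0^1 x\,\dint x=\tfrac12$, the $j\in\N_0$ coordinates each contribute $b^{-2j-1}/(\e^{\frac{2\pi\im}{b}l}-1)$ via Lemma \ref{lem_factor5}, and the bound follows since $|\e^{\frac{2\pi\im}{b}l}-1|\geq 2\sin(\pi/b)$. The only cosmetic slip is calling $b^{-j-1}(bm+k)$ the ``constant part'' — only $b^{-j}m$ is constant in $k$, while the $b^{-j-1}k$ piece is precisely what survives — but your displayed computation handles this correctly.
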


\begin{lem} \label{lem_haar_coeff_besov_indicator}

Let $z = (z_1,\ldots,z_d) \in \Q^d$ and $g(x) = \chi_{[0,x)}(z)$ for $x = (x_1, \ldots, x_d) \in \Q^d$. Let $j \in \N_{-1}^d, \, m \in \Dd_j, l \in \B_j$ and let $\mu_{jml}$ be the $b$-adic Haar coefficient of $g$. Then $\mu_{jml} = 0$ whenever $z$ is not contained in the interior of the $b$-adic interval $I_{jm}$ supporting the functions $h_{jml}$. If $z$ is contained in the interior of $I_{jm}$ then there is a unique $k = (k_1,\ldots,k_d)$ with $k_i \in \{0,1,\ldots,b-1\}$ if $j_i \neq -1$ or $k_i = -1$ if $j_i = -1$ such that, $z$ is contained in $I_{jm}^k$. Then 
	      \begin{multline*}
        \mu_{jml} = b^{-|j| - s} \prod_{1 \leq i \leq d; \, j_i = -1}(1-z_i) \times\\
        \times \prod_{\nu = 1}^s \left[ (bm_{\eta_{\nu}}+k_{\eta_{\nu}}+1-b^{j_{\eta_{\nu}}+1}z_{\eta_{\nu}}) \e^{\frac{2\pi \im}{b}k_{\eta_{\nu}} l_{\eta_{\nu}}} + \sum_{r_{\eta_{\nu}} = k_{\eta_{\nu}}+1}^{b-1} \e^{\frac{2\pi \im}{b}r_{\eta_{\nu}} l_{\eta_{\nu}}}, \right]
        \end{multline*}
and therefore,
\[ |\mu_{jml}| \leq c \, b^{-|j|} \]
with a constant $c > 0$.

\end{lem}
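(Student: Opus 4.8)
The plan is to reduce everything to the one-dimensional case and then take tensor products, exactly as was done for the Besov coefficients of $x_1\cdots x_d$ in Lemma \ref{lem_haar_coeff_besov_x}. First I would treat a single coordinate. Fix $i$ and consider the function $g_i(x_i) = \chi_{[0,x_i)}(z_i) = \chi_{(z_i,1)}(x_i)$ as a function of $x_i\in[0,1)$; note $g(x)=\prod_{i=1}^d g_i(x_i)$, so by the tensor-product definition of $h_{jml}$ we have $\mu_{jml}(g)=\prod_{i=1}^d \mu_{j_i m_i l_i}(g_i)$. For the coordinates with $j_i=-1$ the Haar function is $h_{-1,0,1}=\chi_{[0,1)}$, so $\mu_{-1,0,1}(g_i)=\int_0^1 \chi_{(z_i,1)}(x_i)\,\dint x_i = 1-z_i$, which produces the factor $\prod_{j_i=-1}(1-z_i)$.

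For a coordinate with $j_i\in\N_0$: the Haar function $h_{j_i m_i l_i}$ is supported on $I_{j_i m_i}$ and takes the constant value $\e^{\frac{2\pi\im}{b}l_i k}$ on the child $I_{j_i m_i}^k$. Hence $\mu_{j_i m_i l_i}(g_i)=\sum_{k=0}^{b-1}\e^{\frac{2\pi\im}{b}l_i k}\int_{I_{j_i m_i}^k}\chi_{(z_i,1)}(x_i)\,\dint x_i$, and the integral over $I_{j_i m_i}^k$ is simply the length of the part of that child lying above $z_i$. If $z_i\notin I_{j_i m_i}$ then either the whole support lies below $z_i$ (all integrals vanish) or the whole support lies above $z_i$, in which case $\int_{I_{j_i m_i}^k}1\,\dint x_i = b^{-j_i-1}$ for every $k$ and $\sum_{k=0}^{b-1}\e^{\frac{2\pi\im}{b}l_i k}=0$ since $l_i\in\{1,\ldots,b-1\}$; either way $\mu_{j_i m_i l_i}(g_i)=0$. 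This already gives the vanishing statement: if $z$ is not in the interior of $I_{jm}=\prod I_{j_i m_i}$ (more precisely, if $z_i\notin I_{j_i m_i}$ for some $i$ with $j_i\neq-1$) then the whole product is zero. If instead $z_i$ lies in the interior of $I_{j_i m_i}$, there is a unique child index $k_i$ with $z_i\in I_{j_i m_i}^{k_i}$; the children with index $k>k_i$ contribute $b^{-j_i-1}\e^{\frac{2\pi\im}{b}r_i l_i}$ each (full length), the children with $k<k_i$ contribute $0$, and the child $k_i$ contributes $\e^{\frac{2\pi\im}{b}k_i l_i}$ times the length of $I_{j_i m_i}^{k_i}\cap(z_i,1)$, which equals $b^{-j_i-1}(b m_i + k_i + 1 - b^{j_i+1}z_i)$ by a direct computation of the right endpoint of $I_{j_i m_i}^{k_i}=I_{j_i+1,\,b m_i+k_i}$. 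Factoring out $b^{-j_i-1}$ from this coordinate and collecting: the coordinates with $j_i\neq-1$ contribute a total factor $b^{-\sum_{\nu}(j_{\eta_\nu}+1)}=b^{-|j|-s}$, matching the prefactor in the statement, and the bracketed sum is exactly the one written in the lemma. Multiplying all coordinates gives the displayed formula.

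The estimate $|\mu_{jml}|\le c\,b^{-|j|}$ then follows by a crude bound on the bracket: each term $\e^{\frac{2\pi\im}{b}r_i l_i}$ has modulus $1$ and there are at most $b$ of them, and the coefficient $bm_i+k_i+1-b^{j_i+1}z_i$ lies in $(0,b]$ since $z_i\in I_{j_i m_i}^{k_i}$, so each of the $s$ brackets is bounded in modulus by a constant depending only on $b$; likewise $|1-z_i|\le 1$; hence $|\mu_{jml}|\le b^{-|j|-s}\cdot C^s \le c\,b^{-|j|}$ with $c=c(b,d)$. There is no real obstacle here — the only point requiring care is bookkeeping the endpoints of the $b$-adic child intervals correctly so that the affine expression $bm_i+k_i+1-b^{j_i+1}z_i$ comes out right, and making sure the "interior" hypothesis is used to guarantee uniqueness of $k_i$ and strict positivity of that expression. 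The result is the pointwise analogue, in arbitrary dimension and arbitrary base, of Lemma \ref{lem_haar_coeff_besov_indicator_ham}, and the argument is the verbatim $d$-fold tensor-product version of the one behind that lemma.
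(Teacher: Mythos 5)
Your proposal is correct and is exactly the argument the paper intends: the paper's entire justification is the sentence ``One easily calculates the one-dimensional case and concludes via tensor products,'' and you have simply written that calculation out in full (including the correct identification of the factor $b^{-j_i-1}(bm_i+k_i+1-b^{j_i+1}z_i)$ for the child containing $z_i$ and the vanishing of $\sum_{k}\e^{\frac{2\pi\im}{b}l_ik}$ when the support lies entirely above $z_i$). The only cosmetic remark is that the expression $bm_i+k_i+1-b^{j_i+1}z_i$ actually lies in $(0,1]$, not merely $(0,b]$, and the boundary case $z_i$ equal to the left endpoint of $I_{j_im_i}$ still gives zero since the integrals over all children remain of full length; neither affects your conclusion.
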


One easily calculates the one-dimensional case and concludes via tensor products.

\begin{lem} \label{lem_lambda_s_minus_1}

Let $\lambda \in \N_0$ and $s \in \N$. Then
\[ \# \left\{ (j_1, \ldots, j_s) \in \N_0^s: \, j_1 + \ldots + j_s = \lambda \right\} \leq (\lambda + 1)^{s-1}. \]

\end{lem}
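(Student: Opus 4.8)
## Proof proposal for Lemma \ref{lem_lambda_s_minus_1}

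The plan is to bound the number of ordered $s$-tuples of nonnegative integers summing to $\lambda$ by a simple counting argument, rather than using the exact formula $\binom{\lambda+s-1}{s-1}$ directly. First I would observe that an $s$-tuple $(j_1,\ldots,j_s)\in\N_0^s$ with $j_1+\ldots+j_s=\lambda$ is uniquely determined by its first $s-1$ coordinates $(j_1,\ldots,j_{s-1})$, since the last coordinate is forced to be $j_s=\lambda-(j_1+\ldots+j_{s-1})$. Hence the map sending such a tuple to $(j_1,\ldots,j_{s-1})$ is injective, so it suffices to count the possible values of the truncated tuple.

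Next I would note that every coordinate $j_i$ of a valid tuple satisfies $0\le j_i\le\lambda$, so each of the first $s-1$ coordinates takes at most $\lambda+1$ values. Therefore the number of possible truncated tuples is at most $(\lambda+1)^{s-1}$, which gives the claimed bound. The case $s=1$ is degenerate and trivially fine: the only tuple is $(\lambda)$, and $(\lambda+1)^{0}=1$, so the inequality holds.

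I do not anticipate any real obstacle here; the statement is an elementary combinatorial estimate and the proof is a one-line injectivity-plus-pigeonhole argument. The only minor point to be careful about is handling $s=1$ separately (or noting that the empty product convention makes the general argument go through), and making sure the bound is stated as an inequality rather than an equality, since $(\lambda+1)^{s-1}$ overcounts compared with the exact binomial coefficient.
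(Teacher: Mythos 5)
Your proof is correct, but it takes a different route from the paper. The paper argues by induction on $s$: it writes the count for $s+1$ coordinates as $\sum_{i=0}^{\lambda}$ of the counts for $s$ coordinates summing to $\lambda-i$, applies the inductive hypothesis to get $\sum_{i=0}^{\lambda}(\lambda-i+1)^{s-1}$, and bounds this by $(\lambda+1)^{s}$. You instead give a direct one-step argument: the truncation map $(j_1,\ldots,j_s)\mapsto(j_1,\ldots,j_{s-1})$ is injective on the set in question because $j_s$ is determined by the sum, and each of the first $s-1$ coordinates lies in $\{0,\ldots,\lambda\}$, so the set injects into a set of cardinality $(\lambda+1)^{s-1}$. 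Your argument avoids the induction entirely and is arguably cleaner; the paper's induction has the minor virtue of exhibiting the recursive structure $\binom{\lambda+s}{s}=\sum_i\binom{\lambda-i+s-1}{s-1}$ behind the exact count, but for the purposes of this crude upper bound nothing is lost by your shortcut. Your handling of the degenerate case $s=1$ is also fine.
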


\begin{proof}
For $s = 1$ the assertion is trivial. Inductively we get
\begin{align*}
& \# \left\{ (j_1, \ldots, j_{s+1}) \in \N_0^{s+1}: \, j_1 + \ldots + j_{s+1} = \lambda \right\} \\ 
& \qquad = \sum_{i = 0}^{\lambda} \# \left\{ (j_1, \ldots, j_s) \in \N_0^s: \, j_1 + \ldots + j_s = \lambda - i \right\} \\
& \qquad \leq \sum_{i = 0}^{\lambda} (\lambda - i + 1)^{s-1} \leq (\lambda + 1)^s
\end{align*}
\end{proof}

We consider the Walsh series expansion of the function $\chi_{[0,y)}$,
\begin{align}
\chi_{[0,y)}(x) = \sum_{t = 0}^\infty \hat{\chi}_{[0,y)}(t) \wal_t(x),
\end{align}
where for $t \in \N_0$ with $b$-adic expansion $t = \tau_0 + \tau_1 b + \ldots + \tau_{\varrho(t) - 1} b^{\varrho(t) - 1}$, the $t$-th Walsh coefficient is given by
\[ \hat{\chi}_{[0,y)}(t) = \int_0^1 \chi_{[0,y)}(x) \overline{\wal_t(x)} \dint x = \int_0^y \overline{\wal_t(x)} \dint x. \]
For $t > 0$ we put $t = t' + \tau_{\varrho(t) - 1} b^{\varrho(t) - 1}$.

\begin{lem} \label{chi_roof_lem}

Let $b \geq 2$ be an integer and $y \in \Q$. Then we have
\[ \hat{\chi}_{[0,y)}(0) = y = \frac{1}{2} + \sum_{a = 1}^\infty \sum_{z = 1}^{b-1} \frac{1}{b^a (\e^{-\frac{2 \pi \im}{b} z} - 1)} \wal_{z b^{a-1}}(y) \]
and for any integer $t > 0$ we have
\begin{multline*}
\hat{\chi}_{[0,y)}(t) = \frac{1}{b^{\varrho(t)}}\left( \frac{1}{1 - \e^{-\frac{2 \pi \im}{b} \tau_{\varrho(t) - 1}}} \overline{\wal_{t'}(y)} \right. +\\
+ \left( \frac{1}{\e^{-\frac{2 \pi \im}{b} \tau_{\varrho(t) - 1}} - 1} + \frac{1}{2} \right) \overline{\wal_t(y)} +\\
+ \left. \sum_{a = 1}^\infty \sum_{z = 1}^{b-1} \frac{1}{b^a (\e^{\frac{2 \pi \im}{b} z} - 1)} \overline{\wal_{z b^{\varrho(t)+a-1} + t}(y)} \right).
\end{multline*}

\end{lem}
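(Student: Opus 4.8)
The statement is about the $b$-adic Walsh coefficients $\hat\chi_{[0,y)}(t)$ of the indicator function $\chi_{[0,y)}$ on $\Q$, and I want to prove the two closed-form identities by a direct computation using the structure of $b$-adic intervals and the recursive nature of the Walsh functions. The main tool is Proposition~\ref{prp_constant_walsh}: $\wal_t$ is constant on each $b$-adic interval of level $\varrho(t)$, so integrating $\overline{\wal_t}$ over $[0,y)$ naturally splits into a "full $b$-adic block" part and a "leftover" part, and the leftover part is again an integral of the form $\int_0^{y'}\overline{\wal_{s}}$ for a shorter-order $s$. This self-similarity is exactly what produces the infinite series.

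\textbf{Step 1: the case $t=0$.} Here $\hat\chi_{[0,y)}(0)=\int_0^y\overline{\wal_0}\,\dint x=\int_0^y 1\,\dint x=y$ since $\wal_0=\chi_\Q$. For the series representation, I expand $\chi_{[0,y)}$ (as a function of its \emph{second} variable, so to speak: the identity is the Walsh expansion of the function $y\mapsto\chi_{[0,y)}(x)$ evaluated... ) — more cleanly, I note that $y=\frac12+\sum_{a\ge1}\sum_{z=1}^{b-1}c_{a,z}\wal_{zb^{a-1}}(y)$ is just the Walsh expansion of the identity function $y\mapsto y$, whose coefficients $\widehat{(\mathrm{id})}(t)$ vanish unless $t=zb^{a-1}$ for some digit $z$, because $\int_0^1 x\,\overline{\wal_t(x)}\,\dint x$ can be evaluated by writing $x=\sum x_ib^{-i}$ and using orthogonality of the characters $\e^{\frac{2\pi\im}{b}z\,x_i}$ digit by digit; the constant term is $\int_0^1 x\,\dint x=\frac12$. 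A short direct computation of $\int_0^1 x\,\overline{\wal_{zb^{a-1}}(x)}\,\dint x$ gives the stated coefficient $\tfrac{1}{b^a(\e^{-\frac{2\pi\im}{b}z}-1)}$.

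\textbf{Step 2: the case $t>0$.} Write $\varrho=\varrho(t)$, so $t=t'+\tau_{\varrho-1}b^{\varrho-1}$ with $\tau_{\varrho-1}\ne0$. I split $[0,y)$ into the union of the full level-$\varrho$ $b$-adic intervals $I_{\varrho,m}$ contained in it, say $m=0,\dots,m_0-1$ where $m_0=\lfloor b^{\varrho}y\rfloor$, plus the residual interval $[m_0 b^{-\varrho},y)$. On each $I_{\varrho,m}$ the function $\wal_t$ is constant with value $\wal_t(mb^{-\varrho})$, so $\int_{I_{\varrho,m}}\overline{\wal_t}=b^{-\varrho}\overline{\wal_t(mb^{-\varrho})}$; summing a geometric-type series over $m$ using the explicit formula $\wal_t(mb^{-\varrho})=\e^{\frac{2\pi\im}{b}(\tau_0 m_\varrho+\dots+\tau_{\varrho-1}m_1)}$ from the proof of Proposition~\ref{prp_constant_walsh} produces, after careful bookkeeping of the digit of $m$ in position corresponding to $\tau_{\varrho-1}$, the first two terms $\frac{1}{b^{\varrho}}\big(\frac{1}{1-\e^{-\frac{2\pi\im}{b}\tau_{\varrho-1}}}\overline{\wal_{t'}(y)}+(\frac{1}{\e^{-\frac{2\pi\im}{b}\tau_{\varrho-1}}-1}+\frac12)\overline{\wal_t(y)}\big)$ — this is where the factor $\overline{\wal_{t'}(y)}$ enters, because the partial sum of the geometric series in the top digit, evaluated at $m_0$, is expressible through the value of $\wal$ at the truncated index. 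The residual integral $\int_{m_0b^{-\varrho}}^{y}\overline{\wal_t(x)}\,\dint x$ is then handled by a change of variables $x=m_0b^{-\varrho}+b^{-\varrho}u$, $u\in[0,b^{\varrho}y-m_0)$, under which $\wal_t(x)$ factors into $\wal_t(m_0b^{-\varrho})$ times a Walsh function of $u$ of \emph{higher} order; iterating, or equivalently recognising this as $\int_0^{y''}\overline{\wal_0}$ composed with shifts, generates the infinite sum $\sum_{a\ge1}\sum_{z=1}^{b-1}\frac{1}{b^a(\e^{\frac{2\pi\im}{b}z}-1)}\overline{\wal_{zb^{\varrho+a-1}+t}(y)}$ by the same mechanism as in Step~1.

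\textbf{Main obstacle.} The conceptual content is light, but the bookkeeping in Step~2 — tracking exactly how the digit $\tau_{\varrho-1}$ and the quotient/remainder of $m_0=\lfloor b^\varrho y\rfloor$ interact, and verifying that the "residual integral" reassembles into Walsh functions indexed precisely by $zb^{\varrho+a-1}+t$ rather than some neighbouring index — is the delicate part, and the $\pm$-free bookkeeping of which of $\e^{\frac{2\pi\im}{b}\tau_{\varrho-1}}$ versus $\e^{-\frac{2\pi\im}{b}\tau_{\varrho-1}}$ appears where. I expect most of the effort to go into making the geometric-sum manipulation for the leading digit clean; the rest follows the $t=0$ computation verbatim with an index shift by $\varrho$. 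Once both identities are established for $b=2$ they extend with no change to general $b$, so there is no separate dyadic case to treat.
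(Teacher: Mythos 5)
Your outline is correct, and it is essentially the standard argument: the paper itself does not prove this lemma but only cites Fine, Price and \cite[Lemma 14.8]{DP10}, and your decomposition of $[0,y)$ into the full level-$\varrho(t)$ blocks plus a residual interval, with the residual reduced to the $t=0$ expansion shifted by $\varrho(t)$, is exactly that proof. Two points should be tightened. First, in Step~1, computing the Walsh coefficients of $y\mapsto y$ does not by itself give the pointwise identity; add that the $b^n$-th partial sum of the Walsh series of $f$ equals $b^n\int_{I_{n,m}}f$ for $y\in I_{n,m}$, which for $f(x)=x$ is $mb^{-n}+\tfrac12 b^{-n}\to y$, so the series (grouped by $a$) converges to $y$ at every point. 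Second, on the residual interval $[m_0b^{-\varrho(t)},y)$ the function $\wal_t$ does not ``factor into $\wal_t(m_0b^{-\varrho(t)})$ times a higher-order Walsh function'': it is simply constant equal to $\wal_t(y)$ there, since it depends only on the first $\varrho(t)$ digits, so the residual integral is $\overline{\wal_t(y)}\,\bigl(y-m_0b^{-\varrho(t)}\bigr)$; the higher-order Walsh functions enter only when you expand the length $y-m_0b^{-\varrho(t)}=b^{-\varrho(t)}\{b^{\varrho(t)}y\}$ via Step~1 and reindex $z\mapsto b-z$ to convert $\wal_{zb^{\varrho(t)+a-1}}(y)$ into $\overline{\wal_{zb^{\varrho(t)+a-1}}(y)}$ and $\e^{-\frac{2\pi\im}{b}z}$ into $\e^{\frac{2\pi\im}{b}z}$. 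With these repairs the bookkeeping closes as you expect: the sum over full blocks collapses (mean zero over each level-$(\varrho(t)-1)$ interval) to a geometric sum over the single digit $y_{\varrho(t)}$, producing the $\overline{\wal_{t'}(y)}$ and $\frac{1}{\e^{-2\pi\im\tau_{\varrho(t)-1}/b}-1}\overline{\wal_t(y)}$ terms, while the residual supplies $\tfrac12\overline{\wal_t(y)}$ and the infinite series. Your closing remark about deducing general $b$ from $b=2$ is backwards and unnecessary — the computation is uniform in $b$ from the start.
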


This is called Fine-Price formulas and was first proved in \cite{F49} (dyadic case) and \cite{P57} ($b$-adic version). One often finds it in literature, e.g. see \cite[Lemma 14.8]{DP10} for an easy understandable proof.

For $n \in \N_0$ we consider the approximation of $\chi_{[0,y)}$ by the truncated series
\begin{align}
\chi_{[0,y)}^{(n)}(x) = \sum_{t = 0}^{b^n - 1} \hat{\chi}_{[0,y)}(t) \wal_t(x).
\end{align}
Now let $y = (y_1, \ldots, y_d) \in [0,1)^d$. Then we put
\[ \chi_{[0,y)}^{(n)}(x) = \prod_{i = 1}^d \chi_{[0,y_i)}^{(n)}(x_i) \]
where $x = (x_1, \ldots, x_d) \in [0,1)^d$ to approximate $\chi_{[0,y)}$. Let $N$ be a positive integer. Then we put for some point set $\P$ in $\Q^d$ with $N$ points
\begin{align}
\Theta_{\P}(y) = \frac{1}{N} \sum_{z \in \P} \chi_{[0,y)}^{(n)}(z) - y_1 \cdot \ldots \cdot y_d.
\end{align}
We partition the discrepancy function as
\begin{align} \label{split}
D_{\P}(y) = \Theta_{\P}(y) + R_{\P}(y)
\end{align}
with the main part $\Theta_{\P}$ and the rest $R_{\P}$ which will be handled separately. We now restrict ourselves again to the case where $b$ is prime. The reader might want to recall Definition \ref{df_d_prime}.

\begin{lem} \label{theta_lem}

Let $\C$ be an $\Fb_b$-linear subspace of $\Fb_b^{dn}$ of dimension $n$ and let $\P = \Phi_n^d(\C)$ denote the corresponding digital $(v,n,d)$-net in base $b$ with generating matrices $C_1, \ldots, C_d$. Then
\[ \Theta_{\P}(y) = \sum_{t \in \Dn'(C_1, \ldots, C_d)} \hat{\chi}_{[0,y)}(t). \]

\end{lem}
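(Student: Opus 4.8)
The plan is to unwind the definition of $\Theta_{\P}$ and apply the duality relation between the digital net and Walsh functions (Lemma \ref{lem_duality_into_disc}). First I would write
\[ \Theta_{\P}(y) = \frac{1}{b^n} \sum_{z \in \P} \chi_{[0,y)}^{(n)}(z) - y_1 \cdots y_d \]
and insert the tensor-product form $\chi_{[0,y)}^{(n)}(x) = \prod_{i=1}^d \chi_{[0,y_i)}^{(n)}(x_i)$ together with the one-dimensional expansions $\chi_{[0,y_i)}^{(n)}(x_i) = \sum_{t_i=0}^{b^n-1} \hat{\chi}_{[0,y_i)}(t_i) \wal_{t_i}(x_i)$. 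Multiplying these out and using that Walsh functions on $\Q^d$ are tensor products, $\wal_t(x) = \wal_{t_1}(x_1) \cdots \wal_{t_d}(x_d)$, yields
\[ \chi_{[0,y)}^{(n)}(x) = \sum_{t \in \{0,\ldots,b^n-1\}^d} \hat{\chi}_{[0,y)}(t) \, \wal_t(x), \]
where $\hat{\chi}_{[0,y)}(t) = \prod_{i=1}^d \hat{\chi}_{[0,y_i)}(t_i)$. Hence
\[ \frac{1}{b^n} \sum_{z \in \P} \chi_{[0,y)}^{(n)}(z) = \sum_{t \in \{0,\ldots,b^n-1\}^d} \hat{\chi}_{[0,y)}(t) \, \frac{1}{b^n} \sum_{z \in \P} \wal_t(z). \]

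Next I would apply Lemma \ref{lem_duality_into_disc}: since $\P = \Phi_n^d(\C)$ is a digital $(v,n,d)$-net with generating matrices $C_1,\ldots,C_d$, the inner sum $\frac{1}{b^n}\sum_{z\in\P}\wal_t(z)$ equals $1$ if $t \in \Dn(C_1,\ldots,C_d)$ and $0$ otherwise. (One should note here that the indexing matches: the net points are $x_h = \Phi_n^d(A)$ as $A$ ranges over $\C$, which is exactly the digital net construction, so the hypothesis of the lemma applies.) Therefore
\[ \frac{1}{b^n} \sum_{z \in \P} \chi_{[0,y)}^{(n)}(z) = \sum_{t \in \Dn(C_1,\ldots,C_d)} \hat{\chi}_{[0,y)}(t). \]
The term $t = 0$ contributes $\hat{\chi}_{[0,y)}(0) = \prod_{i=1}^d \hat{\chi}_{[0,y_i)}(0) = y_1 \cdots y_d$ by Lemma \ref{chi_roof_lem} (the statement $\hat{\chi}_{[0,y)}(0) = y$). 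Since $0 \in \Dn(C_1,\ldots,C_d)$ always, subtracting $y_1 \cdots y_d$ exactly cancels the $t=0$ term, leaving the sum over $\Dn'(C_1,\ldots,C_d) = \Dn(C_1,\ldots,C_d) \setminus \{0\}$, which is the claim.

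There is no serious obstacle here; the only points requiring a little care are (i) the interchange of the finite sum over $z \in \P$ with the finite sum over $t$, which is trivially justified as everything is finite, and (ii) checking that the tensor-product structure of the Walsh coefficients and of the truncation (truncating each coordinate-wise series at $b^n$ and then taking the product gives the same thing as is written in the definition of $\chi_{[0,y)}^{(n)}$ on $\Q^d$, namely $\prod_i \chi_{[0,y_i)}^{(n)}(x_i)$) is consistent — which it is, by the very definition given just before the lemma. So the proof is essentially a direct unwinding of definitions combined with the duality lemma and the value $\hat{\chi}_{[0,y)}(0) = y$.

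\begin{proof}
By definition,
\[ \Theta_{\P}(y) = \frac{1}{b^n} \sum_{z \in \P} \chi_{[0,y)}^{(n)}(z) - y_1 \cdots y_d. \]
Writing $\chi_{[0,y)}^{(n)}(x) = \prod_{i=1}^d \chi_{[0,y_i)}^{(n)}(x_i)$ and expanding each factor as $\chi_{[0,y_i)}^{(n)}(x_i) = \sum_{t_i = 0}^{b^n - 1} \hat{\chi}_{[0,y_i)}(t_i) \wal_{t_i}(x_i)$, then using $\wal_t(x) = \wal_{t_1}(x_1) \cdots \wal_{t_d}(x_d)$, we obtain
\[ \chi_{[0,y)}^{(n)}(x) = \sum_{t \in \{0,\ldots,b^n-1\}^d} \hat{\chi}_{[0,y)}(t) \, \wal_t(x), \qquad \hat{\chi}_{[0,y)}(t) = \prod_{i=1}^d \hat{\chi}_{[0,y_i)}(t_i). \]
Summing over the finitely many $z \in \P$ and interchanging the two finite sums gives
\[ \frac{1}{b^n} \sum_{z \in \P} \chi_{[0,y)}^{(n)}(z) = \sum_{t \in \{0,\ldots,b^n-1\}^d} \hat{\chi}_{[0,y)}(t) \, \frac{1}{b^n} \sum_{z \in \P} \wal_t(z). \]
Since $\P = \Phi_n^d(\C)$ is the digital $(v,n,d)$-net in base $b$ with generating matrices $C_1, \ldots, C_d$, Lemma \ref{lem_duality_into_disc} yields
\[ \frac{1}{b^n} \sum_{z \in \P} \wal_t(z) = \begin{cases} 1 & \text{ if } t \in \Dn(C_1, \ldots, C_d), \\ 0 & \text{ otherwise}. \end{cases} \]
Therefore
\[ \frac{1}{b^n} \sum_{z \in \P} \chi_{[0,y)}^{(n)}(z) = \sum_{t \in \Dn(C_1, \ldots, C_d)} \hat{\chi}_{[0,y)}(t). \]
The element $t = 0$ always lies in $\Dn(C_1, \ldots, C_d)$, and by Lemma \ref{chi_roof_lem} we have $\hat{\chi}_{[0,y_i)}(0) = y_i$ for each $i$, hence $\hat{\chi}_{[0,y)}(0) = y_1 \cdots y_d$. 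Subtracting $y_1 \cdots y_d$ thus removes precisely the contribution of $t = 0$, and we conclude
\[ \Theta_{\P}(y) = \sum_{t \in \Dn'(C_1, \ldots, C_d)} \hat{\chi}_{[0,y)}(t). \]
\end{proof}
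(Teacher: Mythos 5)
Your proof is correct and follows essentially the same route as the paper: expand $\chi_{[0,y)}^{(n)}$ via the tensor-product Walsh series, interchange the finite sums, apply Lemma \ref{lem_duality_into_disc}, and cancel the $t=0$ term against $y_1\cdots y_d = \hat{\chi}_{[0,y)}(0)$. The only difference is that you spell out the cancellation of the zero term slightly more explicitly than the paper does.
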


\begin{proof}
For $t = (t_1, \ldots, t_d) \in \N_0^d$ and $y = (y_1, \ldots, y_d) \in \Q^d$, we have
\[ \hat{\chi}_{[0,y)}(t) = \hat{\chi}_{[0,y_1)}(t_1) \cdot \ldots \cdot \hat{\chi}_{[0,y_d)}(t_d). \]
By Lemma \ref{lem_duality_into_disc} we get
\begin{align*}
\Theta_{\P}(y) & = \frac{1}{b^n} \sum_{z \in \P} \sum_{t_1, \ldots, t_d = 0}^{b^n - 1} \hat{\chi}_{[0,y)}(t) \wal_t(z) - \hat{\chi}_{[0,y)}((0, \ldots, 0)) \\
               & = \sum_{\substack{t_1, \ldots, t_d = 0 \\ (t_1, \ldots t_d) \neq (0, \ldots, 0)}}^{b^n - 1} \hat{\chi}_{[0,y)}(t) \frac{1}{b^n} \sum_{z \in \P} \wal_t(z) \\
               & = \sum_{t \in \Dn'(C_1, \ldots, C_d)} \hat{\chi}_{[0,y)}(t).
\end{align*}
\end{proof}

The next results give us pointwise estimates for the rest term, so that only the main term $\Theta_{\P}$ needs to be considered.

\begin{lem} \label{rest_lem}

There exists a constant $c > 0$ such that, for any $n \in \N_0$ and for any $\Fb_b$-linear subspace $\C$ of $\Fb_b^{dn}$ of dimension $n$ with dual space $\C^{\perp}$ satisfying $\delta_n(\C^{\perp}) \geq n + 1$ with the corresponding digital $(0,n,d)$-net $\P = \Phi_n^d(\C)$ and for every $y \in [0,1)^d$, we have
\[ |R_{\P}(y)| \leq c \, b^{-n}. \]

\end{lem}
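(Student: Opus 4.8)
The plan is to estimate the rest term $R_{\P}(y) = D_{\P}(y) - \Theta_{\P}(y)$ pointwise by writing both $D_{\P}$ and $\Theta_{\P}$ in terms of Walsh expansions and controlling the tail. First I would fix $y = (y_1,\ldots,y_d) \in [0,1)^d$ and recall that $\chi_{[0,y)}(x) = \prod_{i=1}^d \chi_{[0,y_i)}(x_i)$ and $\chi_{[0,y)}^{(n)}(x) = \prod_{i=1}^d \chi_{[0,y_i)}^{(n)}(x_i)$. Since $\P = \Phi_n^d(\C)$ is a digital $(0,n,d)$-net in base $b$ (by Theorem \ref{CS_is_digital_net}, or rather the hypothesis here), it is in particular fair with respect to $b$-adic intervals of order $n$, so the one-dimensional truncations behave well on net points: each coordinate $z_i$ of a net point lies in some $b$-adic interval of order $n$, and $\chi_{[0,y_i)}^{(n)}(z_i)$ agrees with $\chi_{[0,y_i)}(z_i)$ unless $y_i$ lies in the same $b$-adic interval of order $n$ as $z_i$. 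The key point is that
\[ R_{\P}(y) = \frac{1}{b^n} \sum_{z \in \P} \left( \chi_{[0,y)}(z) - \chi_{[0,y)}^{(n)}(z) \right), \]
so it suffices to bound the number of net points $z$ for which the product over coordinates differs, together with the size of each defect.

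The cleanest route is via Lemma \ref{theta_lem}, which gives $\Theta_{\P}(y) = \sum_{t \in \Dn'(C_1,\ldots,C_d)} \hat{\chi}_{[0,y)}(t)$, combined with the full Walsh expansion $D_{\P}(y) = \sum_{t \in \Dn'(C_1,\ldots,C_d)} \hat{\chi}_{[0,y)}(t) \cdot \mathbbm{1}$ obtained by applying Lemma \ref{lem_duality_into_disc} without truncation: indeed $\frac{1}{b^n}\sum_{z\in\P}\chi_{[0,y)}(z) - y_1\cdots y_d = \sum_{t \in \Dn'(C_1,\ldots,C_d)} \hat{\chi}_{[0,y)}(t)$ as well, where now the sum runs over \emph{all} of $\Dn'$ and not just its intersection with $\{0,\ldots,b^n-1\}^d$. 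Therefore
\[ R_{\P}(y) = \sum_{\substack{t \in \Dn'(C_1,\ldots,C_d) \\ t \notin \{0,\ldots,b^n-1\}^d}} \hat{\chi}_{[0,y)}(t), \]
that is, $R_{\P}$ collects exactly the contributions of dual-net elements $t$ having at least one coordinate $t_i \geq b^n$, equivalently $\varrho(t_i) > n$ for some $i$, so $\varrho^d(t) \geq n+1$. Now I would invoke the Fine--Price formula (Lemma \ref{chi_roof_lem}) to get $|\hat{\chi}_{[0,y_i)}(t_i)| \leq c\, b^{-\varrho(t_i)}$ uniformly in $y_i$, hence $|\hat{\chi}_{[0,y)}(t)| \leq c^d\, b^{-\varrho^d(t)}$. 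The condition $\delta_n(\C^{\perp}) \geq n+1$ means every nonzero element of $\C^{\perp}$ has NRT weight at least $n+1$; one must relate $\Dn'(C_1,\ldots,C_d)$ to $\C^{\perp}$ so that this lower bound applies, and then count the number of $t$ with $\varrho^d(t) = N$ for each $N \geq n+1$.

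The main obstacle is the counting/summation step: one must show $\sum_{t : \varrho^d(t) \geq n+1} b^{-\varrho^d(t)}$ over the relevant lattice is $O(b^{-n})$ despite the fact that the number of $t \in \N_0^d$ with $\varrho^d(t) = N$ grows like $b^N$ up to polynomial-in-$N$ factors, which naively would make the sum of constant order. The resolution is that $t$ is constrained to lie in $\Dn'$, an $\Fb_b$-linear object of dimension $dn - n$ inside an ambient space where the weight condition forces sparsity at large weights; more precisely, for fixed $t_1,\ldots,t_d$ the digits beyond position $n$ are \emph{determined} by the linear relation $C_1^\top \bar t_1 + \ldots + C_d^\top \bar t_d = 0$ (which only involves the first $n$ digits of each $t_i$), so the dual net elements with coordinates possibly exceeding $b^n$ still form a set parametrized by at most $b^{dn-n}$ choices, and one combines this with the geometric decay $b^{-\varrho^d(t)}$ and the weight lower bound $\varrho^d(t) \geq n+1$. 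I would structure the estimate so that the polynomial factors from counting elements of a given weight (bounded using Lemma \ref{lem_lambda_s_minus_1}) are absorbed against a slightly stronger geometric decay, yielding the clean bound $|R_{\P}(y)| \leq c\, b^{-n}$ with $c$ depending only on $b$ and $d$. All of this follows Skriganov's argument in \cite{S06} and \cite[Chapter 16]{DP10}, so I would cite those for the routine parts and give the Walsh-expansion identity above as the conceptual core.
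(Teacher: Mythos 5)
The paper does not actually prove this lemma: it records that $\P$ is a $(0,n,d)$-net and refers to \cite[Lemma 16.21]{DP10}. The argument behind that citation is precisely the elementary one you sketch in your first paragraph and then discard. Namely, write $R_{\P}(y)=\frac{1}{b^n}\sum_{z\in\P}\bigl(\chi_{[0,y)}(z)-\chi^{(n)}_{[0,y)}(z)\bigr)$, note that the $b^n$-th Walsh partial sum satisfies $\chi^{(n)}_{[0,y_i)}(x)=b^n\,|I\cap[0,y_i)|$ with $I$ the $b$-adic interval of order $n$ containing $x$ (Walsh--Dirichlet kernel), so the one-dimensional defect $\chi^{(n)}_{[0,y_i)}(x)-\chi_{[0,y_i)}(x)$ is bounded by $1$ and vanishes unless $x$ and $y_i$ share an order-$n$ interval; then telescope the $d$-fold product and use that each slab $I_n(y_i)\times\Q^{d-1}$ has volume $b^{-n}$ and hence contains exactly one point of the $(0,n,d)$-net. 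This yields $|R_{\P}(y)|\le d\,b^{-n}$ in a few lines; had you carried that paragraph through, your proof would be complete and essentially the cited one.

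The route you actually develop has a genuine gap. Identifying $R_{\P}(y)$ with the tail of a Walsh expansion over an ``extended'' dual net requires the full Walsh series of $\chi_{[0,y)}$, which is not absolutely convergent: there are about $b^{\rho}$ frequencies $t$ with $\varrho(t)=\rho$, each with only the bound $|\hat{\chi}_{[0,y)}(t)|\le c\,b^{-\rho}$ available, so the majorant for $\sum_{\varrho(t)>n}|\hat{\chi}_{[0,y)}(t)|$ diverges, and the interchange of the sums over $z$ and over $t$ is unjustified. Your proposed rescue rests on a false claim: the relation $C_1^{\top}\bar t_1+\ldots+C_d^{\top}\bar t_d=0$ constrains only the first $n$ digits of each $t_i$, so the digits beyond position $n$ are completely free, not determined. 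The extended dual lattice therefore consists of all $t$ congruent to an element of the dual net modulo $b^n$ in each coordinate; it has roughly $b^{(d-1)n}\cdot b^{a}$ elements whose weight exceeds $n$ by $a$ in one coordinate, and the crude bound $b^{-n-a}$ per coordinate summed over $a\ge 1$ produces a divergent series. Nor can $\delta_n(\C^{\perp})\ge n+1$ help here, since elements with a coordinate $\ge b^n$ do not correspond to elements of $\C^{\perp}\subset\Fb_b^{dn}$ at all. So the part of your write-up you label the conceptual core does not close, while the discarded opening paragraph does.
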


The fact that $\P$ is a $(0,n,d)$-net is a consequence of Proposition \ref{prp_dig_net_duality}. For a proof of this lemma the interested reader is referred to \cite[Lemma 16.21]{DP10}.

We recall the following notation. For functions $f, g \in L_2(\Q^d)$ we write
\[ \left\langle f, g \right\rangle = \int_{\Q^d} f \, \bar{g}. \]

\begin{prp} \label{prp_min1}

Let $j = (-1, \ldots, -1), \, m = (0, \ldots, 0), \, l = (1, \ldots, 1)$. Then there exists a constant $c > 0$ independent of $n$ such that,
\[ |\mu_{jml}(D_{\CS_n})| \leq c \, b^{-n}. \]

\end{prp}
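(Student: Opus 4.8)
The plan is to compute the single $b$-adic Haar coefficient $\mu_{jml}(D_{\CS_n})$ for $j=(-1,\dots,-1)$, $m=(0,\dots,0)$, $l=(1,\dots,1)$, where the associated Haar function is simply $h_{jml}=h_{-1,0,1}\otimes\cdots\otimes h_{-1,0,1}=\chi_{[0,1)^d}$. Thus $\mu_{jml}(D_{\CS_n})=\int_{[0,1)^d}D_{\CS_n}(x)\,\dint x$, i.e. the average of the discrepancy function over the unit cube. Splitting $D_{\CS_n}=\Theta_{\CS_n}+R_{\CS_n}$ via \eqref{split} and using Lemma \ref{rest_lem} (applicable since by Theorem \ref{CS_is_digital_net} and Proposition \ref{CS_to_be_digital_net} the set $\CS_n$ is a digital $(0,n,d)$-net with $\delta_n(\C_n^\perp)\ge n+1$), we get $|\int R_{\CS_n}|\le c\,b^{-n}$ immediately. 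So it suffices to bound $\int_{[0,1)^d}\Theta_{\CS_n}(y)\,\dint y$.

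First I would apply Lemma \ref{theta_lem}: $\Theta_{\CS_n}(y)=\sum_{t\in\Dn'(C_1,\dots,C_d)}\hat\chi_{[0,y)}(t)$, where $\hat\chi_{[0,y)}(t)=\prod_{i=1}^d\hat\chi_{[0,y_i)}(t_i)$. Integrating over $y\in[0,1)^d$ and interchanging sum and integral (the sum over the dual net restricted to $\{0,\dots,b^n-1\}^d$ is finite), we obtain
\[ \int_{[0,1)^d}\Theta_{\CS_n}(y)\,\dint y=\sum_{t\in\Dn'(C_1,\dots,C_d)}\prod_{i=1}^d\int_0^1\hat\chi_{[0,y_i)}(t_i)\,\dint y_i. \]
Now the key elementary fact is that $\int_0^1\hat\chi_{[0,y)}(t)\,\dint y=\int_0^1\int_0^y\overline{\wal_t(x)}\,\dint x\,\dint y=\int_0^1(1-x)\overline{\wal_t(x)}\,\dint x$, which by orthogonality of the Walsh system evaluates to a clean closed form: it equals $1/2$ for $t=0$, and for $t>0$ it equals $\overline{\hat g(t)}$ where $g(x)=1-x$, so it is $O(b^{-\varrho(t)})$ with an absolute constant — more precisely one reads off from Lemma \ref{chi_roof_lem} (or directly) that $|\int_0^1\hat\chi_{[0,y)}(t)\,\dint y|\le c\,b^{-\varrho(t)}$ for all $t\ge 1$ and it is exactly $1/2$ for $t=0$.

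The main work — and the main obstacle — is then to show $\sum_{t\in\Dn'}\prod_i|\int_0^1\hat\chi_{[0,y_i)}(t_i)\,\dint y_i|\le c\,b^{-n}$. The point is that any nonzero $t\in\Dn(C_1,\dots,C_d)$ has NRT weight $\varrho^d(t)=v_n^d$ of the corresponding element of $\C_n^\perp$, and by Proposition \ref{CS_to_be_digital_net} we have $\delta_n(\C_n^\perp)\ge n+1$, so every $t\in\Dn'$ satisfies $\varrho^d(t)\ge n+1$, while coordinates with $t_i=0$ contribute the factor $1/2$. Hence each surviving product is $\le c^d\,2^{-\#\{i:t_i=0\}}b^{-\sum_{i:t_i\ne 0}\varrho(t_i)}\le c^d\,b^{-(n+1)}\cdot 2^{\#\{i:t_i\ne0\}}$ roughly, and the remaining task is to count, for each fixed total weight $W\ge n+1$, how many $t\in\Dn'$ have $\varrho^d(t)=W$: this is at most $b^{W-\dim\C_n^\perp}\cdot(\text{polynomial in }W)$ by a standard dimension/coding-bound argument (each prescribed weight pattern pins down $W$ digits, leaving a subspace of dimension $\dim\C_n-$ constraints — effectively the number of codewords of weight $W$ in $\C_n^\perp$ is $\le b^{W-(nd-n)}$ times a combinatorial factor), and one sums the resulting geometric series over $W\ge n+1$, which converges and yields a total of order $b^{-n}$ after absorbing the dimensional factor $b^{n}=b^{\dim\C_n}$. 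Care is needed with the polynomial factors from counting weight patterns (use Lemma \ref{lem_lambda_s_minus_1} for the number of ways to split $W$ among $d$ coordinates), but since $b\ge 2d^2$ these are dominated by the geometric decay. Assembling the $\Theta$ and $R$ bounds gives $|\mu_{jml}(D_{\CS_n})|\le c\,b^{-n}$, as claimed.
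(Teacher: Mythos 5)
Your setup coincides with the paper's: split $D_{\CS_n}=\Theta_{\CS_n}+R_{\CS_n}$, dispose of the rest term by Lemma \ref{rest_lem}, and reduce to bounding $\langle\Theta_{\CS_n},\wal_{(0,\ldots,0)}\rangle=\sum_{t\in\Dn'}\prod_i\int_0^1\hat{\chi}_{[0,y_i)}(t_i)\,\dint y_i$. But your treatment of this sum has a genuine gap. First, your count of codewords of NRT weight $W$ is dimensionally wrong: the relevant bound (Lemma \ref{main_est_lem_notenough}) is $b^{W-n}=b^{W-\dim\C_n}$ per weight pattern, not $b^{W-\dim\C_n^{\perp}}=b^{W-(nd-n)}$. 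Second, and more importantly, once you combine the per-term bound $c^d b^{-\varrho^d(t)}=c^d b^{-W}$ with the count $(W+1)^{d-1}b^{W-n}$ (the factor $(W+1)^{d-1}$ coming from Lemma \ref{lem_lambda_s_minus_1}), the exponentials cancel exactly and you are left with $b^{-n}\sum_{W=n+1}^{nd}(W+1)^{d-1}$. There is no geometric decay left to absorb the polynomial factor; this sum is of order $n^{d}b^{-n}$, not $b^{-n}$. So your argument, even with all details filled in, proves only $|\mu_{jml}|\leq c\,n^{d}b^{-n}$, which is weaker than the stated $c\,b^{-n}$ with $c$ independent of $n$. (That weaker bound would still suffice for Theorem \ref{thm_main} because there $r>0$, but it does not prove the proposition as stated.)

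The fix is already sitting in your own computation: $\int_0^1\hat{\chi}_{[0,y)}(t)\,\dint y$ is not merely $O(b^{-\varrho(t)})$ — by the Fine--Price formula it \emph{vanishes} unless $t=0$ or $t$ has a single nonzero $b$-adic digit, i.e. unless $\varkappa(t)\leq 1$. Hence any $t\in\Dn'$ contributing a nonzero product must satisfy $\varkappa^d(t)\leq d$, which contradicts $\varkappa_n(\C_n^{\perp})\geq 2d+1$ from Proposition \ref{CS_to_be_digital_net}. Therefore $\langle\Theta_{\CS_n},\wal_{(0,\ldots,0)}\rangle=0$ exactly, and only the rest term contributes $c\,b^{-n}$. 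This is precisely the paper's proof, and it is the reason the Hamming-weight condition appears in Proposition \ref{CS_to_be_digital_net} at all; you should use it rather than the NRT minimum distance here.
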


\begin{proof}
As in \eqref{split} we partition $D_{\CS_n}(y) = \Theta_{\CS_n}(y) + R_{\CS_n}(y)$ and we know from Proposition \ref{CS_to_be_digital_net} and Lemma \ref{rest_lem} that there exists a constant $c > 0$ such that, $|R_{\CS_n}(y)| \leq c \, b^{-n}$. Using Lemma \ref{theta_lem} we can calculate the Haar coefficient
\[ \mu_{jml}(D_{\CS_n}) = \left\langle \Theta_{\CS_n} + R_{\CS_n} , h_{jml} \right\rangle. \]
To do so we use the fact that $h_{jml} = \wal_{(0, \ldots, 0)}$. Now we consider the one-dimensional case first. From the first part of Lemma \ref{chi_roof_lem} we get
\[ \left\langle \hat{\chi}_{[0,\cdot)}(0),\wal_0 \right\rangle = \frac{1}{2}. \]
Now let $t > 0$. Then from the second part of Lemma \ref{chi_roof_lem} we have
\[ \left\langle \hat{\chi}_{[0,\cdot)}(t),\wal_0 \right\rangle = \begin{cases}
\frac{1}{b^{\varrho(t)}} \frac{1}{1 - \e^{-\frac{2 \pi \im}{b} \tau_{\varrho(t)-1}}} & t' = 0, \\
0 & t' \neq 0.
\end{cases} \]
This means that we can find a constant $c_1 > 0$ such that, for any integer $t \geq 0$ we have
\[ \left| \left\langle \hat{\chi}_{[0,\cdot)}(t),\wal_0 \right\rangle \right| \leq c_1 \, b^{-\varrho(t)} \]
and
\[ \left\langle \hat{\chi}_{[0,\cdot)}(t),\wal_0 \right\rangle = 0 \]
if $t > 0$ and $t' \neq 0$.

Now suppose, we have some $t \in \Dn'(C_1, \ldots, C_d)$ such that,
\[ \left\langle \hat{\chi}_{[0,\cdot)}(t), \wal_{(0, \ldots, 0)} \right\rangle \neq 0. \]
Then for all $1 \leq i \leq d$ we have
\[ \left\langle \hat{\chi}_{[0,\cdot)}(t_i), \wal_0 \right\rangle \neq 0. \]
Then necessarily $t_i = \tau_{\varrho(t_i)-1} \, b^{\varrho(t_i)-1}$ (since $t_i' = 0$) or $t_i = 0$ for any $i = 1, \ldots, d$ which means that either $\varkappa(t_i) = 1$ or $\varkappa(t_i) = 0$. In any case we have $\varkappa^d(t) \leq d$ which is a contradiction to $\varkappa_n(\mathcal{C}^{\bot}_n) \geq 2d + 1$ as must be the case according to Proposition \ref{CS_to_be_digital_net}. Therefore, for all $t \in \Dn'(C_1, \ldots, C_d)$ we have
\[ \left\langle \hat{\chi}_{[0,\cdot)}(t), \wal_{(0, \ldots, 0)} \right\rangle = 0 \]
and from Lemma \ref{theta_lem} follows $\left\langle \Theta_{\CS_n}, \wal_{(0, \ldots, 0)} \right\rangle = 0$.
Hence we have
\[ |\mu_{jml}(D_{\CS_n})| \leq |\left\langle \Theta_{\CS_n} , \wal_{(0, \ldots, 0)} \right\rangle| + |\left\langle R_{\CS_n} , \wal_{(0, \ldots, 0)} \right\rangle| \leq c \, b^{-n}. \]
\end{proof}

\begin{lem} \label{lem_scalprod_1}

Let $j \in \N_{-1}$, $m \in \Dd_j$, $l \in \B_j$ and $\alpha \in \N_0$. Then
\begin{enumerate}[(i)]
  \item if $j \in \N_0$ and $\varrho(\alpha) = j + 1$ and $\alpha_{j} = l$ then
\[ |\left\langle h_{jml} , \wal_{\alpha} \right\rangle| = b^{-j}, \]
  \item if $j = -1, \, m = l = 0$ and $\alpha = 0$ then
\[ |\left\langle h_{jml} , \wal_{\alpha} \right\rangle| = 1, \]
  \item if $\varrho(\alpha) \neq j + 1$ or $\alpha_{j} \neq l$ then
\[ |\left\langle h_{jml} , \wal_{\alpha} \right\rangle| = 0. \] \label{lem_haar_wal_3}
\end{enumerate}

\end{lem}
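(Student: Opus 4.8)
The plan is to reduce the statement to a one-dimensional computation of the inner product $\langle h_{jml}, \wal_\alpha\rangle$ and then to exploit the explicit constant-on-$b$-adic-intervals structure of both functions. First I would dispose of the trivial case $j = -1$: there $h_{-1,0,1} = \chi_{[0,1)}$ and $\wal_0 = \chi_{[0,1)}$ by Proposition \ref{prp_constant_walsh}, so $\langle h_{jml},\wal_\alpha\rangle = \int_0^1 1 \,\dint x = 1$, and for $\alpha\ne 0$ orthogonality of the Walsh system against the constant function gives $0$; this matches cases (ii) and the $j=-1$ part of (iii).

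For $j \in \N_0$, the key observation is that $h_{jml}$ is supported on $I_{jm}$ and takes the value $\e^{\frac{2\pi\im}{b} l k}$ on the child $I_{jm}^k = I_{j+1,bm+k}$, while $\wal_\alpha$ is, by Proposition \ref{prp_constant_walsh}, constant on $b$-adic intervals of level $\varrho(\alpha)$. I would split into the cases $\varrho(\alpha) \le j$, $\varrho(\alpha) = j+1$, and $\varrho(\alpha) \ge j+2$. If $\varrho(\alpha) \le j$, then $\wal_\alpha$ is constant, say equal to $c$, on all of $I_{jm}$, so
\[
\langle h_{jml},\wal_\alpha\rangle = \bar c \int_{I_{jm}} h_{jml}(x)\,\dint x = \bar c \sum_{k=0}^{b-1} \e^{\frac{2\pi\im}{b}lk} \, b^{-j-1} = 0
\]
since $l \in \{1,\dots,b-1\}$ makes the geometric sum vanish. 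If $\varrho(\alpha) \ge j+2$, then on each child $I_{jm}^k$ the Walsh function $\wal_\alpha$ is an orthogonal-to-constants function of level $\varrho(\alpha) \ge (j+1)+1$, so $\int_{I_{jm}^k} \wal_\alpha = 0$ for each $k$, and hence $\langle h_{jml},\wal_\alpha\rangle = \sum_k \e^{\frac{2\pi\im}{b}lk}\, \overline{\int_{I_{jm}^k}\wal_\alpha} = 0$. Both of these fall under case (iii).

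The main case is $\varrho(\alpha) = j+1$. Write $\alpha = \alpha' + \alpha_j b^{j}$ with $\alpha_j = \tau_{\varrho(\alpha)-1} \ne 0$ the leading digit. On the child $I_{jm}^k = I_{j+1,bm+k}$, the Walsh function $\wal_\alpha$ is constant; evaluating as in the proof of Proposition \ref{prp_constant_walsh} at the left endpoint shows that the dependence of this constant on $k$ is exactly through the factor $\e^{\frac{2\pi\im}{b}\alpha_j k}$ (the remaining digits of $bm+k$ coming from $m$ contribute a $k$-independent phase), while $\int_{I_{jm}^k} 1 = b^{-j-1}$. Therefore
\[
\langle h_{jml},\wal_\alpha\rangle = b^{-j-1}\, \overline{\wal_\alpha(mb^{-j})}\, \sum_{k=0}^{b-1} \e^{\frac{2\pi\im}{b}(l - \alpha_j)k}.
\]
This geometric sum equals $b$ if $\alpha_j \equiv l \pmod b$, i.e.\ $\alpha_j = l$ since both lie in $\{1,\dots,b-1\}$, and equals $0$ otherwise. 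Taking absolute values (and noting $|\wal_\alpha| = 1$) gives $|\langle h_{jml},\wal_\alpha\rangle| = b^{-j-1}\cdot b = b^{-j}$ in the first subcase, completing (i), and $0$ in the second, completing the remaining part of (iii). The only step requiring genuine care is tracking that the $k$-dependence of $\wal_\alpha$ on $I_{jm}^k$ is purely the single-digit phase $\e^{\frac{2\pi\im}{b}\alpha_j k}$ — this is a direct bookkeeping consequence of the $b$-adic digit formula for $\wal_\alpha$ already recorded in Proposition \ref{prp_constant_walsh}, so no real obstacle arises, just attention to indexing conventions for the digits of $\alpha$ and of $m$.
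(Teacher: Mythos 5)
Your proof is correct and follows essentially the same route as the paper's: the same case split on $\varrho(\alpha)$ versus $j+1$, the same use of the constancy of both $h_{jml}$ and $\wal_\alpha$ on $b$-adic intervals, and the same terminal geometric sum $\sum_{k=0}^{b-1}\e^{\frac{2\pi\im}{b}(l-\alpha_j)k}$ in the case $\varrho(\alpha)=j+1$. The only difference is cosmetic: you spell out why $\int_{I_{jm}^k}\wal_\alpha=0$ when $\varrho(\alpha)\ge j+2$, a detail the paper leaves implicit in its containment argument.
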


\begin{proof}
The second claim and the third for $j = -1$ are trivial so let $j \geq 0$. Let $y \in [0,1)$. We expand $\alpha$ and $y$ as
\[ \alpha = \alpha_0 + \alpha_1 b + \ldots + \alpha_{\varrho(\alpha)-1} b^{\varrho(\alpha)-1} \]
and
\[ y = y_1 b^{-1} + y_2 b^{-2} + \ldots. \]
Hence
\[ \wal_{\alpha} (y) = \e^{\frac{2 \pi \im}{b} (\alpha_0 y_1 + \ldots + \alpha_{\varrho(\alpha)-1} y_{\varrho(\alpha)})}. \]
The function $\wal_{\alpha}$ is constant on the intervals
\[ \left[\right.b^{-\varrho(\alpha)} \delta , b^{-\varrho(\alpha)} (\delta + 1)\left.\right) \]
for any integer $0 \leq \delta < b^{\varrho(\alpha)}$ according to Proposition \ref{prp_constant_walsh}. The function $h_{jml}$ is constant on the intervals
\[ I_{jm}^k = \left[\right.b^{-j-1} (bm+k) , b^{-j-1} (bm+k+1)\left.\right) \]
for any integer $0 \leq k < b$. Now suppose that either $j+1 > \varrho(\alpha)$ or $j+1 < \varrho(\alpha)$. This would mean that either
\[ I_{jm} = \left[\right. b^{-j} m , b^{-j} (m+1)\left.\right) \subseteq \left[\right.b^{-\varrho(\alpha)} \delta , b^{-\varrho(\alpha)} (\delta + 1)\left. \right) \]
in the first case or
\[ \left[\right. b^{-\varrho(\alpha)} \delta , b^{-\varrho(\alpha)} (\delta + 1)\left. \right) \subset I_{jm}^k \]
for some $k$ in the second case or in both cases
\[ \left[\right. b^{-j} m , b^{-j} (m+1)\left. \right) \cap \left[\right. b^{-\varrho(\alpha)} \delta , b^{-\varrho(\alpha)} (\delta + 1)\left. \right) = \emptyset \]
In any case
\[ \left\langle h_{jml} , \wal_{\alpha} \right\rangle = 0. \]
Hence, \eqref{lem_haar_wal_3} is proved and the only remaining case is $j+1 = \varrho(\alpha)$. Then either again
\[ \left[\right. b^{-j} m , b^{-j} (m+1)\left.\right) \cap \left[\right.b^{-\varrho(\alpha)} \delta , b^{-\varrho(\alpha)} (\delta + 1)\left. \right) = \emptyset \]
or
\[ \left[\right. b^{-\varrho(\alpha)} \delta , b^{-\varrho(\alpha)} (\delta + 1)\left. \right) = I_{jm}^k \]
for some $k$. We consider the last possibility.
The value of $h_{jml}$ on $I_{jm}^k$ is $\e^{\frac{2 \pi \im}{b} l k}$. To calculate the value of $\wal_{\alpha}$ we expand $m$ as
\[ m = m_1 + m_2 b + \ldots + m_j b^{j-1}. \]
Clearly, $0 \leq b m + k < b^{j+1}$. Hence,
\[ b^{-j-1} (b m + k) = m_j b^{-1} + \ldots + m_2 b^{-j+1} + m_1 b^{-j} + k b^{-j-1}. \]
So,
\[ \wal_{\alpha} (b^{-j-1} (b m + k)) = \e^{\frac{2 \pi \im}{b} (\alpha_0 m_j + \ldots + \alpha_{j-1} m_1 + \alpha_j k)}. \]
Now we can calculate
\begin{align*}
\overline{\left\langle h_{jml} , \wal_{\alpha} \right\rangle} & = \int_{I_{jm}} \overline{h_{jml}(y)} \wal_{\alpha}(y) \dint y\\
& = \sum_{k=0}^{b-1} \int_{I_{jm}^k} \overline{h_{jml}(y)} \wal_{\alpha}(y) \dint y\\
& = b^{-j-1} \sum_{k=0}^{b-1} \e^{\frac{2 \pi \im}{b} (\alpha_0 m_j + \ldots + \alpha_{j-1} m_1 + (\alpha_j-l) k)}\\
& = b^{-j-1} \e^{\frac{2 \pi \im}{b} (\alpha_0 m_j + \ldots + \alpha_{j-1} m_1)} \sum_{k=0}^{b-1} \e^{\frac{2 \pi \im}{b} (\alpha_j-l) k}\\
& = \begin{cases}
b^{-j} \e^{\frac{2 \pi \im}{b} (\alpha_0 m_j + \ldots + \alpha_{j-1} m_1)} & \alpha_j = l,\\
0 & \alpha_j \neq l
\end{cases}
\end{align*}
and the lemma follows.

\end{proof}

\begin{lem} \label{lem_scalprod_2}

There exists a constant $c > 0$ with the following property. Let $t, \alpha \in \N_0$. Then if $\alpha = t'$ or $\alpha = t + \tau \, b^{\varrho(t) + a - 1}$ for some integers $0 \leq \tau \leq b - 1$ and $a \geq 1$ then
\[ \left| \left\langle \hat{\chi}_{[0,\cdot)}(t) , \wal_{\alpha} \right\rangle \right| \leq c \, b^{-\max(\varrho(t), \varrho(\alpha))}. \]
If $\alpha \neq t'$ and there are no integers $0 \leq \tau \leq b - 1$ and $a \geq 1$ such that, $\alpha = t + \tau \, b^{\varrho(t) +a - 1}$, then
\[ \left\langle \hat{\chi}_{[0,\cdot)}(t) , \wal_{\alpha} \right\rangle = 0. \]

\end{lem}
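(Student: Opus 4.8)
The plan is to compute $\langle \hat{\chi}_{[0,\cdot)}(t), \wal_{\alpha} \rangle$ by plugging the Fine--Price formulas from Lemma \ref{chi_roof_lem} into the inner product and then using the orthonormality of the Walsh system together with Lemma \ref{lem_scalprod_1}-type reasoning (more precisely, the orthogonality relations for Walsh functions stated before Theorem \ref{walshbasis}). Recall that $\hat{\chi}_{[0,y)}(t)$, as a function of $y$, is itself a (conjugated) Walsh-type expansion: for $t > 0$ it is a linear combination of $\overline{\wal_{t'}(y)}$, $\overline{\wal_t(y)}$, and $\overline{\wal_{zb^{\varrho(t)+a-1}+t}(y)}$ for $z = 1, \ldots, b-1$ and $a \geq 1$, with coefficients that are bounded in modulus by $c\, b^{-\varrho(t)}\, b^{-a}$ (the $a$-th family carries the extra factor $b^{-a}$). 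Hence $\langle \hat{\chi}_{[0,\cdot)}(t), \wal_{\alpha} \rangle$ is a sum of terms of the form (coefficient) $\cdot \langle \overline{\wal_{\beta}}, \wal_{\alpha} \rangle = $ (coefficient) $\cdot \overline{\langle \wal_{\alpha}, \wal_{\beta}\rangle}$, wait --- more carefully, $\langle \overline{\wal_\beta}, \wal_\alpha\rangle = \int \overline{\wal_\beta}\,\overline{\wal_\alpha} = \overline{\int \wal_\beta \wal_\alpha}$; since $\wal_\alpha \wal_\beta = \wal_{\alpha \oplus \beta}$ under digitwise addition, this integral is $1$ when $\alpha = \beta$ and $0$ otherwise. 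So the whole inner product vanishes unless $\alpha$ matches one of the indices $t'$, $t$, or $zb^{\varrho(t)+a-1}+t$.

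The key steps, in order, are: (1) expand $\langle \hat{\chi}_{[0,\cdot)}(t), \wal_{\alpha}\rangle$ using Lemma \ref{chi_roof_lem}, treating the cases $t = 0$ and $t > 0$ separately; (2) for each index appearing in the expansion, invoke Walsh orthogonality to see that the contribution is nonzero only if $\alpha$ equals that index, which immediately gives the vanishing statement (note $t + \tau b^{\varrho(t)+a-1}$ with $\tau = 0$ is just $t$, and with $\alpha = t'$ it is the first term, so the list $\{\alpha = t'$ or $\alpha = t + \tau b^{\varrho(t)+a-1}$, $0 \le \tau \le b-1$, $a \ge 1\}$ exactly enumerates the surviving cases); (3) in the surviving cases, read off the matching coefficient and bound it. When $\alpha = t$ or $\alpha = t'$, the coefficient is $O(b^{-\varrho(t)})$ and one checks $\varrho(\alpha) \le \varrho(t)$ in both subcases (indeed $\varrho(t') = \varrho(t) - 1$ when $t' \ne 0$, and $\varrho(t) = \varrho(t)$), so $b^{-\varrho(t)} = b^{-\max(\varrho(t),\varrho(\alpha))}$. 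When $\alpha = t + \tau b^{\varrho(t)+a-1}$ with $\tau \ge 1$, one has $\varrho(\alpha) = \varrho(t) + a$, the coefficient is $c\, b^{-\varrho(t)}\, b^{-a} = c\, b^{-\varrho(\alpha)} = c\, b^{-\max(\varrho(t),\varrho(\alpha))}$, as claimed; the case $\tau = 0$ reduces to $\alpha = t$ already handled.

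The main obstacle I anticipate is purely bookkeeping: making sure that the various index families in the Fine--Price formula are genuinely disjoint (so that no cancellation between, say, the $\overline{\wal_t}$ term and some $\overline{\wal_{zb^{\varrho(t)+a-1}+t}}$ term can occur and no coefficient needs to be combined), and keeping the $b$-adic digit constraints straight --- in particular checking $\varrho(t + \tau b^{\varrho(t)+a-1}) = \varrho(t) + a$, which uses that adding $\tau b^{\varrho(t)+a-1}$ affects only the digit in position $\varrho(t)+a-1$ and all digits of $t$ from position $\varrho(t)$ upward vanish. Once those elementary digit facts are in place, the estimate follows by matching coefficients term by term. No deep input beyond Lemma \ref{chi_roof_lem} and Walsh orthogonality is required; this is why the statement is phrased as a lemma.
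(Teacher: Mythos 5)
Your proposal follows the paper's proof essentially verbatim: both read $\left\langle \hat{\chi}_{[0,\cdot)}(t),\wal_\alpha\right\rangle$ directly off the Fine--Price expansion of Lemma \ref{chi_roof_lem}, observe that at most one term of the expansion can pair nontrivially with $\wal_\alpha$, and then bound that single coefficient by comparing $\varrho(\alpha)$ with $\varrho(t)$ in each of the cases $\alpha=t'$, $\alpha=t$, $\alpha=t+\tau b^{\varrho(t)+a-1}$. One small caution: for $b>2$ the relation $\int \wal_\alpha\wal_\beta=\delta_{\alpha\beta}$ that you invoke is false --- the orthogonality is $\int\wal_\alpha\overline{\wal_\beta}=\delta_{\alpha\beta}$, so $\int\overline{\wal_\beta}\,\overline{\wal_\alpha}$ equals $1$ precisely when the $b$-adic digits satisfy $\alpha_k+\beta_k\equiv 0\ (\mathrm{mod}\ b)$ for all $k$ --- but since this only replaces each surviving index by its digitwise negation, which preserves both the NRT weight $\varrho$ and the modulus of the attached coefficient, neither the stated bounds nor any downstream use of the lemma is affected.
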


\begin{proof}
We use Lemma \ref{chi_roof_lem}. First let $t > 0$. Suppose that $\alpha = t'$, so $\varrho(\alpha) < \varrho(t)$. Then
\[ \left| \left\langle \hat{\chi}_{[0,\cdot)}(t) , \wal_{\alpha} \right\rangle \right| = \left| \frac{1}{1 - \e^{\frac{-2 \pi \im}{b} \tau_{\varrho(t) - 1}}} \right| \, b^{-\varrho(t)} \leq c \, b^{-\varrho(t)}. \]
If $\alpha = t$ meaning that $\varrho(\alpha) = \varrho(t)$ then
\[ \left| \left\langle \hat{\chi}_{[0,\cdot)}(t) , \wal_{\alpha} \right\rangle \right| \leq \left| \frac{1}{\e^{\frac{-2 \pi \im}{b} \tau_{\varrho(t) - 1}} - 1} + \frac{1}{2} \right| \, b^{-\varrho(t)} \leq c \, b^{-\varrho(t)}. \]
Now let $\alpha = t + \tau \, b^{\varrho(t) +a - 1}$ for some $1 \leq \tau \leq b - 1$ and $a \geq 1$. Hence $\varrho(\alpha) = \varrho(t) + a$. Then
\[ \left| \left\langle \hat{\chi}_{[0,\cdot)}(t) , \wal_{\alpha} \right\rangle \right| = \left| \frac{1}{\e^{\frac{2 \pi \im}{b} \tau} - 1} \right| \, b^{-\varrho(t)} \, b^{-a} \leq c \, b^{-\varrho(\alpha)}. \]
For any other $\alpha$ clearly,
\[ \left\langle \hat{\chi}_{[0,\cdot)}(t) , \wal_{\alpha} \right\rangle = 0. \]
Now we consider the case $t = 0$. Then for $\alpha = 0$ (meaning $\varrho(\alpha) = 0$) we have
\[ \left| \left\langle \hat{\chi}_{[0,\cdot)}(t) , \wal_{\alpha} \right\rangle \right| = \frac{1}{2} \leq c \, b^{-\varrho(\alpha)}. \]
Let $\alpha = \tau \, b^{a - 1}$ for some $1 \leq \tau \leq b - 1$ and $a \geq 1$. Then $\varrho(\alpha) = a$ and
\[ \left| \left\langle \hat{\chi}_{[0,\cdot)}(t) , \wal_{\alpha} \right\rangle \right| = \left| \frac{1}{\e^{\frac{2 \pi \im}{b} \tau} - 1} \right| \, b^{-a} \leq c \, b^{-\varrho(\alpha)}. \]
For any other $\alpha$ again clearly,
\[ \left\langle \hat{\chi}_{[0,\cdot)}(t) , \wal_{\alpha} \right\rangle = 0. \]
\end{proof}

We now need an additional notation. For any function $f \, : \, \Fb_b^{dn} \longrightarrow \Cx$ we call $\hat{f}$ given by
\[ \hat{f}(B) = \sum_{A \in \Fb_b^{dn}} \e^{\frac{2 \pi \im}{b} A \cdot B} f(A) \]
for $B \in \Fb_b^{dn}$ the Walsh transform of $f$.

The following two facts can be found in \cite{DP10}. The first lemma is \cite[Lemma 16.9]{DP10} while the second is \cite[(16.3)]{DP10}.

\begin{lem} \label{lem_169dp10}

Let $\C$ and $\C^{\perp}$ be mutually dual $\Fb_b$-linear subspaces of $\Fb_b^{dn}$. Then for any function $f \, : \, \Fb_b^{dn} \longrightarrow \Cx$ we have
\[ \sum_{A \in \C} f(A) = \frac{\# \C}{b^{dn}} \sum_{B \in \C^{\perp}} \hat{f}(B). \]

\end{lem}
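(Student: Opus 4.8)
The plan is to prove the identity $\sum_{A \in \C} f(A) = \frac{\#\C}{b^{dn}} \sum_{B \in \C^{\perp}} \hat{f}(B)$ by expanding the right-hand side using the definition of the Walsh transform and interchanging the order of summation, so that the inner sum becomes a character sum over $\C$ that collapses to an indicator of membership in $(\C^{\perp})^{\perp} = \C$. First I would write out
\[
\sum_{B \in \C^{\perp}} \hat{f}(B) = \sum_{B \in \C^{\perp}} \sum_{A \in \Fb_b^{dn}} \e^{\frac{2\pi \im}{b} A \cdot B} f(A) = \sum_{A \in \Fb_b^{dn}} f(A) \sum_{B \in \C^{\perp}} \e^{\frac{2\pi \im}{b} A \cdot B},
\]
which is legitimate since everything is a finite sum. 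The crux is then the evaluation of the character sum $S(A) = \sum_{B \in \C^{\perp}} \e^{\frac{2\pi \im}{b} A \cdot B}$ for fixed $A \in \Fb_b^{dn}$.

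The key step is the standard orthogonality argument for characters of the finite abelian group $(\C^{\perp}, +)$. The map $B \mapsto \e^{\frac{2\pi \im}{b} A \cdot B}$ is a character of $\C^{\perp}$; it is the trivial character exactly when $A \cdot B = 0$ in $\Fb_b$ for all $B \in \C^{\perp}$, i.e.\ when $A \in (\C^{\perp})^{\perp}$. By the remark after the definition of the dual space we have $(\C^{\perp})^{\perp} = \C$, so the character is trivial precisely when $A \in \C$. In that case $S(A) = \#\C^{\perp} = b^{dn}/\#\C$ (using $\dim(\C^{\perp}) = dn - \dim(\C)$), and otherwise $S(A) = 0$ because a nontrivial character of a finite group sums to zero over that group. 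Substituting this back gives
\[
\sum_{B \in \C^{\perp}} \hat{f}(B) = \sum_{A \in \C} f(A) \cdot \frac{b^{dn}}{\#\C},
\]
and dividing by $b^{dn}/\#\C$ yields the claim.

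The main obstacle — such as it is in a short lemma of this kind — is making the orthogonality argument fully rigorous over $\Fb_b$ rather than over $\Z_b$: one needs that $A \cdot B$ lands in $\Fb_b$ and that $\e^{\frac{2\pi\im}{b}(\cdot)}$ is a well-defined nontrivial character of $(\Fb_b,+)$, which holds because $b$ is prime so $\Fb_b$ is identified with $\Z_b$ and the additive characters are exactly $x \mapsto \e^{\frac{2\pi\im}{b} c x}$ for $c \in \Z_b$. The only genuinely substantive input beyond bookkeeping is the double-dual identity $(\C^{\perp})^{\perp} = \C$, which is already recorded in the excerpt. Everything else is a routine interchange of finite sums and the vanishing of nontrivial character sums, so no delicate estimates are required.
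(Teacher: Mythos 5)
Your proof is correct. The paper does not prove this lemma itself (it is quoted from [DP10, Lemma 16.9]), but your argument is the standard one: the interchange of finite sums is harmless, and your evaluation of the inner character sum $\sum_{B \in \C^{\perp}} \e^{\frac{2\pi\im}{b} A \cdot B}$ is exactly Lemma \ref{lem_walshduality} applied with $\C^{\perp}$ in the role of $\C$, combined with $(\C^{\perp})^{\perp} = \C$ and $\#\C^{\perp} = b^{dn}/\#\C$.
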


\begin{lem} \label{lem_walshduality}

Let $\C$ and $\C^{\perp}$ be mutually dual $\Fb_b$-linear subspaces of $\Fb_b^{dn}$. Let $B \in \Fb_b^{dn}$. Then we have
\[ \sum_{A \in \C} \e^{\frac{2 \pi \im}{b} A \cdot B} = \begin{cases} \# \C, & B \in \C^{\perp}, \\ 0, & B \notin \C^{\perp}. \end{cases} \]

\end{lem}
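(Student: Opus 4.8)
The plan is to derive this directly from Lemma \ref{lem_169dp10}, which is the more general duality statement already available. Actually, since Lemma \ref{lem_169dp10} is quoted from \cite{DP10} and Lemma \ref{lem_walshduality} is an even more elementary fact, the cleanest route is a self-contained argument using only the orthogonality of additive characters on the finite abelian group $\Fb_b^{dn}$.

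First I would fix $B \in \Fb_b^{dn}$ and consider the function $\chi_B \colon \Fb_b^{dn} \to \Cx$, $\chi_B(A) = \e^{\frac{2\pi\im}{b} A \cdot B}$. This is a group homomorphism from $(\Fb_b^{dn}, +)$ into the multiplicative group of $b$-th roots of unity, because $(A_1 + A_2) \cdot B = A_1 \cdot B + A_2 \cdot B$ in $\Fb_b$ and the exponential turns this sum into a product. Restricting $\chi_B$ to the subgroup $\C$ gives a character of $\C$. The key dichotomy is then the standard one: either $\chi_B|_{\C}$ is the trivial character, in which case $\sum_{A \in \C} \chi_B(A) = \#\C$, or it is nontrivial, in which case $\sum_{A \in \C} \chi_B(A) = 0$. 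The second case follows by the usual trick: pick $A_0 \in \C$ with $\chi_B(A_0) \neq 1$; since $A \mapsto A + A_0$ permutes $\C$, we get $\sum_{A \in \C}\chi_B(A) = \sum_{A\in\C}\chi_B(A+A_0) = \chi_B(A_0)\sum_{A\in\C}\chi_B(A)$, forcing the sum to vanish.

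It remains to identify when $\chi_B|_{\C}$ is trivial, i.e. when $A \cdot B = 0$ in $\Fb_b$ for all $A \in \C$. By definition of the dual space this says precisely $B \in \C^{\perp}$. Conversely if $B \in \C^{\perp}$ then $A \cdot B = 0$ for all $A \in \C$, so $\chi_B(A) = 1$ identically on $\C$ and the sum equals $\#\C$. Combining the two cases yields
\[
\sum_{A \in \C} \e^{\frac{2\pi\im}{b} A \cdot B} = \begin{cases} \#\C, & B \in \C^{\perp}, \\ 0, & B \notin \C^{\perp}, \end{cases}
\]
which is the assertion.

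I do not anticipate any real obstacle here; the only point requiring a modicum of care is checking that $A \cdot B$, which a priori is an element of $\Fb_b$ identified with $\{0,\dots,b-1\}$, enters the exponential in a well-defined way — but this is fine since $\e^{\frac{2\pi\im}{b} \cdot}$ is periodic modulo $b$, so the character is genuinely defined on $\Fb_b$. Alternatively, one could simply invoke Lemma \ref{lem_169dp10} with $f = \mathbbm{1}_{\{A_0\}}$ the indicator of a single point, since $\hat{f}(B) = \e^{\frac{2\pi\im}{b} A_0 \cdot B}$ and $\sum_{A \in \C} f(A) = \mathbbm{1}_{\C}(A_0)$; but the direct character-sum argument is shorter and keeps the exposition elementary, so that is the route I would present.
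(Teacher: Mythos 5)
Your proof is correct: the restriction of $A \mapsto \e^{\frac{2\pi\im}{b} A \cdot B}$ to $\C$ is a character, the shift trick kills the sum when it is nontrivial, and triviality is by definition equivalent to $B \in \C^{\perp}$. The paper itself gives no proof, citing the statement as \cite[(16.3)]{DP10}, and your character-orthogonality argument is exactly the standard one behind that reference, so there is nothing to reconcile.
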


We will introduce some notation now, slightly changed from what can be found in \cite[16.2]{DP10}. Let $0 \leq \gamma_1, \ldots, \gamma_d \leq n$ be integers. We put $\gamma = (\gamma_1, \ldots, \gamma_d)$. Then we write
\[ \V_{\gamma} = \left\{ A \in \Fb_b^{dn} \, : \, \Phi_n(A) \in \prod_{i = 1}^d \left[ \left. 0, b^{-\gamma_i} \right) \right. \right\}. \]
Hence, $\V_{\gamma}$ consists of all such $A \in \Fb_b^{dn}$ that $a_i = (0, \ldots, 0, a_{i,\gamma_i + 1}, \ldots, a_{i n})$ for all $1 \leq i \leq d$. For all $1 \leq i \leq d$ let $0 \leq \lambda_i \leq \gamma_i$ be integers and let $\lambda = (\lambda_1, \ldots, \lambda_d)$. Then we write $\V_{\gamma, \lambda}$ for the set consisting of all such $A \in \Fb_b^{dn}$ that $a_i = (0, \ldots, 0, a_{i, \lambda_i + 1}, \ldots, a_{i, \gamma_i - 1}, 0, \linebreak a_{i, \gamma_i + 1}, \ldots, a_{i n})$. The case $\lambda_i = \gamma_i$ is to be understood in the obvious way as $a_i = (0, \ldots, 0, a_{i,\gamma_i + 1}, \ldots, a_{i n})$. Therefore, $\V_{\gamma}^{\perp}$ consists of such $A \in \Fb_b^{dn}$ that $a_i = (a_{i 1}, \ldots, a_{i, \gamma_i}, \linebreak 0, \ldots, 0)$ and $\V_{\gamma, \lambda}^{\perp}$ consists of such $A \in \Fb_b^{dn}$ that $a_i = (a_{i 1}, \ldots, a_{i, \lambda_i}, 0, \ldots, 0, a_{i, \gamma_i}, 0, \ldots, 0)$.

For a subset $V$ of $\Fb_b^{dn}$ we denote the characteristic function of $V$ by $\chi_V$. The next result is a slight generalization of the corresponding assertion from \cite[Lemma 16.11]{DP10}.

\begin{lem} \label{lem_1611dp10}

Let $\gamma_1, \ldots, \gamma_d, \lambda_1, \ldots, \lambda_d$ be as above. Let $\sigma$ be the number of such $i$ that $\lambda_i < \gamma_i$. For all $B \in \Fb_b^{dn}$ we have
\[ \hat{\chi}_{\V_{\gamma, \lambda}}(B) = b^{dn - |\lambda| - \sigma} \chi_{\V_{\gamma, \lambda}^{\perp}}(B). \]

\end{lem}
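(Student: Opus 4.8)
The plan is to compute the Walsh transform of $\chi_{\V_{\gamma,\lambda}}$ directly from the definition $\hat f(B)=\sum_{A\in\Fb_b^{dn}}\e^{\frac{2\pi\im}{b}A\cdot B}f(A)$. Taking $f=\chi_{\V_{\gamma,\lambda}}$, the sum collapses to a sum over $A\in\V_{\gamma,\lambda}$ only, so
\[
\hat{\chi}_{\V_{\gamma,\lambda}}(B)=\sum_{A\in\V_{\gamma,\lambda}}\e^{\frac{2\pi\im}{b}A\cdot B}.
\]
Now $\V_{\gamma,\lambda}$ is itself an $\Fb_b$-linear subspace of $\Fb_b^{dn}$: by its very definition it is the set of $A=(a_1,\dots,a_d)$ with $a_i=(0,\dots,0,a_{i,\lambda_i+1},\dots,a_{i,\gamma_i-1},0,a_{i,\gamma_i+1},\dots,a_{in})$, i.e. the coordinates in positions $1,\dots,\lambda_i$ and in position $\gamma_i$ (when $\lambda_i<\gamma_i$) are forced to be $0$ and all remaining coordinates are free. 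So $\V_{\gamma,\lambda}$ is a coordinate subspace and we may apply Lemma \ref{lem_walshduality} with $\C=\V_{\gamma,\lambda}$, which gives immediately
\[
\hat{\chi}_{\V_{\gamma,\lambda}}(B)=\sum_{A\in\V_{\gamma,\lambda}}\e^{\frac{2\pi\im}{b}A\cdot B}=\begin{cases}\#\V_{\gamma,\lambda}, & B\in\V_{\gamma,\lambda}^{\perp},\\ 0,& B\notin\V_{\gamma,\lambda}^{\perp}.\end{cases}
\]
Hence $\hat{\chi}_{\V_{\gamma,\lambda}}(B)=(\#\V_{\gamma,\lambda})\,\chi_{\V_{\gamma,\lambda}^{\perp}}(B)$, and it only remains to count $\#\V_{\gamma,\lambda}$.

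The counting is the bookkeeping step. In each block $i$, the number of free coordinates is $n$ minus the number of forced-zero positions. For each $i$ with $\lambda_i<\gamma_i$ the forced-zero positions are $1,\dots,\lambda_i$ together with $\gamma_i$, so there are $\lambda_i+1$ of them and hence $n-\lambda_i-1$ free coordinates; for each $i$ with $\lambda_i=\gamma_i$ (the "obvious" case $a_i=(0,\dots,0,a_{i,\gamma_i+1},\dots,a_{in})$) the forced-zero positions are $1,\dots,\gamma_i=\lambda_i$, so there are $\lambda_i$ of them and $n-\lambda_i$ free coordinates. Writing $\sigma$ for the number of indices $i$ with $\lambda_i<\gamma_i$, the total number of free coordinates over all $d$ blocks is $\sum_{i=1}^d(n-\lambda_i)-\sigma=dn-|\lambda|-\sigma$, where $|\lambda|=\lambda_1+\dots+\lambda_d$. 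Since each free coordinate ranges over $\Fb_b$, we get $\#\V_{\gamma,\lambda}=b^{dn-|\lambda|-\sigma}$, which combined with the previous display yields exactly $\hat{\chi}_{\V_{\gamma,\lambda}}(B)=b^{dn-|\lambda|-\sigma}\chi_{\V_{\gamma,\lambda}^{\perp}}(B)$, as claimed.

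There is essentially no hard analytic obstacle here — the one point that requires a little care is making sure the description of $\V_{\gamma,\lambda}^{\perp}$ used implicitly (and recorded in the paragraph preceding the lemma) is consistent with the standard inner product $A\cdot B=\sum_{i,j}a_{ij}b_{ij}$: the annihilator of a coordinate subspace is the complementary coordinate subspace, so $\V_{\gamma,\lambda}^{\perp}$ consists of those $B$ whose support is contained in the forced-zero positions of $\V_{\gamma,\lambda}$, i.e. $b_i=(b_{i1},\dots,b_{i\lambda_i},0,\dots,0,b_{i\gamma_i},0,\dots,0)$, matching the stated form. Apart from this consistency check and the index-counting, the proof is just an application of Lemma \ref{lem_walshduality}, exactly paralleling the argument for \cite[Lemma 16.11]{DP10}; the present statement differs only in allowing the extra "gap" coordinate $\gamma_i$, which is what produces the $+\sigma$ in the exponent and the modified form of $\V_{\gamma,\lambda}^{\perp}$.
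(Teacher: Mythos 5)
Your proposal is correct and follows essentially the same route as the paper: both expand the Walsh transform, apply Lemma \ref{lem_walshduality} with $\C=\V_{\gamma,\lambda}$ to get $\#(\V_{\gamma,\lambda})\,\chi_{\V_{\gamma,\lambda}^{\perp}}(B)$, and then count the free coordinates to obtain $b^{dn-|\lambda|-\sigma}$. Your write-up merely makes explicit the cardinality bookkeeping and the consistency of $\V_{\gamma,\lambda}^{\perp}$ with the stated form, which the paper leaves implicit.
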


\begin{proof}
We use Lemma \ref{lem_walshduality} and obtain
\begin{align*}
\hat{\chi}_{\V_{\gamma, \lambda}}(B) & =\sum_{A \in \Fb_b^{dn}} \e^{\frac{2 \pi \im}{b} A \cdot B} \chi_{\V_{\gamma, \lambda}}(A) \\
& = \sum_{A \in \V_{\gamma, \lambda}} \e^{\frac{2 \pi \im}{b} A \cdot B} \\
& = \# \left( \V_{\gamma, \lambda} \right) \chi_{\V_{\gamma, \lambda}^{\perp}}(B) \\
& = b^{dn - |\lambda| - \sigma} \chi_{\V_{\gamma, \lambda}^{\perp}}(B).
\end{align*}
\end{proof}

The following fact is a generalization of \cite[Lemma 16.13]{DP10}.

\begin{lem} \label{1613dp10}

Let $\C$ and $\C^{\perp}$ be mutually dual $\Fb_b$-linear subspaces of $\Fb_b^{dn}$. Let $\gamma_1, \ldots, \gamma_d, \linebreak \lambda_1, \ldots, \lambda_d, \sigma$ be as above. Then we have
\[ \# \left( \C \cap \V_{\gamma, \lambda} \right) = \frac{\# \C}{b^{|\lambda| + \sigma}} \, \# \left( \C^{\perp} \cap \V_{\gamma, \lambda}^{\perp} \right). \]

\end{lem}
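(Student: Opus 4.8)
\textbf{Plan for the proof of Lemma \ref{1613dp10}.}
The statement relates a count inside a subspace $\C$ to a count inside its dual $\C^{\perp}$, with the "weight" sets $\V_{\gamma,\lambda}$ and $\V_{\gamma,\lambda}^{\perp}$ playing mirror roles. The natural tool is the Walsh/Poisson-type summation formula already recorded in Lemma \ref{lem_169dp10}, which converts a sum over $\C$ of any function $f\colon\Fb_b^{dn}\to\Cx$ into $\frac{\#\C}{b^{dn}}$ times the sum over $\C^{\perp}$ of the Walsh transform $\hat f$. The plan is to apply this with $f = \chi_{\V_{\gamma,\lambda}}$, the characteristic function of the set $\V_{\gamma,\lambda}$, so that the left-hand side $\sum_{A\in\C}\chi_{\V_{\gamma,\lambda}}(A)$ is exactly $\#(\C\cap\V_{\gamma,\lambda})$.

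The key step is then the identification of the Walsh transform of $\chi_{\V_{\gamma,\lambda}}$, which is precisely the content of Lemma \ref{lem_1611dp10}: $\hat\chi_{\V_{\gamma,\lambda}}(B) = b^{dn-|\lambda|-\sigma}\,\chi_{\V_{\gamma,\lambda}^{\perp}}(B)$, where $\sigma$ is the number of indices $i$ with $\lambda_i<\gamma_i$. Substituting this into the right-hand side of Lemma \ref{lem_169dp10} gives
\[
\#(\C\cap\V_{\gamma,\lambda}) = \frac{\#\C}{b^{dn}}\sum_{B\in\C^{\perp}} b^{dn-|\lambda|-\sigma}\,\chi_{\V_{\gamma,\lambda}^{\perp}}(B) = \frac{\#\C}{b^{|\lambda|+\sigma}}\,\#(\C^{\perp}\cap\V_{\gamma,\lambda}^{\perp}),
\]
since $\sum_{B\in\C^{\perp}}\chi_{\V_{\gamma,\lambda}^{\perp}}(B)$ counts exactly the elements of $\C^{\perp}\cap\V_{\gamma,\lambda}^{\perp}$. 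This is the claimed formula, so essentially the whole proof is one application of Lemma \ref{lem_169dp10} followed by one substitution of Lemma \ref{lem_1611dp10} and a cancellation of the $b^{dn}$ factor.

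There is not really a hard obstacle here; the only point requiring care is bookkeeping, namely checking that the set $\V_{\gamma,\lambda}$ is genuinely an $\Fb_b$-linear subspace (so that $\V_{\gamma,\lambda}^{\perp}$ in the notation of Section~1 coincides with the annihilator appearing in Lemma \ref{lem_1611dp10}) and that the parameter $\sigma$ is used consistently between the two cited lemmas. One should also note that the formula degenerates correctly in the boundary cases $\lambda_i=\gamma_i$ (then that coordinate contributes nothing to $\sigma$ and the corresponding slot in $\V_{\gamma,\lambda}$ and $\V_{\gamma,\lambda}^{\perp}$ behaves as in the definition of $\V_\gamma$ and $\V_\gamma^{\perp}$). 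Once this is observed, the proof is complete; I would present it as the three displayed lines above with a sentence each justifying the passage from $\sum_{A\in\C}\chi_{\V_{\gamma,\lambda}}(A)$ to $\#(\C\cap\V_{\gamma,\lambda})$, the invocation of Lemmas \ref{lem_169dp10} and \ref{lem_1611dp10}, and the final counting identity for $\chi_{\V_{\gamma,\lambda}^{\perp}}$ summed over $\C^{\perp}$.
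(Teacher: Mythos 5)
Your proposal is correct and follows exactly the paper's own argument: apply Lemma \ref{lem_169dp10} to $f = \chi_{\V_{\gamma,\lambda}}$, substitute the Walsh transform identity from Lemma \ref{lem_1611dp10}, and cancel the factor $b^{dn}$. Nothing to add.
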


\begin{proof}
We use Lemmas \ref{lem_169dp10} and \ref{lem_1611dp10} and get
\begin{align*}
\# \left( \C \cap \V_{\gamma, \lambda} \right) & = \sum_{A \in \C} \chi_{\V_{\gamma, \lambda}}(A) \\
& = \frac{\# \C}{b^{dn}} \sum_{B \in \C^{\perp}} \hat{\chi}_{\V_{\gamma, \lambda}}(B) \\
& = \frac{\# \C}{b^{dn}} \sum_{B \in \C^{\perp}} b^{dn - |\lambda| - \sigma} \chi_{\V_{\gamma, \lambda}^{\perp}}(B) \\
& = \frac{\# \C}{b^{|\lambda| + \sigma}} \, \# \left( \C^{\perp} \cap \V_{\gamma, \lambda}^{\perp} \right).
\end{align*}
\end{proof}

The proof of the following fact is contained in the proof of \cite[Lemma 16.26]{DP10}.

\begin{lem} \label{main_est_lem_notenough}

Let $\C$ be an $\Fb_b$-linear subspace of $\Fb_b^{dn}$ of dimension $n$ with dual space of dimension $dn - n$ satisfying $\delta_n(\C^{\perp}) \geq n + 1$. Let $0 \leq \gamma_1, \ldots, \gamma_d \leq n$ be integer with $|\gamma| \geq n + 1$. Then we have
\[ \# \left\{ A = (a_1, \ldots, a_d) \in \C^{\perp}: \, v_n(a_i) = \gamma_i; \, 1 \leq i \leq d \right\} \leq b^{|\gamma| - n}. \]

\end{lem}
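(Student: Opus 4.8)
The plan is to reduce the counting problem to a statement about the minimum distance of $\C^{\perp}$. Fix integers $0 \leq \gamma_1, \ldots, \gamma_d \leq n$ with $|\gamma| = \gamma_1 + \ldots + \gamma_d \geq n + 1$ and consider the set
\[ S = \left\{ A = (a_1, \ldots, a_d) \in \C^{\perp}: \, v_n(a_i) = \gamma_i \text{ for all } 1 \leq i \leq d \right\}. \]
Each $A \in S$ has $a_i$ of the shape $(a_{i1}, \ldots, a_{i,\gamma_i-1}, a_{i\gamma_i}, 0, \ldots, 0)$ with $a_{i\gamma_i} \neq 0$; in particular $S \subset \V_{\gamma,\gamma}^{\perp}$ in the notation introduced before Lemma \ref{lem_1611dp10} (with $\lambda = \gamma$, so the separating $0$ is vacuous and $\V_{\gamma,\gamma}^{\perp}$ simply forces the tail beyond position $\gamma_i$ to vanish in each block). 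The idea is: if $|\gamma| \geq n+1 > n = \dim(\C)$, then $\C^{\perp} \cap \V_{\gamma,\gamma}^{\perp}$ together with the nonvanishing-at-$\gamma_i$ condition is very constrained, and one counts it via a dimension/injectivity argument.

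First I would show that the map $A \mapsto (a_{1\gamma_1}, \ldots, a_{d\gamma_d}) \in \Fb_b^d$ cannot be too far from injective on $S$: concretely, the plan is to prove that the projection sending $A$ to the truncation $(a_{i1}, \ldots, a_{i,\gamma_i-1})_{1\leq i\leq d}$ of its "low" coordinates is injective on $S$. Indeed, if $A, A' \in S$ agreed in all coordinates strictly below position $\gamma_i$ in every block, then $A - A' \in \C^{\perp}$ would be supported only on the positions $\gamma_1, \ldots, \gamma_d$, hence $v_n^d(A-A') \leq |\gamma|$ but more importantly $v_n^d(A-A')$ would be at most $\gamma_1 + \ldots + \gamma_d$ while the number of nonzero blocks is small; here I would invoke $\delta_n(\C^{\perp}) \geq n+1$ to force $A - A' = 0$ unless $v_n^d(A-A') \geq n+1$, and a short case analysis on which of the top coordinates survive yields the contradiction. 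Counting the possible truncations gives at most $\prod_{i=1}^d b^{\gamma_i - 1} \cdot (\text{something})$; a cleaner route is to bound $\#S \leq \# ( \C^{\perp} \cap \V_{\gamma,\gamma}^{\perp})$ and apply Lemma \ref{1613dp10} with $\lambda = \gamma$ (so $\sigma = 0$), giving
\[ \# \left( \C \cap \V_{\gamma,\gamma} \right) = \frac{\#\C}{b^{|\gamma|}} \, \# \left( \C^{\perp} \cap \V_{\gamma,\gamma}^{\perp} \right) = b^{n - |\gamma|} \, \# \left( \C^{\perp} \cap \V_{\gamma,\gamma}^{\perp} \right), \]
whence $\# ( \C^{\perp} \cap \V_{\gamma,\gamma}^{\perp}) = b^{|\gamma| - n} \, \#(\C \cap \V_{\gamma,\gamma})$. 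Then I would argue $\#(\C \cap \V_{\gamma,\gamma}) \leq 1$: any $A \in \C$ lying in $\V_{\gamma,\gamma}$ has $\Phi_n^d(A)$ with each coordinate in $[0, b^{-\gamma_i})$ up to the single reinstated position — here one uses $\delta_n(\C^{\perp}) \geq n+1$ dually, i.e. the $(0,n,d)$-net property of $\Phi_n^d(\C)$ from Proposition \ref{prp_dig_net_duality}, which says the $b$-adic box of volume $b^{-|\gamma|} \leq b^{-n-1} < b^{-n}$ can contain at most one of the $b^n$ net points, forcing $\#(\C \cap \V_{\gamma,\gamma}) \leq 1$. Combining, $\# S \leq \# ( \C^{\perp} \cap \V_{\gamma,\gamma}^{\perp}) \leq b^{|\gamma| - n}$, which is the claim.

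The main obstacle I anticipate is bookkeeping around the definition of $\V_{\gamma,\lambda}$ and $\V_{\gamma,\lambda}^{\perp}$ when $\lambda = \gamma$: one must be careful that $S$ (which requires $a_{i\gamma_i} \neq 0$, i.e. $v_n(a_i)$ exactly $\gamma_i$, not just $\leq \gamma_i$) really does sit inside the relevant dual box, and that the counting $\#(\C\cap\V_{\gamma,\gamma}) \le 1$ is applied to a genuine $b$-adic interval of volume $< b^{-n}$. A secondary subtlety is the degenerate case where some $\gamma_i = 0$, which must be folded into the notation ("$\lambda_i = \gamma_i$ understood in the obvious way" from the text) and checked not to break the application of Lemma \ref{1613dp10}. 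Once the identification $S \subset \C^{\perp}\cap\V_{\gamma,\gamma}^{\perp}$ is nailed down, the rest is a direct substitution into Lemma \ref{1613dp10} and the net property, so the estimate follows cleanly.
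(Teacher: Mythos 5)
Your ``cleaner route'' is correct and is essentially the paper's own approach: the paper only cites \cite[Lemma 16.26]{DP10} for this lemma, but your argument is exactly the $\lambda=\gamma$ specialization of the proof given for Proposition \ref{main_est_prp} --- observe $S\subseteq \C^{\perp}\cap\V_{\gamma,\gamma}^{\perp}$, apply Lemma \ref{1613dp10} with $\sigma=0$ to get $\#(\C^{\perp}\cap\V_{\gamma,\gamma}^{\perp})=b^{|\gamma|-n}\,\#(\C\cap\V_{\gamma,\gamma})$, and use the $(0,n,d)$-net property from Proposition \ref{prp_dig_net_duality} to see that the box of volume $b^{-|\gamma|}<b^{-n}$ contains at most one net point. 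Your preliminary ``injectivity of the truncation'' paragraph is a dead end (an element of $\C^{\perp}$ supported only on the positions $\gamma_1,\ldots,\gamma_d$ has $v_n^d\le|\gamma|$, which can well exceed $n$, so no contradiction with $\delta_n(\C^{\perp})\ge n+1$ arises), but since you discard it in favour of the counting argument, the proof stands.
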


For our purposes this result is not strong enough. We use the following instead.

\begin{prp} \label{main_est_prp}

Let $\C$ be an $\Fb_b$-linear subspace of $\Fb_b^{dn}$ of dimension $n$ with dual space of dimension $dn - n$ satisfying $\delta_n(\C^{\perp}) \geq n + 1$. Let $0 \leq \lambda_i \leq \gamma_i \leq n$ be integers for all $1 \leq i \leq d$ with $|\gamma| \geq n + 1$ and $|\lambda| + d \leq n$. Then we have
\[ \# \left\{ A = (a_1, \ldots, a_d) \in \C^{\perp}: \, v_n(a_i) \leq \gamma_i; \, a_{ik} = 0 \; \forall \, \lambda_i < k < \gamma_i; \, 1 \leq i \leq d \right\} \leq b^d. \]

\end{prp}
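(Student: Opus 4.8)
The plan is to reduce the counting problem to the minimum-distance condition $\delta_n(\C^{\perp}) \geq n+1$ by showing that any two distinct vectors in the set under consideration must coincide, so the set has at most as many elements as there are possible "patterns" of supported coordinates. First I would fix $\gamma = (\gamma_1,\ldots,\gamma_d)$ and $\lambda = (\lambda_1,\ldots,\lambda_d)$ as in the hypothesis, and consider the set
\[ S = \left\{ A = (a_1,\ldots,a_d) \in \C^{\perp} : v_n(a_i) \leq \gamma_i,\ a_{ik}=0\ \forall\, \lambda_i < k < \gamma_i,\ 1 \leq i \leq d \right\}. \]
Each $a_i$ of such an $A$ is supported only on the coordinate positions $\{1,\ldots,\lambda_i\} \cup \{\gamma_i\}$ (with position $\gamma_i$ possibly zero), and all positions past $\gamma_i$ are zero because $v_n(a_i) \leq \gamma_i$. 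So the NRT weight $v_n^d(A)$ of any $A \in S$ is at most $|\gamma|$ but that alone is too weak; the point is rather the structure of the support.

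The key step is the following: suppose $A, A' \in S$ are distinct. Then $A - A' \in \C^{\perp} \setminus \{0\}$, and for each $i$ the component $a_i - a_i'$ is again supported on $\{1,\ldots,\lambda_i\} \cup \{\gamma_i\}$. Hence $v_n(a_i - a_i') \leq \lambda_i$ whenever the $\gamma_i$-th coordinates of $a_i$ and $a_i'$ agree, and $v_n(a_i - a_i') = \gamma_i$ otherwise. If the $\gamma_i$-th coordinates agree for all $i$, then $v_n^d(A-A') \leq |\lambda| < |\lambda| + d \leq n$, contradicting $\delta_n(\C^{\perp}) \geq n+1$ (indeed contradicting $\delta_n(\C^{\perp}) \geq n$). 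Therefore the "top coordinate vector" $(a_{1,\gamma_1}, \ldots, a_{d,\gamma_d}) \in \Fb_b^d$ must differ between any two distinct elements of $S$, which immediately gives $\# S \leq b^d$.

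Wait — I need to be careful that distinct top-coordinate vectors really do force $v_n^d(A-A')$ large enough; if some but not all top coordinates differ, we only get $v_n^d(A - A') \leq |\lambda| + \sum_{i \in T}(\gamma_i - \lambda_i)$ where $T$ is the set of $i$ with differing top coordinate. This need not exceed $n$ in general, so the crude "injectivity of the top vector" argument as stated is insufficient. The fix: partition $S$ according to the top-coordinate vector $(a_{1,\gamma_1},\ldots,a_{d,\gamma_d})$; it suffices to show that within each class (i.e. with the top coordinates fixed), there is at most one element. But within a class, any difference $A - A'$ has all $\gamma_i$-th coordinates zero, hence $a_i - a_i'$ is supported on $\{1,\ldots,\lambda_i\}$, so $v_n^d(A-A') \leq |\lambda| < n < \delta_n(\C^{\perp})$, forcing $A = A'$. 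Since there are exactly $b^d$ possible top-coordinate vectors, $\# S \leq b^d$, which is the claim.

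The main obstacle, then, is precisely this bookkeeping about which coordinates can be nonzero in a difference of two elements of $S$ and correctly invoking $\delta_n(\C^{\perp}) \geq n+1$ together with the hypothesis $|\lambda| + d \leq n$ — the role of the $+d$ being exactly to leave room for the $d$ top coordinates when one does not fix them, although in the cleaner argument above one only needs $|\lambda| < n$, and the condition $|\lambda| + d \leq n$ is what guarantees this with margin. I would double-check that the hypothesis $|\gamma| \geq n+1$ is used: it ensures the set is genuinely constrained (so that e.g. the top coordinates are not automatically forced, and the bound $b^d$ rather than something trivial is the right target), though in the injectivity argument it plays no direct role beyond consistency with the ambient setup; I would state it as a standing hypothesis and note that the essential inputs are $\delta_n(\C^{\perp}) \geq n+1$ and $|\lambda| + d \leq n$.
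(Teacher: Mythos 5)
Your argument is correct, but it takes a genuinely different route from the paper. The paper embeds the set in $\C^{\perp}\cap\V_{\gamma,\lambda}^{\perp}$, applies the duality counting identity of Lemma \ref{1613dp10} to trade this for counting $\C\cap\V_{\gamma,\lambda}$, and then invokes the digital $(0,n,d)$-net property of $\Phi_n^d(\C)$ to bound the number of points of the net in the $b$-adic box $\prod_i[0,b^{-\lambda_i})$ by $b^{n-|\lambda|}$, which combines to give $b^{\sigma}\leq b^d$. You instead argue directly at the level of the dual code: partition the set by the vector of ``top'' coordinates $(a_{1,\gamma_1},\ldots,a_{d,\gamma_d})\in\Fb_b^d$, observe that the difference of two elements in the same class is supported in positions $\leq\lambda_i$ in each block, hence has NRT weight at most $|\lambda|\leq n-d<n+1\leq\delta_n(\C^{\perp})$, and conclude each class is a singleton. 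Your self-correction is essential --- the first ``injectivity of the top vector'' attempt does fail for exactly the reason you identify --- but the repaired partition argument is sound, including in the degenerate cases $\lambda_i=\gamma_i$ and $\gamma_i=0$. Your route is shorter and more elementary: it bypasses Lemmas \ref{lem_169dp10}--\ref{1613dp10} and the net machinery entirely, makes transparent that the only inputs are $\delta_n(\C^{\perp})\geq n+1$ and $|\lambda|\leq n$ (with $|\gamma|\geq n+1$ indeed playing no role, as you note), and if one partitions only over the $\sigma$ indices with $\lambda_i<\gamma_i$ it even recovers the sharper bound $b^{\sigma}$ implicit in the paper's computation. What the paper's route buys is uniformity: it reuses the Walsh-duality apparatus already set up for the Chen--Skriganov analysis and runs parallel to the proof of \cite[Lemma 16.26]{DP10}, of which the proposition is a variant.
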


\begin{proof}
Let $A \in \C^{\perp}$ with $v_n(a_i) \leq \gamma_i$ and for all $\lambda_i < k < \gamma_i$ with $a_{i k} = 0$ for all $1 \leq i \leq d$. Let $\gamma = (\gamma_1, \ldots, \gamma_d)$ and $\lambda = (\lambda_1, \ldots, \lambda_d)$. Then we have $A \in \V_{\gamma, \lambda}^{\perp}$. Let $\sigma$ be the number of such $i$ that $\lambda_i < \gamma_i$. Analogously to the proof of \cite[Lemma 16.26]{DP10} using Lemma \ref{1613dp10} we get
\begin{align}
\# & \left\{ A = (a_1, \ldots, a_d) \in \C^{\perp}: \, v_n(a_i) \leq \gamma_i; \, a_{ik} = 0 \; \forall \, \lambda_i < k < \gamma_i; \, 1 \leq i \leq d \right\} \notag \\
& \qquad \leq \# \left( \C^{\perp} \cap \V_{\gamma, \lambda}^{\perp} \right) \notag \\
& \qquad = b^{|\lambda| + \sigma - n} \, \# \left( \C \cap \V_{\gamma, \lambda} \right). \label{multline_cardinality}
\end{align}
Now suppose $A \in \V_{\gamma, \lambda}$. Then for all $1 \leq i \leq d$ we have
\[ \Phi_n(a_i) = \frac{a_{i, \lambda_i + 1}}{b^{\lambda_i + 1}} + \ldots + \frac{a_{i, \gamma_i - 1}}{b^{\gamma_i - 1}} + \frac{a_{i, \gamma_i + 1}}{b^{\gamma_i + 1}} + \ldots + \frac{a_{i n}}{b^n} < \frac{1}{b^{\lambda_i}} \]
in the case where $\lambda_i < \gamma_i$ and
\[ \Phi_n(a_i) = \frac{a_{i, \lambda_i + 1}}{b^{\lambda_i + 1}} + \ldots + \frac{a_{i n}}{b^n} < \frac{1}{b^{\lambda_i}} \]
elsewise. Hence, $\Phi_n^d(A)$ is contained in the $b$-adic interval
\[ \prod_{i = 1}^d \left[ \left. 0, b^{-\lambda_i} \right) \right. \]
of volume $b^{-|\lambda|}$. By Proposition \ref{prp_dig_net_duality}, $\Phi_n^d(\C)$ is a digital $(0,n,d)$-net in base $b$, and therefore, according to Remark \ref{rem_numberininterval} contains exactly $b^{n - |\lambda|}$ points which lie in a $b$-adic interval of volume $b^{-|\lambda|}$. Therefore, we have
\[ \# \left( \C \cap \V_{\gamma, \lambda} \right) \leq b^{n - |\lambda|} \]
and the result follows from \eqref{multline_cardinality} since $\sigma \leq d$.

\end{proof}

\begin{prp} \label{prp_haar_coeff_cs}

There exists a constant $c > 0$ with the following property. Let $\CS_n$ be a Chen-Skriganov type point set with $N = b^n$ points and let $\mu_{jml}$ be the $b$-adic Haar coefficient of the discrepancy function of $\CS_n$ for $j \in \N_{-1}^d, \, m \in \Dd_j$ and $l \in \B_j$. Then
\begin{enumerate}[(i)]
 	\item if $j = (-1, \ldots, -1)$ then
\[ \left| \mu_{jml} \right| \leq c \, b^{-n}, \] \label{prp_1_cs}
  \item if $j \neq (-1, \ldots, -1)$ and $|j| \leq n$ then
\[ \left| \mu_{jml} \right| \leq c \, b^{-|j| - n}, \] \label{prp_2_cs}
  \item if $j \neq (-1, \ldots, -1)$ and $|j| > n$ and $j_{\eta_1}, \ldots, j_{\eta_s} < n$ then
\[ \left| \mu_{jml} \right| \leq c \, b^{-|j| - n} \] \label{prp_3_cs}
and
\[ \left| \mu_{jml} \right| \leq c \, b^{-2|j|} \]
for all but $b^n$ coefficients $\mu_{jml}$,
  \item if $j \neq (-1, \ldots, -1)$ and $j_{\eta_1} \geq n$ or $\ldots$ or $j_{\eta_s} \geq n$ then
\[ \left| \mu_{jml} \right| \leq c \, b^{-2|j|}, \] \label{prp_4_cs}
\end{enumerate}

\end{prp}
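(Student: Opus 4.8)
The plan is to compute each Haar coefficient $\mu_{jml}(D_{\CS_n}) = \langle D_{\CS_n}, h_{jml}\rangle$ by exploiting the splitting \eqref{split}, $D_{\CS_n} = \Theta_{\CS_n} + R_{\CS_n}$, together with the expansions of $\chi_{[0,\cdot)}$ in Haar and Walsh systems. First I would dispose of the rest term uniformly: by Proposition \ref{CS_to_be_digital_net} we have $\delta_n(\C_n^\perp)\geq n+1$, so Lemma \ref{rest_lem} gives $|R_{\CS_n}(y)|\leq c\,b^{-n}$ pointwise, hence $|\langle R_{\CS_n}, h_{jml}\rangle| \leq c\,b^{-n}\|h_{jml}\|_{L_1} \leq c\,b^{-n}$ since $\|h_{jml}\|_{L_1}\leq 1$. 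This handles the $b^{-n}$ part of every estimate, so throughout the rest it suffices to bound $\langle \Theta_{\CS_n}, h_{jml}\rangle$ by the stated quantities. Part \eqref{prp_1_cs} is then exactly Proposition \ref{prp_min1}, already proved, so I would simply cite it.

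For \eqref{prp_2_cs} and the first bound in \eqref{prp_3_cs} the plan is a Walsh-Haar "matching" argument. By Lemma \ref{theta_lem}, $\Theta_{\CS_n}(y) = \sum_{t\in\Dn'(C_1,\ldots,C_d)} \hat\chi_{[0,y)}(t)$, and by Lemma \ref{chi_roof_lem} each $\hat\chi_{[0,y_i)}(t_i)$ is a Walsh series in $y_i$ whose frequencies are $t_i'$, $t_i$, and $t_i + \tau\, b^{\varrho(t_i)+a-1}$. Combining this with Lemma \ref{lem_scalprod_2} (scalar products $\langle \hat\chi_{[0,\cdot)}(t),\wal_\alpha\rangle$ are $O(b^{-\max(\varrho(t),\varrho(\alpha))})$ and vanish unless $\alpha$ has the indicated form) and Lemma \ref{lem_scalprod_1} (which forces $\varrho(\alpha_i) = j_i+1$, i.e. $\wal_{\alpha_i}$ must match the Haar level $j_i$, and $\langle h_{j_im_il_i},\wal_{\alpha_i}\rangle$ has modulus $b^{-j_i}$), I would write $\langle \Theta_{\CS_n}, h_{jml}\rangle$ tensorwise as a sum over $t\in\Dn'$ of products over the coordinates. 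For coordinates $i$ with $j_i=-1$ only $\alpha_i=0$ contributes, forcing either $t_i=0$ or $\varkappa(t_i)\leq 1$; for coordinates with $j_i\geq 0$ the matching $\varrho(\alpha_i)=j_i+1$ together with the shape of $\alpha_i$ from Lemma \ref{lem_scalprod_2} pins down $t_i$ up to finitely many choices and forces $\varrho(t_i)\leq j_i+1$ (roughly), while each scalar product contributes a factor $b^{-j_i}$ and $b^{-\varrho(t_i)}$. The surviving $t$ then have small NRT weight $v_n^d(t) = \varrho^d(t)$, and the constraint $\varkappa_n(\C_n^\perp)\geq 2d+1$ from Proposition \ref{CS_to_be_digital_net} kills all $t$ with too few nonzero digits; what remains is counted by Proposition \ref{main_est_prp} (the strengthened counting lemma for $\C_n^\perp$), which yields $O(b^d)$ surviving frequencies once $|\gamma|\geq n+1$ and $|\lambda|+d\leq n$. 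Multiplying the per-coordinate factors $b^{-j_i}$ gives $b^{-|j|}$, and the extra $b^{-n}$ comes either from the cardinality bound (when $|j|\geq n+1$) or from the absolute convergence tail of the Walsh expansion combined with $\delta_n(\C_n^\perp)\geq n+1$ (when $|j|\leq n$, one uses that the smallest nonzero $t\in\Dn'$ has $\varrho^d(t)\geq n+1$). I expect this bookkeeping — correctly tracking which of $t_i', t_i, t_i+\tau b^{\varrho(t_i)+a-1}$ matches a given Haar level across all $d$ coordinates, and feeding the resulting digit pattern into Proposition \ref{main_est_prp} — to be the main obstacle.

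For the second bound in \eqref{prp_3_cs} and for \eqref{prp_4_cs} the plan is different and simpler: when some $j_{\eta_\nu}\geq n$ the $b$-adic interval $I_{jm}$ (in that coordinate) is so fine that no point of $\CS_n$ lies in its interior, since $\CS_n$ has only $b^n$ points and, being a $(0,n,d)$-net (Theorem \ref{CS_is_digital_net}), distributes them so that any $b$-adic interval of volume $\leq b^{-n}$ contains at most one point; actually for level $\geq n$ in a single coordinate the argument is that the coefficient of $\chi_{[0,x)}(z)$ in $h_{jml}$ then reduces, via Lemmas \ref{lem_haar_coeff_besov_x} and \ref{lem_haar_coeff_besov_indicator}, to the contribution coming only from the $x_1\cdots x_d$ term, giving $|\mu_{jml}| \leq c\,b^{-2|j|}$ directly from Lemma \ref{lem_haar_coeff_besov_x}. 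The "all but $b^n$ coefficients" clause in \eqref{prp_3_cs} is handled as in the Hammersley case (Proposition \ref{prp_haar_coeff}\eqref{prp_2}): for fixed $j$ the interiors of the $I_{jm}$ are disjoint, so at most $b^n = \#\CS_n$ of them meet $\CS_n$; for all other $m$ the coefficient of the indicator part vanishes (Lemma \ref{lem_haar_coeff_besov_indicator}) and only the $x_1\cdots x_d$ part survives, again bounded by $c\,b^{-2|j|}$ via Lemma \ref{lem_haar_coeff_besov_x}. Finally I would remark that in all four parts the constant $c$ depends only on $b$ and $d$, and collect the four estimates to finish.
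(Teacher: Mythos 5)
Your overall strategy is the one the paper uses: split $D_{\CS_n}=\Theta_{\CS_n}+R_{\CS_n}$, treat the main term by matching Walsh frequencies against Haar levels and counting the surviving $t\in\Dn'(C_1,\ldots,C_d)$ via Proposition \ref{main_est_prp}, and treat high levels by the one-point-per-box property of the $(0,n,d)$-net. Two steps, however, do not work as you state them. First, your disposal of the rest term is too lossy: you bound $|\langle R_{\CS_n},h_{jml}\rangle|\leq c\,b^{-n}\|h_{jml}\|_{L_1}\leq c\,b^{-n}$ and claim this handles the $b^{-n}$ part of \emph{every} estimate. But in parts \eqref{prp_2_cs} and \eqref{prp_3_cs} the target is $c\,b^{-|j|-n}$, which is smaller than $b^{-n}$ by the factor $b^{-|j|}$, so a rest-term contribution of size $b^{-n}$ would swamp those bounds and your reduction to $\Theta_{\CS_n}$ is not justified. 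The fix is immediate — $|h_{jml}|$ equals $1$ on its support and $0$ elsewhere, so $\|h_{jml}\|_{L_1}=|I_{jm}|=b^{-|j|}$ and hence $|\langle R_{\CS_n},h_{jml}\rangle|\leq c\,b^{-n-|j|}$, which is exactly the estimate the paper uses — but it must be made.

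Second, you propose to obtain the first bound in \eqref{prp_3_cs} (the case $|j|>n$ with all $j_{\eta_\nu}<n$) by the same Walsh--Haar matching argument as in \eqref{prp_2_cs}. That runs into the hypothesis $|\lambda|+d\leq n$ of Proposition \ref{main_est_prp}: in the matching argument one takes $\lambda_i=j_i$ in the coordinates where $\varrho(t_i)>j_i$ (and $\lambda_i=\gamma_i\leq j_i$ elsewhere), so in the worst case $|\lambda|=|j|>n$ and the counting proposition is not applicable. The paper gets this bound by a much simpler route: since $\CS_n$ is a $(0,n,d)$-net and $|I_{jm}|=b^{-|j|}<b^{-n}$, each $I_{jm}$ contains at most one point of $\CS_n$, so by Lemmas \ref{lem_haar_coeff_besov_x} and \ref{lem_haar_coeff_besov_indicator} the coefficient is at most $c\,b^{-2|j|}+b^{-n}\cdot c\,b^{-|j|}\leq c\,b^{-|j|-n}$. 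You already invoke exactly this mechanism for \eqref{prp_4_cs} and for the second bound in \eqref{prp_3_cs}, so you should route the first bound of \eqref{prp_3_cs} through it as well. The remainder of your plan — part \eqref{prp_1_cs} by citation of Proposition \ref{prp_min1}, the tensorwise matching and the use of $\varkappa_n(\C_n^{\perp})\geq 2d+1$ and $\delta_n(\C_n^{\perp})\geq n+1$ for \eqref{prp_2_cs}, and the ``all but $b^n$'' count — coincides with the paper's proof.
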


\begin{proof}
Part \eqref{prp_1_cs} is actually Proposition \ref{prp_min1}.

To prove part \eqref{prp_2_cs} we use again the resolution of $D_{\CS_n}$ in 
\[ D_{\CS_n} = \Theta_{\CS_n} + R_{\CS_n}. \]
Let $j \in \N_{-1}^d, \, j \neq (-1, \ldots, -1), \, |j| \leq n, \, m \in \Dd_j, \, l \in \B_j$. The Walsh function series of $h_{jml}$ can be given as
\begin{align} \label{h_j_m_l_given_as}
h_{jml} = \sum_{\alpha \in \N_0^d} \left\langle h_{jml} , \wal_{\alpha} \right\rangle \wal_{\alpha}.
\end{align}
By Lemma \ref{rest_lem} we have
\[ \left| \left\langle R_{\CS_n} , h_{jml} \right\rangle \right| \leq c \, b^{-n} |I_{jm}| = c \, b^{-|j| - n}. \]
We recall that
\[ \left\langle h_{jml}, \wal_{\alpha} \right\rangle = \left\langle h_{j_1 m_1 l_1}, \wal_{\alpha_1} \right\rangle \cdot \ldots \cdot \left\langle h_{j_d m_d l_d}, \wal_{\alpha_d} \right\rangle \]
and
\[ \left\langle \hat{\chi}_{[0,y)}(t) , \wal_{\alpha} \right\rangle = \left\langle \hat{\chi}_{[0,y_1)}(t_1) , \wal_{\alpha_1} \right\rangle \cdot \ldots \cdot \left\langle \hat{\chi}_{[0,y_d)}(t_d) , \wal_{\alpha_d} \right\rangle. \]
We will use Lemmas \ref{lem_scalprod_1} and \ref{lem_scalprod_2} on each of the factors. Lemma \ref{lem_scalprod_1} gives us \linebreak $|\left\langle h_{j_i m_i l_i}, \wal_{\alpha_i} \right\rangle| \leq b^{-j_i}$ if $j_i \neq -1$ for all $i$. For all $\alpha$ with $\varrho(\alpha_i) \neq j_i + 1$ for some $i$ we have $\left\langle h_{j m l}, \wal_{\alpha} \right\rangle = 0$. We also always get $0$ if the leading digit in the $b$-adic expansion of $\alpha_i$ is not $l_i$ for some $i$. In the case where $j_i = -1$ we can get $b^{-j_i}$, by increasing the constant. From Lemma \ref{lem_scalprod_2} we have $|\left\langle \hat{\chi}_{[0,y_i)}(t_i) , \wal_{\alpha_i} \right\rangle| \leq c \, b^{-\max(\varrho(\alpha_i), \varrho(t_i))}$. Inserting Lemma \ref{theta_lem} and \eqref{h_j_m_l_given_as} we get
\begin{align*}
\left| \mu_{jml} (\Theta_{\CS_n}) \right| & = \left| \left\langle \Theta_{\CS_n} , h_{jml} \right\rangle \right| \\
& = \left| \left\langle \sum_{t \in \Dn'(C_1, \ldots, C_d)} \hat{\chi}_{[0,\cdot)}(t) , \sum_{\alpha \in \N_0^d} \left\langle h_{jml} , \wal_{\alpha} \right\rangle \wal_{\alpha} \right\rangle \right| \\
& \leq \sum_{t \in \Dn'(C_1, \ldots, C_d)} \sum_{\alpha \in \N_0^d} \left| \left\langle \hat{\chi}_{[0,\cdot)}(t) , \wal_{\alpha} \right\rangle \right| \left| \left\langle h_{jml}, \wal_{\alpha} \right\rangle \right| \\
& \leq c_1 \, b^{-j_1 - \ldots - j_d} \sum_{t \in \Dn'(C_1, \ldots, C_d)} b^{-\max(j_1, \varrho(t_1)) - \ldots - \max(j_d, \varrho(t_d))}.
\end{align*}
The summation in $\alpha$ disappears due to the following facts. The application of Lemma \ref{lem_scalprod_1} leaves only all such $\alpha$ with $\varrho(\alpha_i) = j_i + 1$ and with $l_i$ as leading digit in the $b$-adic expansion of $\alpha_i$ for all $i$. The application of Lemma \ref{lem_scalprod_2} leaves then at most one $\alpha$ per $t$, namely the one with either $\alpha_i = t_i'$ (if $\varrho(t_i) > j_i + 1$) or $\alpha_i = t_i + l_i \, b^{j_i}$ (if $\varrho(t_i) \leq j_i + 1$) for all $i$. In the cases where there is an $i$ with $\varrho(t_i) > j_i + 1$, it is possible that no $\alpha$ is left in the summation, since we still have the condition on $\alpha_i$ that the leading digit in the $b$-adic expansion is $l_i$, which cannot be guaranteed for $t_i'$.

Our next step is to break the sum above into sums where for every $t$ every coordinate either has bigger NRT weight than the corresponding coordinate of $j$ or a smaller NRT weight. Let $0 \leq r \leq d$ be the integer that is the cardinality of such $1 \leq i \leq d$ that the NRT weight is smaller. Without loss of generality we consider for every $r$ only the case where for $1 \leq i \leq r$ we have $\varrho(t_i) \leq j_i$ while for $r + 1 \leq i \leq d$ we have $\varrho(t_i) > j_i$. All the other cases follow from renaming the indices and we will just increase the constant. In the notation we split the sum
\[ \sum_{t \in \Dn'(C_1, \ldots, C_d)} \leq c_2 \, \sum_{r = 0}^d \, \sum_{t \in \Dn'_r(C_1, \ldots, C_d)} \]
where by $\Dn'_r(C_1, \ldots, C_d)$ we mean the subset of $\Dn'(C_1, \ldots, C_d)$ according to what we explained above (with ordered indices and other cases incorporated into the constant, $r$ coordinates have smaller NRT weight). So we have
\begin{align*}
\left| \mu_{jml} (\Theta_{\CS_n}) \right| & \leq c_3 \, b^{-j_1 - \ldots - j_d} \sum_{r = 0}^d \, \sum_{t \in \Dn'_r(C_1, \ldots, C_d)} b^{-j_1 - \ldots - j_r - \varrho(t_{r + 1}) - \ldots - \varrho(t_d)} \\
& = c_3 \, \sum_{r = 0}^d \, b^{-2j_1 - \ldots - 2j_r - j_{r + 1} - \ldots - j_d} \sum_{t \in \Dn'_r(C_1, \ldots, C_d)} b^{-\varrho(t_{r + 1}) - \ldots - \varrho(t_d)}.
\end{align*}
Instead of summing over $t$, we can sum over the values of $\varrho(t)$, considering the number of such $t$ that $\varrho(t_i) = \gamma_i$, $1 \leq i \leq d$.
We recall that $\CS_n = \Phi_n^d(\C_n)$. Then we denote
\[ \omega_{\gamma} = \# \left\{ A \in \C_n^{\perp}: \, v_n(a_i) = \gamma_i \, \forall \, i \, \wedge \, a_{ik} = 0 \; \forall \, j_i < k < \gamma_i; \, r + 1 \leq i \leq d \right\} \]
and 
\[ \tilde{\omega}_{\gamma} = \# \left\{ A \in \C_n^{\perp}: \, v_n(a_i) \leq \gamma_i \, \forall \, i \, \wedge \, a_{ik} = 0 \; \forall \, j_i < k < \gamma_i; \, r + 1 \leq i \leq d \right\}. \]
Let $\Gamma$ consist of all such $\gamma = (\gamma_1, \ldots, \gamma_d)$ that $0 \leq \gamma_i \leq j_i$ for $1 \leq i \leq r$, $j_i < \gamma_i \leq n$ for $r + 1 \leq i \leq d$ and $|\gamma| \geq n + 1$. Then we have
\[ \left| \mu_{jml} (\Theta_{\CS_n}) \right| \leq c_3 \, \sum_{r = 0}^d \, b^{-2j_1 - \ldots - 2j_r - j_{r+1} - \ldots - j_d} \sum_{\gamma \in \Gamma} b^{-\gamma_{r + 1} - \ldots - \gamma_d} \, \omega_{\gamma}. \]
We can apply Proposition \ref{main_est_prp} with $\lambda_i = \gamma_i, \, 1 \leq i \leq r$ and $\lambda_i = j_i, \, r + 1 \leq i \leq d$. Thereby, we get $\tilde{\omega}_{\gamma} \leq b^d$. An obvious observation is that
\[ \sum_{0 \leq \kappa_i \leq \gamma_i, \, 1 \leq i \leq d} \omega_{\kappa} \leq \tilde{\omega}_{\gamma} \]
with $\kappa = (\kappa_1, \ldots, \kappa_d)$. Recall the notation $\bar{n} = (n, \ldots, n)$. For all $\gamma \in \Gamma$ it holds that $-\gamma_{r + 1} - \ldots - \gamma_d \leq \gamma_1 + \ldots + \gamma_r - n - 1$ so,
\begin{align*}
& \left| \mu_{jml} (\Theta_{\CS_n}) \right| \leq c_3 \, \sum_{r = 0}^d \, b^{-2j_1 - \ldots - 2j_r - j_{r+1} - \ldots - j_d} \sum_{\gamma \in \Gamma} b^{-n - 1 + \gamma_1 + \ldots + \gamma_r} \, \omega_{\gamma} \\
& \leq c_4 \, \sum_{r = 0}^d \, b^{-2j_1 - \ldots - 2j_r - j_{r+1} - \ldots - j_d - n} \sum_{0 \leq \gamma_i \leq j_i, \, 1 \leq i \leq r} b^{\gamma_1 + \ldots + \gamma_r} \sum_{j_i < \gamma_i \leq n, \, r + 1 \leq i \leq d} \omega_{\gamma} \\
& \leq c_4 \, \sum_{r = 0}^d \, b^{-2j_1 - \ldots - 2j_r - j_{r+1} - \ldots - j_d - n} \, \prod_{i = 1}^r \sum_{\kappa_i = 0}^{j_i} b^{\kappa_i} \sum_{j_i < \gamma_i \leq n, \, r + 1 \leq i \leq d} \, \max_{0 \leq \gamma_i \leq j_i, \, 1 \leq i \leq r} \omega_{\gamma} \\
& \leq c_5 \, \sum_{r = 0}^d \, b^{-j_1 - \ldots - j_d - n} \sum_{0 \leq \gamma_i \leq n, \, 1 \leq i \leq d} \omega_{\gamma} \\
& \leq c_6 \, b^{-j_1 - \ldots - j_d - n} \, \tilde{\omega}_{\bar{n}} \\
& \leq c_6 \, b^{-j_1 - \ldots - j_d - n} \, b^d \\
& \leq c_7 \, b^{-|j| - n}.
\end{align*}

For the part \eqref{prp_3_cs} let $|j| > n$ and $j_{\eta_1}, \ldots, j_{\eta_s} < n$. We recall that $\CS_n$ contains exactly $N = b^n$ points and for fixed $j \in \N_{-1}^d$, the interiors of the $b$-adic intervals $I_{jm}$ are mutually disjoint. There are no more than $b^n$ such $b$-adic intervals which contain a point of $\CS_n$ meaning that all but $b^n$ intervals contain no points at all. This fact combined with Lemma \ref{lem_haar_coeff_besov_x} gives us the second statement of this part. The remaining boxes contain exactly one point of $\CS_n$ (Theorem \ref{CS_is_digital_net}). So from Lemmas \ref{lem_haar_coeff_besov_x} and \ref{lem_haar_coeff_besov_indicator} we get the first statement of this part.

Finally, let $j_{\eta_1} \geq n$ or $\ldots$ or $j_{\eta_s} \geq n$, then there is no point of $\CS_n$ which is contained in the interior of the $b$-adic interval $I_{jm}$. Thereby part \eqref{prp_4_cs} follows from Lemma \ref{lem_haar_coeff_besov_x}.

\end{proof}

\begin{rem}

Proposition \ref{prp_haar_coeff_cs} together with \eqref{parsevals_equation} gives us yet another alternative proof for Theorem \ref{thm_upper_chen}.

\end{rem}

We are ready to prove the main result of this work.

\begin{proof}[Proof of Theorem \ref{thm_main}]
At first, we assume that $N = b^n$ for $n = 2dw$ for some $w \in \N_0$. Then the point set satisfying the assertion is the Chen-Skriganov type point set $\CS_n$. Let $\mu_{jml}$ be the $b$-adic Haar coefficients of the discrepancy function of $\CS_n$. Theorem \ref{thm_besov_char} gave us an equivalent quasi-norm on $S_{pq}^r B([0,1)^d)$ so that the proof of the inequality
\[ \left( \sum_{j \in \N_{-1}^d} b^{|j|(r - \frac{1}{p} + 1) q} \left( \sum_{m \in \Dd_j, \, l \in \B_j} |\mu_{jml}|^p \right)^{\frac{q}{p}} \right)^{\frac{1}{q}} \leq C \, b^{n(r-1)} n^{\frac{d-1}{q}} \]
for some constant $C > 0$ establishes the proof of the theorem in this case. 

To estimate the expression on the left-hand side, we use Minkowski's inequality to split the sum into summands according to the cases of Proposition \ref{prp_haar_coeff_cs}. We denote
\[ \Xi_j = b^{|j|(r - \frac{1}{p} + 1) q} \left( \sum_{m \in \Dd_j, \, l \in \B_j} |\mu_{jml}|^p \right)^{\frac{1}{p}} \]
and get
\[ \left( \sum_{j \in \N_{-1}^d} \Xi_j^q \right)^{\frac{1}{q}} \leq \Xi_{(-1, \ldots, -1)} + \sum_{s = 1}^d \left[ \left( \sum_{j \in J_s^1} \Xi_j^q \right)^{\frac{1}{q}} + \left( \sum_{j \in J_s^2} \Xi_j^q \right)^{\frac{1}{q}} + \sum_{i = 1}^s \left( \sum_{j \in J_{s i}^3} \Xi_j^q \right)^{\frac{1}{q}} \right] \]
where $J_s^1$ is the set of all such $j \neq (-1, \ldots, -1)$ for which $|j| \leq n$, $J_s^2$ is the set of all such $j \neq (-1, \ldots, -1)$ for which $0 \leq j_{\eta_1}, \ldots, j_{\eta_s} \leq n-1$ and $|j| > n$ and $J_{s i}^3$ is the set of all such $j$ for which $j_{\eta_i} \geq n$.

We will show that each of the summands above can be bounded by $C \, b^{n(r-1)} n^{\frac{d-1}{q}}$ which finishes the proof.

Part \eqref{prp_1_cs} of Proposition \ref{prp_haar_coeff_cs} gives us for $j = (-1, \ldots, -1), \, m = (0,\ldots,0), \, l = (0,\ldots,0)$
\[ \Xi_j = |\mu_{jml}| \leq c_1 b^{-n} \leq c_2 b^{n(r-1)} n^{\frac{d-1}{q}}. \]
Let now $1 \leq s \leq d$. We will use \eqref{prp_2_cs} in Proposition \ref{prp_haar_coeff_cs} and Lemma \ref{lem_lambda_s_minus_1}. The summation over $l \in \B_j$ can be incorporated into the constant and we recall that $\# \Dd_j = b^{|j|}$. Hence (using the fact that $r < 0$) we have
\begin{align*}
\left( \sum_{j \in J_s^1} \Xi_j^q \right)^{\frac{1}{q}} & \leq c_3 \left( \sum_{j \in J_s^1} b^{|j|(r - \frac{1}{p} + 1) q} \left( \sum_{m \in \Dd_j} b^{(-|j| - n) p} \right)^{\frac{q}{p}} \right)^{\frac{1}{q}} \\
& = c_3 \left( \sum_{j \in J_s^1} b^{(|j| r - n ) q} \right)^{\frac{1}{q}} \\
& \leq c_4 \left( \sum_{\lambda = 0}^n b^{(\lambda r - n) q} (\lambda + 1)^{s - 1} \right)^{\frac{1}{q}} \\
& \leq c_5 \, n^{\frac{s - 1}{q}} \, b^{-n} \left( \sum_{\lambda = 0}^n b^{\lambda r q} \right)^{\frac{1}{q}} \\
& \leq c_6 \, n^{\frac{d - 1}{q}} \, b^{n (r-1)}.
\end{align*}
From \eqref{prp_3_cs} in the same proposition (using the fact that $r - \frac{1}{p} < 0$ and $r - 1 \leq 0$) we have
\begin{align*}
\left( \sum_{j \in J_s^2} \Xi_j^q \right)^{\frac{1}{q}} & \leq c_7 \left( \sum_{j \in J_s^2} b^{|j|(r - \frac{1}{p} + 1) q} \, b^{n \frac{q}{p}} \, b^{(-|j| - n) q} \right)^{\frac{1}{q}} \\
& \quad + c_8 \left( \sum_{j \in J_s^2} b^{|j|(r - \frac{1}{p} + 1)q} \, b^{|j| \frac{q}{p}} \, b^{-2|j| q} \right)^{\frac{1}{q}} \\
& = c_7 \left( \sum_{j \in J_s^2} b^{\left[ |j| (r - \frac{1}{p}) + \frac{n}{p} - n \right] q} \right)^{\frac{1}{q}} \\
& \quad + c_8 \left( \sum_{j \in J_s^2} b^{|j|(r - 1)q} \right)^{\frac{1}{q}}\\
& \leq c_7 \left( \sum_{\lambda = n+1}^{s(n - 1)} (\lambda + 1)^{s-1} b^{\left[ \lambda(r - \frac{1}{p}) + \frac{n}{p} - n \right] q} \right)^{\frac{1}{q}} \\
& \quad + c_8 \left( \sum_{\lambda = n+1}^{s(n - 1)} (\lambda + 1)^{s-1} b^{\lambda (r-1) q} \right)^{\frac{1}{q}} \\
& \leq c_9 \, n^{\frac{s - 1}{q}} b^{\frac{n}{p} - n} \left( \sum_{\lambda = n + 1}^{s(n - 1)} b^{\lambda (r - \frac{1}{p}) q} \right)^{\frac{1}{q}} + c_{10} \, n^{\frac{s - 1}{q}} \left( \sum_{\lambda = n + 1}^{s(n - 1)} b^{\lambda (r - 1) q} \right)^{\frac{1}{q}} \\
& \leq c_{11} \, n^{\frac{s - 1}{q}} b^{\frac{n}{p} - n} \, b^{n (r - \frac{1}{p})} + c_{12} \, n^{\frac{s - 1}{q}} b^{n (r - 1)} \\
& \leq c_{13} \, n^{\frac{d - 1}{q}} \, b^{n(r-1)}.
\end{align*}
Part \eqref{prp_4_cs} in Proposition \ref{prp_haar_coeff_cs} gives us for any $1 \leq i \leq s$
\begin{align*}
\left( \sum_{j \in J_{s i}^3} \Xi_j^q \right)^{\frac{1}{q}} & \leq c_{14} \left( \sum_{j \in J_{s i}^3} b^{|j|(r - \frac{1}{p} + 1) q} \, b^{|j| \frac{q}{p}} \, b^{-2|j| q} \right)^{\frac{1}{q}}\\
& \leq c_{15} \left( \sum_{\lambda = n}^\infty (\lambda + 1)^{s-1} b^{\lambda(r - 1) q} \right)^{\frac{1}{q}}\\
& \leq c_{16} n^{\frac{d-1}{q}} b^{n(r-1)}.
\end{align*}

The cases $p = \infty$ and $q = \infty$ have to be modified in the usual way.

Now let $N \geq 2$ be an arbitrary integer. Then we find $w \in \N_0$ such that,
\[ b^{2d(w-1)} < N \leq b^{2dw} \]
and for $n = 2dw$ we construct the point set $\CS_n$. Since $\CS_n$ is a digital $(0,n,d)$-net, the point set
\[ \tilde{\P} = \CS_n \cap \left( \left[ \left. 0, \frac{N}{b^n} \right) \right. \times \Q^{d-1} \right) \]
contains exactly $N$ points. We define the point set
\[ \P = \left\{ \left( \frac{b^n}{N} x_1, x_2, \ldots, x_d \right): \, (x_1, \ldots, x_d) \in \tilde{\P} \right\}. \]
Then
\[ D_{\P}(y) = \# \left( \left[ \left. 0, \frac{N}{b^n} y_1 \right) \right. \times [0,y_2) \times \ldots \times [0,y_d) \cap \tilde{\P} \right) - N y_1 \cdot \ldots \cdot y_d. \]
Therefore, by scaling the first coordinate with the factor $\frac{N}{b^n} \in \big( \frac{1}{2}, 1 \big]$, we estimate (with certain constant $c_1 > 0$)
\[ \left\| \mathcal{F}^{-1}(\varphi_k \mathcal{F} D_{\P}) | L_p(\Q^d) \right\| \leq c_1 \left\| \mathcal{F}^{-1}(\varphi_k \mathcal{F} D_{\CS_n}) | L_p(\Q^d) \right\|. \]
Finally, we get
\[ \left\| D_{\P} | S_{pq}^r B(\Q^d) \right\| \leq c_1 \left\| D_{\P} | S_{pq}^r B(\Q^d) \right\| \leq c_2 \, N^{r-1} \, (\log N)^{\frac{d-1}{q}}. \]
\end{proof}

\begin{rem}

We remind the reader of Lemma \ref{non_existence_for_nets} which already made a necessity for $b$ to be large to ensure the existence of $(0,n,d)$-nets. But this dependence was linear, namely $b > d - 2$. The construction of $\CS_n$ demands for $b$ to be even larger, namely $b \geq 2d^2$.

\end{rem}
\section{Conclusion}
We summarize the discrepancy results for spaces with dominating mixed smoothness. Especially we would like to give the cases where the lower and the upper bounds coincide. The following combines results from Theorems \ref{lower_to_main_B}, \ref{thm_main} and Corollaries \ref{lower_to_main_F}, \ref{cor_f_spaces}, \ref{lower_to_main_H}, \ref{cor_h_spaces}.

\begin{thm} \label{thm_main_conclusion}

\mbox{}
\begin{enumerate}[(i)]
	\item Let $1 \leq p,q \leq \infty$ and $q < \infty$ if $p = 1$ and $q > 1$ if $p = \infty$. Let $0 < r < \frac{1}{p}$. Then there exist constants $c_1, C_1 > 0$ such that, for any integer $N \geq 2$, we have
	\[ c_1 \, N^{r-1} \, (\log N)^{\frac{d-1}{q}} \leq D^{S_{pq}^r B}(N) \leq C_1 \, N^{r-1} \, (\log N)^{\frac{d-1}{q}}. \]
	\item Let $1 \leq p,q < \infty$. Let $0 < r < \frac{1}{\max(p,q)}$. Then there exist constants $c_2, C_2 > 0$ such that, for any integer $N \geq 2$, we have
	\[ c_2 \, N^{r-1} \, (\log N)^{\frac{d-1}{q}} \leq D^{S_{pq}^r F}(N) \leq C_2 \, N^{r-1} \, (\log N)^{\frac{d-1}{q}}. \]
	\item Let $1 \leq p < \infty$. Let $0 \leq r < \frac{1}{\max(p,2)}$. Then there exist constants $c_3, C_3 > 0$ such that, for any integer $N \geq 2$, we have
	\[ c_3 \, N^{r-1} \, (\log N)^{\frac{d-1}{2}} \leq D^{S_p^r H}(N) \leq C_3 \, N^{r-1} \, (\log N)^{\frac{d-1}{2}}. \]
\end{enumerate}

\end{thm}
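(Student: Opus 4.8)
The plan is to assemble Theorem~\ref{thm_main_conclusion} purely as a bookkeeping exercise that collects the lower bounds of Section~4.1 and the upper bounds of Section~4.2 and matches their ranges of validity. For part~(i), the lower bound is exactly Theorem~\ref{lower_to_main_B} applied with the restriction $0 < r < \frac{1}{p}$ (which is a subinterval of $\frac{1}{p}-1 < r < \frac{1}{p}$, and in particular satisfies $r > \frac{1}{\min(p,q)}-1$ trivially since $r\ge 0$), and the matching upper bound is Theorem~\ref{thm_main}, whose hypotheses are precisely $1\le p,q\le\infty$ and $0<r<\frac1p$. So part~(i) is immediate; one only needs to remark that the parameter conditions of the two cited results coincide on the stated range.

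For part~(ii), I would first invoke Corollary~\ref{lower_to_main_F} for the lower bound and Corollary~\ref{cor_f_spaces} for the upper bound. The subtlety here is that the two corollaries are stated with different-looking restrictions on $r$: the lower bound in Corollary~\ref{lower_to_main_F} needs $\frac{1}{\min(p,q)}-1 < r < \frac1p$, while the upper bound in Corollary~\ref{cor_f_spaces} needs $0<r<\frac{1}{\max(p,q)}$. The second interval is contained in the first (since $\frac{1}{\max(p,q)}\le\frac1p$ when $p\le\max(p,q)$, and $0\ge\frac{1}{\min(p,q)}-1$ is false in general — so one must check: $\frac{1}{\min(p,q)}-1 < 0$ iff $\min(p,q)>1$; if $\min(p,q)=1$ then $\frac{1}{\min(p,q)}-1=0$, matching $r\ge 0$, but $q<\infty$ is already assumed). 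So on $0<r<\frac{1}{\max(p,q)}$ both estimates apply and part~(ii) follows. For part~(iii), specialize part~(ii) to $q=2$, using $S_p^r H(\Q^d)=S_{p2}^r F(\Q^d)$ from Definition~\ref{sobolev_df}; for $r>0$ this is immediate, and for $r=0$ one uses $S_p^0 H(\Q^d)=L_p(\Q^d)$ together with Theorem~\ref{thm_L1_upper}, Theorem~\ref{thm_upper_chen}, Corollary~\ref{lower_to_main_H} (or Theorem~\ref{thm_schmidt}), exactly as in the proof of Corollary~\ref{cor_h_spaces}.

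The only genuine content of the proof, then, is verifying that for each of the three parts the range of $r$ stated is exactly the intersection of the $r$-ranges in the lower- and upper-bound results being quoted, and that the side conditions on $p,q$ (the $q<\infty$ if $p=1$, $q>1$ if $p=\infty$ clauses in part~(i), and $p,q<\infty$ in part~(ii)) are consistent with both. I expect the main — very mild — obstacle to be the $r=0$ endpoint in part~(iii): the upper bound there is not covered by Theorem~\ref{thm_main} (which requires $r>0$) and must be pulled in separately from the $L_p$-discrepancy chapter, and one must be slightly careful that the $p=1$ case of $S_1^0 H = L_1$ is handled by Theorem~\ref{thm_L1_upper} rather than Theorem~\ref{thm_upper_chen}.

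\begin{proof}[Proof of Theorem \ref{thm_main_conclusion}]
Part~(i) is the combination of Theorem~\ref{lower_to_main_B} (lower bound) and Theorem~\ref{thm_main} (upper bound), whose hypotheses on $p$, $q$ and $r$ agree on the range $0 < r < \frac{1}{p}$.

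For part~(ii), the lower bound is Corollary~\ref{lower_to_main_F} and the upper bound is Corollary~\ref{cor_f_spaces}. Since $\frac{1}{\max(p,q)} \leq \frac{1}{p}$ and, for $1 \leq p, q < \infty$, the interval $0 < r < \frac{1}{\max(p,q)}$ is contained in $\frac{1}{\min(p,q)} - 1 < r < \frac{1}{p}$, both estimates are available on the stated range and the claim follows.

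For part~(iii) we use $S_p^r H(\Q^d) = S_{p\,2}^r F(\Q^d)$. If $r > 0$, the assertion is part~(ii) with $q = 2$. If $r = 0$, then $S_p^0 H(\Q^d) = L_p(\Q^d)$, and the upper bound is Theorem~\ref{thm_L1_upper} for $p = 1$ and Theorem~\ref{thm_upper_chen} for $1 < p < \infty$, while the lower bound is Corollary~\ref{lower_to_main_H} (equivalently Theorem~\ref{thm_schmidt}).
\end{proof}
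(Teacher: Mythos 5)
Your proposal is essentially identical to what the paper does: Theorem \ref{thm_main_conclusion} is stated with no proof beyond the sentence that it ``combines'' Theorems \ref{lower_to_main_B}, \ref{thm_main} and Corollaries \ref{lower_to_main_F}, \ref{cor_f_spaces}, \ref{lower_to_main_H}, \ref{cor_h_spaces}, and your bookkeeping of the parameter ranges for parts (i) and (ii) is accurate and complete.

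There is, however, one corner of part (iii) where your citation does not actually deliver the claimed bound, namely $p=1$, $r=0$. You write that the lower bound there is ``Corollary \ref{lower_to_main_H} (equivalently Theorem \ref{thm_schmidt})'', but Corollary \ref{lower_to_main_H} requires $\frac{1}{\min(p,2)}-1 < r$, which for $p=1$ reads $0 < r$ and so excludes $r=0$; and Theorem \ref{thm_schmidt} requires $1<p<\infty$. At $p=1$, $r=0$ the space is $L_1(\Q^d)$, where the best known lower bound is Hal\'asz's $\sqrt{\log N}/N$ (Theorem \ref{thm_halasz}) and the bound $(\log N)^{\frac{d-1}{2}}/N$ is precisely the open conjecture the paper records in its $L_1$ section; indeed the remark after Corollary \ref{lower_to_main_H} explicitly says ``The case $L_1(\Q^d)$ is not included.'' So the lower bound in part (iii) is not established for $p=1$, $r=0$ by any of the cited results. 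To be fair, this gap is inherited from the paper's own statement of the theorem (you correctly handled the upper bound at $r=0$ via Theorems \ref{thm_L1_upper} and \ref{thm_upper_chen}); the clean fix is either to exclude the pair $(p,r)=(1,0)$ from part (iii) or to restrict the lower bound there to $1<p<\infty$ when $r=0$.
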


\begin{rem}

The constants in Theorem \ref{thm_main_conclusion} depend only on the dimension and on the parameters $b, p, q, r$. In particular they do not depend on $N$. We point out that the conditions on $r$ in Theorem \ref{thm_main_conclusion} are better in the case of the Besov spaces. For the Triebel-Lizorkin spaces in the case $\frac{1}{q} \leq r < \frac{1}{p}$ for $p < q$ the upper bound is still an open problem. Therefore, for the Sobolev spaces in the case $\frac{1}{2} \leq r < \frac{1}{p}$ for $p < 2$ the upper bound is an open problem.

\end{rem}

\chapter{Integration errors} \label{chpt_int_err}
In this chapter we are going to deal with the applications of discrepancy theory. We already have mentioned the connection between the discrepancy function and the integration errors. Now we will conclude concrete results. For spaces with dominating mixed smoothness this connection is given by \cite[Theorem 6.11, Remark 6.28]{T10a}. We start with the definition of the error.
\begin{df}
Let $N$ be a positive integer and $M(\Q^d)$ be some Banach space of functions on $\Q^d$. Let $M_0^1(\Q^d)$ be the subset of the unit ball of $M(\Q^d)$ with the property that the extensions of all elements of $M_0^1(\Q^d)$ vanish whenever one of the coordinates of the argument is $1$. The error of quadrature formulas in $M(\Q^d)$ with $N$ points is
\[ \err_N(M) = \inf_{\{ x_1, \ldots, x_N \} \subset [0,1)^d} \sup_{f \in M^1_0([0,1)^d)} \left| \int_{[0,1)^d} f(x) \, \dint x - \frac{1}{N}\sum_{i = 1}^N f(x_i) \right|. \]
\end{df}
We now quote the aforementioned result and apply it then to the results from Theorem \ref{thm_main_conclusion}. The reader might be confused by the different notation in \cite{T10a}. In \cite[(3.182),(3.187),(5.5),(5.88),(6.7),(6.32)]{T10a} the necessary definitions and facts for the understanding can be found.

Let
\[ \frac{1}{p} + \frac{1}{p'} = \frac{1}{q} + \frac{1}{q'} = 1. \]

\begin{prp} \label{prp_triebel611}

Let $1 \leq p,q \leq \infty$ and $\frac{1}{p} < r < \frac{1}{p} + 1$. Then there exist constants $c_1, c_2 > 0$ such that, for any integer $N \geq 2$, we have
\[ c_1 \, D^{S_{p'q'}^{1-r} B}(N) \leq \err_N(S_{pq}^r B) \leq c_2 \, D^{S_{p'q'}^{1-r} B}(N). \]

\end{prp}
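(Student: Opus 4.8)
The statement is, up to notation, \cite[Theorem 6.11, Remark 6.28]{T10a}, so the plan is to reproduce the functional-analytic form of the Koksma--Hlawka inequality underlying it and to verify that the hypothesis $\frac{1}{p} < r < \frac{1}{p} + 1$ is exactly what the argument needs. The skeleton is: a Hlawka--Zaremba integration-by-parts identity, the lifting (mixed-differentiation) isomorphism for spaces with dominating mixed smoothness, and the duality of Besov spaces with dominating mixed smoothness from Proposition \ref{dom_mix_smoo_duality}.

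First I would record the Hlawka--Zaremba identity: for every point set $\P = \{x_1,\ldots,x_N\} \subset \Q^d$ and every smooth $f$ on $\Q^d$ whose extension vanishes as soon as one coordinate equals $1$,
\[ \int_{\Q^d} f(x)\,\dint x - \frac{1}{N}\sum_{i=1}^N f(x_i) = (-1)^d \int_{\Q^d} D_{\P}(x)\, \frac{\partial^d f}{\partial x_1 \cdots \partial x_d}(x)\,\dint x, \]
which follows by integrating by parts in each variable separately, the boundary contributions being annihilated by the vanishing of $f$ on the "upper" faces and by $D_{\P}(x) = 0$ whenever some coordinate of $x$ equals $0$. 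This identity then extends by density to all of $M_0^1(\Q^d)$ for the spaces $M = S_{pq}^r B(\Q^d)$ in the stated parameter range.

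Next I would use that the mixed derivative $\partial^d/\partial x_1\cdots\partial x_d$ maps the zero-trace subspace of $S_{pq}^r B(\Q^d)$ (whose normalized unit ball is precisely $M_0^1(\Q^d)$) isomorphically onto $S_{pq}^{r-1} B(\Q^d)$; this is the lifting property of dominating-mixed-smoothness spaces, cf. \cite[Section 2.2, (6.7)]{T10a}, and it is one place where $r > \frac{1}{p}$ enters, so that traces on the faces make sense. Taking the supremum of the right-hand side of the identity over $M_0^1(\Q^d)$ then yields, up to the norm of the isomorphism,
\[ \sup_{f \in M_0^1(\Q^d)} \left| \int_{\Q^d} f(x)\,\dint x - \frac{1}{N}\sum_{i=1}^N f(x_i) \right| \asymp \sup_{\|g\| \leq 1} \left| \int_{\Q^d} D_{\P}(x)\, g(x)\,\dint x \right| = \left\| D_{\P} \,\big|\, \big(S_{pq}^{r-1}B(\Q^d)\big)' \right\|, \]
the supremum being over $g$ in the unit ball of $S_{pq}^{r-1}B(\Q^d)$.

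Finally I would invoke duality. By the ($d$-dimensional, $b$-independent) analogue of Proposition \ref{dom_mix_smoo_duality}(ii), valid for $\frac{1}{p} - 1 < r-1 < \frac{1}{p}$, i.e. exactly for $\frac{1}{p} < r < \frac{1}{p}+1$, one has $\big(S_{pq}^{r-1}B(\Q^d)\big)' = S_{p'q'}^{1-r}B(\Q^d)$ with equivalent norms, so the last expression is comparable to $\|D_{\P}\,|\,S_{p'q'}^{1-r}B(\Q^d)\|$; taking the infimum over all $\P$ with $\#\P = N$ turns the displayed supremum into $\err_N(S_{pq}^r B)$ and the right side into $D^{S_{p'q'}^{1-r}B}(N)$, giving the two-sided estimate. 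The main obstacle is the bookkeeping around boundary traces: identifying $M_0^1(\Q^d)$ with the (normalized) zero-trace subspace, justifying the integration-by-parts identity at the precise smoothness threshold, and treating the endpoints $p=\infty$ or $q=\infty$, where Proposition \ref{dom_mix_smoo_duality} must be replaced by its appropriate substitute — all of which are the points handled in \cite[Sections 5.2, 6.2]{T10a}.
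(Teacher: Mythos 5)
The paper does not prove this proposition at all: it is quoted verbatim from \cite[Theorem 6.11, Remark 6.28]{T10a}, with only pointers to the definitions needed to translate the notation. Your reconstruction via the Hlawka--Zaremba identity, the mixed-derivative lifting onto $S_{pq}^{r-1}B(\Q^d)$, and the duality $\big(S_{pq}^{r-1}B\big)' = S_{p'q'}^{1-r}B$ (valid exactly for $\frac{1}{p}<r<\frac{1}{p}+1$, with the endpoint cases $p=\infty$, $q=\infty$ needing separate treatment, as you note) is precisely the argument underlying Triebel's proof, so it is correct and consistent with the source the paper cites.
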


Hence, we can conclude bounds for the integration error. We start with the lower bounds.

\begin{thm} \label{int_err_low_B}

Let $1 \leq p,q \leq \infty$ and $q < \infty$ if $p = 1$ and $q > 1$ if $p = \infty$. Let $\frac{1}{p} < r < \frac{1}{p} + 1$. Then there exists a constant $c > 0$ such that, for any integer $N \geq 2$, we have
\[ \err_N(S_{pq}^r B) \geq c \, \frac{(\log N)^{\frac{(q-1)(d-1)}{q}}}{N^r}. \]

\end{thm}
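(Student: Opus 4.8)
The strategy is to combine Proposition \ref{prp_triebel611} with the known lower bound for the $B$-discrepancy in Theorem \ref{lower_to_main_B}. By Proposition \ref{prp_triebel611} it suffices to bound $D^{S_{p'q'}^{1-r}B}(N)$ from below, since $\err_N(S_{pq}^r B) \geq c_1 \, D^{S_{p'q'}^{1-r}B}(N)$. So first I would check that the hypotheses of Theorem \ref{lower_to_main_B} are satisfied with the parameters $p' , q' , 1-r$ in place of $p,q,r$. The condition $\frac{1}{p'}-1 < 1-r < \frac{1}{p'}$ is equivalent to $\frac{1}{p} < r < \frac{1}{p}+1$, which is exactly the hypothesis we have imposed. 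The side conditions "$q'<\infty$ if $p'=1$ and $q'>1$ if $p'=\infty$" translate, under $\frac1p+\frac1{p'}=\frac1q+\frac1{q'}=1$, into "$q>1$ if $p=\infty$ and $q<\infty$ if $p=1$" — again precisely what is assumed here. (The degenerate endpoint cases $p=1$, $p=\infty$, $q=1$, $q=\infty$ have to be matched up carefully; this is the one place where a little bookkeeping is needed, but it is routine.)

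Next, applying Theorem \ref{lower_to_main_B} with the dual indices gives a constant $c>0$ such that for all $N\geq 2$ and all point sets $\P$ in $\Q^d$ with $N$ points,
\[ \left\| D_{\P} \, | \, S_{p'q'}^{1-r} B(\Q^d) \right\| \geq c \, N^{(1-r)-1} \, (\log N)^{\frac{d-1}{q'}} = c \, N^{-r} \, (\log N)^{\frac{d-1}{q'}}. \]
Taking the infimum over $\P$ with $\#\P = N$ yields $D^{S_{p'q'}^{1-r}B}(N) \geq c \, N^{-r}(\log N)^{\frac{d-1}{q'}}$. Finally I would rewrite the exponent of the logarithm in terms of $q$: since $\frac1{q'} = 1 - \frac1q = \frac{q-1}{q}$, we have $\frac{d-1}{q'} = \frac{(q-1)(d-1)}{q}$, so $D^{S_{p'q'}^{1-r}B}(N) \geq c \, N^{-r}(\log N)^{\frac{(q-1)(d-1)}{q}}$. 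Combining this with the lower half of Proposition \ref{prp_triebel611} gives
\[ \err_N(S_{pq}^r B) \geq c_1 \, D^{S_{p'q'}^{1-r}B}(N) \geq c \, \frac{(\log N)^{\frac{(q-1)(d-1)}{q}}}{N^r}, \]
after relabelling the constant, which is the claimed bound.

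The only genuinely non-trivial point is the bookkeeping of the endpoint restrictions on $p$ and $q$ when passing to the dual pair; everything else is a direct chaining of two results already available in the excerpt. I would also note in passing that Proposition \ref{prp_triebel611} already builds in the requirement that the relevant discrepancy functions make sense in the spaces involved (the condition $r<\frac1p$ for $S_{p'q'}^{1-r}B$ after dualising is $1-r<\frac1{p'}$, i.e. $r>\frac1p$, consistent with our hypothesis), so no separate verification of membership is needed beyond citing that proposition.
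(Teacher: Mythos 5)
Your proposal is correct and follows exactly the same route as the paper's own proof: chaining Proposition \ref{prp_triebel611} with Theorem \ref{lower_to_main_B} applied to the dual parameters $p', q', 1-r$, and checking that the parameter range and endpoint conditions transform as claimed under duality. The dual-index bookkeeping you flag as the only delicate point is handled correctly.
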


\begin{proof}
From Proposition \ref{prp_triebel611} we have
\[ \err_N(S_{pq}^r B) \geq c_1 \, D^{S_{p'q'}^{1-r} B}(N) \]
for $\frac{1}{p} < r < \frac{1}{p} + 1$. From Theorem \ref{lower_to_main_B} we have
\[ D^{S_{p'q'}^{1-r} B}(N) \geq c \, N^{1-r-1} \, \left( \log N \right)^{\frac{d-1}{q'}} \]
for $\frac{1}{p'} - 1 < 1 - r < \frac{1}{p'}$ which is equivalent to $\frac{1}{p} < r < \frac{1}{p} + 1$. So the lower bounds follow. The additional conditions for $p$ and $q$ also come from Theorem \ref{lower_to_main_B}.

\end{proof}

\begin{cor} \label{int_err_low_F}

Let $1 \leq p,q < \infty$. Let $\frac{1}{p} < r < \frac{1}{\max(p,q)} + 1$. Then there exists a constant $c > 0$ such that, for any integer $N \geq 2$, we have
\[ \err_N(S_{pq}^r F) \geq c \, \frac{(\log N)^{\frac{(q-1)(d-1)}{q}}}{N^r}. \]

\end{cor}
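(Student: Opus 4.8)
The plan is to deduce Corollary \ref{int_err_low_F} from Corollary \ref{int_err_low_B} exactly as Corollary \ref{int_err_low_B} was deduced from Theorem \ref{int_err_low_B}, but now going through the Triebel-Lizorkin analogue of Proposition \ref{prp_triebel611} together with the embeddings of Corollary \ref{cor_emb_BF}. Concretely, I first invoke the duality/Koksma-Hlawka result for Triebel-Lizorkin spaces (the $F$-version of Proposition \ref{prp_triebel611}, which is available from \cite{T10a} under the same range $\frac{1}{p} < r < \frac{1}{p} + 1$ and $1 < p,q < \infty$, with the usual care about the endpoint $p = 1$) to obtain
\[ \err_N(S_{pq}^r F) \geq c_1 \, D^{S_{p'q'}^{1-r} F}(N). \]
Then I apply Corollary \ref{lower_to_main_F} to the space $S_{p'q'}^{1-r} F(\Q^d)$: it gives
\[ D^{S_{p'q'}^{1-r} F}(N) \geq c \, N^{(1-r)-1} \, (\log N)^{\frac{d-1}{q'}} = c \, \frac{(\log N)^{\frac{(q-1)(d-1)}{q}}}{N^r}, \]
using $\frac{1}{q'} = 1 - \frac{1}{q} = \frac{q-1}{q}$, which is valid provided $\frac{1}{\min(p',q')} - 1 < 1 - r < \frac{1}{p'}$.

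The key step is to check that the admissible range for $r$ that results from chaining these two conditions is exactly $\frac{1}{p} < r < \frac{1}{\max(p,q)} + 1$. The upper endpoint $r < \frac{1}{p}+1$ of Proposition \ref{prp_triebel611} (its $F$-analogue) translates under $r \mapsto 1-r$ into $1-r > -\frac{1}{p}$, i.e. the lower bound $1-r > \frac1{p'} - 1$; this is the mild condition. The condition $1 - r < \frac{1}{p'}$ from Corollary \ref{lower_to_main_F} is equivalent to $r > \frac1p$, giving the stated lower endpoint. The binding constraint is $\frac{1}{\min(p',q')} - 1 < 1 - r$, i.e. $r < 1 + 1 - \frac{1}{\min(p',q')} = 2 - \frac{1}{\min(p',q')}$. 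Since $\min(p',q')' = \max(p,q)$ and $\frac{1}{\min(p',q')} = 1 - \frac{1}{\max(p,q)}$, this reads $r < 1 + \frac{1}{\max(p,q)}$, which is precisely the upper endpoint in the corollary. One must also record, as in the proof of Corollary \ref{int_err_low_B}, that the extra hypotheses on $p,q$ (here $1 \leq p,q < \infty$, excluding the degenerate endpoints) are inherited from Corollary \ref{lower_to_main_F}.

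The main obstacle is justifying the $F$-version of Proposition \ref{prp_triebel611}, since the excerpt only states the Besov case; one has to point to the corresponding statements in \cite{T10a} (the Koksma-Hlawka inequality and duality for $S_{pq}^r F$ spaces, cf. the references \cite[Theorem 6.11, Remark 6.28]{T10a} already cited for the integration error, together with the duality result Proposition \ref{dom_mix_smoo_duality}(iii)) and to note the same caveat the paper already raised: Triebel's Triebel-Lizorkin statement in \cite[Remark 6.28]{T10a} needs the corrected conditions on $r$ coming from the embeddings, which is exactly what the chaining above produces. Once that input is granted, the remainder is the short two-line computation displayed above, and the proof closes in essentially the same few lines as the proof of Theorem \ref{int_err_low_B}.
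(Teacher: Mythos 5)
Your range bookkeeping is correct and does reproduce the window $\frac{1}{p} < r < \frac{1}{\max(p,q)} + 1$, but the argument rests on an input that the paper does not have and deliberately avoids: an $F$-space analogue of Proposition \ref{prp_triebel611}, i.e.\ a Koksma--Hlawka/duality relation $\err_N(S_{pq}^r F) \geq c\, D^{S_{p'q'}^{1-r} F}(N)$. The paper only proves the Besov version, the duality statement for $F$-spaces on the cube is not among the listed duality results (Proposition \ref{dom_mix_smoo_duality}(iii) is for $\R^d$ only), and the paper explicitly warns that Triebel's corresponding Triebel--Lizorkin statements in \cite[Remark 6.28]{T10a} are ``not supported by arguments'' because the embeddings change the admissible range of $r$ --- which is precisely why the author routes everything through the Besov scale. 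You flag this yourself as ``the main obstacle'' but then grant it; as written, that is a genuine gap rather than a citation you can point to. A second, concrete failure occurs at the endpoint $p=1$, which the corollary allows: then $p'=\infty$, the space $S_{\infty,q'}^{1-r}F$ is not even defined in this framework ($F$-spaces require $p<\infty$), and Corollary \ref{lower_to_main_F} cannot be applied to it.

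The paper's actual proof sidesteps all of this by never leaving the Besov scale on the dual side. From Corollary \ref{cor_emb_BF} one has $S_{\max(p,q),q}^r B(\Q^d) \hookrightarrow S_{pq}^r F(\Q^d)$, hence the unit ball of the $B$-space sits (up to a constant) inside that of the $F$-space and
\[ \err_N(S_{pq}^r F) \geq c\, \err_N(S_{\max(p,q),q}^r B), \]
after which Theorem \ref{int_err_low_B} with $\max(p,q)$ in place of $p$ gives the bound for $\frac{1}{\max(p,q)} < r < \frac{1}{\max(p,q)} + 1$; the sharper lower endpoint $r > \frac{1}{p}$ is then imposed separately to guarantee pointwise evaluation of the integrands. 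If you want to keep your duality-based route you would first have to actually establish the $F$-version of Proposition \ref{prp_triebel611} with the corrected conditions on $r$ (and exclude or treat $p=1$ separately); otherwise the embedding argument at the level of $\err_N$ is the clean way through.
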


\begin{proof}
From Corollary \ref{cor_emb_BF} we have $S_{\max(p,q),q}^r B(\Q^d) \hookrightarrow S_{pq}^r F(\Q^d)$, therefore, we get
\[ \err_N(S_{pq}^r F) \geq \err_N(S_{\max(p,q),q}^r B) \geq c \, \frac{(\log N)^{\frac{(q-1)(d-1)}{q}}}{N^r} \]
for $\frac{1}{\max(p,q)} < r < \frac{1}{\max(p,q)} + 1$ from Theorem \ref{int_err_low_B}. But we also need to guarantee pointwise evaluation for the integration, therefore we get the restriction $r > \frac{1}{p}$ (see \cite[Section 4.2.1]{T10a}).

\end{proof}

We add results for the Sobolev spaces ($q = 2$).

\begin{cor} \label{int_err_low_H}

Let $1 \leq p < \infty$. Let $\frac{1}{p} < r < \frac{1}{\max(p,2)} + 1$. Then there exists a constant $c > 0$ such that, for any integer $N \geq 2$, we have
\[ \err_N(S_p^r H) \geq c \, \frac{(\log N)^{\frac{d-1}{2}}}{N^r}. \]

\end{cor}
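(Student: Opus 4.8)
The plan is to mimic the proof of Corollary \ref{int_err_low_F}, but now going down from the Triebel-Lizorkin spaces to the Sobolev spaces via the definition $S_p^r H(\Q^d) = S_{p,2}^r F(\Q^d)$. So the first step is simply to invoke Definition \ref{sobolev_df}, which identifies $S_p^r H(\Q^d)$ with $S_{p,2}^r F(\Q^d)$; consequently $\err_N(S_p^r H) = \err_N(S_{p,2}^r F)$, and the problem is reduced to applying Corollary \ref{int_err_low_F} with $q = 2$.

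Next I would check that the hypotheses of Corollary \ref{int_err_low_F} are met for $q = 2$. With $q = 2$ the admissible range for $r$ becomes $\frac{1}{p} < r < \frac{1}{\max(p,2)} + 1$, which is exactly the range assumed in the statement. The condition $1 \leq p < \infty$ is also inherited directly. Finally, the exponent of the logarithm in Corollary \ref{int_err_low_F} is $\frac{(q-1)(d-1)}{q}$, which for $q = 2$ evaluates to $\frac{d-1}{2}$, matching the claimed bound $c\,(\log N)^{\frac{d-1}{2}}/N^r$.

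Putting these together: for any integer $N \geq 2$ we have
\[
\err_N(S_p^r H) = \err_N(S_{p,2}^r F) \geq c\,\frac{(\log N)^{\frac{(2-1)(d-1)}{2}}}{N^r} = c\,\frac{(\log N)^{\frac{d-1}{2}}}{N^r},
\]
where $c > 0$ is the constant furnished by Corollary \ref{int_err_low_F} for the parameter choice $q = 2$. This completes the proof.

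There is essentially no obstacle here; the statement is a direct specialization and the only thing to be careful about is that the range of $r$ in Corollary \ref{int_err_low_F} collapses correctly when $q = 2$ (in particular the lower endpoint stays $\frac1p$ and is not improved), and that the logarithmic exponent simplifies as claimed. Both are immediate. The work was already done in establishing Proposition \ref{prp_triebel611}, Theorem \ref{int_err_low_B} and Corollary \ref{int_err_low_F}.
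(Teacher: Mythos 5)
Your proposal is correct and matches the paper's intended argument exactly: the paper introduces this corollary with the remark ``We add results for the Sobolev spaces ($q = 2$)'', i.e.\ it is obtained precisely by specializing Corollary \ref{int_err_low_F} to $q = 2$ via the identification $S_p^r H(\Q^d) = S_{p,2}^r F(\Q^d)$. Your checks of the $r$-range and of the logarithmic exponent are the only points requiring verification, and both are handled correctly.
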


Now we turn to the upper bounds.

\begin{thm}  \label{thm_int_err_up_B}

Let $1 \leq p,q \leq \infty$. Let $\frac{1}{p} < r < 1$. Then there exists a constant $C > 0$ such that, for any integer $N \geq 2$, we have
\[ \err_N(S_{pq}^r B) \leq C \, \frac{(\log N)^{\frac{(q-1)(d-1)}{q}}}{N^r}. \]

\end{thm}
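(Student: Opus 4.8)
The plan is to derive the upper bound for the integration error directly from the duality result in Proposition \ref{prp_triebel611} together with the discrepancy upper bound already established in Theorem \ref{thm_main}. The chain of reasoning is parallel to the proof of Theorem \ref{int_err_low_B}, only now running through the upper inequality of the duality statement and the upper bound for the discrepancy.

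First I would invoke Proposition \ref{prp_triebel611}: for $1 \leq p, q \leq \infty$ and $\frac{1}{p} < r < \frac{1}{p} + 1$ we have
\[ \err_N(S_{pq}^r B) \leq c_2 \, D^{S_{p'q'}^{1-r} B}(N). \]
Next I would apply Theorem \ref{thm_main} to the space $S_{p'q'}^{1-r} B(\Q^d)$. That theorem gives, for $0 < 1 - r < \frac{1}{p'}$, a point set with
\[ D^{S_{p'q'}^{1-r} B}(N) \leq C \, N^{(1-r)-1} \, (\log N)^{\frac{d-1}{q'}} = C \, N^{-r} \, (\log N)^{\frac{d-1}{q'}}. \]
The condition $0 < 1 - r < \frac{1}{p'}$ translates exactly into $\frac{1}{p} < r < 1$, which matches the hypothesis of the theorem being proved. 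Finally, since $\frac{1}{q'} = 1 - \frac{1}{q} = \frac{q-1}{q}$, the exponent of the logarithm is $\frac{(q-1)(d-1)}{q}$, and combining the two estimates yields
\[ \err_N(S_{pq}^r B) \leq c_2 C \, \frac{(\log N)^{\frac{(q-1)(d-1)}{q}}}{N^r}, \]
which is the claimed bound with $C$ replaced by $c_2 C$.

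There is essentially no genuine obstacle here beyond bookkeeping: one must check that the range $\frac{1}{p} < r < 1$ in the statement is compatible with both the duality result (which needs $r < \frac{1}{p}+1$, automatic since $r<1$) and with Theorem \ref{thm_main} applied to the dual parameters (which needs $1-r$ to lie strictly between $0$ and $\frac{1}{p'}$). The mild subtlety is that Theorem \ref{thm_main} as stated requires $0 < 1-r$, i.e. $r < 1$ strictly, so the endpoint $r=1$ is genuinely excluded — this is why the statement has $r < 1$ rather than $r \leq 1$. No care about the cases $p = \infty$ or $q = \infty$ is needed beyond the usual modifications already built into Proposition \ref{prp_triebel611} and Theorem \ref{thm_main}. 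The proof is therefore short, and I would present it simply as the composition of these two cited inequalities.
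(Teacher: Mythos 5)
Your proposal is correct and follows exactly the paper's own argument: the upper inequality of Proposition \ref{prp_triebel611} combined with Theorem \ref{thm_main} applied to $S_{p'q'}^{1-r}B(\Q^d)$, with the parameter translation $0 < 1-r < \frac{1}{p'} \Leftrightarrow \frac{1}{p} < r < 1$ and $\frac{1}{q'} = \frac{q-1}{q}$ handled in the same way. Nothing to add.
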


\begin{proof}
From Proposition \ref{prp_triebel611} we have
\[ \err_N(S_{pq}^r B) \leq c_2 \, D^{S_{p'q'}^{1-r} B}(N) \]
for $\frac{1}{p} < r < \frac{1}{p} + 1$. From Theorem \ref{thm_main} we have
\[ D^{S_{p'q'}^{1-r} B}(N) \leq C \, N^{1-r-1} \, \left( \log N \right)^{\frac{d-1}{q'}} \]
for $0 < 1 - r < \frac{1}{p'}$ which is equivalent to $\frac{1}{p} < r < 1$. Hence, the bounds follow.

\end{proof}

\begin{rem}
 
Just recently Ullrich proved the same upper bound in the case $1 \leq r < 2$ for the plane. His approach in \cite{U13} were diadic Hammersley point sets.

\end{rem}

\begin{cor}  \label{cor_int_err_up_F}

Let $1 \leq p < \infty$. Let $1 \leq q \leq \infty$. Let $\frac{1}{\min(p,q)} < r < 1$. Then there exists a constant $C > 0$ such that, for any integer $N \geq 2$, we have
\[ \err_N(S_{pq}^r F) \leq C \, \frac{(\log N)^{\frac{(q-1)(d-1)}{q}}}{N^r}. \]

\end{cor}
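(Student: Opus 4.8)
The plan is to derive Corollary \ref{cor_int_err_up_F} from Theorem \ref{thm_int_err_up_B} exactly as the lower-bound Corollary \ref{int_err_low_F} was derived from Theorem \ref{int_err_low_B}, namely by invoking the embeddings of Corollary \ref{cor_emb_BF} together with the monotonicity of $\err_N$ with respect to the underlying function space. Recall that if $M_1 \hookrightarrow M_2$ then every $f$ in the unit ball of $M_1$ lies (up to the embedding constant) in a ball of $M_2$, so passing from $M_1$ to $M_2$ can only increase the worst-case integration error; hence $M_1 \hookrightarrow M_2$ implies $\err_N(M_1) \leq c\,\err_N(M_2)$ for a constant $c > 0$ depending only on the embedding.

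First I would fix $1 \leq p < \infty$, $1 \leq q \leq \infty$ and $\frac{1}{\min(p,q)} < r < 1$. From Corollary \ref{cor_emb_BF} we have the embedding
\[
S_{\min(p,q),q}^r B(\Q^d) \hookrightarrow S_{pq}^r F(\Q^d),
\]
which is valid for all $r \in \R$ (in particular for our $r$). Applying the monotonicity of the quadrature error to the subset $M_0^1$ of the unit ball we obtain a constant $c_1 > 0$ with
\[
\err_N(S_{pq}^r F) \leq c_1 \, \err_N(S_{\min(p,q),q}^r B).
\]
Then I would apply Theorem \ref{thm_int_err_up_B} with $p$ replaced by $\min(p,q)$: the hypothesis of that theorem is $\frac{1}{\min(p,q)} < r < 1$, which is precisely what we assumed, so there is a constant $C > 0$ with
\[
\err_N(S_{\min(p,q),q}^r B) \leq C \, \frac{(\log N)^{\frac{(q-1)(d-1)}{q}}}{N^r}
\]
for all integers $N \geq 2$. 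Chaining the two inequalities gives the claimed bound.

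Two small points need care, and they are the only places where the proof is not entirely mechanical. First, one must check that pointwise evaluation makes sense in $S_{pq}^r F(\Q^d)$ so that the quadrature functional and the space $M_0^1$ are well defined; this is exactly the role played by the lower bound $r > \frac{1}{p}$ in Corollary \ref{int_err_low_F}, and since $r > \frac{1}{\min(p,q)} \geq \frac{1}{p}$ it is automatically satisfied here, so no extra restriction on $r$ beyond $\frac{1}{\min(p,q)}<r$ is needed. Second, the exponent of $\log N$ on the right-hand side should be written in terms of $q$ (not $\min(p,q)$), which is consistent because Theorem \ref{thm_int_err_up_B} produces $\frac{(q-1)(d-1)}{q}$ with the same $q$ that survives the embedding. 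The main (and really only) obstacle is bookkeeping: making sure the embedding of Corollary \ref{cor_emb_BF} is the right one for the $F$-to-$B$ direction and that the parameter substitution $p \mapsto \min(p,q)$ lands inside the admissible range of Theorem \ref{thm_int_err_up_B}; there is no analytic difficulty once these are lined up.
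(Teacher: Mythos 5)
Your proposal follows the same route as the paper: reduce to the Besov case via the $F$-to-$B$ embedding and apply Theorem \ref{thm_int_err_up_B} with $p$ replaced by $\min(p,q)$. Two points need correction. First, you quote the embedding from Corollary \ref{cor_emb_BF} backwards: that corollary gives $S_{pq}^r F(\Q^d) \hookrightarrow S_{\min(p,q),q}^r B(\Q^d)$, not the reverse; as written, your displayed embedding combined with your own (correctly stated) monotonicity rule $M_1 \hookrightarrow M_2 \Rightarrow \err_N(M_1) \leq c\,\err_N(M_2)$ would yield $\err_N(S_{\min(p,q),q}^r B) \leq c\,\err_N(S_{pq}^r F)$, the opposite of what you then assert. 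The inequality you actually use, $\err_N(S_{pq}^r F) \leq c_1\,\err_N(S_{\min(p,q),q}^r B)$, is the correct one and follows from the correctly oriented embedding, so this is two sign errors cancelling rather than a fatal flaw --- but it must be fixed. Second, Corollary \ref{cor_emb_BF} is stated only for $0 < q < \infty$, while the statement you are proving allows $q = \infty$; the paper handles that case separately via $S_{p,\infty}^r F(\Q^d) \hookrightarrow S_{p,\infty}^r B(\Q^d)$ from the first part of Proposition \ref{embeddings_SpqrBF}, which gives the assertion for $\frac{1}{p} < r < 1$ (note $\min(p,\infty) = p$). Your observation that $r > \frac{1}{\min(p,q)} \geq \frac{1}{p}$ guarantees pointwise evaluation is a sensible addition that the paper's proof of the upper bound leaves implicit.
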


\begin{proof}
From Corollary \ref{cor_emb_BF} we have $S_{pq}^r F(\Q^d) \hookrightarrow S_{\min(p,q),q}^r B(\Q^d)$, therefore, we get
\[ \err_N(S_{pq}^r F) \leq \err_N(S_{\min(p,q),q}^r B) \leq C \, \frac{(\log N)^{\frac{(q-1)(d-1)}{q}}}{N^r} \]
for $\frac{1}{\min(p,q)} < r < 1$ from Theorem \ref{thm_int_err_up_B}.

From the first part of Proposition \ref{embeddings_SpqrBF} we have $S_{p, \infty}^r F(\Q^d) \hookrightarrow S_{p, \infty}^r B(\Q^d)$, therefore, we get the assertion analogously to the case above for $\frac{1}{p} < r < 1$.

\end{proof}

\begin{cor}  \label{cor_int_err_up_H}

Let $1 \leq p < \infty$. Let $\frac{1}{\min(p,2)} < r < 1$. Then there exists a constant $C > 0$ such that, for any integer $N \geq 2$, we have
\[ \err_N(S_p^r H) \leq C \, \frac{(\log N)^{\frac{d-1}{2}}}{N^r}. \]

\end{cor}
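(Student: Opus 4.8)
The plan is to obtain this estimate as the special case $q=2$ of the Triebel--Lizorkin result already established in Corollary \ref{cor_int_err_up_F}, using the identification of the Sobolev space with dominating mixed smoothness as a Triebel--Lizorkin space. By Definition \ref{sobolev_df} we have $S_p^r H(\Q^d) = S_{p\,2}^r F(\Q^d)$, so the quadrature error $\err_N(S_p^r H)$ is literally $\err_N(S_{p\,2}^r F)$, and no new estimate is needed beyond what is proved for the $F$-spaces. This is the same device by which Corollaries \ref{cor_h} and \ref{cor_h_spaces} were deduced from their $F$-space counterparts.

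First I would check that the hypotheses of Corollary \ref{cor_int_err_up_F} are met for $q=2$. That corollary requires $1 \leq p < \infty$, $1 \leq q \leq \infty$ and $\frac{1}{\min(p,q)} < r < 1$. Setting $q=2$ turns the admissible range into $\frac{1}{\min(p,2)} < r < 1$, which is exactly the range assumed in the present statement, so the hypotheses transfer verbatim with no extra conditions to impose.

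Next I would read off the conclusion. Corollary \ref{cor_int_err_up_F} yields
\[ \err_N(S_{p\,2}^r F) \leq C \, \frac{(\log N)^{\frac{(q-1)(d-1)}{q}}}{N^r}, \]
and substituting $q=2$ gives $\frac{(q-1)(d-1)}{q} = \frac{d-1}{2}$, so the right-hand side becomes $C \, (\log N)^{\frac{d-1}{2}} \, N^{-r}$. Combining this with the identity $S_p^r H = S_{p\,2}^r F$ produces the claimed bound with the same constant $C$, for every integer $N \geq 2$.

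Since all the substantive work is contained in the chain consisting of Proposition \ref{prp_triebel611} (the Koksma--Hlawka type duality between integration error and discrepancy), Theorem \ref{thm_main} (the discrepancy upper bound for the Chen--Skriganov point sets), Theorem \ref{thm_int_err_up_B}, and Corollary \ref{cor_int_err_up_F} (which itself uses the embeddings of Corollary \ref{cor_emb_BF}), there is no genuine obstacle at this last step. The only thing to verify carefully is the bookkeeping of the parameter range and the evaluation of the logarithmic exponent at $q=2$, both of which are immediate.
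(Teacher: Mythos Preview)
Your proof is correct and matches the paper's own argument exactly: the paper simply states that the assertion follows from Corollary~\ref{cor_int_err_up_F} for $q=2$. Your additional bookkeeping of the parameter range and the logarithmic exponent is accurate and just makes the one-line deduction explicit.
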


\begin{proof}
The assertion from Corollary \ref{cor_int_err_up_F} for $q = 2$.

\end{proof}

Again we summarize the results for the cases where we have the same lower and upper bounds.

\begin{thm} \label{thm_int_err}

\mbox{}
\begin{enumerate}[(i)]
	\item Let $1 \leq p,q \leq \infty$ and $q < \infty$ if $p = 1$ and $q > 1$ if $p = \infty$. Let $\frac{1}{p} < r < 1$. Then there exist constants $c_1, C_1 > 0$ such that, for any integer $N \geq 2$, we have
	\[ c_1 \, \frac{(\log N)^{\frac{(q-1)(d-1)}{q}}}{N^r} \leq \err_N(S_{pq}^r B) \leq C_1 \, \frac{(\log N)^{\frac{(q-1)(d-1)}{q}}}{N^r}. \]
	\item Let $1 \leq p,q < \infty$. Let $\frac{1}{\min(p,q)} < r < 1$. Then there exist constants $c_2, C_2 > 0$ such that, for any integer $N \geq 2$, we have
	\[ c_2 \, \frac{(\log N)^{\frac{(q-1)(d-1)}{q}}}{N^r} \leq \err_N(S_{pq}^r F) \leq C_2 \, \frac{(\log N)^{\frac{(q-1)(d-1)}{q}}}{N^r}. \]
	\item Let $1 \leq p < \infty$. Let $\frac{1}{\min(p,2)} < r < 1$. Then there exist constants $c_3, C_3 > 0$ such that, for any integer $N \geq 2$, we have
	\[ c_3 \, \frac{(\log N)^{\frac{d-1}{2}}}{N^r} \leq \err_N(S_p^r H) \leq C_3 \, \frac{(\log N)^{\frac{d-1}{2}}}{N^r}. \] \label{BTY_quote}
\end{enumerate}

\end{thm}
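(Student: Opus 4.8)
The plan is to obtain Theorem~\ref{thm_int_err} as a direct consequence of the matching lower and upper bounds for $\err_N$ established above; the only work is to intersect the admissible ranges of the smoothness parameter $r$ and to keep track of which side conditions on $p$ and $q$ are inherited from which estimate. No new machinery is needed.

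For part~(i) I would combine the lower bound of Theorem~\ref{int_err_low_B}, which holds for $\frac{1}{p}<r<\frac{1}{p}+1$ under the extra hypotheses $q<\infty$ if $p=1$ and $q>1$ if $p=\infty$, with the upper bound of Theorem~\ref{thm_int_err_up_B}, which holds for $\frac{1}{p}<r<1$. Since $\frac{1}{p}+1>1$, the intersection of the two ranges is precisely $\frac{1}{p}<r<1$, and on this range both inequalities carry the same power of $\log N$, namely $(\log N)^{(q-1)(d-1)/q}$, so one simply reads off the constants $c_1$ and $C_1$.

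For part~(ii) I would invoke the lower bound of Corollary~\ref{int_err_low_F}, valid for $\frac{1}{p}<r<\frac{1}{\max(p,q)}+1$ (with $1\le p,q<\infty$), together with the upper bound of Corollary~\ref{cor_int_err_up_F}, valid for $\frac{1}{\min(p,q)}<r<1$. Since $\min(p,q)\le p$ gives $\frac{1}{p}\le\frac{1}{\min(p,q)}$, and $\frac{1}{\max(p,q)}+1>1$, the two ranges intersect exactly in $\frac{1}{\min(p,q)}<r<1$; again the logarithmic exponents agree. Part~(iii) is then the special case $q=2$ of part~(ii) (equivalently, combine Corollaries~\ref{int_err_low_H} and~\ref{cor_int_err_up_H}), using that $\frac{(q-1)(d-1)}{q}=\frac{d-1}{2}$ when $q=2$.

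Since every ingredient is already proved, there is no genuine obstacle here; the only point requiring care is that the conditions on $r$ for the lower and upper bounds are genuinely different — the lower bound tolerates $r$ up to $\frac{1}{p}+1$, whereas the upper bound is only known for $r<1$ — so the theorem is stated on the smaller common range, and the sub-range $1\le r<\frac{1}{p}+1$ of the upper-bound problem remains open, in accordance with the remark following Theorem~\ref{thm_int_err_up_B}.
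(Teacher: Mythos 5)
Your proposal is correct and coincides with the paper's own (implicit) argument: Theorem \ref{thm_int_err} is stated there precisely as the summary obtained by intersecting the parameter ranges of Theorems \ref{int_err_low_B} and \ref{thm_int_err_up_B} and of Corollaries \ref{int_err_low_F}, \ref{cor_int_err_up_F}, \ref{int_err_low_H}, \ref{cor_int_err_up_H}. Your bookkeeping of the resulting ranges $\frac{1}{p}<r<1$ and $\frac{1}{\min(p,q)}<r<1$ and of the side conditions on $p,q$ matches the paper exactly.
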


\begin{rem}

The constants do not depend on $N$ but they do depend on $d, b, p, q, r$. The reason that we have parameters $p', q', 1 - r$ in Proposition \ref{prp_triebel611} is that those results come from duality arguments. Part \eqref{BTY_quote} in Theorem \ref{thm_int_err} is the $d$-dimensional counterpart (for $r \leq 1$) of the first part of \cite[Theorem 4.1]{T03}.

\end{rem}

For the Triebel-Lizorkin spaces in the case $\frac{1}{p} \leq r < \frac{1}{q}$ for $p > q$ the upper bound is still an open problem. Therefore, for the Sobolev spaces in the case $\frac{1}{p} \leq r < \frac{1}{2}$ for $p > 2$ the upper bound is an open problem.

\end{document}